\def\xyellowspace{%
  \sbox0{\colorbox{yellow}{\strut\ }}
  \dimen0=\wd0\relax
  \hskip0pt\cleaders\box0\hskip\dimen0\hskip0pt}
\gdef\makeyellowspace{\let \xyellowspace\catcode`\ =\active}%
\def\?#1{\colorbox{yellow}{\strut#1}}
\DeclareFontFamily{OT1}{rsfs10}{}
\DeclareFontShape{OT1}{rsfs10}{m}{n}{ <-> rsfs10 }{}
\DeclareMathAlphabet{\mathscript}{OT1}{rsfs10}{m}{n}
\DeclareMathOperator{\im}{Im}       
\DeclareMathOperator{\id}{id}       
\DeclareMathOperator{\Spec}{Spec}   
\DeclareMathOperator{\Hom}{Hom}     
\DeclareMathOperator{\Tors}{Tors}    
\DeclareMathOperator{\Pic}{Pic}     
\DeclareMathOperator{\Cl}{Cl}       
\DeclareMathOperator{\Cox}{Cox}     
\DeclareMathOperator{\Aut}{Aut}     
\DeclareMathOperator{\rk}{rk}       
\DeclareMathOperator{\Sing}{Sing}   
\DeclareMathOperator{\Mov}{Mov}     
\DeclareMathOperator{\Nef}{Nef}     
\DeclareMathOperator{\Eff}{Eff}     
\DeclareMathOperator{\Relint}{Relint}  
\DeclareMathOperator{\Int}{Int}     
\DeclareMathOperator{\diag}{diag}   
\DeclareMathOperator{\conv}{Conv}   
\DeclareMathOperator{\lcm}{lcm}     
\DeclareMathOperator{\HNF}{HNF}     
\def\widebreve{\mathpalette\wide@breve}
\def\wide@breve#1#2{\sbox\z@{$#1#2$}%
     \mathop{\vbox{\m@th\ialign{##\crcr
\kern0.08em\brevefill#1{0.8\wd\z@}\crcr\noalign{\nointerlineskip}%
                    $\hss#1#2\hss$\crcr}}}\limits}
\def\brevefill#1#2{$\m@th\sbox\tw@{$#1($}%
  \hss\resizebox{#2}{\wd\tw@}{\rotatebox[origin=c]{90}{\upshape(}}\hss$}
\title[Extended duality of toric varieties and Mirror Symmetry]{An extension of polar duality of toric varieties and its consequences in Mirror Symmetry}
\author[M. Rossi]{Michele Rossi}
\date{\today}
\address{Dipartimento di Matematica, Universit\`a di Torino,
via Carlo Alberto 10, 10123 Torino} \email{michele.rossi@unito.it}
\thanks{The author was partially supported by the I.N.D.A.M. as a member of the G.N.S.A.G.A.\\
 Author's ORCID:0000-0001-6191-2087}
\def \wrt{with respect to }
\def \a{\alpha }
\def \b{\beta }
\def \d{\delta }
\def \l{\lambda }
\def\ll{\boldsymbol{\lambda}}
\def \s{\sigma }
\def \D{\Delta }
\def \Ga{\Gamma }
\def \Si{\Sigma }
\def \g{\gamma}
\def \vf{\varphi}
\def \ve{\varepsilon}
\def \ét{\'{e}tale}
\def \aa{\mathbf{a}}
\def \bb{\mathbf{b}}
\def \cc{\mathbf{c}}
\def \e{\mathbf{e}}
\def \q{\mathbf{q}}
\def \uu{\mathbf{u}}
\def \v{\mathbf{v}}
\def \n{\mathbf{n}}
\def \m{\mathbf{m}}
\def \w{\mathbf{w}}
\def \tt{\mathbf{t}}
\def \z{\mathbf{z}}
\def \x{\mathbf{x}}
\def \y{\mathbf{y}}
\def \1{\mathbf{1}}
\def \0{\mathbf{0}}
\def\P{{\mathbb{P}}}
\def\p2{\mathbb{P}^2}
\def\p3{\mathbb{P}^3}
\def\p4{\mathbb{P}^4}
\def\cv#1{\wideparen{#1}}
\def\cO{\mathcal{O}}
\def\cY{\mathcal{Y}}
\def\rk{\operatorname{rk}}
\def\GL{\operatorname{GL}}
\def\Mat{\operatorname{Mat}}
\def\Z{\mathbb{Z}}
\def\FF{\mathbb{F}}
\def\C{\mathbb{C}}
\def\R{\mathbb{R}}
\def\M{\mathbf{M}}
\def\Q{\mathbb{Q}}
\def\N{\mathbb{N}}
\def\NN{\nabla}
\def\T{\mathbb{T}}
\def\L{\Lambda}
\def\XX{\mathbb{X}}
\def\SF{\mathcal{SF}}
\def\I{\mathcal{I}}
\def\Ga{\Gamma}
\def\Weil{\mathcal{W}_T}
\def\U1{\mathfrak{U}^{(1)}}
\def\hv{h_{\text{\rm T}}}
\theoremstyle{plain}
\newtheorem{theorem}{Theorem}[section]
\newtheorem{proposition}[theorem]{Proposition}
\newtheorem{thm-def}[theorem]{Theorem--Definition}
\newtheorem{corollary}[theorem]{Corollary}
\newtheorem{conjecture}[theorem]{Conjecture}
\newtheorem{lemma}[theorem]{Lemma}
\newtheorem*{a-proposition}{Proposition}
\theoremstyle{remark}
\newtheorem{remark}[theorem]{Remark}
\newtheorem{remarks}[theorem]{Remarks}
\newtheorem{example}[theorem]{Example}
\theoremstyle{definition}
\newtheorem{definition}[theorem]{Definition}
\newtheorem{algorithm}[theorem]{Algorithm}
\newtheorem*{step I}{Step I}
\newtheorem*{step II}{Step II}
\newtheorem*{step III}{Step III}
\newtheorem*{step IV}{Step IV}
\newtheorem*{acknowledgements}{Acknowledgements}
\newcommand{\halfline}{\vskip6pt}
\newcommand{\cy}{Ca\-la\-bi-Yau }
\newcommand{\ka}{K\"{a}hler }
\begin{document}


\begin{abstract} The present paper is dedicated to illustrating an extension of polar duality between Fano toric varieties to a more general duality, called \emph{framed} duality, so giving rise to a powerful and unified method of producing mirror partners of hypersurfaces and complete intersections in toric varieties, of any Kodaira dimension. In particular, the class of projective hypersurfaces and their mirror partners are studied in detail. Moreover, many connections with known Landau-Ginzburg mirror models, Homological Mirror Symmetry and Intrinsic Mirror Symmetry,  are discussed.
\end{abstract}
\keywords{Mirror symmetry, fan, polytope, Toric variety, Gale duality, fan matrix, weight matrix, resolution of singularities,  Landau-Ginzburg model, hypersurfaces, complete intersection, toric degeneration, geometric transition}
\subjclass[2010]{14J33\and 14M25\and 53D37 }

\maketitle

\tableofcontents

\section*{Introduction}

Polar duality between reflexive polytopes gives the well known Batyrev duality between Fano toric varieties, inducing a mirror symmetry between generic sections of their anti-canonical divisors \cite{Batyrev94}. Borisov and Batyrev extended this duality to complete intersections described by a nef partition of the anti-canonical divisor of a Fano toric variety \cite{Borisov}, \cite{BB96}. By thinking of Batyrev duality as a duality between toric varieties \emph{framed} by their anti-canonical divisor, the present paper is devoted to show how \emph{deforming} the Batyrev-Borisov duality by allowing a more general framing, in principle just given by an effective torus invariant Weil divisor (see Definitions~\ref{def:ftv} and \ref{def:wftv}). In general, such a deformed correspondence, here called \emph{framed} duality ($f$-duality), between framed and weakly framed toric varieties, is not involutive, but imposing some further conditions on the framing gives back an involutive duality, here called a \emph{calibrated $f$-process}, incorporating the classical duality between Fano toric varieties as a very particular case. Roughly speaking, $f$-duality behaves as follows. Assume $Y$ be a hypersurface of a complete toric variety $X$ and let $D_Y$ be an effective torus invariant Weil divisor (the framing) of $X$ such that $Y\in|D_Y|$, that is $Y\sim D_Y$: actually we are considering the family of hypersurfaces described by the linear system $|D_Y|$. A suitable combinatorial procedure $(X,D_Y)\rightsquigarrow (\XX,D^\vee)$ gives back a dual toric variety $\XX$ and a dual framing $D^\vee$ whose linear system $|D^\vee|$ describes an $f$-mirror family of $|D_Y|$ as a family of hypersurfaces of $\XX$. Calibration means that the same combinatorial procedure applied to $(\XX,D^\vee)$ gives back $(X,D_Y)$ as an $f$-dual framed toric variety, that is $(X,D_Y)\leftrightsquigarrow (\XX,D^\vee)$ is an involutive process. In particular, if $Y$ is a \cy hypersurface of a Fano toric variety $X$, then the $f$-process is involutive giving precisely the Batyrev duality. Theorems~\ref{thm:Deltatriviale} and \ref{thm:Krawitz} will give equivalent conditions to calibration, but it is difficult to understand how general these conditions are. Certainly there exist framed toric varieties (ftv) do not admitting any calibrated $f$-process even if the framing is given by the anti-canonical divisor (see Example~\ref{ex:anticanonico}).

The main goal ot this paper is explaining how $f$-duality is able to describe, in a unified way, an enormous number of $f$-mirror symmetric pairs of not necessarily Calabi-Yau hypersurfaces and complete intersections in toric varieties, sensibly improving the current knowledge of mirror partners of non \cy varieties (see e.g. \cite{Givental96}, \cite{Hori-Vafa}, \cite{Seidel}, \cite{Efimov}, \cite{KKOY}, \cite{Krawitz}, \cite{HSSW}, \cite{GKR}). Here the word ``mirror'' has to be considered in a broader sense, outside the \cy setup: the reader is referred to discussion developed in \S~\ref{ssez:mirrortest},~\ref{ssez:MWeb}.

 For instance, as a very particular but interesting case, a generic projective hypersurface in $\P^n$ of degree $d\geq n+1$, can be thought of a framing of $\P^n$, so admitting (at least) one $f$-mirror dual partner given by an hypersurface in a suitable finite quotient of a weighted projective $n$-space, whose weights are essentially assigned by the framing itself (see \S\ref{sez:ipersuperfici}). This construction turns out to be a nice extention to higher degrees of the pioneering description of a mirror partner of the quintic threefold given by Greene and Plesser \cite{GP}. Moreover, for lower degrees, a generic projective hypersurface in $\P^n$ of degree $d\leq n$ can be thought of a weak framing of $\P^n$, whose associated $f$-mirror dual partner can no more be a complete variety, proposing a rational re-parametrization of Landau-Ginzburg (LG) mirror models proposed by Givental \cite{Givental96}, \cite{Givental-ICM} (see \S\ref{ssez:NegKod}).

 Furthermore, $f$-duality turns out to extend, in a unified construction, many known dualities between Calabi-Yau hypersurfaces and complete intersections in toric varieties: this is the case of the Berglund-H\"{u}bsch duality \cite{Berglund-Hubsch} and, more ge\-ne\-rally, of the recent Artebani-Comparin-Guilbot duality \cite{ACG} (see \S\ref{ssez:BHK} and \ref{ssez:ACG}).

 More in general, $f$-duality opens up to a lot of stimulating connections with many expects of the current status of art of research in mirror symmetry. Here is a list of interesting applications developed in the following.

\halfline \noindent \textbf{Givental\,\&\,Hori-Vafa mirroring: two sides of a same coin.} $f$-duality suggests a sui\-ta\-ble re-parameterization of the Landau-Ginzburg mirror model of a complete intersection in a toric variety, proposed by Hori and Vafa in 2000 \cite{Hori-Vafa}, so getting two interesting consequences: the first one is that $f$-duality turns out to exhibit a suitable compactification of this re-parameterized LG mirror model, extending to higher degrees the Hori and Vafa construction for \cy projective hypersurfaces (see \S\ref{ssez:HoriVafa}); the second one is that Hori-Vafa and Givental mirroring processes turn out to be two sides of a same coin. The latter has been recently observed by Clarke (see \cite{Clarke}, in particular \S~7) and $f$-duality gives a further confirmation. More deeply, $f$-duality seems providing a combinatorial translation of the duality proposed by Clarke in terms of an exchanging between linear data associated with a toric variety \cite[\S~3]{Clarke}.

Since most of the currently proposed LG mirror models for varieties of general type are modelled on the so called \emph{Hori-Vafa recipe}, the previous observation introduces possible re-pa\-ra\-me\-te\-ri\-za\-tions of all these LG models, which should be compared with the known ones from the point of view of Homological Mirror Symmetry (HMS) (see considerations given in \S\ref{ssez:iperellittica}).

\halfline \noindent \textbf{Landau-Ginzburg/Hypersurface correspondence.} By observing a natural Landau-Ginzburg/Hypersurface correspondence, extending the LG/CY correspondence studied by \cite{Chiodo-Ruan} (see \S\ref{sssez:LG/Hyp}), a framing turns out to be the hypersurface counterpart of the ``anchoring at infinity'' of the compactified LG models proposed by Katzarkov, Kontsevich and Pantev \cite{KKP} (see \ref{ssez:KKP-compct}), so opening the door to conceivable connections with the log-geometry of the Gross-Siebert Intrinsic Mirror Symmetry \cite{GS-IMS}.

 Moreover, $f$-duality explains quite well why, passing from a framing to a weak framing, that is, losing positivity properties, translates in losing completeness properties of the associated mirror partner, so well justifying a description of mirror symmetry in terms of a duality between associated LG models (see Remark~\ref{rem:negKod}).

\halfline \noindent \textbf{Multiple mirror phenomenon and the Mirror Web.}
 A further important remark, is that, since $f$-duality is a duality between framed toric varieties, that is, between pairs given by a complete toric variety and a sufficiently positive torus invariant Weil divisor, multiple mirror partners can, in principle, be assigned by a changing of framing in the same linear equivalence class (see \S\ref{ssez:MWeb}). This means that, one should think of mirror duality more in terms of a connection between nodes in a web (the Mirror Web) rather than a phenomenon connecting pairs of mirror partners, that is, a symmetry, as done for \cy varieties. Notice that the multiple mirror phenomenon is a well known one, also for \cy varieties, after e.g. the R{\o}dland example \cite{Rodland} (see Remark~\ref{rem:mult.mirr} and references therein).

\halfline \noindent \textbf{Beyond the toric setup.}
 Following the lines given by Batyrev for \cy varieties in \cite{Batyrev02}, a conjectural approach, to extending $f$-mirror symmetry beyond a toric embedding, is sketched in \S\ref{ssez:Tdegenarazione}, by means of toric degeneration and geometric transitions.

\halfline \noindent \textbf{Mirror theorems}
 As Batyrev-Borisov duality, $f$-duality is just a construction to propose candidate mirror partners. After that, one has to prove they are effectively mirror partners, by checking various instances of mirror symmetry. Beyond the \cy setup, understanding which are those mirror symmetric instances is a bit more involved (see \S\ref{ssez:mirrortest}). Probably, the deepest way of checking mirror symmetry is the one proposed by Kontsevich's HMS. But this seems to be a very difficult approach and we defer it to future works. In this paper, a large section is dedicated to check several matching of (stringy) Hodge numbers in the case of projective hypersurfaces of non-negative Kodaira dimension (see \S\ref{sez:ipersuperfici}). One side of this check (we call $A$-side) turns out to be easily computable (Theorem~\ref{thm:m*=k}). The other side of this check (so called $B$-side) is sensibly more intricate. We will state main results in Theorem~\ref{thm:B-mirror}, deferring their proof to \cite{R-fpCI}, where analogous computation are developed in the broader context of toric complete intersections.

 Anyway, this is a case by case checking, quickly becoming essentially impossible for more general hypersurfaces and complete intersections in toric varieties, due to the wild singularities $f$-duality produces. According with Chiodo and Ruan \cite{Chiodo-Ruan}, it is generally believed that considering suitably associated LG models may sensibly simplify singularities and give rise to alternative way of checking mirror symmetry. The already mentioned LG/Hypersurface correspondence, presented in \S\ref{ssez:K-dualita}, allows one to drawing an alternative conjectural approach to checking mirror symmetry (see Remark~\ref{rem:K-dualita}).

 \halfline
 This paper is organized as follows.
 \S\ref{sez:preliminari} is devoted to introduce notation on toric varieties, their divisors, hypersurfaces and associated stratifications. \S\ref{sez:dualita-ftv} is  dedicated to the definition of framed toric varieties (ftv) and framed duality. Then \S\ref{sez:dualita-hyp} is devoted to present mirror symmetric consequences of $f$-duality for hypersurfaces in complete toric varieties. In \S\ref{sez:framingPn} an important class of framed toric varieties admitting a calibrated $f$-process is presented, namely projective spaces endowed with suitable framings. In the following \S\ref{sez:ipersuperfici}, all these considerations are applied to the important class of examples given by hypersurfaces in $\P^n$ of degree $d\ge n+1$. Then in \S\ref{sez:CI}, $f$-duality is extended to complete intersections subvarieties in complete toric varieties. \S\ref{sez:wftv} is devoted to introduce the concept of a weak framing and weakly framed toric varieties (wftv) and $f$-mirror symmetry of negative Kodaira dimension hypersurfaces: in particular, $f$-mirrors of hypersurfaces in $\P^n$ of degree $d\le n$ are considered and showed matching the Givental LG mirror models.  Finally in \S\ref{sez:open} many further considerations and open problems are collected, ending up with studying the $f$-mirror model of the general hyperelliptic curve, to propose a comparison with an example which has been extensively studied in the literature \cite{KKOY}, \cite{Seidel}, \cite{Efimov}.

 \begin{acknowledgements}
   It is a pleasure to thank M.~Artebani for several clarifications about many aspects treated in \cite{ACG} and T.~H\"ubsch for his interested comments and interesting suggestions, giving rise to perspectives in \S\ref{ssez:Hubsch}. Many thanks also to S.~Filippini, for useful conversation during her last visit in Turin, and to G.~Bini for his considerations. Last but not least, many thanks to the unknown referee for his constructive encouragement and suggestions, greatly improving the final presentation of this work.

Many computations and proofs' prototypes have been partially performed by means of several Maple routines, mostly of them jointly written with L.~Terracini, and some of them based on the Maple package \texttt{Convex} \cite{Convex}.
 \end{acknowledgements}

\section{Preliminaries and notation on toric varieties}\label{sez:preliminari}

A \emph{$n$--dimensional toric variety} is an algebraic normal variety $X$ containing the \emph{torus} $T:=(\C^*)^n$ as a Zariski open subset such that the natural multiplicative self--action of the torus can be extended to an action $T\times X\rightarrow X$.

Let us quickly recall the classical approach to toric varieties by means of \emph{cones} and \emph{fans}. For proofs and details the interested reader is referred to the extensive treatments \cite{Danilov}, \cite{Fulton}, \cite{Oda} and the recent and quite comprehensive \cite{CLS}.

\noindent As usual $M$ denotes the \emph{group of characters} $\chi : T \to \C^*$ of $T$ and $N$ the \emph{group of 1--parameter subgroups} $\lambda : \C^* \to T$. It follows that $M$ and $N$ are $n$--dimensional dual lattices via the pairing
\begin{equation*}
\begin{array}{ccc}
M\times N & \longrightarrow & \Hom(\C^*,\C^*)\cong\C^*\\
 \left( \chi,\lambda \right) & \longmapsto
& \chi\circ\lambda
\end{array}
\end{equation*}
which translates into the standard paring $\langle u,v\rangle=\sum u_i v_i$ under the identifications $M\cong\Z^n\cong N$ obtained by setting $\chi(\tt)=\tt^{\uu}:=\prod t_i^{u_i}$ and $\lambda(t)=t^{\v}:=(t^{v_1},\ldots,t^{v_n})$.

\subsection{Cones and affine toric varieties}\label{ssez:TV}

Define $N_{\R}:=N\otimes \R$ and $M_{\R}:=M\otimes\R\cong \Hom(N,\Z)\otimes\R \cong \Hom(N_{\R},\R)$.

\noindent A \emph{convex polyhedral cone} (or simply a \emph{cone}) $\sigma$ is the subset of $N_{\R}$ defined by
\begin{equation*}
    \sigma = \langle \v_1,\ldots,\v_s\rangle:=\{ r_1 \v_1 + \dots + r_s \v_s \in N_{\R} \mid r_i\in\R_{\geq 0} \}
\end{equation*}
Vectors $\v_1,\ldots,\v_s\in N_{\R}$ are said \emph{to generate} $\sigma$; $\v_i$ is called a \emph{primitive} generator if it generates the semigroup $\langle\v_i\rangle\cap N$. A cone $\s=\langle \v_1,\ldots,\v_s\rangle$ is called \emph{rational} if $\v_1,\ldots,\v_s\in N$, \emph{simplicial} if $\v_1,\ldots,\v_s$ are $\R$--linear independent and \emph{non-singular} if primitive generators $\v_1,\ldots,\v_s$ can be extended to giving a basis of the lattice $N$.

\noindent A cone $\s$ is called \emph{strongly convex} or \emph{pointed} if it does not contain a linear subspace of positive dimension of $N_{\R}$.

\noindent The \emph{dual cone $\s^{\vee}$ of $\s$} is the subset of $M_{\R}$ defined by
\begin{equation*}
    \sigma^{\vee} = \{ \uu \in M_{\R} \mid \forall\ \v \in \sigma \quad \langle \uu, \v \rangle \ge 0 \}
\end{equation*}
A \emph{face $\tau$ of $\s$} (denoted by $\tau <\s$) is the subset defined by
\begin{equation*}
    \tau = \sigma \cap \uu^{\bot} = \{\v \in \sigma \mid \langle \uu, \v \rangle = 0 \}
\end{equation*}
for some $\uu\in \sigma ^{\vee}$. Observe that also $\tau$ is a cone.

\noindent A \emph{facet} $\tau$ of a cone $\s$ is a codimension 1 face, denoted by $\tau<^1\s$.

\noindent Gordon's Lemma ensures that the semigroup $S_{\s}:=\s^{\vee}\cap M$ is \emph{finitely generated}. Then also the associated $\C$--algebra $A_{\s}:=\C[S_{\s}]$ is finitely generated. A choice of $m$ generators gives a presentation of $A_{\s}$
\begin{equation*}
    A_{\s}\cong \C[X_1,\dots,X_m]/I_{\s}
\end{equation*}
Then $U_{\s}:=\Spec(A_{\s})\subset\C^m$
is an \emph{affine toric variety}. Since a closed point $x\in U_{\s}$ is an evaluation of elements in $\C[S_{\s}]$ satisfying the relations generating $I_{\s}$, then it can be identified with a semigroup morphism $x:S_{\s}\rightarrow\C$ assigned by thinking of $\C$ as a multiplicative semigroup. In particular the \emph{characteristic morphism}
\begin{equation}\label{caratteristico}
\begin{array}{cccc}
x_{\s}&:\s^{\vee}\cap M & \longrightarrow & \C\\
      &      \uu & \longmapsto & \left\{\begin{array}{cc}
                                         1 & \text{if $\uu\in\s^{\bot}$} \\
                                         0 & \text{otherwise}
                                       \end{array}
      \right.
\end{array}
\end{equation}
which is well defined since $\s^{\bot}<\s^{\vee}$, defines a \emph{characteristic point} $x_{\s}\in U_{\s}$ whose torus orbit $O_{\s}$ turns out to be a $(n-\dim(\s))$--dimensional torus embedded in $U_{\s}$.

\subsection{Fans and toric varieties}\label{ssez:fans&tv}

A \emph{fan} $\Si$ is a finite set of cones $\s\subset N_{\R}$ such that
\begin{enumerate}
  \item for any cone $\s\in\Si$ and for any face $\tau<\s$ then $\tau\in\Si$,
  \item for any $\s,\tau\in\Si$ then $\s\cap\tau<\s$ and $\s\cap\tau<\tau$.
\end{enumerate}
For every $i$ with $0\leq i\leq n$ denote by $\Si(i)\subset \Si$ the subset of $i$--dimensional cones, called the \emph{$i$--skeleton of $\Si$}.

\noindent A fan $\Si$ is called \emph{simplicial} if every cone $\s\in\Si$ is rational and simplicial, and is called \emph{non-singular} if every such cone is non-singular. The \emph{support} of a fan $\Si$ is the subset $|\Si|\subset N_{\R}$ obtained as the union of all of its cones i.e.
\begin{equation*}
    |\Si|:= \bigcup_{\s\in\Si} \s \subset N_{\R}\ .
\end{equation*}
If $|\Si|=N_{\R}$ then $\Si$ will be called \emph{complete}.

Since for any face $\tau <\s$ the semigroup $S_{\s}$ turns out to be a sub-semigroup of $S_{\tau}$, there is an induced immersion $U_{\tau}\hookrightarrow U_{\s}$ between the associated affine toric varieties which embeds $U_{\tau}$ as a principal open subset of $U_{\s}$. Given a fan $\Si$ one can construct \emph{an associated toric variety $X(\Si)$} by patching all the affine toric varieties $\{U_{\s}\ |\ \s\in\Si \}$ along the principal open subsets associated with any common face. Moreover \emph{for every toric variety $X$ there exists a fan $\Si$ such that $X\cong X(\Si)$}. It turns out that:
\begin{itemize}
  \item \emph{$X(\Si)$ is non-singular if and only if the fan $\Si$ is non-singular,}
  \item \emph{$X(\Si)$ is complete if and only if the fan $\Si$ is complete.}
\end{itemize}
Let $\v_\rho$ be a primitive generator of the ray $\rho\in \Si(1)$. Up to an identification $N\cong\Z^n$, where $n:=\dim X$, and setting $m:=|\Si(1)|$
$$V=\left(\v_\rho\,|\,\rho\in\Si(1)\right)=(\v_1\,\cdots\,\v_m)$$
gives a $n\times m$ integer matrix called \emph{a fan matrix of $\Si$}. Notice that $\Si$ determines $V$ up to the choice of a basis of $N$ and of a permutation of columns (i.e. generators $\v_\rho$), that is, $V$ and $V'$ are \emph{equivalent} fan matrices if
\begin{equation}\label{M-equivalenza}
  \exists\,A\in\GL(n,\Z)\,,\ \exists\,B\in\mathfrak{S}_m\leq \GL(m,\Z)\quad V'=A\cdot V\cdot B
\end{equation}

\subsection{Divisors on Toric varieties}

Let $\mathcal{W}(X)$ denote the group of Weil divisors of a toric variety $X=X(\Si)$. Then its subgroup of \emph{torus--invariant Weil divisors} is given by
\begin{equation*}
    \mathcal{W}_T(X)=\left\langle D_{\rho } \mid \rho \in \Sigma (1)\right\rangle_{\Z} =
    \bigoplus_{\rho \in \Sigma (1)}\Z\cdot D_{\rho }
\end{equation*}
where $D_{\rho}=\overline{\T\cdot x_\rho}$, being $\T\cong\Hom(N,\C^*)$ the acting torus and $x_\rho$ the distinguished point of $\rho$, as defined in (\ref{caratteristico}). Let $\mathcal{P}(X)\subset\mathcal{W}(X)$ be the subgroup of \emph{principal divisors} and $\v_{\rho}$ be the generator of the monoid $\rho\cap N$. Then the morphism
\begin{equation}\label{div}
\begin{array}{llll}
div : & M & \longrightarrow & \mathcal{P}(X)\cap \mathcal{W}_{T}(X)=:
\mathcal{P}_{T}(X) \\
& \uu & \longmapsto & div(\uu):=\sum_{\rho \in \Sigma (1)}\langle \uu,\v_{\rho }\rangle
D_{\rho }
\end{array}
\end{equation}
is surjective.
 Let $V=(\v_1,\ldots,\v_{n+r})$ be a fan matrix of $\Si$, with respect to a chosen identification $N\cong\Z^n$. Then the transposed matrix $V^T$ is a representative matrix of the $\Z$-linear morphism $div$ defined in (\ref{div}), with respect to the basis $\{D_1,\ldots,D_{n+r}\}$ of $\mathcal{W}_T(X)$.

Let $\Pic(X)$ be the group of line bundles modulo isomorphism. It is well known that for an \emph{irreducible} variety $X$ the map $D\mapsto\mathcal{O}_X(D)$ induces an isomorphism $\mathcal{C}(X)/\mathcal{P}(X)\cong\Pic(X)$, where $\mathcal{C}(X)\subset\mathcal{W}(X)$ denotes the subgroup of Cartier divisors. The divisor class group is defined as the group of Weil divisors modulo rational (hence linear) equivalence, i.e. $\Cl(X):=\mathcal{W}(X)/\mathcal{P}(X)$. Then the inclusion $\mathcal{C}(X)\subset\mathcal{W}(X)$ passes through the quotient giving an immersion $\Pic(X)\hookrightarrow \Cl(X)$.

 A toric variety $X=X(\Si)$ is called \emph{non-degenerate} if the support $|\Si|$ spans $N_{\R}$\,: in particular this means that it cannot admit torus factors, or, equivalently, that $H^0(X,\cO_X^*)\cong\C^*$. Then, the cardinality of the 1-skeleton is given by
 $$|\Si(1)|=n+r$$
 where $r:=\rk \Pic(X)\geq 1$ is the \emph{Picard number of} $X$, also called \emph{the rank of $X$}, in the following.

\begin{definition}\cite[Def.~3.10]{RT-LA&GD}\label{def:Fmatrice} An \emph{$F$--matrix} is a $n\times (n+r)$ matrix  $V$ with integer entries, satisfying the conditions:
\begin{itemize}
\item[a)] $\rk(V)=n$;
\item[b)] $V$ is \emph{$F$--complete} i.e. $\langle V\rangle=N_{\R}\cong\R^n$ \cite[Def.~3.4]{RT-LA&GD};
\item[c)] all the columns of $V$ are non zero;
\item[d)] if ${\bf  v}$ is a column of $V$, then $V$ does not contain another column of the form $\lambda  {\bf  v}$ where $\lambda>0$ is a real number.
\end{itemize}
A $F$--matrix $V$ is called \emph{reduced} if every column of $V$ is composed by coprime entries \cite[Def.~3.13]{RT-LA&GD}.
\end{definition}
For instance, a fan matrix of a complete toric variety $X(\Si)$ is always a reduced $F$--matrix.

\subsubsection{Notation}\label{sssez:SF} Given a reduced $F$-matrix $V$, in the following $\SF(V)$ will denote the set of all complete and simplicial fans whose 1-skeleton  is given by all the rays generated by the columns of $V$. Moreover,
$$\P\SF(V)\subset\SF(V)$$
will denote the subset of those fans whose associated toric variety $X(\Si)$ is projective.

\subsection{Polytopes of divisors and associated fans and varieties}\label{ssez:politopi}

A \emph{polytope} $\D\subset M_{\R}$ is the convex hull of a finite set $S$ of points, that is $\D=\conv(S)$.

\noindent If $S\subseteq M$ then $\D$ is called a \emph{lattice polytope}.
When $\D$ is a full dimensional polytope its presentation as an intersection of closed half-spaces has an especially nice form, because each facet $\Phi<^1\D$ has a unique supporting affine hyperplane. We denote such an hyperplane and the corresponding closed half-space as
\begin{equation*}
  H_\Phi=\{\m\in\M_\R\,|\,\langle\m,\n_\phi\rangle=-a_\Phi\}\quad,\quad H^+_\Phi=\{\m\in\M_\R\,|\,\langle\m,\n_\phi\rangle\geq-a_\Phi\}
\end{equation*}
where $(\n_\Phi,a_\Phi)\in N_\R\times \R$ is unique up to multiplication by  a positive real number. We call $\n_\Phi$ an \emph{inward pointing normal} vector of the facet $\Phi$. It follows that
\begin{equation}\label{ss-intersezione}
  \D=\bigcap_{\Phi<^1\D} H^+_\Phi= \{\m\in M_\R\,|\,\forall\,\Phi<^1\D\quad\langle\m,\n_\Phi\rangle\geq-a_\Phi\}
\end{equation}
The relative interior of $\D$ will be denoted by $\Relint{\D}$, or simply $\Int\D$ when $\D$ is full dimensional.

In the following \emph{we will consider full dimensional polytopes only, unless otherwise advised}.

The \emph{polar polytope} $\D^*$ of a polytope $\D\subseteq M_\R$ containing the origin $\0\in M$ as an interior point, that is $\0\in\Int{\D}$, is defined as follows
\begin{equation}\label{polare}
  \D^*:=\{\n\in N_\R\,|\,\forall\,\m\in\D\quad\langle\n,\m\rangle\geq -1\}\subseteq N_\R
\end{equation}
It is a full dimensional polytope in $N_\R$ with $\0\in\Int{\D^*}$ and $(\D^*)^*=\D$.
In particular, if $\D$ admits the presentation given in (\ref{ss-intersezione}) then
\begin{equation}\label{polare2}
  \D^*=\conv(\{a_\Phi^{-1}\n_\Phi\,|\,\forall\,\Phi<^1\D\})\subseteq N_\R
\end{equation}
(see \cite[Exer.~2.2.1]{CLS}). Clearly, in general, $\D^*$ is not a lattice polytope in $N$, even if $\D$ is a lattice polytope in $M$.
A lattice polytope $\D$ is called \emph{reflexive} if $\0\in\Int\D$ and $\D^*$ is still a lattice polytope. By \cite[Thm.~4.1.6]{Batyrev94}
\begin{equation*}
  \D\ \text{is reflexive}\ \Longleftrightarrow\ \Int\D\cap M=\{\0\}
\end{equation*}
Given a divisor $D=\sum_{\rho\in\Si(1)}a_\rho D_\rho\in\Weil(X(\Si))$, the following polyhedron
\begin{equation}\label{div-politopo}
  \D_D:=\{\m\in M_\R\,|\,\forall\,\rho\in\Si(1)\quad\langle\m,\v_\rho\rangle\geq -a_\rho\}=\{\m\in M_\R\,|\,V^T\cdot\m \geq -\aa\}
\end{equation}
is called the \emph{polyhedron associated to $D$}, where $V=\left(\v_\rho\right)_{\rho\in\Si(1)}$ is a fan matrix of $X$ and $\aa=\left(a_\rho\right)_{\rho\in\Si(1)}$ is the column vector of coefficients of $D$.
In general it is not a polytope, but just a polyhedron as intersection of a finitely many closed half spaces.

\begin{proposition}[Prop.~4.3.8~(b) and \S6.1 in \cite{CLS}]\label{prop:gg}
  If $X(\Si)$ is complete then, for any $D$ in $\Weil(X)$, the associated polyhedron $\D_D$ is a polytope. Moreover:
  \begin{enumerate}
    \item $D$ is \emph{basepoint free}, that is $\cO_X(D)$ is generated by global section, if and only if $\D_D=\conv(\{\m_\s\in M\,|\,\s\in\Si(n)\})$,
    \item $D$ is ample if and only if $\D_D=\conv(\{\m_\s\in M\,|\,\s\in\Si(n)\})$ and $\s\neq\s'$ implies $\m_\s\neq\m_{\s'}$.
  \end{enumerate}
\end{proposition}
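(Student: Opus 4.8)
The plan is to read all three assertions off the standard toric dictionary, in which the global sections of $\cO_X(D)$ for $D=\sum_\rho a_\rho D_\rho$ are $H^0(X,\cO_X(D))=\bigoplus_{\m\in\D_D\cap M}\C\cdot\chi^\m$, and, for a maximal cone $\s\in\Si(n)$, an element $\m_\s\in M$ with $\langle\m_\s,\v_\rho\rangle=-a_\rho$ for all $\rho\in\s(1)$ (when one exists) records both the local trivialization of $\cO_X(D)$ over $U_\s$ and the value on $\s$ of the support function $\varphi_D$. For the first claim I would simply compute the recession cone of the polyhedron (\ref{div-politopo}): if $\uu\in M_\R$ satisfies $\langle\uu,\v_\rho\rangle\ge 0$ for every $\rho\in\Si(1)$, then, $X(\Si)$ being complete, the cones of $\Si$ cover $N_\R$ and hence the $\v_\rho$ positively generate $N_\R$, so $\langle\uu,\n\rangle\ge 0$ for all $\n\in N_\R$ and $\uu=\0$. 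A closed polyhedron with trivial recession cone is bounded, so $\D_D$ is a polytope.

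For part (1), the first step is a reduction to finitely many points: the base locus of $\cO_X(D)$ is a closed $T$-invariant subscheme, hence a union of torus orbits, and every non-empty such union contains one of the distinguished points $x_\s$ with $\s\in\Si(n)$; therefore $D$ is basepoint free iff for each $\s\in\Si(n)$ some $\chi^\m$, $\m\in\D_D\cap M$, is non-vanishing at $x_\s$. The local description then shows $\chi^\m$ is non-vanishing at $x_\s$ precisely when $\langle\m,\v_\rho\rangle=-a_\rho$ for all $\rho\in\s(1)$; since $\s$ is full-dimensional (completeness), such an $\m=\m_\s$ is unique when it exists, and its existence for every $\s\in\Si(n)$ is exactly the statement that $D$ is Cartier. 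Thus $D$ is basepoint free iff every $\m_\s$ exists and, moreover, $\m_\s\in\D_D$.

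It then remains to convert this into the polytope identity. In the basepoint free case $\conv(\{\m_\s\})\subseteq\D_D$ is clear; conversely, if $\m$ is a vertex of $\D_D$ I would pick $\n_0\in N_\R$ for which $\m$ is the unique point of $\D_D$ minimising $\langle\,\cdot\,,\n_0\rangle$, together with a maximal cone $\s$ containing $\n_0$ (completeness again), so that $\langle\m_\s,\n_0\rangle=\varphi_D(\n_0)=\min_{\D_D}\langle\,\cdot\,,\n_0\rangle=\langle\m,\n_0\rangle$; since $\m_\s\in\D_D$ it is also a minimiser and hence $\m=\m_\s$. So every vertex of $\D_D$ is some $\m_\s$ and $\D_D=\conv(\{\m_\s\mid\s\in\Si(n)\})$. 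The reverse implication is immediate: if $\D_D$ equals that convex hull then each $\m_\s$ exists (hence $D$ is Cartier) and lies in $\D_D$ (hence $D$ is basepoint free).

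For part (2) I would use that $D$ is ample iff $\varphi_D$ is strictly convex, i.e. iff $\Si$ is exactly the normal fan of $\D_D$. By (1) this occurs iff $D$ is basepoint free and $\s\mapsto\m_\s$ is injective on $\Si(n)$: an $n$-dimensional cone (which spans $N_\R$) cannot lie inside the normal cone of a positive-dimensional face of $\D_D$ (which is contained in a proper hyperplane), so once the $\m_\s$ are pairwise distinct each of them is a genuine vertex, and $\s\mapsto\m_\s$ becomes a bijection between $\Si(n)$ and the vertex set, forcing $\Si$ to coincide with the normal fan of $\D_D$. Combining with (1) gives the stated criterion. The main obstacle in all of this is the middle part --- the $T$-equivariant reduction of base-point-freeness to the distinguished points and the identification of the vertices of $\D_D$ with the $\m_\s$; boundedness and ampleness are then short. (Of course the statement is precisely \cite[Prop.~4.3.8 and \S6.1]{CLS}, so one may alternatively just cite it.)
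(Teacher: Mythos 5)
Your argument is correct, but note that the paper gives no proof of this statement at all: it is quoted verbatim from \cite{CLS} (Prop.~4.3.8(b) for the boundedness of $\D_D$ and \S 6.1 for the basepoint-free and ample criteria), and your proof is essentially the standard one given there — recession cone for boundedness, reduction of the base locus to the torus-fixed points $x_\s$, identification of the vertices of $\D_D$ with the Cartier data $\m_\s$ via the support function, and strict convexity for ampleness. So the appropriate move in the context of this paper is simply the citation, as you yourself observe at the end.
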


Recall that a Weil divisor $D$  is \emph{semi-ample} if a positive multiple $kD$, $k\in\N$, is basepoint free (hence Cartier). In particular, if $X(\Si)$ is complete and $D$ semi-ample, then
$$\D_{kD}=k\D_D=\conv(\{\m_\s\in M\,|\,\s\in\Si(n)\})$$
is a lattice polytope.

\begin{proposition}[Thm.~6.3.10 in \cite{CLS}]\label{prop:semiampio}
  Let $|\Si|$ be convex of full dimension. Then $D$ is semi-ample if and only if $kD$ is numerically effective (nef), for some $k\in\N$, that is $kD$ is Cartier and $kD\cdot C\geq 0$, for any complete curve $C\subset X$.
\end{proposition}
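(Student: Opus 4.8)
The plan is to route the equivalence through the classical dictionary between Cartier divisors on a toric variety and piecewise--linear support functions on its fan. The decisive point is that the convexity of $|\Si|$ promotes a \emph{local} positivity condition --- one inequality per interior wall, equivalent to nefness --- into a \emph{global} convexity condition for the support function, which is in turn equivalent to base-point freeness.

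The implication ``$D$ semi-ample $\Rightarrow$ $kD$ nef for some $k$'' is the soft one and uses nothing about $|\Si|$: if $kD$ is base-point free, then $|kD|$ defines a morphism $\vf\colon X\to\P^N$ with $kD=\vf^*\cO_{\P^N}(1)$, and for every complete irreducible curve $C\subset X$ the projection formula gives $kD\cdot C=\cO_{\P^N}(1)\cdot\vf_*C\ge 0$, since $\vf_*C$ is either $0$ or the class of an effective curve in $\P^N$. In particular $kD$ is Cartier and nef.

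For the converse I would take $kD$ Cartier with $kD\cdot C\ge 0$ for all complete $C\subset X$ and, replacing $D$ by $kD$, assume $D=\sum_{\rho\in\Si(1)}a_\rho D_\rho$ is itself Cartier and nef. Cartierness gives, for each $\s\in\Si(n)$, a point $\m_\s\in M$ with $\langle\m_\s,\v_\rho\rangle=-a_\rho$ for $\rho\in\s(1)$, and these data glue into the support function $\vf_D\colon|\Si|\to\R$, affine on each cone with $\vf_D(\v)=\langle\m_\s,\v\rangle$ for $\v\in\s$. For a wall $\tau=\s\cap\s'$ interior to $|\Si|$ the torus-invariant curve $C_\tau\cong\P^1$ is complete, and the standard wall/intersection computation identifies the sign of $D\cdot C_\tau$ with the defect of convexity of $\vf_D$ across $\tau$; hence the nef hypothesis amounts to saying that $\vf_D$ is convex across every interior wall. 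The key lemma is then the local--to--global principle: a piecewise--linear function on the \emph{convex}, full--dimensional set $|\Si|$ that is convex across each interior wall is convex on all of $|\Si|$. I would prove this by restricting $\vf_D$ to an arbitrary segment contained in $|\Si|$ --- segments between points of $|\Si|$ stay in $|\Si|$ precisely because the support is convex --- noting that $\vf_D$ is affine on each of the finitely many subsegments cut out by the cones of $\Si$, and deducing one-variable convexity from the across-the-wall inequality at each crossing. Global convexity of $\vf_D$ then forces $\langle\m_\s,\v_\rho\rangle\ge -a_\rho$ for \emph{all} $\rho\in\Si(1)$, i.e.\ $\m_\s\in\D_D$ for every $\s\in\Si(n)$ and $\vf_D(\v)=\min_{\s\in\Si(n)}\langle\m_\s,\v\rangle$; by Proposition~\ref{prop:gg}(1) (in the complete case, and by the same argument in general) this is precisely base-point freeness of $D$. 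Thus the original $kD$ is base-point free, so $D$ is semi-ample.

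I expect the main obstacle to be exactly the local--to--global convexity step: it fails without the hypothesis that $|\Si|$ be convex of full dimension --- a non-convex fan may support a divisor that meets every torus-invariant curve non-negatively yet is not semi-ample --- so one must take care that all auxiliary segments stay inside $|\Si|$ and that ``nef on all complete curves'' has genuinely been reduced to ``convex across each interior wall''. The sign bookkeeping, both in the wall computation and in the passage from convexity of $\vf_D$ to the membership $\m_\s\in\D_D$, is routine but has to be kept consistent with the conventions fixed in~(\ref{div-politopo}).
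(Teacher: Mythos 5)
The paper does not actually prove this statement: it is quoted directly from \cite{CLS} (Thm.~6.3.10), so there is no in-paper argument to compare against. Your proof is correct and reproduces the standard support-function argument from that reference — projection formula for the easy direction; Cartier data $\{\m_\s\}$, the wall-crossing formula for $D\cdot C_\tau$, and the local-to-global convexity of $\vf_D$ on a convex full-dimensional support for the converse — the only step deserving an extra word being that a segment in $|\Si|$ may cross cones of codimension $\geq 2$ rather than only walls, which one handles by perturbing to a generic segment and concluding by continuity.
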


Starting from a lattice polytope $\D$ one can construct a \emph{projective} toric variety $\P_\D$ as follows. For any nonempty face $\phi< \D$ consider the dual cone $\s_\phi^\vee\subseteq N_\R$ of the cone
\begin{equation*}
    \s_\phi:=\{r(\m-\m')\ |\ \m\in\D\ ,\ \m'\in \phi\ ,\ r\in\R_{\geq 0}\}\subseteq M_{\R}
\end{equation*}
Then $\Si_{\D}^\perp:=\{\s_\phi^\vee\ |\ \phi< \D\}$ turns out to be a fan, called the \emph{normal fan} of the polytope $\D$, and $\P_\D$ is the associated toric variety. It is projective as there exists an ample divisor $H$ of $\P_{\D}$ whose associated polytope is precisely $\D$.

A further toric variety $\XX_\D$ can be associated with a lattice polytope $\D$ such that $\0\in\D$. Namely, for every facet $\Phi<^1\D$ such that $\0\not\in\Relint\Phi$, consider the cone \emph{projecting $\Phi$ from the origin}, that is
\begin{equation}\label{cono su facciata}
  \s_\Phi:=\{r\m\,|\,\m\in\Phi\ ,\ r\in\R_{\geq 0}\}\subseteq M_\R
\end{equation}
Then $\Si_\D:=\{\tau\,|\,\exists\,\Phi<^1\D:\ \tau<\s_\Phi\}$ turns out to be a fan, called the \emph{fan over the polytope $\D$}, and $\XX_\D$ is the associated toric variety. If $\0\in\Int\D$, $\XX_\D$ is complete as the support $|\Si_\D|$ is the whole $M_\R$ (clearly for $\XX_\D$, the role of the dual lattices $M,N$ is reversed with respect to $\P_\D$). This is a direct consequence of the following

\begin{proposition}\label{prop:Fmatricedipolitopo}
  Given an identification $M\cong\Z^n$ and a lattice polytope
  \begin{equation*}
    \D:=\conv(\{\m_i\in M\,|\,i=1,\ldots,m\})
  \end{equation*}
  let $V_\D=\left(
           \begin{array}{ccc}
             \v_1 & \cdots & \v_m \\
           \end{array}
         \right)$ be the $n\times m$ integer matrix defined by the generators $\v_i$ of the semi-groups $\langle\m_i\rangle\cap M$, for any $1\leq i\leq m$. Then $V_\D$ is a reduced $F$-matrix if and only if $\0\in\Int\D$. In particular, if $\0\in\Int\D$ then $V_\D$ is a fan matrix of $\XX_\D$ and the latter turns out to be a complete toric variety.
\end{proposition}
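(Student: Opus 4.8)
The plan is to establish the biconditional through its two implications, after the preliminary remark that the word ``reduced'' is here automatic: by construction each column $\v_i$ is the primitive generator of the monoid $\langle\m_i\rangle\cap M$, hence has coprime entries. So the substantive content is the equivalence between ``$V_\D$ is an $F$-matrix'' and ``$\0\in\Int\D$'', and the pivot of the whole argument will be that condition (b) of Definition~\ref{def:Fmatrice} ($F$-completeness, $\langle V_\D\rangle=M_\R$) already forces $\0\in\Int\D$ on its own, while (a), (c) and (d) then come along for free. Two elementary facts get used throughout: since $\v_i\in\R_{>0}\cdot\m_i$ one has $\cone(\{\v_1,\dots,\v_m\})=\cone(\{\m_1,\dots,\m_m\})$, and since $\D=\conv(\{\m_i\})$ this common cone coincides with $\cone(\D)$, the cone swept out by $\D$ from the origin (where the $\m_i$ are understood to be the vertices of $\D$).

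\emph{Sufficiency.} Assume $\0\in\Int\D$. An interior point is never a vertex, so every $\m_i\neq\0$ and each $\v_i$ is a well-defined nonzero primitive vector; this is (c) and reducedness. Pick $\ve>0$ with $B_\ve(\0)\subseteq\D$; then $\cone(\{\v_i\})=\cone(\D)\supseteq\cone\bigl(B_\ve(\0)\bigr)=M_\R$, which is (b), and (a) $\rk(V_\D)=n$ follows since the columns then span $M_\R$ as a vector space. For (d): if $\v_j=\lambda\v_i$ with $\lambda>0$ and $i\neq j$, primitivity of $\v_i$ (through a B\'ezout relation among its entries) forces $\lambda\in\Z_{>0}$, and then $\lambda=1$, as otherwise $\v_j$ would fail to be primitive; so $\m_i$ and $\m_j$ lie on a single ray through $\0$, say $\m_j=\mu\,\m_i$ with $\mu>0$, $\mu\neq1$. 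But then the shorter of $\m_i,\m_j$ is a convex combination, with strictly positive coefficients, of $\0\in\Int\D$ and the longer one (which lies in $\D$), hence lies in $\Int\D$ --- contradicting that it is a vertex of $\D$. Thus (d) holds and $V_\D$ is a reduced $F$-matrix.

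\emph{Necessity.} I argue by contraposition: assuming $\0\notin\Int\D$, I produce a nonzero $\n\in N_\R$ with $\langle\m_i,\n\rangle\geq0$ for every $i$. Then $\langle\v_i,\n\rangle\geq0$ for all $i$ too, so $\cone(\{\v_i\})$ is contained in the closed half-space $\{\,\x\in M_\R\mid\langle\x,\n\rangle\geq0\,\}\subsetneq M_\R$, which contradicts $F$-completeness; hence $V_\D$ is not an $F$-matrix. To produce $\n$: if $\0\notin\D$, strict separation of the compact convex set $\D$ from the point $\0$ yields $\n\neq\0$ with $\langle\m,\n\rangle\geq c>0$ for all $\m\in\D$; if $\0\in\D$ then $\0$ must be a boundary point of $\D$ --- as $\0\notin\Int\D$, which moreover holds automatically when $\D$ is not full-dimensional --- and a supporting hyperplane of $\D$ at $\0$ supplies $\n\neq\0$ with $\D\subseteq\{\,\x\in M_\R\mid\langle\x,\n\rangle\geq0\,\}$. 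In both cases all the $\m_i$ lie in a closed half-space bounded by a hyperplane through the origin, as required.

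\emph{The last assertion.} Granting $\0\in\Int\D$, no facet $\Phi<^1\D$ meets $\0$ in its relative interior, so each cone $\s_\Phi=\cone(\Phi)$ enters the fan $\Si_\D$; moreover $\0\notin\Phi$, so $\s_\Phi$ is pointed and its extreme rays are $\cone(\m_i)$ as $\m_i$ ranges over the vertices of $\Phi$ (equivalently, of $\D$). Hence $\Si_\D(1)$ consists exactly of the rays $\R_{\geq0}\v_i$, and by the computation already made for (d) the $\v_i$ are pairwise non-proportional, so the columns of $V_\D$ are precisely the primitive ray generators of $\Si_\D$, i.e.\ $V_\D$ is a fan matrix of $\XX_\D$. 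Finally $|\Si_\D|=\bigcup\{\cone(\Phi)\mid\Phi<^1\D\}=\cone(\partial\D)=M_\R$, the last equality once more because $\0\in\Int\D$, so $\Si_\D$ is complete and $\XX_\D$ is a complete toric variety by the equivalence ``$X(\Si)$ complete $\iff$ $\Si$ complete'' recalled above. The only mildly delicate point I anticipate is phrasing the contrapositive in \emph{Necessity} so that the exterior, boundary and non-full-dimensional cases are all dispatched by a single separation argument; everything else is a routine unwinding of the definitions of $\cone$, of polar duality and of the fan $\Si_\D$.
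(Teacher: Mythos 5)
Your proof is correct, and it reaches the equivalence by a genuinely different route than the paper. For the implication ``$V_\D$ is an $F$-matrix $\Rightarrow \0\in\Int\D$'' the paper argues directly: it invokes \cite[Prop.~3.5]{RT-LA&GD} to place $-\v_1$ in the cone spanned by the remaining columns, rewrites $\0$ as a strictly positive convex combination of the $\m_i$, and reads off interiority; you instead take the contrapositive and run a single hyperplane-separation argument, showing that $\0\notin\Int\D$ traps all the $\m_i$ (hence all the $\v_i$) in a closed half-space and kills $F$-completeness. For the converse, the paper fixes an arbitrary $\v\in M_\R$, forms the auxiliary polytope $\D'=\conv(\m_1,\ldots,\m_m,-\v)$, and extracts a positive combination expressing $\v$; you instead observe that $\langle V_\D\rangle=\R_{\geq 0}\cdot\D$ contains the cone over a small ball about $\0$ and is therefore all of $M_\R$. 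What your version buys is self-containedness (no appeal to the external $F$-matrix lemma) and a uniform treatment of the degenerate cases ($\0\notin\D$, $\0\in\partial\D$, $\D$ not full-dimensional), which the paper leaves implicit; what the paper's version buys is the slightly stronger output that $\0$ and any prescribed $\v$ are \emph{strictly} positive combinations of the columns, in the spirit of the Gale-duality machinery it reuses later. You also spell out the verification of conditions (c) and (d) and of the final assertion about $\Si_\D(1)$ and completeness, which the paper dismisses as ``clearly satisfied'' resp.\ ``a direct consequence''; your explicit caveat that the $\m_i$ should be the vertices of $\D$ (or at least pairwise non-proportional) for (d) to hold is a fair reading of the intended hypothesis and matches how the proposition is used. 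No gaps.
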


\begin{proof}
  Assume $V_\D$ is an $F$-matrix. Then $V_\D$ is clearly reduced as all the $\v_i$'s are primitive. Moreover, choosing the first column $\v_1$ of $V_\D$, the opposite vector $-\v_1$ belongs to the cone $\langle\v_2,\ldots,\v_n\rangle$, by \cite[Prop.~3.5]{RT-LA&GD}. Then
  \begin{equation*}
    \0=\v_1-\v_1=\v_1 +\sum_{j=2}^m \lambda_j\v_j=\sum_{i=1}^m \mu_i\m_i\
  \end{equation*}
  where
  \begin{equation*}
   \mu_1={\|\v_1\|\over\|\m_1\|}>0\ ,\quad \forall\,j\geq 2\quad \mu_j=\lambda_j{\|\v_j\|\over\|\m_j\|}>0
  \end{equation*}
  One can then conclude that $\0\in\Int\conv(\{\m_k\}_{k=1}^m)=\Int\D$ by setting  $\mu:=\sum_k\mu_k$ and writing $\0=\sum_k (\mu_k/\mu)\,\m_k$\,.

  Viceversa, assume $\0\in\Int\D=\Int\conv(\{\m_k\}_{k=1}^m)$. Conditions (a), (c) and (d) in Definition~\ref{def:Fmatrice} are clearly satisfied. To show that $V_\D$ is $F$-complete, for any vector $\v\in M_\R$ consider the polytope
  \begin{equation*}
    \D':=\conv(\m_1\,\ldots,\m_m,-\v)\subseteq M_\R
  \end{equation*}
  Since $\D\subseteq\D'$, one has that $\0\in\Int\D'$, meaning that
  \begin{equation*}
    \exists\,\mu_1>0,\ldots,\mu_m>0,\mu>0\,:\quad \sum_k\mu_k +\mu=1\ ,\quad\0=\sum_k\mu_k\m_k-\mu\v
  \end{equation*}
  meaning that
  \begin{equation*}
    \v=\sum_k\l_k\v_k\ ,\quad\text{with}\quad \forall\,k\ \l_k={\mu_k\,\|\m_k\|\over\mu\,\|\v_k\|}>0\ \Longrightarrow\ \v\in\langle V_\D\rangle
  \end{equation*}
\end{proof}

\begin{remark}\label{rem:reflexive}
  If $\D\subseteq M_\R$ is a reflexive polytope, then
  $$\P_\D\cong\XX_{\D^*}\quad\text{and}\quad\XX_\D\cong\P_{\D^*}$$
  These isomorphisms are induced by identity morphisms of lattices $N$ and $M$, respectively.
\end{remark}

\begin{corollary}\label{cor:smallres}
  Let $\D$ be a lattice polytope such that $\0\in\Int(\D)$ and $V_\D$ be the fan matrix of $\XX_\D$, as constructed in the previous Proposition~\ref{prop:Fmatricedipolitopo}. Then, for any $\Si\in\SF(V_\D)$ which is a refinement of $\Si_\D$, the associated toric variety $X(\Si)$ is a $\Q$-factorial small resolution of $\XX_\D$.
\end{corollary}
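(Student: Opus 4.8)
The plan is to compare the two fans $\Si_\D$ (the fan over $\D$, of which $\XX_\D$ is the associated toric variety by Proposition~\ref{prop:Fmatricedipolitopo}) and $\Si\in\SF(V_\D)$ refining $\Si_\D$, and to observe that the induced toric morphism $\varphi\colon X(\Si)\to\XX_\D$ has exactly the properties of a $\Q$-factorial small resolution. First I would recall that, since $\Si$ is a refinement of $\Si_\D$ with the \emph{same} set of rays — namely the rays generated by the columns $\v_1,\dots,\v_m$ of $V_\D$, by definition of $\SF(V_\D)$ — the identity on $N$ induces a proper, birational, torus-equivariant morphism $\varphi\colon X(\Si)\to\XX_\D$; properness follows because both fans are complete (both have support all of $N_\R$ by $F$-completeness of $V_\D$), and birationality because the two fans share the torus $T$ as a dense open orbit and $\varphi$ restricts to the identity there.

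Next I would argue $Q$-factoriality: $\Si$ is simplicial by hypothesis (it lies in $\SF(V_\D)$, whose elements are complete \emph{simplicial} fans), and a toric variety associated to a simplicial fan is $\Q$-factorial. Then I would establish that $\varphi$ is small, i.e. an isomorphism in codimension one, hence crepant and extracting no divisor. The key point is that $\Si$ and $\Si_\D$ have the same $1$-skeleton $\Si(1)=\Si_\D(1)=\{\langle\v_i\rangle\mid i=1,\dots,m\}$; consequently $\varphi$ does not contract any torus-invariant prime divisor $D_\rho$ and does not create any new one, so the exceptional locus of $\varphi$ has codimension at least two. Equivalently, removing the torus-invariant divisors from both varieties, $\varphi$ restricts to an isomorphism on the complement, which is precisely the statement that $\varphi$ is small.

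Finally I would note that $\XX_\D$ need not be normal-crossing nice but is at worst the toric variety of a complete fan, so a $\Q$-factorial resolution in the toric sense means resolving to a simplicial fan with the same support; the refinement $\Si$ does exactly this while keeping rays fixed, which is what "small" demands. I would also remark that such refinements $\Si\in\SF(V_\D)$ refining $\Si_\D$ always exist (one may take any simplicial subdivision of the cones of $\Si_\D$ that introduces no new rays, e.g. a pulling triangulation at the existing vertices), so the statement is non-vacuous, and if moreover one wants $X(\Si)$ projective one restricts to $\P\SF(V_\D)$, using that a projective simplicial refinement with fixed rays exists by standard toric results.

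The main obstacle — really the only subtle point — is verifying \emph{smallness} carefully, that is, that a refinement preserving the $1$-skeleton genuinely has exceptional locus of codimension $\ge 2$ and contracts no divisor; once one unwinds the orbit–cone correspondence this is essentially formal, since torus-invariant divisors correspond to rays and the set of rays is unchanged, but it is the step where the hypothesis $\Si\in\SF(V_\D)$ (as opposed to an arbitrary refinement of $\Si_\D$) is used in an essential way.
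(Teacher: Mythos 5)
Your proposal is correct and follows essentially the same route as the paper's own (very brief) proof: the refinement is a simplicial subdivision of the cones of $\Si_\D$ with unchanged $1$-skeleton, so the induced birational morphism is small, and $\Q$-factoriality comes from simpliciality of $\Si$. The extra detail you supply on properness, birationality on the torus, and the orbit--cone argument for smallness is a faithful expansion of the same idea.
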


\begin{proof}
  In fact, every refinement $\Si\in\SF(V)$  of $\Si_\D$ is obtained by a simplicial subdivision of cones in $\Si_\D$. In particular, the induced birational resolution $X(\Si)\longrightarrow\XX_\D$ is small, as $\Si(1)=\Si_\D(1)=\{\langle\v_1\rangle,\ldots,\langle\v_m\rangle\}$\,.
\end{proof}

\subsection{Cones of divisors}\label{ssez:coni&div} Let $X(\Si)$ be a complete toric variety. Then there is a short exact sequence
\begin{equation}\label{complete deg sequence}
  \xymatrix@1{0\ar[r]& M \ar[r]^-{div}_-{V^T} & *!U(.45){\bigoplus\limits_{\rho \in \Sigma (1)} \Z \cdot D_{\rho}}
\ar[r]^-d_-Q & \Cl (X) \ar[r] & 0 }
\end{equation}
(see e.g. \cite[\S3.4]{Fulton}, \cite[Prop.~4.2.5]{CLS}). The representative matrices,
$V$ and $Q$, of the $\Z$-linear morphisms $div$ and $d$, respectively, gives a fan matrix and \emph{weight matrix}, respectively, of $X$. Since $X$ is complete, $V$ is a reduced $F$-matrix and $Q$ is a \emph{Gale dual matrix} of $V$, which \emph{can be assumed to be positive} \cite[Thm.~3.18]{RT-LA&GD}. This means that:
\begin{itemize}
  \item[(*)] \emph{the image $\im(d)=d(\Weil(X))$ of the degree morphism in (\ref{complete deg sequence}), can be assumed contained in  the positive orthant $\R^r_+$ of $\R^r\cong\Cl(X)\otimes\R$, being $r$ the Picard number of $X$}.
\end{itemize}
Recall that every divisor of $X$ is linearly equivalent to a torus invariant divisor. This means that, in the isomorphism $\Cl(X)\otimes\R\cong\R^r$ the cone $\overline{\Eff}(X)\subseteq\Cl(X)\otimes\R$, which is the closure of the cone generated by classes of effective divisors, is identified with cone $\langle Q\rangle$ generated by the columns of $Q$, that is
\begin{equation}\label{iso-coni}
  \overline{\Eff}(X)\cong\langle Q\rangle
\end{equation}
Let us now introduce the following:

\subsubsection{Notation}\label{sssez:notazione} Let $A$ be a $d\times m$ matrix. For any subset $I\subseteq\{1,\ldots,m\}$ we will denote by $A_I$ the sub-matrix of $A$ obtained by considering the columns indexed by $I$, only, and by $A^I$ the complementary sub-matrix of $A_I$ in $A$, that is, the one obtained by considering only the columns not indexed by $I$.

Coming back to the situation of a complete toric variety $X(\Si)$, of dimension $n$ and Picard number $r$, to every cone $\s\in\Si$ one can associate a subset $I\subseteq\{1,\ldots,m=n+r\}$ such that
\begin{equation*}
  \s=\langle V_I\rangle\subseteq N_\R
\end{equation*}
Define $\I_\Si:=\{I\in\mathfrak{P}(\{1,\ldots,m\})\,|\,\langle V_I\rangle\in\Si\}$\,, that is, $\Si=\{\langle V_I\rangle\,|\,I\in\I_\Si\}$\,. Then set
\begin{equation*}
  \Mov(Q):=\bigcap_{i=1}^m \langle Q^{\{i\}}\rangle\quad,\quad \Nef(\I):=\bigcap_{I\in\I} \langle Q^I\rangle\quad \text{(for any}\ \I\subseteq\mathfrak{P}(\{1,\ldots,m\})\,)
  \end{equation*}
Recall that the cone $\overline{\Mov}(X)$, which is the closure of the one generated by classes of \emph{movable} divisors, and the cone $\Nef(X)$, generated by classes of nef divisors, are both sub-cones of the effective cone $\overline{\Eff}(X)$. Then, the isomorphism (\ref{iso-coni}) descends to give isomorphisms
\begin{equation*}
  \overline{\Mov}(X)\cong\Mov(Q)\quad,\quad \Nef(X)\cong\Nef(\I_\Si)
\end{equation*}
More precisely, recalling notation~\ref{sssez:SF}, we get the following

\begin{proposition}\emph{\cite[Thm. 15.1.10(c)]{CLS}}\label{prop:nef} If $V=\left(
                                                                              \begin{array}{ccc}
                                                                                \v_1 & \ldots & \v_{n+r} \\
                                                                              \end{array}
                                                                            \right)$
is an $F$--matrix then, for every fan $\Si\in\P\SF(V)$
there is a na\-tu\-ral isomorphism
$$\Pic(X(\Si))\otimes\R\cong\Cl(X)\otimes\R\cong \R^r$$
taking the cones
\begin{equation*}
    \Nef(X(\Si))\subseteq\overline{\Mov}(X(\Si))\subseteq\overline{\Eff}(X(\Si))\subseteq\R^r
\end{equation*}
to the sub-cones of the positive orthant
\begin{equation*}
    \Nef(\I_\Si)\subseteq\Mov(Q)\subseteq\langle Q\rangle\subseteq\R^r_+
\end{equation*}
In particular, if $d:\mathcal{W}_T(X(\Si))\to\Cl(X(\Si))$ is the degree morphism, then a Weil divisor $D$ on $X(\Si)$ admits a nef (ample) positive multiple if and only if its class $[D]=d(D)\in\Nef(\I_\Si)$ (\,$d(D)\in\Relint\Nef(\I_\Si)$, resp.).
 \end{proposition}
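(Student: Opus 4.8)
The plan is to read off every assertion from the exact sequence (\ref{complete deg sequence}) together with the toric description of (semi-)ample divisors. First I would tensor (\ref{complete deg sequence}) with $\R$. Since $\Si$ is simplicial, $X(\Si)$ is $\Q$-factorial, so $\Pic(X(\Si))\otimes\R=\Cl(X(\Si))\otimes\R$, and the resulting exact sequence $0\to M_\R\xrightarrow{V^T}\R^{n+r}\xrightarrow{Q}\Cl(X)_\R\to 0$ identifies $\Cl(X)_\R$ with $\R^r$ in such a way that the degree morphism $d$ is represented by the positive Gale dual $Q$. The word \emph{natural} refers precisely to the fact that $\Cl(X(\Si))$, hence $Q$ and this identification with $\R^r$, depend only on $\Si(1)$, i.e. only on the columns of $V$, and therefore coincide for every $\Si\in\P\SF(V)$. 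The identification $\overline{\Eff}(X(\Si))\cong\langle Q\rangle$ is already (\ref{iso-coni}): every effective class is represented by a torus invariant $\sum_\rho a_\rho D_\rho$ with $a_\rho\geq 0$, whose image under $d$ lies in $\langle Q\rangle$, and conversely every column of $Q$ is the class of some $D_\rho$.

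The heart of the proof is the nef cone. Fix a Weil divisor $D=\sum_\rho a_\rho D_\rho$; by $\Q$-factoriality some multiple $kD$ is Cartier, and since the claim only concerns nef/ample \emph{multiples} we may pass to $kD$ freely, equivalently use Proposition~\ref{prop:semiampio} to replace ``nef multiple'' by ``semi-ample'' by ``basepoint free multiple''. For a maximal cone $\s=\langle V_I\rangle$ (so $|I|=n$ and $V_I$ is a basis of $N_\R$) there is a unique $\m_\s\in M_\R$ with $\langle\m_\s,\v_\rho\rangle=-a_\rho$ for all $\rho\in I$; the divisor $D+div(\m_\s)$ is then supported away from the rays of $I$, so $[D+div(\m_\s)]=[D]$ lies in $\langle Q^I\rangle$ exactly when $D+div(\m_\s)$ is effective, i.e. when $\langle\m_\s,\v_\rho\rangle\geq-a_\rho$ for every $\rho$, i.e. when $\m_\s\in\D_D$. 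By Proposition~\ref{prop:gg}(1) (applied to $kD$, together with $\D_{kD}=k\D_D$) the existence of a basepoint free multiple is equivalent to $\D_{kD}=\conv(\{\m_\s\,:\,\s\in\Si(n)\})$, which — unwinding the scaling — forces each vertex $\m_\s$ to lie in $\D_D$, hence $[D]\in\langle Q^I\rangle$ for every maximal cone $\langle V_I\rangle$; the converse holds by the standard reconstruction of $\D_{kD}$ from its vertices on a complete fan. Finally, in a complete fan every cone is a face of a maximal one, and $I\subseteq I'$ gives $\langle Q^{I'}\rangle\subseteq\langle Q^I\rangle$, so $\bigcap_{I\in\I_\Si}\langle Q^I\rangle=\bigcap_{\langle V_I\rangle\in\Si(n)}\langle Q^I\rangle=\Nef(\I_\Si)$, yielding $\Nef(X(\Si))\cong\Nef(\I_\Si)$.

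For the movable cone I would argue in the same spirit: a torus invariant divisor $\sum_\rho a_\rho D_\rho$ is movable iff for each ray $\rho$ it is linearly equivalent to an effective divisor not containing $D_\rho$, i.e. $[D]\in\langle Q^{\{\rho\}}\rangle$; intersecting over all $\rho$ gives $\overline{\Mov}(X(\Si))\cong\bigcap_{i=1}^{n+r}\langle Q^{\{i\}}\rangle=\Mov(Q)$, again visibly independent of the chosen fan. The chain of inclusions is then formal: every singleton $\{i\}$ and the empty set lie inside the index set of some maximal cone, so $\langle Q^I\rangle\subseteq\langle Q^{\{i\}}\rangle\subseteq\langle Q^\emptyset\rangle=\langle Q\rangle$ for maximal $I$, and intersecting gives $\Nef(\I_\Si)\subseteq\Mov(Q)\subseteq\langle Q\rangle$. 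For the parenthetical ``ample'' statement I would invoke the strict convexity criterion for ampleness on the (projective) toric variety $X(\Si)$, which upgrades the inequalities $\m_\s\in\D_D$ above to strict separation of the vertices across every wall; this is exactly the condition that $[D]$ lie in the topological interior of the full-dimensional cone $\Nef(\I_\Si)\subseteq\R^r$, i.e. in $\Relint\Nef(\I_\Si)$.

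The step I expect to be the main obstacle is the nef characterization in the second paragraph: one must carefully pass between a Weil divisor and a Cartier multiple using $\Q$-factoriality, match the polytope condition of Proposition~\ref{prop:gg}(1) with the combinatorial condition ``$[D]\in\langle Q^I\rangle$ for every maximal $I$'', and check that intersecting over maximal cones alone already produces $\Nef(\I_\Si)$. The remaining points — exactness of (\ref{complete deg sequence}) after $\otimes\,\R$, $\Q$-factoriality of simplicial $X(\Si)$, the independence of $\R^r$, $\overline{\Eff}$ and $\overline{\Mov}$ from the chosen fan, and the formal inclusions of cones — are routine.
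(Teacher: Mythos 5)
Your proof is correct and follows the standard route: the paper does not actually prove this proposition but imports it verbatim from \cite[Thm.~15.1.10(c)]{CLS}, and your sketch reconstructs exactly the argument given there (Gale-dual identification of $\Cl(X)_\R$ with $\R^r$ via the tensored exact sequence, the vertex computation $\m_\s=-(V_I^T)^{-1}\aa_I$ matching $[D]\in\langle Q^I\rangle$ with the basepoint-free criterion of Proposition~\ref{prop:gg}, and the ray-by-ray description of the movable cone). The only point worth flagging is that the full-dimensionality of $\Nef(\I_\Si)$, which you use to conflate $\Relint$ with the topological interior, is exactly where the hypothesis $\Si\in\P\SF(V)$ (projectivity) enters.
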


 The following is a useful application to $\Q$-factorial small resolutions of a complete toric variety $X(\Si)$.

 \begin{proposition}\label{prop:risoluzioni}

   Let $X(\Si)$ be a complete toric variety and $V$ be a fan matrix of $X$. Then, for any $\Xi\in\SF(V)$ giving a refinement of $\Si$, the identity morphism of the common lattice $N$, namely $\id_N:N\longrightarrow N$, induces a fans' morphism from $\Xi$ to $\Si$. The induced morphism of toric varieties
   $$\vf:Y(\Xi)\longrightarrow X(\Si)$$
   is a $\Q$-factorial small resolution. Then, there is the following isomorphism of divisorial exact sequences
   \begin{equation*}
     \xymatrix{0\ar[r]& M \ar[d]^-{\id_M}_-{I_n}\ar[r]^-{div}_-{V^T} & \Weil(X)\ar[d]^-{\vf^*}_-{I_m}\ar[r]^-d_-Q & \Cl (X)\ar[d]^-{\overline{\vf}^*}_-{I_r} \ar[r] & 0\\
     0\ar[r]& M \ar[r]^-{div}_-{V^T} & \Weil(Y)\ar[r]^-d_-Q & \Cl (Y) \ar[r] & 0}
   \end{equation*}
   where $\vf^*$ is the pull-back of Weil divisors and $\overline{\vf}^*$ is the induced morphism on divisor classes. By composing $\overline{\vf}^*$ with isomorphisms $\Pic(X)_\R\cong\R^r$ and $\Pic(Y)_\R\cong\R^r$, defined in the previous Proposition~\ref{prop:nef}, there follows the following identification of nested divisorial sub-cones of the positive orthant $\R^r_+$
   \begin{equation*}
     \xymatrix{\Nef(\I_{\Si})\cong\Nef(X)\ar@{^(->}[dd]\ar@<-1ex>@{^(->}[rd]&\\
     &\left(\Mov(Q)\cong\overline{\Mov}(X)\stackrel{\overline{\vf}^*_\R}{\cong}\overline{\Mov}(Y)\right)
     \,\subseteq\, \left(\langle Q\rangle\cong\overline{\Eff}(X)\stackrel{\overline{\vf}^*_\R}{\cong}\overline{\Eff}(Y)\right)\\
     \Nef(\I_{\Xi})\cong\Nef(Y)\ar@{^(->}[ur]&
}
   \end{equation*}
   Moreover, for every Weil divisor $D=\sum_{\rho\in\Si(1)}a_\rho D_\rho\in\Weil(X)$, there follows the identification of associated polytopes
   $$\{\m\in M_\R\,|\,V^T\cdot\m \geq -\aa\}=\D_D=\id_M(\D_D)=\D_{\vf^*(D)}$$
   where $\aa=(a_\rho)_{\rho\in\Si(1)}$\,.
   In particular $D$ is semi-ample if and only if its class $[D]$ belongs to $\Nef(X)$. Then there exists a positive integer $k\in\N$ such that
   \begin{equation*}
     \conv(\{\m_\s\in M\,|\,\s\in\Si(n)\})=k\D_D\stackrel{\id_M}{=}k\D_{\vf^*(D)}
     =\conv(\{\m_I\in M\,|\,I\in\I_{\Xi(n)}\})
   \end{equation*}
where
\begin{equation}\label{mI}
\m_I:=-k(V_I^T)^{-1}\cdot \aa_I
\end{equation}
and $\aa_I$ is the sub-vector of $\aa$ whose entries are indexed by $I$.
   In particular, this means that, for every $\s\in\Si(n)$, $\m_\s=\m_I$ for any $I\in\I_{\Xi(n)}$ such that $\langle V_I\rangle\subseteq \s$\,.
 \end{proposition}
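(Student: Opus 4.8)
The plan is to go through the assertions in the order stated, each being essentially formal once the $\SF(V)$-setup is unwound. \textbf{The morphism $\vf$.} Since $\Xi$ refines $\Si$ inside the common space $N_\R$, every cone of $\Xi$ is contained in a cone of $\Si$, so $\id_N$ is a morphism of fans and induces a toric morphism $\vf\colon Y(\Xi)\to X(\Si)$. It is proper because $|\Xi|=|\Si|=N_\R$, and birational because $\id_N$ restricts to the identity on the acting tori; it is \emph{small} because $\Xi(1)=\Si(1)$ --- both $1$-skeleta consist of the rays through the columns of $V$ --- so no torus-invariant prime divisor is contracted; and $Y(\Xi)$ is $\Q$-factorial because $\Xi$ is simplicial. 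Hence $\vf$ is a $\Q$-factorial small resolution. Moreover, having the same $1$-skeleton, $V$ is a fan matrix and the fixed positive Gale dual $Q$ a weight matrix of \emph{both} $X$ and $Y$, so both occurrences of the sequence \eqref{complete deg sequence} carry the same $V^T$ and the same $Q$.

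\textbf{The diagram.} Along a small birational morphism the pull-back of Weil divisors is well defined and sends $D_\rho$ on $X$ to $D_\rho$ on $Y$, so $\vf^*=I_m$ in the bases $\{D_\rho\}_{\rho\in\Si(1)}$; likewise $\vf^*(\mathrm{div}(\uu))=\mathrm{div}(\uu)$, so the left-hand square commutes with $\id_M=I_n$. Consequently $\vf^*$ descends to the cokernels (i.e.\ $\overline{\vf}^*$ is the induced pull-back of divisor classes), and commutativity of the right-hand square together with the surjectivity of $Q$ forces $\overline{\vf}^*=I_r$. Identifying $\Cl(X)_\R\cong\R^r\cong\Cl(Y)_\R$ through these sequences, $\overline{\vf}^*_\R$ is the identity of $\R^r$. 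Then \eqref{iso-coni} applied to $X$ and to $Y$ (with the common $Q$) gives $\overline{\Eff}(X)\cong\langle Q\rangle\cong\overline{\Eff}(Y)$; since $\Mov(Q)=\bigcap_i\langle Q^{\{i\}}\rangle$ depends only on $Q$ and small birational maps preserve the movable cone, $\overline{\Mov}(X)\cong\Mov(Q)\cong\overline{\Mov}(Y)$; finally $\Nef(X)\cong\Nef(\I_\Si)$ and $\Nef(Y)\cong\Nef(\I_\Xi)$ by Proposition~\ref{prop:nef} (applied to a projective refinement, or directly from \cite[Thm.~15.1.10(c)]{CLS}), and the inclusion $\Nef(\I_\Si)\subseteq\Nef(\I_\Xi)$ holds because nef Cartier divisors pull back to nef divisors along the proper morphism $\vf$ (combinatorially: each maximal $I\in\I_{\Xi(n)}$ lies in some maximal index set $I'$ of $\Si$, a ray of $\Si$ contained in a cone of $\Si$ being a face of it, whence $\langle Q^{I'}\rangle\subseteq\langle Q^I\rangle$).

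\textbf{The polytopes.} Because $\vf^*D_\rho=D_\rho$, the coefficient vector of $\vf^*D$ equals $\aa$, so by \eqref{div-politopo} the polyhedra $\D_D$ and $\D_{\vf^*D}$ are literally the same subset $\{\m\in M_\R\mid V^T\m\geq-\aa\}$, giving $\D_D=\id_M(\D_D)=\D_{\vf^*D}$. By Propositions~\ref{prop:semiampio} and \ref{prop:nef}, $D$ is semi-ample if and only if $[D]\in\Nef(\I_\Si)$; and then $[\vf^*D]=\overline{\vf}^*[D]=[D]\in\Nef(\I_\Si)\subseteq\Nef(\I_\Xi)$, so $\vf^*D$ is semi-ample on $Y$ as well. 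Choose $k\in\N$ with $kD$ and $k\vf^*D$ both basepoint free; then $\D_{kD}=k\D_D=k\D_{\vf^*D}=\D_{k\vf^*D}$, and applying Proposition~\ref{prop:gg}(1) to $X(\Si)$ and to $Y(\Xi)$ yields $\conv(\{\m_\s\mid\s\in\Si(n)\})=k\D_D=k\D_{\vf^*D}=\conv(\{\m_I\mid I\in\I_{\Xi(n)}\})$, where, for a simplicial maximal cone $\langle V_I\rangle$, the corresponding vertex is cut out by the equalities $V_I^T\m=-k\aa_I$, i.e.\ $\m_I=-k(V_I^T)^{-1}\aa_I$, which is \eqref{mI}. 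For the last assertion, if $\s\in\Si(n)$ and $\langle V_I\rangle\subseteq\s$ with $I\in\I_{\Xi(n)}$, the vertex $\m_\s$ of $\D_{kD}$ satisfies $\langle\m_\s,\v_\rho\rangle=-ka_\rho$ for every ray $\rho$ of $\s$ (the support-function description of a basepoint-free divisor, \cite[\S6.1]{CLS}); since each column $\v_i$, $i\in I$, spans a ray of $\Si$ contained in $\s$, hence a face of $\s$, we get $V_I^T\m_\s=-k\aa_I$, so $\m_\s=-k(V_I^T)^{-1}\aa_I=\m_I$.

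\textbf{Main obstacle.} Essentially all of this is bookkeeping with Gale duality and the divisor--polytope dictionary; the one point needing care is the comparison of the two nef cones --- matching the combinatorial inclusion $\Nef(\I_\Si)\subseteq\Nef(\I_\Xi)$ with the geometric statement that $\vf^*$ preserves nefness --- together with the verification that semi-ampleness transfers along $\vf$, which is what licenses the use of Proposition~\ref{prop:gg}(1) on $Y(\Xi)$.
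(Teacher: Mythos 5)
Your proof is correct and follows essentially the same route as the paper's: the cone identifications come from Proposition~\ref{prop:nef}, the inclusion $\Nef(\I_\Si)\subseteq\Nef(\I_\Xi)$ from the refinement combinatorics of Gale-dual sub-cones, and the polytope statements from Propositions~\ref{prop:semiampio} and~\ref{prop:gg} after transferring semi-ampleness along $\vf^*$. You simply spell out several steps the paper leaves implicit (why $\vf$ is a small $\Q$-factorial resolution, the commutativity of the divisorial diagram, and the support-function verification that $\m_\s=\m_I$), and your quantification in the nef-cone comparison (for each $J\in\I_\Xi$ find $I'\in\I_\Si$ containing it) is, if anything, the cleaner way to state the argument.
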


 \begin{proof}
 Results on divisorial cones are direct consequences of the previous Proposition \ref{prop:nef}. In particular\footnote{Moreover, after \cite[Prop.~1.11]{Hu-Keel}, $\Nef(X)\cong\vf^*(\Nef(X))$ lives on the boundary of $\Nef(Y)$: actually Hu-Keel proved this fact when $X$ is projective, but this hypothesis is unnecessary (see e.g. \cite[Thm.~3.7]{R-wMDS} and references therein).}
\begin{equation*}
   \Nef(\I_\Si)=\bigcap_{I\in\I_\Si} \langle Q^I\rangle\subseteq\bigcap_{J\in\I_\Xi} \langle Q^J\rangle = \Nef(\I_\Xi)
 \end{equation*}
 because $\Xi$ is a refinement of $\Si$, so implying that
 \begin{equation*}
   \forall\,I\in\I_\Si\ \exists\,J\in\I_\Xi\,: \langle V_J\rangle \subseteq \langle V_I\rangle\ \Longleftrightarrow\ \langle Q^I\rangle \subseteq \langle Q^J\rangle\
 \end{equation*}
 Results on divisorial polytopes come, on the one hand, directly from the definition given in (\ref{div-politopo}) and, on the other hand, from Proposition~\ref{prop:semiampio}, giving that $[D]\in\Nef(X)$, and Proposition~\ref{prop:gg}. In fact, a positive multiple $kD$ is base-point free, that is, $k\D_D=\conv(\{\m_\s\in M\,|\,\s\in\Si(n)\})$. Since $\Nef(X)\subseteq\Nef(Y)$, $[\vf^*(D)]=\overline{\vf}^*([D])\in\Nef(Y)$ so giving that $\vf^*(D)$ is semi-ample, too, still by Proposition~\ref{prop:nef}. Therefore
 \begin{equation*}
   k\D_D=k\D_{\vf^*(D)}=\conv(\{\m_\tau\in M\,|\,\tau\in\Xi(n)\})=\conv(\{\m_I\in M\,|\,I\in\I_{\Xi(n)}\})
 \end{equation*}
where $\m_\tau=\m_I$ whenever $\tau=\langle V_I\rangle$\, and $\m_I$ is defined as in (\ref{mI}).
 \end{proof}

\begin{remark} Part of the statement in Proposition~\ref{prop:risoluzioni} admits a streamlined interpretation by using the secondary fan (see \cite[\S~14.4]{CLS}). In fact, one can say that if $X$ is not $\Q$-factorial then $\Nef(X)$ is a boundary cell of a full dimensional chamber $\Nef(Y)$ of the secondary fan supported by the pseudo-effective cone $\overline{\Eff}(Y)\cong\overline{\Eff}(X)$.
\end{remark}

 \subsection{Non-degenerate hypersurfaces and their stratification}\label{ssez:regular} Given a toric variety $X=X(\Si)$, let $\T\subseteq X$ be the maximal acting torus on $X$.
 Consider a Laurent polynomial
 \begin{equation*}
 f=\sum_{\substack{m\in M\\\text{finite}}} c_m\chi^m\ ,\quad c_m\in\C^*
 \end{equation*}
Denote by $Z_f\subseteq\T$ the zero-locus of $f$ in $\T$ and let $Y_f$ be its closure in $X$.

 \begin{definition}[see Def.~3.1.4 in \cite{Batyrev94} and Def.~4.13 in \cite{BC}]\label{def:Si-regolare}
   A Laurent polynomial $f$, and the associated hypersurfaces $Z_f\subseteq\T$ and $Y_f\subseteq X(\Si)$, are called \emph{non-degenerate} \wrt $\Si$ (or, equivalently, \emph{$\Si$-regular}) if, for every $\s\in\Si$, the associated $\s$-stratum $Y_{f,\s}:=Y_f\cap \T\cdot x_\s$ is empty or a smooth subvariety of codimension 1 in the torus orbit $\T\cdot x_\s$\,.
In other words, non-degenerate means that $Y_f$ admits only \emph{transversal intersections} with all the torus orbits $\T\cdot x_\s$, $\s\in\Si$\,.
 \end{definition}

 \section{A duality between framed toric varieties}\label{sez:dualita-ftv}

 Let us start the present section by introducing the main character of this paper.

 \begin{definition}[Framed toric variety (ftv)]\label{def:ftv}
   A \emph{framed toric variety} is a couple $(X,D)$ where:
   \begin{itemize}
     \item $X$ is a complete toric variety, with $\dim(X)=n$ and $\rk(\Pic(X))=r$,
     \item $D=\sum_{\rho\in\Si(1)}a_\rho D_\rho=\sum_{i=1}^m a_i D_i\in\Weil(X)$, with $m=n+r$, is a \emph{strictly effective} (that is $a_i>0$, for every $i$), torus invariant Weil divisor, called a \emph{framing} of $X$.
   \end{itemize}
   A \emph{morphism of framed toric varieties} $f:(X,D)\longrightarrow(X',D')$ is a morphism of underlying toric varieties $f:X\longrightarrow X'$ inducing a well defined pull-back morphism on torus invariant Weil divisors $f^*:\Weil(X')\longrightarrow\Weil(X)$ such that $f^*D'=D$. If $f$ is an isomorphism of toric varieties, then it gives an \emph{isomorphism of framed toric varieties} $f:(X,D)\cong(X',D')$\,. This construction defines a category \textbf{ftv} of framed toric varieties.
 \end{definition}

 \subsection{Framed duality}
 Given a ftv
 $$\left(X(\Si),D_\aa=\sum_{\rho\in\Si(1)} a_\rho D_\rho\right)=:(X,\aa)$$
 consider the polytope associated with $D_\aa$
 \begin{equation}\label{Delta_a}
   \D_{\aa}:=\D_{D_\aa}=\{\m\in M_\R\,|\,V^T\cdot\m\geq -\aa\}
 \end{equation}
 being $V$ a fan matrix of $X$\,.
 Since ${D_\aa}$ is strictly effective, that is $-\aa<\0$\,, certainly $\0\in\Int\D_\aa$, meaning that $\0\in\Int (k\D_\aa)$, for any positive integer $k\in\N$.

 On the other hand, define the \emph{integer part} of a polytope $\D\subseteq M_\R$ as
 \begin{equation*}
   [\D]:=\conv(\{\m\in M\cap\D\})
 \end{equation*}
 Clearly, if $\D$ is a lattice polytope then $[\D]=\D$\,.

 \begin{definition}[$f$-polytope]\label{def:Deltapolytope}
   The \emph{framing polytope} (\emph{$f$-polytope}) of a ftv $(X,\aa)$ is the lattice polytope $\D(X,\aa)\subseteq M_\R$ so defined:
   \begin{equation}\label{k0}
     \D(X,\aa):=[k_0\D_\aa]\quad,\quad k_0:=\min\{k\in\N\,|\,\0\in\Int[k\D_\aa]\}
   \end{equation}
 \end{definition}

 \begin{remark}
   For what observed above, $k_0$ is well defined. Notice that $k_0$ may be bigger than 1\,: in fact it may happen that $\0$ is not an interior point of the integer part $[\D_\aa]$\,, as the following Example~\ref{ex:k0>1} shows. On the other hand $k_0=1$ when $\D_\aa$ is a lattice polytope: in this case $\D(X,\aa)=\D_\aa$\,. The geometric importance of having $\0$ as an interior point of the framing polytope $\Int\D(X,\aa)$ is explained in the next Remark~\ref{rem:negKod}.
 \end{remark}

 \begin{example}\label{ex:k0>1}
   Consider the ftv given by $(X,\aa)=(\P(1,2,5),(2,1,1))$. Then
   \begin{equation*}
     \D_\aa=\conv\left(
                   \begin{array}{ccc}
                     -2 & -2 & 7 \\
                     3/2 & {3/ 5} & -3 \\
                   \end{array}
                 \right)\ \Longrightarrow\ [\D_\aa]=\conv\left(
                                                           \begin{array}{cccc}
                                                             -1 & -2 & 2 & 7 \\
                                                             1 & 1 & -1 & -3 \\
                                                           \end{array}
                                                         \right)
   \end{equation*}
   Then $\0\not\in\Int[\D_\aa]$. But $\0\in\Int[2\D_\aa]$, so giving $k_0=2$ in the previous Definition~\ref{def:Deltapolytope}.
 \end{example}

\begin{remark} Notice that, given a complete toric variety $X$ there can be different choices of the framing $\aa$ giving the same framing polytope $\D(X,\aa)$, as shown by the following Example~\ref{ex:due framing}. Anyway, only one of these choices can give rise to an involutive duality, that is, a \emph{calibrated $f$-process}, as shown by Example~\ref{ex:due framing 3}.
\end{remark}

\begin{example}\label{ex:due framing}
  Consider the fan matrix $V=\left(
                   \begin{array}{ccc}
                     5 & -2 & -1\\
                     -2 & 5 & -1 \\
                   \end{array}
                 \right)$.
There is a unique simplicial and complete fan in $\SF(V)$ giving rise to the $\Q$-factorial complete toric variety $X$ described by the quotient of the weighted projective space $\P(1,1,3)$
by the following action of $\Z/7\Z$
\begin{equation}\label{azione2}
  \Z/7\Z\times\P(1,1,3)\ni(\overline{\ve},[x_1:x_2:x_3])\mapsto[\mu x_1:x_2:\mu^{-1}x_3]\in\P(1,1,3)
\end{equation}
where $\mu=\exp(2\ve\pi i/7)$ (see also the next Lemma~\ref{lem:Ga}). Then consider the two framing given by $\aa=(3,3,1)$ and $\aa'=(4,4,1)$. The associated polytopes are given by
\begin{equation*}
  \D_\aa=\conv\left(
                   \begin{array}{ccc}
                     8/7 & -1 & -1/7 \\
                     -1/7 & -1 & 8/7 \\
                   \end{array}
                 \right)\,,\quad
  \D_{\aa'}=\conv\left(
                   \begin{array}{ccc}
                     9/7 & -4/3 & -2/7 \\
                     -2/7 & -4/3 & 9/7 \\
                   \end{array}
                 \right)
\end{equation*}
Then
\begin{equation*}
  \D(X,\aa)=[\D_\aa]=\conv\left(
                   \begin{array}{ccc}
                     1 & 0 & -1 \\
                     0 & 1 & -1 \\
                   \end{array}
                 \right)=[\D_{\aa'}]=\D(X,\aa')
\end{equation*}
\end{example}

 \begin{remark}\label{rem:anticanonico}
   Assume $\aa=\1$, that is $D_\aa=-K_X$ is the anti-canonical divisor of the complete toric variety $X$. Then, if $\D_\1$ is a reflexive polytope, the above construction gives
   \begin{equation*}
     \D(X,\aa)=\D_\aa=\D_\1=\D_{-K_X}
   \end{equation*}
 \end{remark}

 \subsubsection{The $f$-dual ftv and its small $\Q$-factorial resolutions}\label{sssez:deltadual}
 Associated with the construction of the lattice polytope $\D(X,\aa)$ there is the complete toric variety
 \begin{equation*}
   \XX_\aa:=\XX_{\D(X,\aa)}
 \end{equation*}
 given by the fan $\Si_\aa:=\Si_{\D(X,\aa)}$ over the lattice polytope $\D(X,\aa)$\,.
Let $\Lambda_\aa$ be the fan matrix of $\XX_\aa$ constructed in Proposition~\ref{prop:Fmatricedipolitopo}: it is  a $n\times m'$ integer matrix. Given a fan matrix $V$ of $X$, which is a $n\times m$ integer matrix, define
\begin{equation*}
  M_\aa:=V^T\cdot\L_\aa\in\mathbf{M}(m\times m';\Z)
\end{equation*}
Let $\bb=(b_j)_{j=1}^{m'}$ be \emph{the minimum strictly positive column vector} such that
\begin{equation}\label{b}
  M_\aa^T+B\geq \0\quad\text{where}\quad B:=\underbrace{\left(\,\bb\ \cdots\ \bb\,\right)}_{m\ \text{times}}\,\in \mathbf{M}(m'\times m;\N)
\end{equation}

\begin{definition}\label{def:dual-ftv}
  Calling $D'_\bb:=\sum_{j=1}^{m'}b_jD'_j$, where $D'_1,\ldots,D'_{m'}$ are the torus invariant prime divisors generating $\Weil(\XX_\aa)$\,, then $(\XX_\aa,\bb):=(\XX_\aa,D'_\bb)$ is a ftv, called \emph{the framed dual ($f$-dual) of $(X,\aa)$}.
\end{definition}

\begin{remark}\label{rem:batyrev}
  Recalling previous Remarks~\ref{rem:reflexive} and \ref{rem:anticanonico}, if $\aa=\1$ and $\D_\1$ is a reflexive polytope, then \emph{$f$-duality is Batyrev's duality} between Fano toric varieties, as defined in \cite{Batyrev94}.
\end{remark}

\begin{example}[Example~\ref{ex:due framing} continued]\label{ex:due framing 2} Consider the two ftv $(X,\aa)$ and $(X,\aa')$ given in Example~\ref{ex:due framing}. They admit the same framing polytope $\D(X,\aa)=\D(X,\aa')$, that is, the same $f$-dual ftv $(\XX_\aa,\bb)=(\XX_{\aa'},\bb)$ with $\XX_\aa=\XX_{\aa'}=\P^2$ and $\bb=(2,2,3)$ determined by condition (\ref{b}), as
\begin{equation*}
  M_\aa^T= \L_\aa^T\cdot V= \left(
                              \begin{array}{cc}
                                1 & 0 \\
                                0 & 1 \\
                                -1 & -1 \\
                              \end{array}
                            \right)\cdot \left(
                   \begin{array}{ccc}
                     5 & -2 & -1\\
                     -2 & 5 & -1 \\
                   \end{array}
                 \right)= \left(
                            \begin{array}{ccc}
                              5 & -2 & -1 \\
                              -2 & 5 & -1 \\
                              -3 & -3 & 2 \\
                            \end{array}
                          \right)
\end{equation*}
Anyway, next Example~\ref{ex:due framing 3} will show that only one of the two choices $\aa$ and $\aa'$ gives actually rise to an involutive duality between framed toric varieties.

\end{example}

\begin{remark}\label{rem:negKod}
   Hypothesis of strict effectiveness, given in Definition~\ref{def:ftv} for a framing $\aa$, is needed to get $\0$ as an interior point of $\D(X,\aa)$ and, consequently, completeness of the $f$-dual toric variety $\XX_\aa$ (recall Proposition~\ref{prop:Fmatricedipolitopo}). Dropping that hypothesis leads to an asymmetric duality, as $\XX_\aa$ can no more be complete: this fact is quite reminiscent of the Givental's LG mirror model construction \cite{Givental96}. We will discuss this aspect in \S\ref{ssez:NegKod}, introducing the concept of a \emph{weak framing}. In a sense, dropping strict effectiveness is the key to understand when to look for a LG mirror model rather than for a complete mirror partner.
 \end{remark}

The following statement is a direct application of the Propostion~\ref{prop:risoluzioni}.

\begin{corollary}\label{cor:risoluzioni}
For every fan $\Xi\in\SF(\L_\aa)$ such that $\Xi$ refines $\Si_\aa$ there is a well defined birational morphism $\vf:Y(\Xi)\longrightarrow \XX_\aa$ which is a $\Q$-factorial small resolution.
In particular, for any such $\Xi$, $(Y(\Xi),\vf^*D'_\bb)$ is a $\Q$-factorial ftv.
\end{corollary}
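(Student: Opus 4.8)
The plan is to derive Corollary~\ref{cor:risoluzioni} as a straightforward specialization of Proposition~\ref{prop:risoluzioni}, applied to the complete toric variety $\XX_\aa = \XX_{\D(X,\aa)}$ in place of $X(\Si)$. First I would recall that, by Proposition~\ref{prop:Fmatricedipolitopo}, since $\0\in\Int\D(X,\aa)$ the matrix $\L_\aa$ is a reduced $F$-matrix and is a fan matrix of $\XX_\aa$, with $\Si_\aa = \Si_{\D(X,\aa)}$ the fan over the polytope. In particular $\Si_\aa(1)$ consists precisely of the rays generated by the columns of $\L_\aa$, so the notation $\SF(\L_\aa)$ of \S\ref{sssez:SF} is available and every $\Xi\in\SF(\L_\aa)$ has the same $1$-skeleton as $\Si_\aa$.

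Next I would invoke Proposition~\ref{prop:risoluzioni} with $\Si := \Si_\aa$ and $V := \L_\aa$: for any $\Xi\in\SF(\L_\aa)$ refining $\Si_\aa$, the identity $\id_N$ induces a morphism of fans $\Xi\to\Si_\aa$, hence a birational morphism of toric varieties $\vf : Y(\Xi)\to\XX_\aa$, which that proposition asserts is a $\Q$-factorial small resolution. (Alternatively, one could cite Corollary~\ref{cor:smallres} directly, which gives the same conclusion for refinements of $\Si_\D$; but routing through Proposition~\ref{prop:risoluzioni} is cleaner since it also records the divisorial data we use in the second assertion.) This gives the first sentence of the statement verbatim.

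For the second assertion, I would argue that $(Y(\Xi),\vf^*D'_\bb)$ is a ftv by checking the two bullet points of Definition~\ref{def:ftv}. Completeness of $Y(\Xi)$ follows because $\Xi$ refines the complete fan $\Si_\aa$ (completeness of $\Si_\aa$ itself comes from $\0\in\Int\D(X,\aa)$ via Proposition~\ref{prop:Fmatricedipolitopo}), so $|\Xi| = |\Si_\aa| = N_\R$; and $Y(\Xi)$ is a toric variety with a well-defined Picard number since $\Xi$ is simplicial. It remains to see that $\vf^*D'_\bb$ is strictly effective. Here I would use the isomorphism of divisorial exact sequences from Proposition~\ref{prop:risoluzioni}: under $\vf^* : \Weil(\XX_\aa)\to\Weil(Y(\Xi))$, which in the chosen bases is the identity matrix $I_{m'}$ (because $\Xi(1) = \Si_\aa(1)$), the divisor $D'_\bb = \sum_j b_j D'_j$ pulls back to $\sum_j b_j \widetilde{D'_j}$ with the \emph{same} coefficient vector $\bb$. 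Since $\bb$ is, by construction in~\eqref{b}, the minimum strictly positive column vector, every $b_j > 0$, so $\vf^*D'_\bb$ is strictly effective, completing the verification.

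I do not anticipate a genuine obstacle here — the corollary is essentially a repackaging — but the one point that needs a word of care is the identification $\vf^*D'_\bb = \sum_j b_j \widetilde{D'_j}$: one must note that the pull-back of a torus invariant prime divisor $D'_j$ under a small (in particular birational, crepant-on-divisors) toric morphism induced by a refinement with unchanged $1$-skeleton is again the corresponding prime divisor $\widetilde{D'_j}$, which is exactly what the commutative diagram in Proposition~\ref{prop:risoluzioni} with vertical map $I_{m'}$ encodes. Once that is spelled out, strict effectivity of $\bb$ transfers directly and the proof is complete.
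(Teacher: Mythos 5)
Your proposal is correct and follows the same route as the paper, which simply presents the corollary as a direct application of Proposition~\ref{prop:risoluzioni}; your verification that $\vf^*D'_\bb$ stays strictly effective via the identity matrix $I_{m'}$ on $\Weil$ (unchanged $1$-skeleton, small contraction) is exactly the observation the paper records later in Remark~\ref{rem:ftviso}.
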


\subsubsection{$f$-process as double $f$-duality}\label{ssez:Deltaprocesso} By definition, we call \emph{$f$-process the double application of $f$-duality}. This gives rise to a ftv $(\XX_\bb, \cc):=(\XX_\bb, D''_\cc)$ where:
\begin{itemize}
  \item calling
  \begin{equation}\label{rel1}
    \D_{\bb}=\{\n\in N_\R\,|\,\L_\aa^T\cdot\n \geq -\bb\}=\{\n\in N_\R\,|\,\forall\,1\leq j\leq m'\quad\langle\n,\ll_j\rangle \geq -b_j\}
  \end{equation}
  $\XX_\bb$ is the complete toric variety associated with the fan $\Si_\bb:=\Si_{\D(\XX_\aa,\bb)}$ over the lattice polytope
      \begin{equation}\label{k1}
        \D(\XX_\aa,\bb):=[k_1\D_\bb]\subseteq N_\R\quad,\quad k_1:=\min\{k\in\N\,|\,\0\in\Int[k\D_\bb]\}
      \end{equation}
  \item $D''_\cc=\sum_{l=1}^{m''} c_l D''_l$, where $D''_1,\ldots,D''_{m''}$ are the torus invariant prime divisors generating $\Weil(\XX_\bb)$ and $\cc=(c_l)_{l=1}^{m''}$ is the minimum strictly positive column vector such that
\begin{equation}\label{c}
  M_{\aa,\bb}^T+C\geq 0\,, \ \text{where}\ M_{\aa,\bb}:=\L^T_\aa\cdot\L_\bb\,,\ C:=\underbrace{\left(\,\cc\ \cdots\ \cc\,\right)}_{m'\ \text{times}}\,\in \mathbf{M}(m''\times m';\N)
\end{equation}
being $\L_\bb$ the fan matrix of $\XX_\bb$, defined by the primitive generators as\-so\-cia\-ted with the vertices of $\D_\bb$, as in Proposition~\ref{prop:Fmatricedipolitopo}.
\end{itemize}

\begin{remark}
   Let $\NN:=\conv(V)=\conv(\{\v_1,\ldots,\v_m\})$ be the lattice polytope associated with the fan matrix $V=(\v_1\ \cdots\ \v_m)$ of $X$. Then
  \begin{equation}\label{inclusione}
    \NN\subseteq [\D_\bb]\subseteq\D(\XX_\aa,\bb)
  \end{equation}
  as, for any column $\v_i$ of $V$, relation (\ref{b}) gives that
  \begin{equation*}
    \L_\aa^T\cdot\v_i\geq -\bb\ \stackrel{(\ref{rel1})}{\Longrightarrow}\ \v_i\in\D_\bb\cap N\subseteq [\D_\bb]
  \end{equation*}
\end{remark}

\begin{definition}[Calibrated $f$-process]\label{def:Deltaproc banale}
  A $f$-process
  \begin{equation}\label{Deltaproc}
    (X,\aa)\stackrel{f-dual}{\rightsquigarrow}(\XX_\aa,\bb)\stackrel{f-dual}{\rightsquigarrow}(\XX_\bb,\cc)
  \end{equation}
  is called \emph{calibrated} if there exist $\Xi\in\SF(V)$ and $\Xi'\in\SF(\L_\bb)$, refining $\Si$ and $\Si_\bb$, respectively, and an isomorphism of toric varieties $f:Y(\Xi)\stackrel{\cong}{\longrightarrow}Y'(\Xi')$ such that, calling $\vf:Y(\Xi)\longrightarrow X(\Si)$
  and $\vf':Y'(\Xi')\longrightarrow \XX_\bb(\Si_\bb)$
  the $\Q$-factorial resolutions associated with the choice of $\Xi$ and $\Xi'$, respectively, one has
  $$\vf^*D_\aa=(\vf'\circ f)^*D''_\cc$$
   In particular, there is an induced birational isomorphism in codimension 1, say $\check{f}:X\dashrightarrow \XX_\bb$\,, fitting in the following commutative diagram
  \begin{equation*}
    \xymatrix{Y\ar[d]^-\vf\ar[r]^-f_-\cong&Y'\ar[d]^-{\vf'}\\
                X\ar@{-->}[r]^-{\check{f}}&\XX_\bb}
  \end{equation*}
\end{definition}

\begin{remark}\label{rem:ftviso}
  Notice that, in the notation of the previous Definition~\ref{def:Deltaproc banale}, \emph{both $(Y,\vf^*D_\aa)$ and $(Y',(\vf')^*D''_\cc)$ are still framed toric varieties}. In fact, the birational transform $\vf^*D_\aa=\sum_i a_i\vf^*D_i$ is still a strictly effective divisor, as
  $$\Weil(Y)=\bigoplus_{i=1}^{n+r}\Z\cdot\vf^*(D_i)$$
  being $\vf$ a \emph{small} birational contraction.  Analogously for $(Y',(\vf')^*D''_\cc)$.

  \noindent Consequently, the condition of being calibrated can be restated by asking that \emph{$f:(Y,\vf^*D_\aa)\stackrel{\cong}{\longrightarrow}(Y',(\vf')^*D''_\cc)$ is a ftv isomorphism}.
\end{remark}

\begin{example}\label{ex:D}
  To fix ideas, consider the following example, that will be a running example throughout the present paper. Actually, it is the easiest case of the big class of framing given by projective hypersurfaces of general type, extensively studied in the next \S\ref{sez:ipersuperfici}.

  Consider the ftv $(X,\aa)=(\P^2,(1,1,2))$. A fan matrix of $X$ is given by
\begin{equation*}
  V=\left(
      \begin{array}{ccc}
        1 & 0 & -1 \\
        0 & 1 & -1 \\
      \end{array}
    \right)
\end{equation*}
Consequently, the polytope $\D_\aa =\D_{D_\aa}$ is given by
\begin{equation*}
  \D_\aa = \conv(\L_\aa)\ ,\quad\text{with}\ \L_\aa=\left(
                                                      \begin{array}{ccc}
                                                        3 & -1 & -1 \\
                                                        -1 & 3 & -1 \\
                                                      \end{array}
                                                    \right)
\end{equation*}
(see Fig.~\ref{Fig1}).
$\XX_\aa$ is the unique complete and $\Q$-factorial toric variety whose fan matrix is given by $\L_\aa$. It is a quotient of the weighted projective space $\P(\aa)=\P(1,1,2)$
by the action of $\Z/4\Z$ given by sending
\begin{equation}\label{azione2}
  \Z/4\Z\times\P(1,1,2)\ni(\overline{\ve},[x_1:x_2:x_3])\mapsto[\mu x_1:x_2:\mu^{-1}x_3]\in\P(1,1,2)
\end{equation}
being $\mu=\exp(\ve\pi i/2)$ (see also the next Lemma~\ref{lem:Ga}).
Dually, observing that
  \begin{equation*}
    \L_\aa^T\cdot V = \left( \begin {array}{ccc} 3&-1&-2\\
                                                -1&3&-2\\
                                                -1&-1&2\end {array} \right)
  \end{equation*}
  the framing of $\XX_\aa$ is given by the minimum positive vector $\bb$ such that
  \begin{equation*}
    \L_\aa^T\cdot V + \left(
                      \begin{array}{ccc}
                        \bb & \bb & \bb \\
                      \end{array}
                    \right)\geq \0\ \Longrightarrow\ \bb=\left(
                                                          \begin{array}{c}
                                                            2 \\
                                                            2 \\
                                                            1 \\
                                                          \end{array}
                                                        \right)
  \end{equation*}
    \begin{figure}
\begin{center}
\includegraphics[width=11truecm]{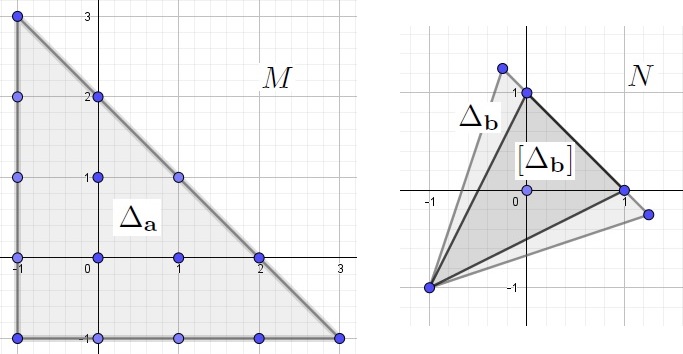}
\caption{\label{Fig1} Example \ref{ex:D}: polytopes $\D_\aa\subset M_\R$ and $[\D_\bb]\subseteq\D_\bb\subset N_\R$. }
\end{center}
\end{figure}
  Then
  \begin{equation*}
    \D_\bb=\conv\left( \begin {array}{ccc} 5/4&-1/4&-1\\
                                            -1/4&5/4&-1\end {array} \right)\ \Longrightarrow\ [\D_\bb]=\conv(V)
  \end{equation*}
  so giving that the $f$-process $(X,\aa)\leftrightsquigarrow (\XX_\aa,\bb)$ is calibrated.
\end{example}

The following result gives an algebraic criterion to having a calibrated $f$-process. Theorem~\ref{thm:Krawitz} will give a more geometric version of this criterion, in terms of Krawitz duality of the associated Landau-Ginzburg models.

\begin{theorem}\label{thm:Deltatriviale} Let $V=(\v_1\ \cdots\ \v_m)$\,, $\L_\aa=(\ll_1\ \cdots\ \ll_{m'})$ and $\L_\bb$ be the fan matrices of $X$\,, $\XX_\aa$ and $\XX_\bb$\,, respectively, constructed above. Then, up to identifying lattices $M$ (hence $N$) of $X$ and $\XX_\bb$, the $f$-process \emph{(\ref{Deltaproc})} is calibrated if and only if
  \begin{eqnarray}\label{min}
    V&=&\L_\bb\quad\text{(up to a permutation of columns)}\\
    \nonumber
    \min_{1\leq j\leq m'}\langle\v_i,\ll_j\rangle&=&-a_i\quad\text{(for all $i$ with $1\leq i\leq m$\,)}
  \end{eqnarray}
   In particular, recalling (\ref{k0}) and (\ref{k1}), $k_0=1=k_1$, that is,
   \begin{equation*}
     \D(X,\aa)=[\D_\aa]\quad\text{and}\quad \D(\XX_\aa,\bb)=[\D_\bb]
   \end{equation*}
\end{theorem}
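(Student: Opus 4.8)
The plan is to unravel what a calibrated $f$-process asks for and compare it, step by step, with the two arithmetic conditions in \eqref{min}. Recall from Definition~\ref{def:Deltaproc banale} and Remark~\ref{rem:ftviso} that the process is calibrated precisely when there are simplicial fans $\Xi\in\SF(V)$, $\Xi'\in\SF(\L_\bb)$ refining $\Si$ and $\Si_\bb$, together with a toric isomorphism $f\colon Y(\Xi)\xrightarrow{\cong} Y'(\Xi')$ making $(Y,\vf^*D_\aa)\cong(Y',(\vf')^*D''_\cc)$ an isomorphism of framed toric varieties. First I would observe that a toric isomorphism $f\colon Y(\Xi)\to Y'(\Xi')$ exists and is compatible with the small contractions $\vf,\vf'$ exactly when, after an identification of the common lattice $N$, the two fan matrices $V$ and $\L_\bb$ coincide up to a column permutation (the $1$-skeleton data $\Si(1)=\{\langle\v_i\rangle\}$ and $\Si_\bb(1)=\{\langle\bm\lambda\rangle\}$ must match, since $\vf$ and $\vf'$ are small), and the permutation may be absorbed into the reordering of the prime divisors; this yields the first line of \eqref{min}. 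Under this identification, $\vf^*D_i$ and $(\vf')^*D''_l$ are identified, so the framing condition $\vf^*D_\aa=(\vf'\circ f)^*D''_\cc$ becomes the numerical equality $\aa=\cc$ (entrywise, up to the column permutation).

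The second step is to compute $\cc$ explicitly using \eqref{c}. Once $V=\L_\bb$, we have $M_{\aa,\bb}=\L_\aa^T\cdot\L_\bb=\L_\aa^T\cdot V=(V^T\cdot\L_\aa)^T=M_\aa^T$, so $M_{\aa,\bb}^T=M_\aa=V^T\cdot\L_\aa$, whose $(i,j)$ entry is $\langle\v_i,\bm\lambda_j\rangle$. By definition $\cc=(c_l)$ is the minimum strictly positive vector with $M_{\aa,\bb}^T+C\ge 0$, i.e. $\langle\v_i,\bm\lambda_j\rangle+c_i\ge 0$ for all $i,j$; the minimum such positive $c_i$ is $c_i=\max\{0,-\min_j\langle\v_i,\bm\lambda_j\rangle\}$. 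Since by \eqref{b} we always have $\langle\v_i,\bm\lambda_j\rangle\ge -b_j$, and in fact $\NN\subseteq[\D_\bb]$ forces each $\v_i\in\D_\bb$, one sees $\min_j\langle\v_i,\bm\lambda_j\rangle\le -a_i<0$ in general, so $c_i=-\min_j\langle\v_i,\bm\lambda_j\rangle$. Therefore $\aa=\cc$ holds if and only if $\min_j\langle\v_i,\bm\lambda_j\rangle=-a_i$ for every $i$, which is exactly the second line of \eqref{min}. Conversely, assuming both lines of \eqref{min}, one reconstructs the isomorphism $f$ from $V=\L_\bb$ and checks that $\aa=\cc$, so the framings match; the resulting $f$ is a framed isomorphism between suitable $\Q$-factorial resolutions, hence the process is calibrated.

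For the final "in particular" claim, I would argue as follows. The identity $\min_j\langle\v_i,\bm\lambda_j\rangle=-a_i$ together with $\D_\aa=\{\m\mid V^T\m\ge-\aa\}$ says that each vertex $\bm\lambda_j$ of $\D_\bb$ lies on (not merely inside) the supporting hyperplane $\langle\cdot,\v_i\rangle=-a_i$ for the appropriate $i$, which is the combinatorial signature that $\L_\aa=(\bm\lambda_1\cdots\bm\lambda_{m'})$ are precisely the primitive generators of the cones over the facets of $\D_\aa$, i.e. $\D_\aa$ is already a lattice polytope with primitive vertices and $\0\in\Int\D_\aa$; hence $k_0=1$ and $\D(X,\aa)=[\D_\aa]=\D_\aa$. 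The symmetric statement $k_1=1$, $\D(\XX_\aa,\bb)=[\D_\bb]$ follows by the same reasoning applied to the second arrow of \eqref{Deltaproc}, using $\bb$ in place of $\aa$ and the dual lattice; one uses Remark~\ref{rem:natural} to see that $\bb$ being the minimal vector of \eqref{b} together with $V=\L_\bb$ forces $\D_\bb=\conv(\L_\bb)=\conv(V)=\NN$ to be a lattice polytope with primitive vertices.

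The main obstacle I expect is Step~1: carefully justifying that the existence of the toric isomorphism $f$ compatible with the two small resolutions $\vf,\vf'$ really does force $V=\L_\bb$ up to column permutation and an element of $\GL(n,\Z)$, and then checking that after normalizing this $\GL(n,\Z)$-ambiguity (which is the "identifying lattices $M$ (hence $N$)" clause in the statement) the framing-compatibility condition is genuinely equivalent to the pointwise equality $\aa=\cc$ rather than to equality only up to the allowed equivalences. This requires invoking that $\vf$ and $\vf'$ are small (so $\Weil(Y)$ and $\Weil(Y')$ are freely generated by the pullbacks of the $D_i$'s and $D_l''$'s, as in Remark~\ref{rem:ftviso} and Proposition~\ref{prop:risoluzioni}) and that a toric morphism between complete toric varieties which is an isomorphism in codimension one is determined by its action on $N$; once these are in hand the equivalence is a bookkeeping matter, but the bookkeeping is where sign and ordering errors are easy to make.
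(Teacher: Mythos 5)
Your overall skeleton coincides with the paper's: reduce calibration to $\L_\bb=V$ (after normalizing the lattice identification) plus $\aa=\cc$ via the identification of Weil groups coming from the small resolutions, then unwind the definition of $\cc$ into the min-condition. Two of your justifications, however, do not work as stated.

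First, to pass from $c_i=\max\bigl(\{1\}\cup\{-\langle\v_i,\ll_j\rangle\,|\,1\le j\le m'\}\bigr)$ (note the max is taken with $1$, not $0$, since $\cc$ must be \emph{strictly} positive) to $c_i=-\min_j\langle\v_i,\ll_j\rangle$, you must show that for every $i$ some pairing $\langle\v_i,\ll_j\rangle$ is negative. You derive this from ``$\NN\subseteq[\D_\bb]$ forces $\v_i\in\D_\bb$'', but that membership only gives the lower bounds $\langle\v_i,\ll_j\rangle\geq -b_j$; it bounds nothing from above, so it cannot yield $\min_j\langle\v_i,\ll_j\rangle\le -a_i$, and asserting that inequality at this stage begs the question. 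The fix is short: since $\L_\aa$ is an $F$-matrix one has $\0\in\conv(\L_\aa)$, so $\0=\sum_j x_j\ll_j$ with $x_j\ge 0$, $\sum_j x_j=1$, whence $0=\sum_j x_j\langle\v_i,\ll_j\rangle$ forces some $\langle\v_i,\ll_j\rangle<0$.

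Second, the ``in particular'' clause. The min-condition does \emph{not} imply that $\D_\aa$ is a lattice polytope with primitive vertices: Theorem~\ref{thm:dualita} produces calibrated $f$-processes on $\P^n$ in which the vertices of $\D_\aa$ must be divided by nontrivial integers $d_i$ to obtain $\L_\aa$ (condition (b) there only requires two of the $d_i$ to equal $1$). Likewise $\D_\bb=\conv(V)$ is false in general; in Example~\ref{ex:D} one has $\D_\bb\supsetneq\conv(V)$ and only $[\D_\bb]=\conv(V)$. What the min-condition does give is $\ll_j\in\D_\aa\cap M$ for every $j$, hence $\0\in\Int\conv(\L_\aa)\subseteq\Int[\D_\aa]$ and $k_0=1$; and $k_1=1$ follows from $\0\in\Int\NN\subseteq\Int[\D_\bb]$, which is inclusion (\ref{inclusione}), with no primitivity of vertices needed anywhere.
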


\begin{proof}
  If (\ref{Deltaproc}) is a calibrated $f$-process then there exist $\Xi\in\SF(V)$ and $\Xi'\in\SF(\L_\bb)$, refining $\Si$ and $\Si_\bb$, respectively, and a ftv isomorphism $$f:(Y(\Xi),\vf^*D_\aa)\stackrel{\cong}{\longrightarrow}(Y'(\Xi'),(\vf')^*D''_\cc)$$
  as described in Definition~\ref{def:Deltaproc banale}. In particular, this means that $Y$ and $Y'$ admit equivalent fan matrices, as defined in relation (\ref{M-equivalenza}), that is
  \begin{equation}\label{A,B}
    \exists\,A\in\GL(n,\Z)\,,\ \exists\,B\in\mathfrak{S}_m\leq \GL(m,\Z)\quad \L_\bb=A\cdot V\cdot B
  \end{equation}
  Therefore $m''=m$ and inclusion (\ref{inclusione}) implies that $\0\in\Int [\D_\bb]$. Hence, $k_1=1$ in (\ref{k1}) and $\D(\XX_\aa,\bb)=[\D_\bb]$.
   Calling $M$ and $M'$ the characters' lattices of acting tori on $Y$ and $Y'$, respectively, and recalling Proposition~\ref{prop:risoluzioni}, condition (\ref{A,B}) comes from the following commutative diagram between associated divisorial short exact sequences
   \begin{equation}\label{div-diagram-MM'}
     \xymatrix{0\ar[r]& M \ar[r]^-{div}_-{V^T} & \Weil(Y)\ar[d]^-{(f^*)^{-1}}_-{B^T}\ar[r]& \Cl (Y)\ar[d]^-{(\overline{f}^*)^{-1}} \ar[r] & 0\\
     0\ar[r]& M'\ar[u]^-{A^T} \ar[r]^-{div}_-{\L_\bb^T} & \Weil(Y')\ar[r] & \Cl (Y') \ar[r] & 0}
   \end{equation}
This actually means that, up to a change of bases in $M$ and $M'$, matrices $A,B$ in (\ref{A,B}) and (\ref{div-diagram-MM'}) can be chosen as $A=I_n$ and $B=I_m$, so that $\L_\bb=V$.
Therefore, via $f,\vf,\vf'$ lattices $N$ and $M$ of $X,Y,Y'$ and $\XX_\bb$ can be identified as above, so giving an identification
   \begin{equation}\label{Weil=}
     \Weil(X)\cong\Weil(Y)\cong\Weil(Y')\cong\Weil(\XX_\bb)
   \end{equation}
   under which, generators $D_i$ are identified with generators $D''_i$, and the ftv isomorphism $f$ gives $\aa=\cc$. Definition (\ref{c}) of $\cc$ with $V=\L_\bb$ imply that
   \begin{equation}\label{a=c}
   \forall\,1\leq i\leq m\quad a_i=c_i=\max\left(\{1\}\cup\{-\langle\v_i,\ll_j\rangle\,|\,1\leq j\leq m'\}\right)
   \end{equation}
   Since $\L_\aa$ is a reduced $F$-matrix, Proposition~\ref{prop:Fmatricedipolitopo} gives that
   \begin{eqnarray*}
   \0\in\conv(\L_\aa)&\Longrightarrow&\0=\sum_{j=1}^{m'} x_j\ll_j\ \text{with $x_j\geq 0$ and $\sum_jx_j=1$} \\
   &\Longrightarrow&\forall\,i\quad 0=\sum_jx_j\langle\v_i,\ll_j\rangle\\
   &\Longrightarrow&\forall\,i\ \exists\,j\,:\quad \langle\v_i,\ll_j\rangle<0 \\
   &\Longrightarrow&\forall\,i\quad \max\left(\{-\langle\v_i,\ll_j\rangle\,|\,1\leq j\leq m'\}\right)\geq 1\\
   &\Longrightarrow&\forall\,i\quad -a_i=\min\left(\{\langle\v_i,\ll_j\rangle\,|\,1\leq j\leq m'\}\right)
   \end{eqnarray*}

   For the converse, assume $\L_\bb=V$, up to a permutation of columns. Then $\SF(V)=\SF(\L_\bb)$ and, for any choice $\Xi\in\SF(V)$ there exists $\Xi'\in\SF(\L_\bb)$ and an isomorphism of toric varieties $f:Y(\Xi)\cong Y'(\Xi')$\,. We can then identify lattices $N$ and $M$ of $Y$ and $Y'$\,. Moreover, via the $\Q$-factorial small resolutions $\vf:Y\longrightarrow X$\,, $\vf':Y'\longrightarrow \XX_\bb$, we can also identify lattices $N$ and $M$ of $X,Y,Y'$ and $\XX_\bb$, as above, so getting identifications (\ref{Weil=}) for torus invariant Weil divisors. Then, the $f$-process (\ref{Deltaproc}) is calibrated if  $\aa=\cc$\,. The latter is guaranteed by the second condition in (\ref{min}), as
\begin{equation}\label{c=a}
  \forall\,1\leq i\leq m\quad -c_i:=\min_{j}\langle\v_i,\ll_j\rangle=-a_i\ \Longrightarrow\ \cc=\aa
\end{equation}
Reasoning as above, one then has $k_1=1$.

\noindent Moreover, still recalling Proposition~\ref{prop:Fmatricedipolitopo}, relations (\ref{a=c}),(\ref{c=a}) give
$$\0\in\Int(\conv(\L_\aa))\subseteq\{\m\in M_\R\,|\,V^T\cdot \m\geq -\aa\}=\D_\aa\ \Longrightarrow\ k_0=1$$
\end{proof}

\begin{corollary}
  Assume $\D_\aa$ be a lattice polytope with primitive vertices, that is
  $$\D_\aa=\conv(\L_\aa)$$
  Then the $f$-process (\ref{Deltaproc}) is calibrated if and only if $V=\L_\bb$, up to an identification of lattices $M$ (hence $N$) of $X$ and $\XX_\bb$ and a permutation of columns.
\end{corollary}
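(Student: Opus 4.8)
The plan is to deduce the Corollary directly from Theorem~\ref{thm:Deltatriviale}, by showing that under the standing hypothesis $\D_\aa=\conv(\L_\aa)$ the second condition in (\ref{min}) becomes automatic, so that being calibrated reduces to the single condition $V=\L_\bb$ (up to the stated lattice identification and permutation of columns). The ``only if'' implication is then immediate: if the $f$-process (\ref{Deltaproc}) is calibrated, Theorem~\ref{thm:Deltatriviale} yields, in particular, $V=\L_\bb$ up to a permutation of columns together with the indicated identification of the lattices $M$ (hence $N$) of $X$ and $\XX_\bb$.

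For the ``if'' implication, suppose $V=\L_\bb$ up to such identification and permutation. By Theorem~\ref{thm:Deltatriviale} it suffices to verify that $\min_{1\le j\le m'}\langle\v_i,\ll_j\rangle=-a_i$ for every $i$. Here the hypothesis enters: since $\D_\aa=\conv(\L_\aa)$ is a lattice polytope, $k_0=1$ and $\D(X,\aa)=\D_\aa$, and the very equality $\D_\aa=\conv(\L_\aa)$ forces the columns $\ll_j$ of $\L_\aa$ to be exactly the vertices of $\D_\aa$ (they are the primitive generators of the rays through those vertices, and these primitive generators can have convex hull equal to $\D_\aa=\conv(\{\text{vertices}\})$ only if each vertex is itself primitive, no two vertices lying on one ray because $\0\in\Int\D_\aa$). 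Hence $\min_j\langle\v_i,\ll_j\rangle=\min_{\m\in\D_\aa}\langle\v_i,\m\rangle$, the minimum of a linear functional over a (bounded) polytope being attained at a vertex. On the other hand, by Remark~\ref{rem:natural} the defining relation (\ref{Delta_a}) forces $\aa$ to be the minimum strictly positive vector with $M_\aa+A=V^T\cdot\L_\aa+A\ge\0$, i.e. with $a_i\ge -\min_j\langle\v_i,\ll_j\rangle$ for all $i$; since $\0\in\Int\D_\aa$ and $\v_i\ne\0$, the functional $\langle\v_i,\cdot\rangle$ is strictly negative at some $\ll_j$, so $-\min_j\langle\v_i,\ll_j\rangle$ is already a positive integer, and minimality of $\aa$ gives $a_i=-\min_j\langle\v_i,\ll_j\rangle=-\min_{\m\in\D_\aa}\langle\v_i,\m\rangle$. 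Combining the two identities yields $\min_j\langle\v_i,\ll_j\rangle=-a_i$ for all $i$, which is precisely the missing condition; Theorem~\ref{thm:Deltatriviale} then concludes that (\ref{Deltaproc}) is calibrated (and, en route, that $k_1=1$ and $\aa=\cc$).

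The single point deserving care is the translation step in the second paragraph: unwinding Remark~\ref{rem:natural} to see that, once $\D_\aa$ coincides with $\conv(\L_\aa)$, every defining half-space $\langle\m,\v_i\rangle\ge-a_i$ of $\D_\aa$ is supporting, equivalently $-a_i$ is attained at a vertex $\ll_j$ of $\conv(\L_\aa)$. Everything else is a routine specialization of Theorem~\ref{thm:Deltatriviale}; in particular no fresh computation with the dual polytopes $\D_\bb$, $\D(\XX_\aa,\bb)$ or with the framing $\cc$ is required, these being handled exactly as in the proof of that theorem, where the equality $V=\L_\bb$ is already what drives the argument.
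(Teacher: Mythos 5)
Your argument is correct and follows exactly the route of the paper's own proof: the ``only if'' direction is read off from Theorem~\ref{thm:Deltatriviale}, and for the ``if'' direction the second condition in (\ref{min}) is shown to be automatic under $\D_\aa=\conv(\L_\aa)$ by appealing to Remark~\ref{rem:natural} (together with the observation, already made inside the proof of Theorem~\ref{thm:Deltatriviale}, that $\0\in\conv(\L_\aa)$ forces $\min_j\langle\v_i,\ll_j\rangle<0$). You merely unwind the Remark in more detail than the paper does, which states the whole proof in one line.
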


\begin{proof}
  In fact, the second condition in (\ref{min}), in Theorem~\ref{thm:Deltatriviale}, is immediately attained, as columns $\L_\aa$ are given by vertices of $\D_\aa$\,.
\end{proof}

\begin{example}[Examples~\ref{ex:due framing} and \ref{ex:due framing 2} continued]\label{ex:due framing 3}
  Consider the two ftv $(X,\aa)$ and $(X,\aa')$ admitting the same $f$-dual ftv $(\P^2,(2,2,3))$. Notice that the polytope associated with $\bb=(2,2,3)$ on $\P^2$ is given by
\begin{equation*}
  \D_\bb=\conv\left(
                   \begin{array}{ccc}
                     5 & -2 & -2\\
                     -2 & 5 & -2 \\
                   \end{array}
                 \right)\ \Longrightarrow \L_\bb=\left(
                   \begin{array}{ccc}
                     5 & -2 & -1\\
                     -2 & 5 & -1 \\
                   \end{array}
                 \right)=V
\end{equation*}
Then
\begin{equation*}
  M_{\aa',\bb}^T=M_{\aa,\bb}^T= V^T\cdot\L_\aa= M_\aa^T\ \Longrightarrow\ \cc=(3,3,1)\left\{\begin{array}{c}
                                                                                           = \aa \\
                                                                                           \neq \aa'
                                                                                         \end{array}
\right.
\end{equation*}
This means that the $f$-process $\left(X,(3,3,1)\right)\stackrel{f-dual}{\leftrightsquigarrow}\left(\P^3,(2,2,3)\right)$ is calibrated, on the contrary of the $f$-process $\left(X,(4,4,1)\right)\stackrel{f-dual}{\rightsquigarrow}\left(\P^3,(2,2,3)\right)\stackrel{f-dual}
{\rightsquigarrow}\left(X,(3,3,1)\right)$\,.
\end{example}

  By the previous results, a calibrated $f$-process is the key ingredient to introduce an involutive duality between framed toric varieties, largely extending the classical Batyrev duality between Fano toric varieties: in fact the latter can be thought of the particular case of an $f$-duality associated with an ample anti-canonical framing (see the following \ref{ssez:Bat-dualita}).

\begin{remark}\label{rem:anticanonico}
  Although the expectation would be that, in general, a complete toric variety should admit a sufficiently positive line bundle with a strictly effective section giving rise to a calibrated $f$-duality, it is not true that every sufficiently positive line bundle on a complete toric variety admits a section  whose associated $f$-process is calibrated, neither for the anti-canonical one, as the following example shows.
\end{remark}

\begin{example}\label{ex:anticanonico}
  Let $X$ be the $\Q$-factorial complete toric surface of Picard number 2 given by the unique fan in $\SF(V)$ with
\begin{equation*}
  V:=\left(
       \begin{array}{cccc}
         1 & 2 & 0 & -1 \\
         0 & 3 & 1 & -2 \\
       \end{array}
     \right)
\end{equation*}
$X$ is a $\Q$-Fano projective surface. The reader can check that the anti-canonical line bundle $\omega_X^{-1}$ does not admit any section giving rise to a calibrated $f$-process. This fact still holds for the ample multiple $\omega_X^{-3}$.
Nevertheless, the framing $\aa:=(3,3,3,2)$ gives rise to a calibrated $f$-process
\begin{equation*}
  (X,\aa)\leftrightsquigarrow(\XX_\aa,\bb)
\end{equation*}
where $\XX_\aa$ is the $\Q$-factorial complete toric variety given by the unique fan in $\SF(\L_\aa)$ and
\begin{equation*}
  \L_\aa:=\left(
            \begin{array}{ccccc}
               -1 & -3 & -3 & 8 & 1 \\
            1 & 2 & 1 & -3 & -1 \\
            \end{array}
          \right)\ ,\quad\bb:=\left(1,3,3,3,1\right)
\end{equation*}
\end{example}

\section{A duality between hypersurfaces in toric varieties}\label{sez:dualita-hyp}

Consider the general setting presented in \S~\ref{sez:dualita-ftv} and an hypersurface $Y$ in a complete toric variety $X$. Assume that:
\begin{enumerate}
  \item there exists a divisor $D_\aa\in\Weil(X)$ such that $Y$ is a generic element in the linear system $|D_\aa|:=d^{-1}\left([D_\aa]\right)$\,, where $d$ is the class morphism in (\ref{complete deg sequence}),
  \item $(X,D_\aa)$ is a ftv satisfying conditions (\ref{min}) in Theorem~\ref{thm:Deltatriviale}\,, that is the $f$-process $$(X,\aa)\stackrel{f-dual}{\rightsquigarrow}(\XX_\aa,\bb)\stackrel{f-dual}{\rightsquigarrow}(\XX_\bb,\cc)$$ is calibrated.
\end{enumerate}

\begin{definition}\label{def:mirror}
  A generic element $Y^\vee\in|D'_\bb|:=d^{-1}\left([D'_\bb]\right)$ is called \emph{a $f$-mirror partner of $Y\in|D_\aa|$}.
\end{definition}

\begin{remark}\label{rem:famiglie}
  One can explicitly describe the defining polynomials of both $Y$ and $Y^\vee$ in the Cox rings of $X$ and $\XX_\aa$, respectively. Namely:
  \begin{itemize}
    \item[(a)] the polytope $\D(X,\aa)$ is the Newton polytope of $Y\in|D_\aa|$; call $\overline{\L}_\aa$ a matrix whose columns are given by all the lattice points in $\D(X,\aa)$: it is well defined up to a permutation of columns; setting $l:=|\D(X,\aa)\cap M|$, then $\overline{\L}_\aa$ is a $n\times l$ integer matrix; define
        \begin{equation*}
          \overline{M}_\aa:= V^T\cdot \overline{\L}_\aa\quad\text{and}\quad \overline{A}:=\underbrace{\left(\,\aa\ \cdots\ \aa\,\right)}_{l\ \text{times}}\,\in \mathbf{M}(m\times l;\N)\,;
        \end{equation*}
        then the polynomial of $Y$ is given by
        \begin{equation}\label{f-WT}
          f=\sum_{j=1}^l c_j\x^{\m_j} \in \Cox(X)\cong\C[x_1,\ldots,x_m]
        \end{equation}
        where $\m_j=(m_{i,j})$ is the $j$-th column of $\overline{M}_\aa+\overline{A}$ and $\x^{\m_j}:=\prod_{i=1}^m x_i^{m_{i,j}}$;
    \item[(b)] the polytope $\D(\XX_\aa,\bb)$ is the Newton polytope of $Y^\vee\in|D'_\bb|$; call $\overline{\L}_\bb$ a matrix whose columns are given by all the lattice points in $\D(\XX_\aa,\bb)$; setting $l':=|\D(\XX_\aa,\bb)\cap N|$, then $\overline{\L}_\bb$ is a $n\times l'$ integer matrix; define
        \begin{equation*}
          \overline{M}_{\aa,\bb}:= \L_\aa^T\cdot \overline{\L}_\bb\quad\text{and}\quad \overline{B}:=\underbrace{\left(\,\bb\ \cdots\ \bb\,\right)}_{l'\ \text{times}}\,\in \mathbf{M}(m'\times l';\N)\,;
        \end{equation*}
        then the polynomial of $Y^\vee$ is given by
        \begin{equation}\label{fdual}
          f^\vee=\sum_{j=1}^l c_j\x^{\n_j} \in \Cox(\XX_\aa)\cong\C[x_1,\ldots,x_{m'}]
        \end{equation}
        where $\n_j=(n_{i,j})$ is the $j$-th column of $\overline{M}_{\aa,\bb}+\overline{B}$ and $\x^{\n_j}:=\prod_{i=1}^{m'} x_i^{n_{i,j}}$.
  \end{itemize}
  Notice that both $f$ and $f^\vee$ are homogeneous polynomials, with respect to degrees induced by class groups. In fact, columns of both $\overline{M}_\aa$ and $\overline{M}_{\aa,\bb}$ determine trivial divisors, up to linear equivalence. Then
  $$\deg(f)=[D_\aa]\in\Cl(X)\quad\text{and}\quad\deg(f^\vee)=[D'_\bb]\in\Cl(\XX_\aa)$$
\end{remark}

\begin{remark}\label{rem:semimple}
  Given a ftv $(X,D_\aa)$, the divisor $D_\aa$ is not necessarily semi-ample. Anyway, if $(X,D_\aa)$ admits a calibrated $f$-process then it can be shown that there exists a small partial resolution $\phi:\widehat{X}\longrightarrow X$ such that the pulled back framing $\phi^*(D_\aa)$ is semi-ample and, moreover, it admits a positive multiple $h\phi^*(D_\aa)$ which is an ample divisor, showing that $\widehat{X}$ is projective. A detailed proof of this fact will be given in the forthcoming paper \cite{R-fpCI} in the more general context of toric complete intersections.
\end{remark}

\begin{example}[Example \ref{ex:D} continued]\label{ex:D2} Let us consider the calibrated $f$-process described in Example~\ref{ex:D}.   As explained in Remark~\ref{rem:famiglie}
  \begin{equation*}
    V^T\cdot \L_\aa+\overline{A} = \left( \begin {array}{ccccccccccccccc} 0&1&0&2&1&0&3&2&1&0&4&3&2&1&0\\
    4&3&3&2&2&2&1&1&1&1&0&0&0&0&0\\
    0&0&1&0&1&2&0&1&2&3&0&1&2&3&4\end {array} \right)
  \end{equation*}
  and the family $\mathcal{Y}_\aa$ of plane quartics has general element given by the zero-locus of the polynomial
      \begin{eqnarray*}
        f_\aa&=&c_1x_2^4+c_2x_1x_3+c_3x_2^3x_3+ c_4x_1^2x_2^2+c_5x_1x_2^2x_3\\
        &&c_6x_2^2x_3^2+c_7x_1^3x_2+c_8x_1^2x_2x_3
        +c_9x_1x_2x_3^2+c_{10}x_2x_3^3\\
        &&+c_{11}x_1^4+c_{12}x_1^3x_3+c_{13}x_1^2x_3^2
        +c_{14}x_1x_3^3+c_{15}x_3^4
      \end{eqnarray*}
  Dually
  \begin{equation*}
    \L_\aa^T\cdot \overline{V}+\overline{B} = \left( \begin {array}{cccc} 1&5&2&0\\
                                                            5&1&2&0\\
                                                            0&0&1&3\end {array} \right)
  \end{equation*}
  meaning that the general element of the dual family $\mathcal{Y}_\bb$ of $\mathcal{Y}_\aa$ is a quotient, by the $\Z/4\Z$-action described in (\ref{azione2}), of the zero-locus in $\P(1,1,2)$ of the weighted homogeneous polynomial
  \begin{equation*}
    f_\bb=c_1x_1x_2^5+c_2x_1^5x_2+c_3x_1^2x_2^2x_3+c_4x_3^3
  \end{equation*}
\end{example}

\subsection{Generalizing Batyrev's duality}\label{ssez:Bat-dualita}
\noindent Definition~\ref{def:mirror} is clearly motivated by the case when $X$ is a Fano toric variety and $\aa=\1$, that is $D_\aa=-K_X$. In fact, recalling Remark~\ref{rem:batyrev}, in this case $f$-duality gives precisely the Batyrev's polar duality, inducing the well known \emph{mirror symmetry} $Y\leftrightsquigarrow Y^\vee$, being $Y$ and $Y^\vee$ both \cy varieties, up to suitable \emph{crepant} resolutions of singularities.

\subsection{Topological mirror test and Hodge diamond symmetry}\label{ssez:mirrortest} Let $Y$ be a generic hypersurface in a toric variety $X$ of degree $[D_\aa]\in\Cl(X)$.  If $Y$ is quasi-smooth and $X$ is $\Q$-factorial and complete, then there is a well defined concept of (coarse) moduli space $\mathcal{M}_Y$ (see e.g. \cite[\S13]{BC} and the recent \cite{Bunnet}). In this case, define $m_Y$ to be the dimension of the tangent space to $\mathcal{M}_{Y}$ at $[Y]$. By \cite[Prop.~13.7]{BC} one has
  \begin{equation*}
    m_Y=\dim\P\left(H^0(X,\cO_X(D_\aa)\right) -\dim\left(\Aut(X)\right)
  \end{equation*}
For a $\Q$-factorial and complete toric variety $X$, $\Aut(X)$ is an affine algebraic group of dimension
\begin{equation}\label{aut}
  \dim(\Aut(X))= \dim(X)+\sum_{\Theta} l^*(\Theta)
\end{equation}
where $\Theta$ ranges on the facets of the anti-canonical polytope $\D_{-K_X}=\D_\1$ \cite[Prop.~3.6.1 and 3.6.2]{CoxKatz}\footnote{Actually in \cite[Prop.~3.6.2]{CoxKatz} authors assume $X$ to be Gorenstein. Under this assumption $\D_{-K_X}$ is a lattice polytope, making easier to understand relation (\ref{aut}). Anyway, by the previous result  stated in \cite[Prop.~3.6.1]{CoxKatz}, the Gorenstein assumption may be dropped in getting (\ref{aut}).}, \cite[\S4]{Cox} and $l^*(\Theta)$ denotes \emph{the number of lattice points in the relative interior of the polytope $\Theta$}. Moreover,
\begin{equation}\label{h0}
  h^0(X,\cO_X(D_{\aa}))=l(\D_{\aa})
\end{equation}
so giving
\begin{equation}\label{mY}
  m_Y= l(\D_\aa)-1-n-\sum_{\Theta<^1\D_{-K_X}} l^*(\Theta)
\end{equation}
Unfortunately, conditions (1) and (2) opening the present \S\ref{sez:dualita-hyp} are not sufficient to guaranteeing quasi-smoothness neither of $Y$ nor of a $f$-mirror $Y^\vee$ of $Y$. Therefore, in the following, \emph{numbers $m_Y$ and $m_{Y^\vee}$ of complex moduli of $Y$ and $Y^\vee$}, respectively, will be combinatorially defined as the right term in (\ref{mY}). Namely
\begin{eqnarray}\label{mYY}
\nonumber
  m_Y &:=& l(\D_\aa)-1-n-\sum_{\Theta<^1\D_{-K_X}} l^*(\Theta) \\
  m_{Y^\vee} &:=& l(\D_\bb) - 1- n-  \sum_{\Theta<^1\D_{-K_{\XX_\aa}}} l^*(\Theta)
\end{eqnarray}

On the other hand, given a suitable resolution $\widehat{Y}\longrightarrow Y$, it is well defined the \emph{\ka moduli space} of $\widehat{Y}$ as the quotient of the \emph{complexified \ka cone} under the action of the automorphism group $\Aut(\widehat{Y})$ \cite[\S6.2]{CoxKatz}. Define
 \begin{equation*}
   k_{\widehat{Y}}\quad\text{be the dimension of the \ka moduli space of $\widehat{Y}$}
 \end{equation*}
 also called \emph{the number of \ka moduli of $\widehat{Y}$}. If $\widehat{Y}$ is a smooth hypersurface in a complete toric variety then, by the weak Lefschetz Theorem, its \ka cone, that is $\Nef(\widehat{Y})$, has dimension given by $h^{1,1}(\widehat{Y})$, and $\Aut(\widehat{Y})$ turns out to acting as a finite group (apply an argument similar to that given in \cite[\S6.2.3]{CoxKatz}, there proposed for a \cy toric hypersurface), so that $k_{\widehat{Y}}=h^{1,1}(\widehat{Y})$.

\begin{definition}\label{def:A,B-mirror}
Assume $n=\dim X\geq 4$.  Then we will say that:
\begin{itemize}
  \item[$(i)$] the ordered couple $(Y,Y^\vee)$ \emph{satisfies the A-side topological mirror test} if there exists a (partial) resolution of singularities $\widehat{Y}\longrightarrow Y$ such that $\widehat{Y}$ is (quasi-)smooth and
      \begin{equation*}
        k_{\widehat{Y}}= m_{Y^\vee}
      \end{equation*}
      In this case, we will also say that \emph{$Y^\vee$ is an $A$-mirror of $Y$};
  \item[$(ii)$] the ordered couple $(Y,Y^\vee)$ \emph{satisfies the B-side topological mirror test} if there exists a (partial) resolutions of singularities $\widehat{Y}^\vee\longrightarrow Y^\vee$ such that $\widehat{Y}^\vee$ is (quasi-)smooth and
      \begin{equation*}
        k_{\widehat{Y}^\vee}=m_Y
      \end{equation*}
      Then, we will also say that \emph{$Y^\vee$ is a $B$-mirror of $Y$};
  \item[$(iii)$] the ordered couple $(Y,Y^\vee)$ \emph{satisfies the Hodge diamond $A$-symmetry} if
  \begin{equation*}
    h^1(\widehat{\Omega}_{\widehat{Y}})=:h^{1,1}(\widehat{Y})=h^{n-2,1}(\widehat{Y}^\vee):=
    h^{n-2}(\widehat{\Omega}_{\widehat{Y}})
  \end{equation*}
where $\widehat{\Omega}:=i_*(\Omega)$ is the sheaf of Zariski differentials and $i$ is the inclusion of the smooth locus;
  \item[$(iv)$] the ordered couple $(Y,Y^\vee)$ \emph{satisfies the Hodge diamond $B$-symmetry} if
  \begin{equation*}
    h^{n-2,1}(\widehat{Y})=h^{1,1}(\widehat{Y}^\vee)
  \end{equation*}
\end{itemize}
Moreover, if both $(i)$ and $(ii)$ are satisfied we will say that the $f$-mirror partner $Y^\vee$ of $Y$ is actually a \emph{topological mirror} partner of $Y$ (and viceversa), and if both $(iii)$ and $(iv)$ are satisfied we will say that the $f$-mirror partner $Y^\vee$ of $Y$ is an \emph{Hodge mirror} partner of $Y$ (and viceversa).
\end{definition}

\begin{remark}
  Again, the above nomenclature is clearly inspired by the Calabi-Yau/Fano toric case for the toric hypersurface $Y\subset X$. The interested reader is referred to \cite[\S 6.1.2]{CoxKatz} and therein references, for a definition of $m_Y$: see in particular \cite[Prop.~6.1.3]{CoxKatz}. Due to the well known Bogomolov-Tian-Todorov-Ran Theorem and the \cy condition, if $Y$ is a \cy hypersurface in a Fano toric variety $X$, then
  \begin{equation*}
    m_Y=h^1(\widehat{Y},\mathcal{T}_{\widehat{Y}})=h^{2,1}(\widehat{Y})
  \end{equation*}
  meaning that, in the \cy case, $(i)\Leftrightarrow (iii)$ and $(ii)\Leftrightarrow (iv)$ and being a topological mirror partner is equivalent to being a Hodge mirror partner.
\end{remark}

\subsection{Mirror Web vs Mirror Symmetry}\label{ssez:MWeb}

Let $Y$ be a hypersurface in a toric variety $X$, both satisfying above conditions (1) and (2) opening the present \S \ref{sez:dualita-hyp}. Notice that the divisor $D_\aa\in\Weil(X)$ satisfying condition (1), may not be unique. Assume there exist two distinct divisors $D_{\aa_1}\sim D_{\aa_2}$ such that $Y\in|D_{\aa_1}|=|D_{\aa_2}|$ and $(X,D_{\aa_i})$ is a ftv, for both $i=1,2$. Then, $f$-duality may assign two distinct mirror partners $Y^\vee_i\in|D'_{\bb_i}|$, $i=1,2$, which, a priori, may be even non-isomorphic: observe that, in general, $D'_{\bb_1},D'_{\bb_2}$ are divisors living in distinct toric varieties $\XX_{\aa_1}$ and $\XX_{\aa_2}$, respectively.

 Such a phenomenon does not occur in the Calabi-Yau/Fano toric case, as there is a unique strictly effective divisor in the anti-canonical class of $X$, given by $D_\1\in[-K_X]$. In general, it makes then more sense to speak about a concept of \emph{mirror web $\mathcal{MW}$ of toric hypersurfaces} rather then about \emph{mirror symmetry}. More precisely:
 \begin{itemize}
   \item hypersurfaces connected by means of a calibrated $f$-process give rise to what will be called the \emph{$f$-mirror web} $f\mathcal{MW}$,
   \item hypersurfaces of dimension $\geq 3$ and connected by means of a calibrated $f$-process originating a topological mirror pair, give rise to the sub-web $T\mathcal{MW}\subset f\mathcal{MW}$,
   \item hypersurfaces of dimension $\geq 3$ and connected by means of a calibrated $f$-process originating a Hodge mirror pair, give rise to the sub-web $H\mathcal{MW}\subset f\mathcal{MW}$.
 \end{itemize}
 For \cy hypersurfaces $H\mathcal{MW}=T\mathcal{MW}= f\mathcal{MW}$.

 \begin{remark}\label{rem:mult.mirr}
   A similar phenomenon of multiple mirror partners is not a new one. As observed by Chiodo and Ruan \cite[Rem.~1]{Chiodo-Ruan}, examples of multiple mirrors can be easily obtained in the context of Berglund-H\"{u}bsch-Kravitz (BHK) duality between \cy hypersurfaces of (suitable quotients of) weighted projective spaces. But probably, the deepest known example of multiple mirrors is the R{\o}dland one \cite{Rodland}, then further studied and generalized by Borisov, Cald\u{a}r\u{a}ru and Libgober \cite{Borisov-C},\cite{Borisov-L} and Kuznetsov \cite{Kuznetsov}.   Moreover, this fact is well known from the point of view of Homological Mirror Symmetry, where the construction of Landau-Ginzburg (LG) mirror models is not in general expected  to producing unique mirror partners (see e.g. considerations following Def.~2.2 in \cite{KKOY} and the next Example~\ref{ssez:iperellittica}).
 \end{remark}

 \subsection{Generalizing Artebani-Comparin-Guilbot (ACG) duality}\label{ssez:ACG} In \cite{ACG} M.~Artebani, P.~Comparin and R.~Guilbot presented a way of extending Batyrev's dua\-li\-ty of families of anti-canonical hypersurfaces in Fano toric varieties, to suitable sub-families whose associated Newton polytope is \emph{canonical}, that is, a lattice sub-polytope of the anti-canonical polytope admitting the origin as a unique interior point \cite[\S 2]{ACG}.  As observed in \S\ref{ssez:Bat-dualita}, $f$-duality is an extension of Batyrev's duality. Then $f$-duality applies to give an extension of ACG-duality beyond the realm of \cy hypersurfaces.

 Namely, set the following assumptions:
 \begin{enumerate}
   \item let $(X,D_\aa)$ be a ftv admitting a calibrated $f$-process,
   \item let $\D$ be a lattice sub-polytope of $\D(X,\aa)$ containing the origin as an interior point: thinking of $\D$ as a Newton polytope, it describes a sub-family $\mathcal{Y}_\D\subseteq\mathcal{Y}_\aa$ of the family of hypersurfaces in $X$ of degree $[D_\aa]\in \Cl(X)$;
   \item consider the toric variety $\XX_\D$, which is complete by Proposition~\ref{prop:Fmatricedipolitopo}, and the framing $D_\v$, assigned by the minimum strictly positive column vector $\v$ such that
       \begin{equation*}
         V_\D^T\cdot V + \underbrace{\left(\,\v\ \cdots\ \v\,\right)}_{m_\D\ \text{times}}\geq \0
       \end{equation*}
       where $V$ and $V_\D$ are fan matrices of $X$ and $\XX_\D$, respectively, and $m_\D=\rk\Weil(\XX_\D)$; assume the ftv $(\XX_\D,D_\v)$ admitting a calibrated $f$-process;
   \item finally assume that $\D(\XX_\aa,\bb)=[\D_\bb]$ (recall the last assertion in Theorem~\ref{thm:Deltatriviale}) is a lattice sub-polytope of $\D(\XX_\D,\v)=[\D_\v]$, where
       \begin{equation*}
         \D_\v:=\{\n\in N_\R\,|\,V_\D^T\cdot\n\geq -\v\}
       \end{equation*}
 \end{enumerate}
Then, thinking of $[\D_\bb]$ as a Newton polytope, it describes a sub-family $\mathcal{Y}_\bb\subseteq\mathcal{Y}_\v$ of the family of hypersurfaces in $\XX_\D$ of degree $[D_\v]\in\Cl(\XX_\D)$\,.

\begin{definition}
  The family $\mathcal{Y}_\bb$ is called a  \emph{$f$-ACG dual family} of the family $\mathcal{Y}_\D$\,.
\end{definition}

\begin{remarks}
  \begin{enumerate}
    \item By construction, $\mathcal{Y}_\D$  is a $f$-ACG dual family of $\mathcal{Y}_\bb$\,.
    \item If $\D=\D_\aa$, then $f$-ACG duality reduces to $f$-duality exhibiting $\mathcal{Y}_\bb$ as a $f$-dual family of $\mathcal{Y}_\aa$.
    \item If $\aa=\1$, that is $D_\aa=-K_X$, than $\bb=\1$, too, and $(\D,\D_\aa)$ turns out to be a \emph{good pair} in the sense of \cite[Def.1.4]{ACG}. In particular, assumptions (1), (3) and (4) follow immediately, and $f$-ACG duality reduces to giving just ACG duality between families $\mathcal{Y}_\D$ and $\mathcal{Y}_\bb$ of \cy varieties \cite[Thm.~1]{ACG}.
  \end{enumerate}
\end{remarks}

\begin{example}\label{ex:D-ACG}
To better understand the level of generalization introduced by $f$-duality, the present example should be compared with \cite[Ex.~3.3]{ACG}.

Consider the ftv $(X,\aa)=(\P^2,(1,1,2))$ given in Example~\ref{ex:D} and notation there introduced. We are then looking for a suitable sub-family of plane quartics admitting a $f$-ACG dual family.
Consider the sub-polytope $\D\subseteq\D_\aa$ given by
\begin{equation*}
  \D=\conv(V_\D)\ ,\quad\text{with}\ V_\D=\left(
                                            \begin{array}{cccc}
                                              1 & -1 & 1 & -1 \\
                                              -1 & 1 & 1 & -1 \\
                                            \end{array}
                                          \right)
\end{equation*}
\begin{figure}
\begin{center}
\includegraphics[width=12truecm]{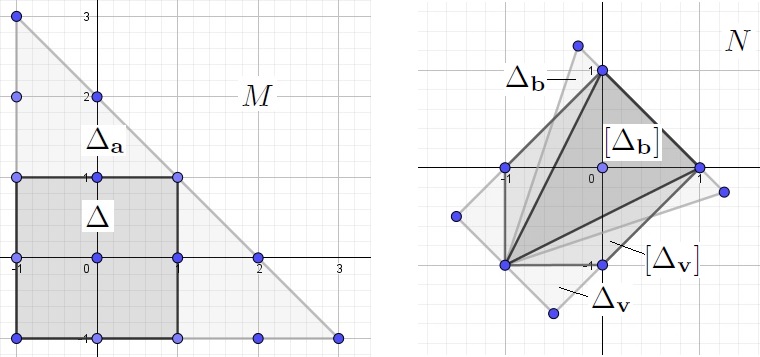}
\caption{\label{Fig2} Example \ref{ex:D-ACG}: Newton polytopes $\D\subseteq\D_\aa\subset M_\R$ and $[\D_\bb]~\subseteq~[\D_\v]~\subset~N_\R$.
Notice that $\D_\bb\nsubseteq\D_\v$.}
\end{center}
\end{figure}
One can then easily check that (see Fig.~\ref{Fig2}):
\begin{itemize}
  \item as observed in Example \ref{ex:D}, a $f$-dual ftv $(\XX_\aa,\bb)$ of $(X,\aa)$ is given by choosing $\bb:=(2,2,1)$, where $\XX_\aa$ is a quotient of the weighted projective space $\P(\aa)=\P(1,1,2)$ by the action of $\Z/4\Z$ described in (\ref{azione2}); in particular, the $f$-process $(X,\aa)\leftrightsquigarrow (\XX_\aa,\bb)$ is calibrated, satisfying assumption (1) above;
  \item clearly the origin of $M$ is an interior point of $\D$, so giving assumption (2);
  \item recalling Remark~\ref{rem:famiglie} and Example~\ref{ex:D2}, and observing that
  \begin{equation*}
    V^T\cdot \overline{V}_\D+\overline{A} = \left( \begin {array}{ccccccccc} 2&1&0&2&1&0&2&1&0\\
    2&2&2&1&1&1&0&0&0\\
    0&1&2&1&2&3&2&3&4\end {array} \right)
  \end{equation*}
  the sub-family $\mathcal{Y}_\D\subseteq\mathcal{Y}_\aa$ of plane quartics has general element given by the zero-locus of the polynomial
      \begin{eqnarray*}
        f_\D&=& c_4x_1^2x_2^2+c_5x_1x_2^2x_3+c_6x_2^2x_3^2+c_8x_1^2x_2x_3\\
        &&+c_9x_1x_2x_3^2+c_{10}x_2x_3^3+c_{13}x_1^2x_3^2+c_{14}x_1x_3^3+c_{15}x_3^4
      \end{eqnarray*}
  \item the toric variety $\XX_\D$, which is the unique complete and $\Q$-factorial one, whose fan matrix is given by $V_\D$, is a quotient of $\P^1\times\P^1$ by the action of $\Z/2Z$ defined by sending
      \begin{equation*}
        (\eta,([x_1:x_2],[y_1:y_2])\mapsto ([\nu x_1:\nu^{-1}x_2],[y_1:y_2])\ ,\quad\text{where}\ \nu=\exp(\eta\pi i)
      \end{equation*}
  \item observing that
  \begin{equation*}
    V_\D^T\cdot V = \left( \begin {array}{ccc} 1&-1&0\\
                                                -1&1&0\\
                                                1&1&-2\\
                                               -1&-1&2\end {array} \right)
  \end{equation*}
  the framing $\v$ of $\XX_\D$ is given by the minimum positive vector such that
  \begin{equation*}
    V_\D^T\cdot V + \left(
                      \begin{array}{ccc}
                        \v & \v & \v \\
                      \end{array}
                    \right)\geq \0\ \Longrightarrow\ \v=\left(
                                                          \begin{array}{c}
                                                            1 \\
                                                            1 \\
                                                            2 \\
                                                            1 \\
                                                          \end{array}
                                                        \right)
  \end{equation*}
  \item lattice polytopes $[\D_\bb]$ and $[\D_\v]$ are given by
  \begin{equation*}
    [\D_\bb]=\conv(V)\subseteq[\D_\v]=\conv(\L_\v)\,,\ \text{with}\ \L_\v=\left(
                                          \begin{array}{ccccc}
                                            1 & -1 & 0 & 0 & -1 \\
                                            0 & 0 & 1 & -1 & -1 \\
                                          \end{array}
                                        \right)
  \end{equation*}
  so guaranteeing assumption (4);
  \item assumption (3), that is, $(\XX_\D,\v)$ is admitting a calibrated $f$-process, is checked by observing that
      \begin{equation*}
        \L_\v^T\cdot V_\D= \left( \begin {array}{cccc} 1&-1&1&-1\\
                                                        -1&1&-1&1\\
                                                        -1&1&1&-1\\
                                                        1&-1&-1&1\\
                                                        0&0&-2&2\end {array} \right)\ \Longrightarrow\ \w=\left(
                                                    \begin{array}{c}
                                                      1 \\
                                                      1 \\
                                                      1 \\
                                                      1 \\
                                                      2 \\
                                                    \end{array}
                                                  \right)
      \end{equation*}
      and noticing that
  \begin{equation*}
    [\D_\w]=[\{\n\in N_\R\,|\,\L_\v^T\cdot \n\geq -\w\}]=\D
  \end{equation*}
\end{itemize}
Therefore, there is a well defined $f$-ACG dual family $\mathcal{Y}_\D^\vee=\mathcal{Y}_{[\D_\bb]}$ of the family $\mathcal{Y}_\D$, described by $[\D_\bb]$ as a Newton polytope of hypersurfaces inside the family $\mathcal{Y}_\v$ of hypersurfaces of degree $[D_\v]\in\Cl(\XX_\D)$. Namely,
\begin{equation*}
  V_\D^T\cdot \overline{V}+\left(
                             \begin{array}{cccc}
                               \v & \v & \v & \v \\
                             \end{array}
                           \right)
  =\left( \begin {array}{cccc} 0&2&1&1\\
                                                        2&0&1&1\\
                                                        3&3&2&0\\
                                                        0&0&1&3\end {array} \right)
\end{equation*}
so giving that the general element of $\mathcal{Y}_{[\D_\bb]}$ is a quotient, by the $\Z/2\Z$-action described above, of the zero-locus of the polynomial
\begin{equation*}
  f_{[\D_\bb]}=c_1x_2^2y_1^3+c_2x_1^2y_1^3+c_3x_1x_2y_1^2y_2+c_4x_1x_2y_2^3
\end{equation*}
\end{example}

\begin{remark}
  Notice that, given assumptions from (1) to (4) above, it is not true, in general, that $\D_\bb$ is a sub-polytope of $\D_\v$. In fact, in the previous Example~\ref{ex:D-ACG}
  \begin{equation*}
    \D_\bb=\conv\left( \begin {array}{ccc} 5/4&-1/4&-1\\
                                            -1/4&5/4&-1\end {array} \right)\ \nsubseteq\ \D_\v=\left( \begin {array}{cccc} 1&0&-1/2&-3/2\\
                                                                                0&1&-3/2&-1/2\end {array} \right)
  \end{equation*}
  (see the right part of Fig.~\ref{Fig2}).
\end{remark}

\subsection{Generalizing Berglund-H\"{u}bsch-Krawitz (BHK) duality}\label{ssez:BHK}

In 1993, physicist Berglund and H\"{u}bsch \cite{Berglund-Hubsch} presented a first generalization of the mirror symmetric Greene-Plesser construction \cite{GP}. Their construction appeared just before the Batyrev's one \cite{Batyrev94} and it is, in a sense, ``orthogonal'' to the latter. The intersection between the two is just the Greene-Plesser example of the quintic threefold. The Berglund-H\"{u}bsch construction was later refined by Krawitz \cite{Krawitz}. For this reason, it is often quoted as the \emph{Berglund-H\"{u}bsch-Krawitz (BHK) duality}. In \cite[\S 4]{ACG} Artebani, Comparin and Guilbot showed how their new ACG-duality ge\-ne\-ra\-lizes BHK-duality from \cy hypersurfaces of (a quotient of) a weighted projective space, whose polynomial is of Delsarte type, that is, same number of monomials and variables, to \cy hypersurfaces of (a quotient of) a $\Q$-Fano toric variety \cite[\S 4.2]{ACG} \footnote{Artebani, Comparin and Guilbot asked for $\Q$-Fano toric varieties with \emph{torsion free class group}, when presenting their generalization. Actually this hypothesis is unnecessary, as it was confirmed to me by Artebani (private communication).}.

In the previous \S \ref{ssez:ACG}, we introduced the $f$-ACG duality, which is a generalization of ACG-duality to suitable subfamilies of hypersurfaces in toric varieties. Clearly the same approach gives a generalization of BHK-duality, which will be called $f$-BHK duality. Namely,
\begin{itemize}
  \item if a lattice sub-polytope $\D\subseteq [\D_\aa]$ satisfies assumption from (1) to (4) in \S\ref{ssez:ACG}, then sub-families $\mathcal{Y}_{\D(0)}$ and $\mathcal{Y}_{\D(0)}^\vee=\mathcal{Y}_{[\D_\bb](0)}$, generated by vertices of the lattice polytopes $\D$ and $[\D_\bb]$, respectively, will be called \emph{$f$-BHK dual families}.
\end{itemize}

\begin{example}\label{ex:D-BHK}
  Consider the previous Example~\ref{ex:D-ACG} and the sub-family $\mathcal{Y}_{\D(0)}\subset \mathcal{Y}_{\D}$, whose general element is the zero-locus of the polynomial
  \begin{equation*}
    f_{\D(0)}=c_4x_1^2x_2^2+c_6x_2^2x_3^2+c_{13}x_1^2x_3^2+c_{15}x_3^4
  \end{equation*}
  Then its $f$-BHK dual family is given by the sub-family $\mathcal{Y}_{[\D_\bb](0)}\subset\mathcal{Y}_{[\D_\bb]}$, whose general element is a quotient, by the $\Z/2\Z$-action described above, of the zero-locus of the polynomial
  \begin{equation*}
    f_{[\D_\bb(0)]}=c_1x_2^2y_1^3+c_2x_1^2y_1^3+c_4x_1x_2y_2^3
  \end{equation*}
\end{example}

\begin{remark}
  In the previous Example~\ref{ex:D-BHK}, both $f_{[\D_\bb](0)}$ and  $f_{\D(0)}$ are no more polynomials of Delsarte type. Notice that this fact may also occur in the ACG generalization of BHK-duality (see \cite[Ex.~4.12]{ACG}).
\end{remark}

\subsection{Framed duality versus Krawitz duality}\label{ssez:K-dualita}

Given a pair of framed toric varieties linked by a calibrated $f$-process
$$(X,\aa)\stackrel{\text{$f$-process}}{\leftrightsquigarrow}(\XX_\aa,\bb)$$
generic hypersurfaces $Y\in|D_\aa|$ and $Y^\vee\in|D_\bb|$ may be very singular, making quite difficult finding suitable resolutions $\widehat{Y}$ and $\widehat{Y}^\vee$ and compute all the needed Hodge numbers to check the various instances of mirror symmetry as explained in \S\ref{ssez:mirrortest}. According with Chiodo and Ruan \cite{Chiodo-Ruan}, it is generally believed that considering suitably associated Landau-Ginzburg (LG) models may sensibly simplify singularities and giving rise to alternative way of checking mirror symmetry.

In the present section, a sort of a \emph{LG/Hypersurface correspondence} is presented, as an extension of the LG/CY correspondence, studied by Chiodo and Ruan \cite{Chiodo-Ruan}, in the case of Delsarte \cy hypersurfaces, and also by Chiodo, Kalashnikov and Veniani in the recent \cite{CKV}, beyond the \cy setting. As it will be observed in the next \S\ref{sez:ipersuperfici}, in the case of projective hypersurfaces, the associated LG models turn out to be even smooth. A similar LG/Hypersurface correspondence, translates the mirror duality at a level of LG models. The latter has been described, for hypersurfaces of Delsarte type in weighted projective spaces, by Krawitz \cite{Krawitz} by means of an extension of Berglund-H\"{u}bsch duality without any \cy condition. ACG extension of BHK-duality and, furthermore, considerations given in the previous \S\ref{ssez:ACG} and \S\ref{ssez:BHK}, allows us \emph{to think of f-duality in terms of a generalized Krawitz duality}, as stated in the following Theorem~\ref{thm:Krawitz}, so getting a geometric equivalent condition to the existence of a calibrated $f$-process. Compare also with the more recent \cite{HSSW}, where He, Si, Shen and Webb give an interesting improvement of Krawitz duality.

\subsubsection{A LG/Hypersurface correspondence}\label{sssez:LG/Hyp} Given a ftv $(X,\aa)$ and a generic hypersurface $Y\in|D_\aa|$, let $\T\cong(\C^*)^n$ be the acting torus on $X$. Consider the torus hypersurface $Z:=\T\cap Y$. Recalling Remark~\ref{rem:famiglie}(a), $Y$ is the zero locus of the polynomial $f$ in (\ref{f-WT}), generated by the columns of the matrix $\overline{M}_\aa+\overline{A}$. Consider the Laurent polynomial
\begin{equation*}
  f_\aa:={f\over \x^\aa}\in\C[\x,\x^{-1}]
\end{equation*}
generated by the columns of the matrix $\overline{M}_\aa$. Notice that, in $\T$ both $f$ and $f_\aa$ admit the same zero-locus $Z$, that is,
\begin{equation*}
  Z=\T\cap f^{-1}(0)=\T\cap f_\aa^{-1}(0)
\end{equation*}
In particular, $f_\aa$ defines a function $f_\aa:\T\longrightarrow\C$, so giving a LG model $(\T,f_\aa)$ admitting a Laurent superpotential.

On the other hand, following Remark~\ref{rem:famiglie}(b), let $\T_\aa\cong(\C^*)^n$ be the acting torus on $\XX_\aa$ and $Z^\vee:=\T_\aa\cap Y^\vee$. Consider the Laurent polynomial
\begin{equation*}
  f^\vee_\bb:={f^\vee\over \x^\bb}\in\C[\x,\x^{-1}]
\end{equation*}
where $f^\vee$ is the polynomial given in (\ref{fdual}), generated by the columns of the matrix $\overline{M}_{\aa,\bb}+\overline{B}$\,. In particular, $f^\vee_\bb$ defines a function $f^\vee_\bb:\T_\aa\longrightarrow\C$, so giving a LG model $(\T_\aa,f^\vee_\bb)$ with a Laurent superpotential.

\begin{definition}[K-duality of Laurent LG models]
  Let $(\T,f)$ and $(\T',f')$ be two \emph{Laurent LG models}, that is, $\T$ and $\T'$ are algebraic tori and $f,f'$  Laurent superpotentials.  Then they are said  \emph{Krawitz dual (K-dual)} if
  \begin{equation*}
    M^T=M'\quad\text{(up to a permutation of columns)}
  \end{equation*}
  being $M, M'$ \emph{vertex matrices of the Newton polytopes} of $f$ and $f'$, respectively, defined up to a permutation on vertices.
\end{definition}

\begin{theorem}\label{thm:Krawitz}
  If a ftv $(X,\aa)$ admits a calibrated $f$-process
$$(X,\aa)\leftrightsquigarrow(\XX_\aa,\bb)$$
   then the associated Landau-Ginzburg models $(\T,f_\aa)$ and $(\T_\aa,f^\vee_\bb)$ are K-dual, that is,
\begin{equation}\label{K-dual}
  M_{\aa,\bb}=M_\aa^T
\end{equation}
This gives rise to the following commutative diagram of LG/Hypersurfaces correspondences and mirror dualities
  \begin{equation*}
    \xymatrix{Y\ar@{<~>}[d]_-{\text{LG/Hyp}}\ar@{<~>}[rr]^-{\text{$f$-MS}}&&Y^\vee
    \ar@{<~>}[d]^-{\text{LG/Hyp}}\\
    (\T,f_\aa)\ar@{<~>}[rr]^-{\text{K-duality}}&&(\T_\aa,f^\vee_\bb)}
  \end{equation*}
Viceversa, if condition (\ref{K-dual}) and the second condition displayed in (\ref{min}) are satisfied then one gets a calibrated $f$-process.
\end{theorem}

\begin{proof} Assume, at first, that $(X,\aa)$ admits a calibrated $f$-process. Then,
  Theorem~\ref{thm:Deltatriviale} implies that one can assume $\L_\bb=V$, up to a change of ge\-ne\-ra\-tors in lattices $M$ and a permutation of columns. Then
  \begin{equation*}
    M_{\aa,\bb}= \L_\aa^T\cdot\L_\bb= \L_\aa^T\cdot V = M_{\aa}^T
  \end{equation*}
On the other hand, assume $M_{\aa,\bb}=M_\aa^T$, with
\begin{equation*}
  M_\aa= V^T\cdot \L_\aa\ ,\quad M_{\aa,\bb}=\L_\aa^T\cdot\L_\bb
\end{equation*}
being $V,\L_\aa,\L_\bb$ be fan matrices of $X,\XX_\aa,\XX_\bb$, respectively. Then
\begin{equation}\label{eqzmat}
  \L_\aa^T\cdot V= \L_\aa^T\cdot\L_\bb
\end{equation}
Being $\L_\aa$ a fan matrix, there exist two invertible matrices $A$ and $U$ such that
\begin{equation*}
  \L_\aa^T=\widehat{\L}_\aa^T\cdot A\ ,\quad \widehat{\L}_\aa^T=U\cdot\left(
                                                                        \begin{array}{c}
                                                                          I_n \\
                                                                          \0 \\
                                                                        \end{array}
                                                                      \right)
\end{equation*}
\cite[Prop.~3.1(3), Prop.~2.6(2)]{RT-LA&GD}. Then, multiplying on the left equation (\ref{eqzmat}) by $U^{-1}$, one gets
\begin{equation*}
  \left(
                                                                        \begin{array}{c}
                                                                          I_n \\
                                                                          \0 \\
                                                                        \end{array}
                                                                      \right)\cdot A\cdot V=\left(
                                                                        \begin{array}{c}
                                                                          I_n \\
                                                                          \0 \\
                                                                        \end{array}
                                                                      \right)\cdot A\cdot\L_\bb\ \Longleftrightarrow\ A\cdot V=A\cdot \L_\bb
\end{equation*}
Multiplying the latter on the left by $A^{-1}$, one finally gets $V=\L_\bb$. Then $\XX_\bb=X$ and, calling $\cc$ the f-dual framing of $(\XX_\aa,\bb)$ on the f-dual toric variety $\XX_\bb$, one gets $\cc=\aa$ by the same argument given in (\ref{c=a}). Then $(\XX_\bb,\cc)=(X,\aa)$, so proving that $(X,\aa)$ admits a calibrated $f$-process.
\end{proof}

\begin{remark}
  Example~\ref{ex:due framing 3} shows that the second condition displayed in (\ref{min}) is needed to get the necessary condition in Theorem~\ref{thm:Krawitz}.
\end{remark}

\begin{remark}\label{rem:K-dualita}
The previous Theorem~\ref{thm:Krawitz} leads to an alternative conjectural approach, of checking mirror symmetry for an $f$-mirror pair $(Y,Y^\vee)$, following the lines described in \cite{Chiodo-Ruan}. Namely, Krawitz established a Mirror Theorem for LG mo\-dels corresponding to \emph{quasi-homogeneous and non-degenerate} weighted hypersurfaces defined by Delsarte polynomials and linked by Berglund-H\"{u}bsch duality \cite[Thm.~1.1]{Krawitz}: the Krawitz mirror map is constructed by means of a bi-graded isomorphism between suitable graded vector spaces associated with the involved superpotentials. See also \cite{HSSW} for an interesting improvement of this Mirror Theorem for LG models. Then, Chiodo and Ruan proved, under the further \cy condition, that those graded vector spaces are related with the cohomology of suitable resolutions $\widehat{Y}$ and $\widehat{Y}^\vee$ of $Y$ and $Y^\vee$, respectively \cite[Thm.~16, Cor.~17]{Chiodo-Ruan}. Superpotentials involved in the statement of Theorem~\ref{thm:Krawitz}, can be assumed quasi-homogeneous, by considering $f$ and $f^\vee$ rather than  $f_\aa$ and $f^\vee_\bb$, respectively. But in general they cannot be assumed neither non-degenerate nor Delsarte, so imposing a deep revision of the Krawitz construction. Moreover, the lack of any CY condition imposes a deeper understanding of relations between Chen-Ruan cohomology and the usual cohomology of $Y$ and $Y^\vee$ (in this sense,  consider also \cite{CKV}, for a slight relaxation of the CY condition).
\end{remark}

\subsection{KKP-compactification of associated LG models and log geometry}\label{ssez:KKP-compct}

Landau-Ginzburg models associated with an $f$-mirror pair $(Y,Y^\vee)$ as in \S\ref{sssez:LG/Hyp}, admit a \emph{compactification} in the sense of Katzarkov-Kontsevich-Pantev \cite[Def.~2.4]{KKP}, exhibiting a log geometry which is that of a log \cy defined by Gross and Siebert \cite[Def.~1.10]{GS-IMS}, where the simple normal crossings divisor $D$ is replaced by the framing of the considered ftv.

Namely, under notation introduced in the previous \S\ref{sssez:LG/Hyp}, $K$-dual superpotential functions $f_\aa:\T\longrightarrow\C$ and $f^\vee_\bb:\T_\aa\longrightarrow\C$ admit the following properifications
\begin{equation*}
  \xymatrix{\T\ar[d]^-{f_\aa}\ar@{^(->}[r]&X\ar[d]^-{\overline{f}_\aa:=[f:\x^\aa]}\\
            \C\ar@{^(->}[r]&\P^1}\quad \xymatrix{\ar@{<~>}[rr]<-18pt>^-{\text{$K$-duality}}&&}\quad
   \xymatrix{\T_\aa\ar[d]^-{f^\vee_\bb}\ar@{^(->}[r]&\XX_\aa\ar[d]^-{\overline{f}^\vee_\bb:=[f^\vee:\x^\bb]}\\
            \C\ar@{^(->}[r]&\P^1}
\end{equation*}
Notice that:
\begin{enumerate}
  \item $\overline{f}_\aa^{-1}([0:1])=Y\subset X$ and $\overline{f}_\aa^{-1}([1:0])=D_\aa\subset X$
  \item $(\overline{f}^\vee_\bb)^{-1}([0:1])=Y^\vee\subset \XX_\aa$ and $(\overline{f}^\vee_\bb)^{-1}([1:0])=D'_\bb\subset \XX_\aa$
  \item families $\mathcal{Y}_\aa=\{Y\in|D_\aa|\}$ and $\mathcal{Y^\vee}_\bb=\{Y^\vee\in|D'_\bb|\}$ give rise to corresponding families of LG models $\{(\T,f_\aa)\}$ and $\{(\T_\aa,f^\vee_\bb)\}$, respectively, whose variation turns out to be ``anchored at infinity'' by their framing; when (suitable resolutions of) $Y$ and $Y^\vee$ are \cy varieties, then these families of LG models are precisely those considered in \cite{KKP}, meaning that, in this case, their spaces of ``anchored'' versal deformations are smooth \cite[Thm.~A]{KKP};
  \item recalling the Gross-Siebert definition of a log \cy pair \cite[Def.~1.10]{GS-IMS}, one has
      $$K_X+D_\aa\sim D_{\aa-\1}\quad\text{and}\quad K_{\XX_\aa}+D'_\bb\sim D_{\bb-\1}$$
      so giving effective divisors supported on $\bigcup_iD_i$ and $\bigcup_jD'_j$, respectively; by this point of view, framed toric varieties $(X,\aa)$ and $(\XX_\aa,\bb)$ may be understood as \emph{log pairs}, no more \cy as $D_\aa$ and $D'_{\bb}$ have only normal crossings; this gives an hint about how thinking of the $f$-duality in the context of \emph{Intrinsic Mirror Symmetry} \cite{GS-IMS}.
\end{enumerate}

\section{Framing $\P^n$ and associated dual partners}\label{sez:framingPn}

A projective space $\P^n$ is a smooth and complete toric variety associated with the fan matrix
\begin{equation}\label{V}
  V=\left(
      \begin{array}{ccc}
        I_n & | &-\mathbf{1} \\
      \end{array}
    \right)=\left(
              \begin{array}{cccc}
                \e_1 & \cdots & \e_n & -\mathbf{1}\\
              \end{array}
            \right)
    \in\mathbf{M}(n,n+1;\Z)
\end{equation}
and the unique fan $\Si\in\SF(V)$, given by all the faces of the $n+1$, maximal, $n$-dimensional cones, generated by every choice of $n$ columns of $V$. For this complete toric variety, it turns out that \emph{condition (\ref{min}) in Theorem~\ref{thm:Deltatriviale} is satisfied for a sufficiently large number of framings}: this is the content of the following result. Notice that the following construction depends on the divisor chosen as a framing, that is, a selected section of a line bundle, and not on the line bundle itself, just as the Greene-Plesser and, more in general, the Berglund-H\"{u}bsch duality. In fact, the reader can check that different choices of sections in the same line bundle can give rise to opposite phenomena: e.g. for projective plane sixtics, the ftv $(\P^2,(1,1,4))$ admits a calibrated process, as explained in the next theorem, but the ftv $(\P^2,(2,2,2))$ does not.

\begin{theorem}\label{thm:dualita}
  Let $D_\aa=\sum_{i=0}^{n+1}a_iD_i$ be a strictly effective divisor of $\P^n$. Then $(\P^n,D_\aa)$ is a ftv.

\noindent For every $i=1,\ldots,n+1$\,, define $d_i:=\gcd(\{a_j\,|\,j\neq i\})$
and assume that
\begin{equation}\label{convenzione}
  a_1\leq a_2\leq \cdots \leq a_{n+1}\quad\text{and}\quad \gcd(a_1,\ldots,a_{n+1})=1
\end{equation}
Let $\NN'\subset N_\R\cong\R^n$ be the polytope given by the convex hull of suitable multiples of the standard basis, as follows
\begin{equation*}
     \NN'=\conv\left(\0,\e_1,{a_n\over a_{n-1}}\e_2,\ldots,{a_n\over a_1}\e_n\right)\quad(\text{notation as in \emph{(\ref{V})}}\,)
   \end{equation*}
    Then, the $f$-process associated with the ftv $(\P^n,D_\aa)$ is calibrated if and only if the following conditions hold:
    \begin{itemize}
      \item[(a)] $\conv(\NN'\cap N)=\conv\left(\{\0\}\cup\left\{\left[{a_n\over a_{n-i+1}}\right]\e_i\,|\,\forall\,1\leq i\leq n \right\}\right)$\,,
      \item[(b)] $\exists\,i,j\in\{1,\ldots,n+1\}:\ i\neq j\,,\  d_i=d_j=1$\,.
    \end{itemize}
  In this case, the associated $f$-dual ftv is given by $(\XX_\aa,D'_\bb)$ with
  \begin{eqnarray*}
    \XX_\aa &\cong& \P(\q)/G_\aa\quad\text{where $\q$ is the reduced weight vector of}\ \aa
                                                                             \\
    D'_\bb &=& \sum_{i=1}^{n+1}b_iD'_i\quad\text{where}\ b_i=\left\{\begin{array}{cc}
                                                               a_{n+1}/d_i & \text{for}\ i\leq n \\
                                                               a_{n}/ d_{n+1} & \text{for}\ i= n+1
                                                             \end{array}\right.
  \end{eqnarray*}
  being $D'_1,\ldots,D'_{n+1}$ the torus invariant prime divisors generating $\Weil(\XX_\aa)$ and $G_\aa$ a finite abelian group of order
  \begin{equation}\label{Ga-order}
    |G_\aa|=\left(\sum_{i=1}^{n+1}a_i\right)^{n-1}
  \end{equation}
  whose action on the weighted projective space $\P(\q)$ is represented by a \emph{torsion matrix} $\Ga$ as follows: by setting
  $$G_\aa\cong\Z/\tau_1\Z\oplus\cdots\oplus\Z/\tau_s\Z$$
  with $\tau_1|\tau_2|\cdots|\tau_s$, the action is given by
  \begin{eqnarray*}
     &\xymatrix{\left(\bigoplus_{k=1}^s\Z/\tau_k\Z\right)\times\P(\q)\ar[rrr]^-{\Ga=([\g_{k,j}]_{\tau_k})}&&&\P(\q)}\hskip1.5truecm&  \\
     &\xymatrix{\left(([\ve_1]_{\tau_1},\ldots,[\ve_s]_{\tau_s}),[x_1:\ldots :x_{n+1}]\right)\ar@{|->}[r]&\left[\left(\prod_{k=1}^s\exp\left({2\pi i \g_{k,j}\ve_k\over\tau_k}\right)\right)x_j\right]_{j=1}^{n+1} }&
  \end{eqnarray*}
  where $$\Ga=\left(
              \begin{array}{ccc}
                [\g_{1,1}]_{\tau_1} & \cdots & [\g_{1,n+1}]_{\tau_1} \\
                \vdots &  & \vdots \\
                 {[}\gamma_{s,1}]_{\tau_s}  & \cdots & [\g_{s,n+1}]_{\tau_s} \\
              \end{array}
            \right)
  $$
  is represented by $(\g_{k,j})\in \mathbf{M}(s,n+1;\Z)$ constructed by means of the next Algorithm~\ref{algoritmoG}.\\
  In particular, if $\aa$  is a reduced weight vector then $\XX_\aa\cong\P(\aa)/G_\aa$ and
  $$\bb=\left(
          \begin{array}{cccc}
            a_{n+1} &  \cdots & a_{n+1} & a_n \\
          \end{array}
        \right)
  $$
\end{theorem}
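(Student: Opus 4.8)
The plan is to verify the two conditions of Theorem~\ref{thm:Deltatriviale} by explicitly computing the combinatorial data produced by the $f$-duality construction applied to $(\P^n, D_\aa)$. First I would work out the polytope $\D_\aa$ associated with the framing $\aa$: using the fan matrix $V = (I_n \mid -\1)$ from \eqref{V}, the defining inequalities $V^T \cdot \m \geq -\aa$ become $m_i \geq -a_i$ for $i = 1, \ldots, n$ together with $-\sum_i m_i \geq -a_{n+1}$, i.e. $\sum_i m_i \leq a_{n+1}$. So $\D_\aa$ is a simplex with vertices at $\m = (-a_1, \ldots, -a_n)$ translated appropriately — concretely the vertices are $\bigl(a_{n+1} + \sum_{j\neq i} a_j\bigr)\e_i$-type points minus the shift; in any case $\D_\aa$ is a dilated-and-translated standard simplex. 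I would then identify $\XX_{\D(\P^n,\aa)}$ with a quotient of a weighted projective space: the fan matrix $\L_\aa$ built from the primitive generators pointing to the vertices of $\D_\aa$ (via Proposition~\ref{prop:Fmatricedipolitopo}) has Gale dual given by the weight vector $\q$ obtained by reducing $\aa$, and the torsion in the cokernel — the group $G_\aa$ — is read off from the Smith/Hermite normal form of $\L_\aa$. The order formula $|G_\aa| = (\sum a_i)^{n-1}$ should drop out of computing the gcd of maximal minors of $\L_\aa$ versus those of the fan matrix of $\P(\q)$.

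Next I would pin down $\bb$. By Definition~\ref{def:dual-ftv}, $\bb$ is the minimal strictly positive vector with $M_\aa^T + B \geq \0$ where $M_\aa = V^T \cdot \L_\aa$. Since the columns of $\L_\aa$ are the primitive generators toward the vertices of $\D_\aa$, and those vertices are (up to the shift) $\tfrac{a_{n+1}}{d_i}$-scaled axis points after clearing denominators, the pairing $\langle \v_i, \ll_j\rangle$ achieves its minimum exactly at the vertex ``opposite'' to the $i$-th ray, giving $b_i = a_{n+1}/d_i$ for $i \leq n$ and $b_{n+1} = a_n/d_{n+1}$ — this is where condition \eqref{convenzione}, the ordering $a_1 \leq \cdots \leq a_{n+1}$, gets used, since it determines which vertex wins. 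Here conditions (a) and (b) enter: condition (a) is precisely the requirement that passing to the integer part $[\D_\aa]$ (resp.\ $[\D_\bb]$) does not lose the vertex structure — i.e.\ that $\conv(\NN' \cap N)$ is still the simplex on the rounded-down multiples $[a_n/a_{n-i+1}]\e_i$, so that $k_0 = 1$ and $\L_\aa$ is genuinely the fan matrix of $\XX_\aa$ rather than of some degeneration; condition (b) guarantees that after one more duality step the fan matrix $\L_\bb$ returns to $V$ (with the right primitivity), which is exactly \eqref{min}.

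The crux is the ``only if'' direction together with the precise bookkeeping of when $k_0 = 1 = k_1$. For the ``if'' direction one assembles the above: compute $\L_\bb$ from $\D_\bb = [k_1 \D_\bb]$, check $\L_\bb = V$ up to column permutation, verify $\min_j \langle \v_i, \ll_j \rangle = -a_i$, and invoke Theorem~\ref{thm:Deltatriviale}. For ``only if'', one argues contrapositively: if (a) fails then $[\D_\aa] \subsetneq \D_\aa$ loses a vertex, so $\L_\aa$ — hence $\L_\bb$ on the second step — has fewer than $n+1$ rays, forcing $V \neq \L_\bb$; if (b) fails, i.e.\ at most one $d_i = 1$, then the recomputed $\L_\bb$ picks up a non-primitive column or an extra ray, again violating the first line of \eqref{min}. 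I expect the main obstacle to be the honest verification that the double dual returns $V$ on the nose — tracking the primitive generators of $\D_\bb$ through the two roundings and confirming that $b_i = a_{n+1}/d_i$ yields back exactly the columns $\e_1, \ldots, \e_n, -\1$ — since this requires controlling the interaction between the gcd's $d_i$, the ordering \eqref{convenzione}, and the lattice-point rounding simultaneously. The group-action description via the torsion matrix $\Ga$ is then a separate, essentially formal step: once $G_\aa = \coker$ of the relevant lattice map is identified, writing its action on $\P(\q)$ through a Smith-normal-form presentation $G_\aa \cong \bigoplus_k \Z/\tau_k\Z$ is standard toric-quotient bookkeeping, carried out by Algorithm~\ref{algoritmoG}, and the order $|G_\aa| = (\sum_i a_i)^{n-1}$ follows by comparing the maximal minors of $\L_\aa$ with those of the fan matrix of $\P(\q)$.
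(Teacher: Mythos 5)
Your overall skeleton is the right one and matches the paper's: compute $\D_\aa$ explicitly from $V=(I_n\mid -\1)$, extract $\L_\aa$ by reducing the vertices by the gcd's $d_i$, read off $\bb$ from the minimality condition using the ordering (\ref{convenzione}), compute $\D_\bb$, and reduce everything to the two conditions (\ref{min}) of Theorem~\ref{thm:Deltatriviale}. But the way you assign the roles of conditions (a) and (b) to those two conditions is wrong, and the contrapositive arguments you sketch for the ``only if'' direction would fail. First, condition (a) has nothing to do with $[\D_\aa]$ versus $\D_\aa$: the vertices of $\D_\aa$ computed from (\ref{DeltaaConv}) are integral (the $i$-th vertex has entries $|\aa|-a_i$ and $-a_j$), so $\D_\aa$ is always a lattice simplex, $k_0=1$ automatically, and $\L_\aa$ always has exactly $n+1$ columns --- no vertex is ever ``lost''. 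Condition (a) lives entirely on the $N$-side: it controls whether $\NN'=\D_\bb\cap N_\R^{\ge}$ has lattice points outside the simplex on $\0$ and the rounded axis points $[a_n/a_{n-i+1}]\e_i$; when it fails, $[\D_\bb]$ acquires \emph{extra} vertices, so $\L_\bb$ has \emph{more} than $n+1$ columns and $\L_\bb\ne V$. That, and only that, is the first condition of (\ref{min}).

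Second, condition (b) is not about $\L_\bb=V$ at all. The paper's explicit formula for the vertices $\n_i$ of $\D_\bb$ shows that all dependence on $d_1,\ldots,d_{n+1}$ cancels, so the $d_i$'s cannot influence $\L_\bb$; in particular ``(b) fails $\Rightarrow$ $\L_\bb$ picks up a non-primitive column or an extra ray'' is false. Condition (b) is exactly equivalent to the \emph{second} condition of (\ref{min}): from (\ref{LamdaV}) one has $\langle\v_i,\ll_j\rangle=-a_i/d_j$ for $j\ne i$, so $\min_j\langle\v_i,\ll_j\rangle=-a_i$ holds iff some $d_j=1$ with $j\ne i$, and demanding this for every $i$ forces at least two of the $d_j$ to equal $1$. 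You need this bookkeeping to get both directions of the equivalence. A secondary point: the order formula $|G_\aa|=|\aa|^{n-1}$ does not simply ``drop out'' of comparing maximal minors --- one has $|\det\L_\aa^{\{i\}}|=a_i|\aa|^{n-1}/\prod_{j\ne i}d_j$, and identifying this with $q_i\det\b$ requires proving that $\gcd(\{d_ka_k\,|\,k\ne i\})=\prod_j d_j$ under the hypothesis $\gcd(a_1,\ldots,a_{n+1})=1$, which is a genuine (if elementary) number-theoretic lemma occupying a good part of the paper's proof.
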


\begin{algorithm}\label{algoritmoG}
The torsion matrix $\Ga$, representing the $G_\aa$-action giving $\XX_\aa=\P(\q)/G_\aa$ in the previous Theorem~\ref{thm:dualita}, is defined in display (3) of \cite[Thm.~3.2]{RT-Erratum}. Namely:
\begin{enumerate}
    \item consider a fan matrix $\widetilde{\L}$ of $\P(\q)$ such that $A\cdot\L_\aa=\b\cdot\widetilde{\L}$, with $$A\in\GL_n(\Z)\quad\text{and}\quad\b=\diag\left(\underbrace{1,\ldots,1}_{n-s},\tau_1,\ldots\tau_s\right)$$
    \item consider the following matrix $U_\q\in\GL_{n+1}(\Z)$ sending the transposed weight vector $\q^T$ in Hermite normal form (HNF):
        \begin{equation*}
          U_\q=\left(
                  \begin{array}{c}
                    \uu \\
                    \widetilde{\L} \\
                  \end{array}
                \right)\ \Longrightarrow\ U_\q\cdot\q^T=\left(
                                                          \begin{array}{c}
                                                            1 \\
                                                            0 \\
                                                            \vdots \\
                                                            0 \\
                                                          \end{array}
                                                        \right)
        \end{equation*}
    \item let $^{n+1-s}U_\q$ be the submatrix of $U_\q$ given by the upper $n+1-s$ rows and consider the matrix $W\in\GL_{n+1}(\Z)$ sending the transposed matrix $^{n+1-s}U_\q^T$ in HNF, that is
        \begin{equation*}
          W\cdot\  ^{n+1-s}U_\q^T=\HNF\left(\,^{n+1-s}U_\q^T\right)
        \end{equation*}
        \item consider the submatrices $_s\widetilde{\L}$ and $_sW$ of $\widetilde{\L}$ and $W$, respectively, assigned by the lower $s$ rows and define the following $s\times s$ integer matrix
            \begin{equation*}
              G:=_s\widetilde{\L}\cdot\, _sW^T\in\mathbf{M}(s,s;\Z)
            \end{equation*}
            \item finally, consider $U_G\in\GL_s(\Z)$ sending the transposed matrix $G^T$ in HNF, that is $U_G\cdot G^T=\HNF(G^T)$, and define
                \begin{equation*}
                  (\g_{k,i}):=U_G\cdot\,_sW\in \mathbf{M}(s,n+1;\Z)\ \Longrightarrow\ \Ga:=(\g_{k,i})\mod \boldsymbol{\tau}
                \end{equation*}
  \end{enumerate}
\end{algorithm}

\begin{proof}[Proof of Theorem~\ref{thm:dualita}] The first part of this proof will describe the $f$-dual ftv $(\XX_\aa,\bb)$ under condition (\ref{convenzione}). Then the $f$-process $(\P^n,\aa)\leftrightsquigarrow(\XX_\aa,\bb)$ will be shown to be calibrated if and only if conditions (a) and (b) are satisfied: that is, assuming (\ref{convenzione}), conditions (a) and (b) are equivalent to conditions (\ref{min}) in Theorem~\ref{thm:Deltatriviale}.

  $\P^n$ is a smooth and complete toric variety whose Picard group
  $$\Pic(\P^n)\cong\Cl(\P^n)\cong\Z\cdot h$$
  is generated by the hyperplane class $h=[D_1]=\cdots =[D_{n+1}]$, associated with the torus invariant prime divisors generating $\Weil(\P^n)\cong\bigoplus_{i=1}^{n+1}\Z\cdot D_i$. In particular $h$ is a very ample class, so giving that every strictly effective divisor is necessarily very ample, that is, for every ftv  $(\P^n,D_{\aa})$, $D_\aa$ ia very ample divisor. Recalling Proposition~\ref{prop:gg}~(2) and relation (\ref{mI}) in Proposition~\ref{prop:risoluzioni}, the associated lattice polytope $\D_\aa=\D_{D_\aa}$ is given by
  \begin{eqnarray}\label{DeltaaConv}
   \nonumber
    \D_\aa&=&\conv\left(\left\{-((V^{\{i\}})^T)^{-1}\cdot\aa^{\{i\}}\,|\,i=1,\ldots,n+1\right\}
    \right)\\
      &=&\conv\left(
                          \begin{array}{c}
                            |\aa|- a_1 \\
                            -a_2 \\
                            -a_3\\
                            \vdots \\
                            -a_n \\
                          \end{array}
                                        \begin{array}{c}
                                          -a_1 \\
                                           |\aa|- a_2\\
                                          -a_3\\
                                          \vdots \\
                                          -a_n \\
                                        \end{array}
                                      \cdots
                                        \begin{array}{c}
                                          -a_1 \\
                                          -a_2\\
                                          \vdots \\
                                          -a_{n-1} \\
                                          |\aa|- a_n\\
                                        \end{array}
                                        \begin{array}{c}
                                          -a_1 \\
                                          -a_2\\
                                          \vdots \\
                                          -a_{n-1} \\
                                          -a_n\\
                                        \end{array}
                                      \right)
  \end{eqnarray}
  where we set $|\aa|:=\sum_{i=1}^{n+1}a_i$\,.
  Then the associated reduced $F$-matrix $\L_\aa$ is
  \begin{equation}\label{Lambda_a}
    \L_\aa=\left(
                          \begin{array}{c}
                            {(|\aa|- a_1)/ d_1} \\
                            -{a_2/ d_1} \\
                            \vdots \\
                            -{a_n/ d_1} \\
                          \end{array}
                                        \begin{array}{c}
                                          \\
                                          \cdots\\
                                          \cdots\\
                                           \\
                                        \end{array}
                                        \begin{array}{c}
                                          -a_1/ d_n \\
                                          \vdots \\
                                          -a_{n-1}/ d_n \\
                                          (|\aa|- a_n)/ d_n\\
                                        \end{array}
                                        \begin{array}{c}
                                          -a_1/ d_{n+1} \\
                                          -a_2/ d_{n+1}\\
                                          \vdots \\
                                          -a_n/ d_{n+1}\\
                                        \end{array}
  \right)
  \end{equation}
  so giving
  \begin{equation}\label{LamdaV}
    \L_\aa^T\cdot V=\left(
                          \begin{array}{c}
                            (|\aa|- a_1)/ d_1 \\
                            -{a_1/ d_2} \\
                            \vdots \\
                            -a_{1}/ d_{n}\\
                            -a_{1}/ d_{n+1} \\
                          \end{array}
                                        \begin{array}{c}
                                          -a_2/ d_{1} \\
                                          (|\aa|- a_2)/ d_{2}\\
                                          -a_2/d_3\\
                                          \vdots \\
                                          -a_{2}/ d_{n+1} \\
                                        \end{array}
                                        \begin{array}{c}
                                          \cdots\\
                                          \cdots\\
                                          \cdots\\
                                          \cdots \\
                                          \cdots \\
                                        \end{array}
                                        \begin{array}{c}
                                          -a_{n}/ d_{1} \\
                                          \vdots \\
                                          -a_{n}/d_{n-1}\\
                                          (|\aa|- a_n)/ d_{n}\\
                                          -a_{n}/d_{n+1}\\
                                        \end{array}
                                        \begin{array}{c}
                                          -a_{n+1}/ d_{1} \\
                                          -a_{n+1}/d_2\\
                                          \vdots \\
                                          -a_{n+1}/d_n\\
                                          (|\aa|- a_{n+1})/ d_{n+1}\\
                                        \end{array}\right)
  \end{equation}
 Recalling that $a_1\leq \cdots \leq a_{n+1}$, there follows
 $$\bb=\left(
                                                                          \begin{array}{c}
                                                                            a_{n+1}/d_1 \\
                                                                            \vdots\ \\
                                                                            a_{n+1}/d_n \\
                                                                            a_n/d_{n+1} \\
                                                                          \end{array}
                                                                        \right)
 $$
 Moreover
\begin{equation*}
   \left(
      \begin{array}{ccc}
        d_1a_1 & \cdots & d_{n+1}a_{n+1} \\
      \end{array}
    \right)\cdot \L_\aa^T= \0
 \end{equation*}
 meaning that the reduced weight vector $\q$ of $(d_1a_1,\,\cdots\,,d_{n+1}a_{n+1})$ is a \emph{weight vector} of $\XX_\aa$, in the sense explained in \S\ref{ssez:coni&div}, that is, a representative matrix of the class morphism $d$ in the  short exact sequence (\ref{complete deg sequence}). Hence $\XX_\aa$ is a suitable quotient of the weighted projective space (WPS) $\P(\q)$ by the action of a finite abelian group $G_\aa$\,. The action of $G_\aa$  on $\P(\q)$ is described by item 6 in \cite[Thm.~3.2]{RT-Erratum}, so giving items from (2) to (5) in Algorithm~\ref{algoritmoG}. The isomorphism type of $G_\aa$ can be determined by in item (1) of Algorithm~\ref{algoritmoG}, that is, by looking for a fan matrix $\widetilde{\L}$ of $\P(\q)$ and a switching matrix $\b=\diag\left(\1_{n-s},\tau_1,\ldots\tau_s\right)$, such that $A\cdot\L_\aa=\b\cdot\widetilde{\L}$, for some $A\in\GL_n(\Z)$. Then
 \begin{equation*}
   G_\aa\cong\bigoplus_{i=1}^s\Z/\tau_i\Z
 \end{equation*}
 with $\tau_1|\tau_2|\cdots|\tau_s$. In particular $|G_\aa|=\prod_{i=1}^s\tau_i=\det\b$. Then, to prove (\ref{Ga-order}), notice that, on the one hand Binnet theorem gives
 \begin{equation*}
   \forall\,i=1,\ldots,n+1\quad \left|\det\left(\L_\aa^{\{i\}}\right)\right|=\left|\det\widetilde{\L}^{\{i\}}\right|\cdot\det\b=q_i\det\b
 \end{equation*}
 On the other hand, we claim that, under condition (\ref{convenzione}),
 \begin{equation}\label{pesi}
   \forall\,i=1,\ldots,n+1\quad \left|\det\left(\L_\aa^{\{i\}}\right)\right|=q_i\,|\aa|^{n-1}
 \end{equation}
 so giving $\det\b=|\aa|^{n-1}$ and then (\ref{Ga-order}). In fact, the reduction $\q=(q_1\,\cdots\,q_{n+1})$ is obtained by setting
\begin{equation*}
  q_i:={d_ia_i\over\lcm(\{\d_j\,|\,j\neq i\})}\quad \text{where}\quad \d_j:=\gcd(\{d_ka_k\,|\,k\neq j\})
\end{equation*}
Moreover, condition (\ref{convenzione}) implies that
\begin{equation}\label{lcm=prod}
  \forall\,i=1,\ldots,n+1\quad \lcm(\{\d_j\,|\,j\neq i\})=\prod_{j=1}^{n+1}d_j
\end{equation}
Notice that (\ref{lcm=prod}) implies (\ref{pesi}), as
 \begin{equation*}
   \left|\det\left(\L_\aa^{\{i\}}\right)\right|= {a_i|\aa|^{n-1}\over\prod_{j\neq i}d_j}={d_ia_i|\aa|^{n-1}\over\prod_{j=1}^{n+1}d_j}=q_i\,|\aa|^{n-1}
 \end{equation*}
To show (\ref{lcm=prod}), notice that, for any $i=1,\ldots,n+1$,
\begin{eqnarray*}
  d_i=\gcd(\{a_k\,|\,k\neq i\})\ \Longrightarrow\ \forall\,k\neq i\quad d_i|d_ka_k&\Longrightarrow& d_i|\d_i\\
  \forall\,j,k\neq i\quad d_j|d_ka_k\ \text{as}\ \left\{\begin{array}{cc}
                                                   d_j|d_j & \text{for $k=j$} \\
                                                   d_j|a_k & \text{for $k\neq j$}
                                                 \end{array}\right.&\Longrightarrow& d_j|\d_i\\
                                                 &\Longrightarrow&\lcm(d_1,\ldots,d_{n+1})| \d_i
\end{eqnarray*}
Recall that $\gcd(a_1,\ldots,a_{n+1})=1$ implies that $\gcd(d_j,d_k)=1$, for any $j\neq k$ \cite[Prop.~3]{RT-wps}. Therefore $\lcm(d_1,\ldots,d_{n+1})=\prod_{j=1}^{n+1}d_j$, so giving that
\begin{equation*}
  \forall\,i=1,\ldots,n+1\quad \prod_{j=1}^{n+1}d_j\,|\,\d_i
\end{equation*}
On the other hand, $\d_i=\gcd(\{d_ka_k\,|\,k\neq i\})$. Then $\d_i|d_ka_k$, for any $k\neq i$\,. Recall that $\gcd(d_k,a_k)=1$ \cite[Prop.~3]{RT-wps}. Hence, for any $i=1,\ldots,n+1$,
\begin{equation*}
  \forall\,k\neq i\quad \d_i|d_ka_k\ \Longrightarrow\ \left\{\begin{array}{cccc}
                              \exists\,k:& \d_i|d_k&\Longrightarrow&\d_i|\prod_{j=1}^{n+1}d_j\\
                              \forall\,k\neq i& \d_i|a_k&\Longrightarrow&\d_i|\gcd(\{a_k\,|\,k\neq i\})=d_i\\
                              &&\Longrightarrow& \d_i  |\prod_{j=1}^{n+1}d_j
                            \end{array}\right.
\end{equation*}
\begin{equation*}
  \Longrightarrow\hskip1.3truecm \d_i|\prod_{j=1}^{n+1}d_j\hskip2.5truecm
\end{equation*}
In conclusion, $\d_i=\prod_{j=1}^{n+1}d_j$ for any $i=1,\ldots,n+1$. Then (\ref{lcm=prod}) immediately follows.

Moreover, notice that $\q$ is also the reduced vector of $\aa$. In fact
\begin{equation*}
  \forall\,i\quad q_i={d_ia_i\over\lcm(\{\d_j\,|\,j\neq i\})}={d_ia_i\over\prod_{j=1}^{n+1}d_j}={a_i\over \prod_{j\neq i}d_j}={a_i\over \lcm(\{d_j\,|\,j\neq i\})}
\end{equation*}
Notice that if $\aa$ is already a reduced weight vector, then
 $$d_1=\cdots=d_{n+1}=1\ \Longrightarrow\ \q=\aa\ ,\quad \bb= \left(
                                                                          \begin{array}{c}
                                                                            a_{n+1} \\
                                                                            \vdots\ \\
                                                                            a_{n+1} \\
                                                                            a_n \\
                                                                          \end{array}
                                                                        \right)
                                                                        $$
Therefore: $(\XX_\aa:=\P(\q)/G_\aa,D'_\bb$) is the $f$-dual ftv of $(\P^n,D_\aa)$.

 We are now going to considering the $f$-process associated with $(\P^n,D_\aa)$\,. By the definition of $\D_\bb$ given in (\ref{rel1}), and noticing that $\XX_\aa$ is the toric variety associated with the fan $\Si_{\D_\aa}$, which turns out to be the unique one in $\SF(\L_\aa)$, we get
\begin{equation}\label{Delta_b}
  \D_\bb=\{\n\in N_\R\,|\,\L_\aa^T\cdot\n \geq -\bb\}=\conv(\{\n_i\in N_\R\,|\,1\leq i\leq n+1\}
\end{equation}
where $\n_i=-\left(\left(\L_\aa^{\{i\}}\right)^T\right)^{-1}\cdot\bb^{\{i\}}$, so giving
\begin{eqnarray*}
\forall\,i=1,\ldots,n\quad\n_i&=&-{a_{n+1}-a_n\over |\aa|}\,\mathbf{1}+{a_{n+1}\over a_i}\left(1-{a_{n+1}-a_n\over |\aa|}\right)\,\e_i \\
 \n_{n+1}     &=& -\mathbf{1}\\
\end{eqnarray*}
Notice that, in this expression of $\D_\bb$\,
\begin{itemize}
  \item the dependence on $d_1,\ldots,d_{n+1}$ completely disappeared,
  \item since $a_n\leq a_{n+1}$\,, it follows that
  \begin{equation}\label{disuguaglianze}
    0\leq{a_{n+1}-a_n\over|\aa|}< 1\quad\text{and}\quad 0<{a_{n+1}\over a_i}\left(1-{a_{n+1}-a_n\over |\aa|}\right)\leq {a_{n+1}\over a_i}
  \end{equation}
\end{itemize}
Inequalities in (\ref{disuguaglianze}) imply that, for every $i=1,\ldots,n$, the $j$-th entry of $\n_i$ has to satisfy the relations
\begin{equation}\label{componenti}
  \forall\,j\neq i\quad -1<n_{j,i}\leq 0\quad\text{and}\quad \left\{\begin{array}{cc}
                                                                      n_{i,i} =1 & \text{if $a_i=a_{n+1}$} \\
                                                                       n_{i,i}>1 & \text{if $a_i<a_{n+1}$}
                                                                     \end{array}\right.
\end{equation}
where the inequality $n_{i,i}>1$ is obtained as follows:
\begin{equation*}
  n_{i,i}={a_{n+1}\over a_i}-\left(1+{a_{n+1}\over a_i}\right){a_{n+1}-a_n\over |\aa|}={a_{n+1}|\aa|-(a_i+a_{n+1})(a_{n+1}-a_n)\over a_i|\aa|}
\end{equation*}
Therefore,
\begin{equation*}
  n_{i,i}>1\ \Longleftrightarrow\ (a_{n+1}-a_i)|\aa|>(a_i+a_{n+1})(a_{n+1}-a_n)
\end{equation*}
and the latter follows immediately by hypothesis on $\aa$ and the first condition in (\ref{convenzione}).

\noindent Calling $\NN':=\D_\bb\cap N_{\R}^\geq$, where $N_{\R}^\geq$ represents the positive orthant in the chosen identification $N_{\R}\cong \R^n$\,, (\ref{componenti}) give that
\begin{equation*}
  \NN'=\conv\left(\0,\e_1,{a_n\over a_{n-1}}\e_2,\ldots,{a_n\over a_1}\e_n\right)
\end{equation*}
as one can check by intersecting the hyperplane passing through $\n_1,\ldots,\n_n$ with coordinate axes.
Moreover,
\begin{equation*}
  \D_\bb\cap N=\{-\mathbf{1}\}\cup(\NN'\cap N)
\end{equation*}
and
\begin{equation}\label{[Deltab]}
  \D(\XX_\aa,\bb)=[\D_\bb]=\conv\left(\left\{\e_1,\left[{a_n\over a_{n-1}}\right]\e_2,\ldots,\left[{a_n\over a_{1}}\right]\e_n,-\mathbf{1}\right\}\right)\ \Longleftrightarrow\ \L_\bb=V
\end{equation}
where the last equality has to be understood up to a possible permutation of columns. This means that the first condition (\ref{min}) in Theorem~\ref{thm:Deltatriviale} is equivalent to condition (a) in the statement. Moreover, recalling expression (\ref{LamdaV}) of the transposed matrix of $\L_\bb^T\cdot\L_\aa=V^T\cdot\L_\aa$, the second condition in (\ref{min}) can be attained if and only if condition (b) in the statement is assumed, that is, if and only if at least two of $d_i$'s equal 1. Then Theorem~\ref{thm:Deltatriviale} ensures that the $f$-process associated with $(\P^n,D_\aa)$ is calibrated if and only if conditions (a) and (b) hold.
\end{proof}

\begin{corollary}\label{cor:ipersuperfici}
  Let $Y_d\subseteq\P^n$ be a projective hypersurface of degree $d\geq n+1$. Then there always exists a framing $D_{\aa_0}$ of $\P^n$ such that $Y_d\sim D_{\aa_0}$ and the $f$-process associated with $(\P^n,D_{\aa_0})$ is calibrated.
\end{corollary}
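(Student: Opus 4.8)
The plan is to exhibit a single explicit member of the linear system $|Y_d|$ that falls within the scope of Theorem~\ref{thm:dualita}. Since $\Cl(\P^n)\cong\Z$ is generated by the common class $[D_1]=\cdots=[D_{n+1}]$ of the torus invariant prime divisors, every tuple of positive integers $\aa=(a_1,\ldots,a_{n+1})$ with $\sum_i a_i=d$ produces a strictly effective divisor $D_\aa=\sum_i a_iD_i$ of class $d$, hence $D_\aa\sim Y_d$, and then $(\P^n,D_\aa)$ is a ftv by the first assertion of Theorem~\ref{thm:dualita}. So the whole task reduces to choosing one such $\aa$ for which the normalization (\ref{convenzione}) holds together with conditions (a) and (b) of that theorem.

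The candidate I would test first is the maximally unbalanced tuple
\[\aa_0:=\bigl(\underbrace{1,\ldots,1}_{n},\,d-n\bigr),\]
which makes sense precisely because $d\geq n+1$ forces $d-n\geq 1$. It satisfies (\ref{convenzione}) on the nose: $1\leq\cdots\leq 1\leq d-n$ and $\gcd(1,\ldots,1,d-n)=1$. For this choice both conditions (a) and (b) degenerate to triviality. For (b): when $i\leq n$ the multiset $\{a_j\mid j\neq i\}$ still contains an entry equal to $1$ (here $n\geq 2$ is used), so $d_i=1$, and $d_{n+1}=\gcd(1,\ldots,1)=1$; thus all the $d_i$ equal $1$, and in particular two distinct ones do. For (a): since $a_1=\cdots=a_n=1$ one has $a_n/a_{n-i+1}=1$ for every $i$, so the auxiliary polytope $\NN'=\conv(\0,\e_1,\ldots,\e_n)$ is the standard unit simplex, whose only lattice points are its $n+1$ vertices; therefore $\conv(\NN'\cap N)=\NN'=\conv\bigl(\{\0\}\cup\{\e_i\mid 1\leq i\leq n\}\bigr)$, which is exactly the right-hand side of (a) because $[a_n/a_{n-i+1}]=1$. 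Theorem~\ref{thm:dualita} then applies verbatim and yields that the $f$-process attached to $(\P^n,D_{\aa_0})$ is calibrated, proving the corollary.

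I do not expect any genuine obstacle here: the only verification worth a second thought is (a), namely that passing to the integer part does not shrink $\NN'$, which is automatic since $\NN'$ is already a lattice polytope. Two remarks I would append. First, for $n=1$ the tuple $\aa_0$ meets (b) only when $d=2$, and in fact on $\P^1$ no framing can work for $d\geq 3$, so the statement is to be understood for $n\geq 2$. Second, many other tuples with $\sum_i a_i=d$ also satisfy (a) and (b), so this construction typically furnishes several candidate $f$-mirrors, in keeping with the Mirror Web philosophy of \S\ref{ssez:MWeb}.
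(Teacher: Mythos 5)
Your proof is correct and coincides with the paper's own argument: the paper likewise takes $\aa_0=(\1_n,d-n)$ and verifies (\ref{convenzione}) together with conditions (a) and (b) of Theorem~\ref{thm:dualita} in exactly the same way, with (a) reducing to the observation that $\NN'$ is the standard unit simplex. Your appended remarks (the $n=1$ caveat and the multiplicity of admissible framings) go beyond the paper's proof but do not affect its validity.
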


\begin{proof}
 It suffices choosing ${\aa_0}=(\underbrace{1,\ldots,1}_{n\ \text{times}},\d:=d-n)=(\1,\d)$. It clearly satisfies conditions (\ref{convenzione}) and (b) of Theorem~\ref{thm:dualita}. Moreover $$\NN'=\conv\left(\0,\e_1,\ldots,\e_n\right)=\conv(\NN'\cap N)$$
  so giving condition (a), too. Then, thesis follows by theorems~\ref{thm:Deltatriviale} and \ref{thm:dualita}.
\end{proof}

\begin{remark}\label{rem:s-ample} Recalling Remark~\ref{rem:semimple},
  consider the framing $\aa_0=(\1_n,\d)$ of $\P^n$ introduced in the previous Corollary~\ref{cor:ipersuperfici}. The dual ftv is then given by
  $$\left(\XX_{\aa_0},\bb_0\right)=\left(\P(\1_n,\d)/\left(\Z/d\Z\right)^{n-1}\ ,\ (\underbrace{\d,\ldots,\d}_{\text{$n$ times}},1)\right)$$
  (notice that $G_{(\1,\d)}\cong\left(\Z/d\Z\right)^{n-1}$, by the following Lemma~\ref{lem:Ga}). By (\ref{Delta_b}), the polytope $d\D_{\bb_0}$ is a lattice polytope, convex hull of $n+1$ lattice points associated to the maximal cones of the fan $\Si_{\aa_0}$ of $\XX_{\aa_0}$, that is, $\cO_{\XX_{\aa_0}}(d D'_{\bb_0})$ is a globally generated line bundle and $D'_{\bb_0}$ is semi-ample, by Proposition~\ref{prop:gg}.\\
  Since $\XX_{\aa_0}$ has Picard number 1, this is enough to ensure that $dD'_{\bb_0}$ is an ample divisor of $\XX_{\aa_0}$.\\
  Moreover, calling $\pi:\P(\1_n,\d)\twoheadrightarrow\XX_{\aa_0}$ the canonical quotient associated with the $\left(\Z/d\Z\right)^{n-1}$-action, the pull-back $\pi^*(D'_{\bb_0})=\sum_{j}b_j\pi^*(D'_j)$ turns out to be the generator of $\Pic(\P(\1_n,\d))\cong\Z$ and a very ample divisor of the universal 1-covering $\P(\1_n,\d)$ of $\XX_{\aa_0}$, as guaranteed by \cite[Prop.~8]{RT-wps}.
\end{remark}

\section{Mirror partners of hypersurfaces of degree $d\geq n+1$ in $\P^n$}\label{sez:ipersuperfici}

Degree $d$ hypersurfaces in $\P^n$ are parameterized by the projective space $$\P\left(H^0(\P^n,\cO_{\P^n}(d)\right)$$
The action of $\P\GL(n+1)$ on $\P^n$ extends naturally to an action on the parameter space $\P\left(H^0(\cO_{\P^n}(d)\right)$. Recalling (\ref{mYY}), define the \emph{number of complex moduli} of a generic (smooth) hypersurface $Y=Y_d\subset\P^n$,  of degree $d$, to be the following one
\begin{equation}\label{m_d^n}
  m_d^n:=\dim \P\left(H^0(\cO_{\P^n}(d)\right) - \dim \P\GL(n+1) ={n+d\choose d}-(n+1)^2
\end{equation}
which is actually the dimension of the moduli space $\mathcal{M}^n_d$ of degree $d$ hypersurfaces in $\P^n$, well defined after Mumford's GIT \cite{Mumford}, as $\P\GL(n+1)$ is a reductive group.\\
On the other hand, if $n\geq 4$, Weak Lefschetz Theorem implies that the \emph{Picard number} of $Y$ is given by
\begin{equation*}
  k_d^n:=h^{1,1}(Y)=b_2(Y)=b_2(\P^n)=1
\end{equation*}
which is also called the \emph{number of \ka moduli} of $Y$, being $k_d^n$ the dimension of the complexified \ka cone of $Y$ \cite[\S6.2]{CoxKatz}.

\begin{remark}\label{rem:combinatorica}
From the combinatorial point of view, consider the framing of $\P^n$ given in Corollary~\ref{cor:ipersuperfici}, that is $D_{\aa_0}\sim Y_d$ with
\begin{equation*}
  {\aa_0}=(\underbrace{1,\ldots,1}_{n\ \text{times}},d-n)
\end{equation*}
Then
\begin{equation}\label{Deltaa}
  \D_{\aa_0}=\conv(\L_{\aa_0})\quad\text{with}\quad \L_{\aa_0}=\left(
                          \begin{array}{c}
                            d-1 \\
                            -1 \\
                            \vdots \\
                            -1 \\
                          \end{array}
                                        \begin{array}{c}
                                          \\
                                          \cdots\\
                                          \cdots\\
                                           \\
                                        \end{array}
                                        \begin{array}{c}
                                          -1 \\
                                          \vdots \\
                                          -1 \\
                                          d-1\\
                                        \end{array}
                                        \begin{array}{c}
                                          -1 \\
                                          -1\\
                                          \vdots \\
                                          -1\\
                                        \end{array}\right)
\end{equation}
is the Newton polytope associated with the generic polynomial in $H^0(\P^n,\cO_{\P^n}(d))$. Recalling (\ref{aut}) and (\ref{h0}), one has
\begin{equation*}
  m_d^n= l(\D_{\aa_0})-1-n-\sum_{\Theta<^1\D_\1} l^*(\Theta)
\end{equation*}
as the anti-canonical polytope $\D_{-K_{\P^n}}$ is given by the following sub-polytope of $\D_{\aa_0}$
\begin{equation*}
  \D_{-K_{\P^n}}=\D_\1=\conv\left(\begin{array}{c}
                            d-2 \\
                            -1 \\
                            \vdots \\
                            -1 \\
                          \end{array}
                                        \begin{array}{c}
                                          \\
                                          \cdots\\
                                          \cdots\\
                                           \\
                                        \end{array}
                                        \begin{array}{c}
                                          -1 \\
                                          \vdots \\
                                          -1 \\
                                          d-2\\
                                        \end{array}
                                        \begin{array}{c}
                                          -1 \\
                                          -1\\
                                          \vdots \\
                                          -1\\
                                        \end{array}\right)
\end{equation*}
On the other hand, by Remark~\ref{rem:reflexive}, $\P^n=\P_{\D_\1}\cong\XX_\NN$ with $\NN=\D_\1^*=\conv(V)$ and $V$ is the fan matrix given in (\ref{V}). Then \cite[Prop.~4.4.1]{Batyrev94} gives
\begin{equation}\label{k}
  k_d^n=h^{1,1}(\P^n)=l(\NN)-l^*(\NN)-n = l(\conv(V))-1-n =1
\end{equation}
\end{remark}

\subsection{A-side mirroring} The $f$-dual ftv of $(\P^n,D_{\aa_0})$, as given by Theorem~\ref{thm:dualita}, is $(\XX_{\aa_0},D'_{\bb_0})$ with
\begin{eqnarray*}
    \XX_{\aa_0} &\cong& \P(1,\ldots,1,d-n)/G_{\aa_0}\\
    D'_{\bb_0} &=& \sum_{i=1}^{n+1}b_iD'_i\quad\text{where}\ b_i=\left\{\begin{array}{cc}
                                                               d-n & \text{for}\ i\leq n \\
                                                               1 & \text{for}\ i= n+1
                                                             \end{array}\right.
  \end{eqnarray*}

\begin{lemma}\label{lem:Ga}
  $G_{\aa_0}\cong\left(\Z/d\Z\right)^{n-1}$ and its action on $\P({\aa_0})$ can be written as follows
  \begin{eqnarray*}
     &\xymatrix{\left(\Z/d\Z\right)^{n-1}\times\P(1,\ldots,1,d-n)\ar[rr]^-{\Ga}&&\P(1,\ldots,1,d-n)}\hskip3.8truecm&  \\
     &\xymatrix{\left((\overline{\ve}_1,\ldots,\overline{\ve}_{n-1}),[x_1:\ldots :x_{n+1}]\right)\ar@{|->}[r]&\left[\mu_1x_1:\cdots:\mu_{n-1}x_{n-1}:x_{n}:\left(\prod_{j=1}^{n-1}\mu_j\right)^{-1}x_{n+1} \right] }&
  \end{eqnarray*}
  where $\mu_j:=\exp\left({2\pi i\over d}\ve_j\right)$. It can then be represented by the following torsion matrix
  \begin{equation}\label{azione}
  \Ga=\left(
                      \begin{array}{ccc}
                        \overline{I}_{n-1} & \overline{\0}_{n-1} & (d-1)\cdot\overline{\mathbf{1}}_{n-1} \\
                      \end{array}
                    \right)\in\M(n-1,n+1;\Z/d\Z)
  \end{equation}
\end{lemma}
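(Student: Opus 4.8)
The plan is to reduce the statement to Theorem~\ref{thm:dualita} and then run an entirely elementary computation with the fan matrix $\L_{\aa_0}$. First I would note that for $\aa_0=(1,\ldots,1,d-n)$ every integer $d_i=\gcd(\{a_j\mid j\neq i\})$ equals $1$: for $i\le n$ because $\{a_j\mid j\neq i\}$ still contains a $1$, and for $i=n+1$ because it is $(1,\ldots,1)$. Hence $\aa_0$ is already a reduced weight vector, so Theorem~\ref{thm:dualita} applies with $\q=\aa_0$ and gives $\XX_{\aa_0}\cong\P(\aa_0)/G_{\aa_0}$ with $|G_{\aa_0}|=(\sum_i a_i)^{n-1}=d^{n-1}$ by (\ref{Ga-order}) --- already matching $|(\Z/d\Z)^{n-1}|$. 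Substituting $d_i=1$ and $|\aa_0|=d$ into (\ref{Lambda_a}) collapses the fan matrix of $\XX_{\aa_0}$ to $\L_{\aa_0}=(\,dI_n-J\mid-\1_n\,)$, where $J$ is the $n\times n$ all-ones matrix; equivalently its columns are $\w_i=d\e_i-\1_n$ for $1\le i\le n$ and $\w_{n+1}=-\1_n$.

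Next I would identify the group $G_{\aa_0}$. By the divisorial sequence (\ref{complete deg sequence}) for $\XX_{\aa_0}$ one has $\Cl(\XX_{\aa_0})=\coker(\L_{\aa_0}^T\colon\Z^n\to\Z^{n+1})$, whose image is the sublattice $L:=\langle\,d\e_i-\1_{n+1}\mid 1\le i\le n\,\rangle\subset\Z^{n+1}$ spanned by the rows of $\L_{\aa_0}$. A three-line reduction in $\Z^{n+1}/L$ does the rest: the differences of the generators give $d(\e_i-\e_j)=0$ for $i,j\le n$; the generator $d\e_1-\1_{n+1}$ then expresses $\e_{n+1}$ in terms of $\e_1$ and the classes $f_i:=\e_i-\e_1$ $(2\le i\le n)$; and the remaining generators become $d f_i=0$. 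Therefore $\Cl(\XX_{\aa_0})\cong\Z\,\e_1\oplus\bigoplus_{i=2}^{n}(\Z/d\Z)f_i\cong\Z\oplus(\Z/d\Z)^{n-1}$, and since in the present construction $G_{\aa_0}$ coincides with the torsion subgroup of $\Cl(\XX_{\aa_0})$ (the switching matrix $\b$ of item~(1) of Algorithm~\ref{algoritmoG} records exactly the invariant factors of that torsion, and $\coker\widetilde{\L}^T=\Cl(\P(\aa_0))\cong\Z$ is torsion-free), we get $G_{\aa_0}\cong(\Z/d\Z)^{n-1}$ with all $\tau_k=d$. Equivalently, the Smith normal form of $\L_{\aa_0}$ has invariant factors $1,d,\ldots,d$.

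Finally, for the action I would exploit that the presentation $\XX_{\aa_0}\cong\P(\aa_0)/G_{\aa_0}$ corresponds to an inclusion of character lattices: under the standard identification the character lattice of $\XX_{\aa_0}$ is $L$ sitting inside that of $\P(\aa_0)$, namely $\q^{\perp}\subset\Z^{n+1}$, and $G_{\aa_0}$ is the subgroup of the torus $T$ of $\P(\aa_0)$ dual to the quotient map $\q^{\perp}\twoheadrightarrow\q^{\perp}/L\cong(\Z/d\Z)^{n-1}$. Writing this map explicitly as $a\mapsto(a_1-a_{n+1},\ldots,a_{n-1}-a_{n+1})\bmod d$ --- one checks directly that its kernel is $L$ and that it is onto --- the dual subgroup of $T$ is exactly $\{[\mu_1:\cdots:\mu_{n-1}:1:(\mu_1\cdots\mu_{n-1})^{-1}]\mid\mu_j^{\,d}=1\}$, acting on $[x_1:\cdots:x_{n+1}]$ by the displayed formula; reading off the exponents of the $\mu_j$ coordinate by coordinate ($\delta_{kj}$ in column $j\le n-1$, $0$ in column $n$, and $d-1\equiv-1$ in column $n+1$) gives the torsion matrix $\Ga=(\,\overline{I}_{n-1}\mid\overline{\0}_{n-1}\mid(d-1)\cdot\overline{\1}_{n-1}\,)$. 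As a cross-check one can instead feed $\L_{\aa_0}$ and a fan matrix of $\P(\aa_0)$ --- e.g. $(\,(d-n)\1_n-\sum_{i=2}^{n}\e_i\mid\e_2\mid\cdots\mid\e_n\mid-\1_n\,)$ --- into Algorithm~\ref{algoritmoG}: since every matrix entering it is assembled from identity blocks and all-ones vectors, each Hermite-normal-form reduction is immediate and the output is the same $\Ga$. I expect the only real difficulty to be the careful bookkeeping of the two lattices and their ray generators in this last step; there is no conceptual obstacle, the matrices being as rigid as one could hope.
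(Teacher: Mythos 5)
Your proof is correct, but it follows a genuinely different route from the paper's. The paper proceeds by (a) exhibiting a fan matrix $\widetilde{\L}_{\aa_0}$ of the covering $\P(\aa_0)$ and an integer matrix $B$ with $B\cdot\widetilde{\L}_{\aa_0}=\L_{\aa_0}$, reading the torsion coefficients $\tau_1=\cdots=\tau_{n-1}=d$ off the Smith normal form of $B$, and then (b) \emph{verifying a posteriori} that the candidate matrix $\Ga$ of (\ref{azione}) satisfies the four characterizing properties (i)--(iv) of a torsion matrix from \cite[Thm.~3.2]{RT-Erratum} (which requires producing the auxiliary matrices $\,^1U_{\aa_0}$ and $C$ explicitly). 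You instead compute $\Cl(\XX_{\aa_0})=\coker(\L_{\aa_0}^T)\cong\Z\oplus(\Z/d\Z)^{n-1}$ by direct row reduction, identify $G_{\aa_0}$ with the torsion of the class group (your parenthetical justification — that the switching matrix $\b$ records the invariant factors of $\q^\perp/\im(\L_{\aa_0}^T)$ because $\coker(\widetilde{\L}^T)\cong\Z$ is torsion-free — is sound), and then \emph{derive} the action a priori from the standard duality between the finite-index inclusion of character lattices $L=\im(\L_{\aa_0}^T)\subset\q^{\perp}$ and the finite subgroup $\Hom(\q^{\perp}/L,\C^*)$ of the torus of $\P(\aa_0)$; the explicit surjection $a\mapsto(a_i-a_{n+1})_{i\le n-1}\bmod d$ does kill $L$ and has the right index, so the torsion matrix falls out without any verification step. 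What each approach buys: the paper's is mechanical and stays entirely inside the algorithmic framework of \cite{RT-Erratum} (so it generalizes verbatim to arbitrary reduced framings via Algorithm~\ref{algoritmoG}), whereas yours is more self-contained and conceptually transparent, at the price of invoking the sublattice/finite-quotient description of $\P(\aa_0)\twoheadrightarrow\XX_{\aa_0}$ rather than the paper's normal-form machinery. Both correctly land on $\Ga=(\,\overline{I}_{n-1}\mid\overline{\0}_{n-1}\mid(d-1)\overline{\1}_{n-1}\,)$.
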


\begin{proof} First of all, we need to compute the torsion coefficients $\tau_1|\cdots|\tau_s$\,. At this purpose we determine a fan matrix $\widetilde{\L}_{\aa_0}$ of the covering wps $\P({\aa_0})$.

\noindent Since ${\aa_0}=(1,\ldots,1,d-n)$, we can choose
\begin{equation}\label{L-hat}
  \widetilde{\L}_{\aa_0}=\left(
                \begin{array}{ccc}
                  I_{n-1} & -\1_{n-1} & \0_{n-1} \\
                  \0_{n-1}^T & d-n & -1 \\
                \end{array}
              \right)\in \M(n, n+1, \Z)
\end{equation}
as ${\aa_0}\cdot \widetilde{\L}_{\aa_0}^T=\0_n^T$\,. As a second step we have to determine  a matrix
\begin{equation*}
  B\in\GL(n,\Q)\cap\M(n,\Z):\quad B\cdot \widetilde{\L}_{\aa_0}=\L_{\aa_0}
\end{equation*}
where $\L_{\aa_0}$ is the fan matrix of $\XX_{\aa_0}$ presented in (\ref{Deltaa}). Such an integer matrix $B$ exists by \cite[Prop.~3.1\,(3)]{RT-LA&GD} (see also \cite[Rem.~2.4]{RT-QUOT}) and is given by
\begin{equation}\label{betamat}
  B=\left(
             \begin{array}{ccccc}
               d-1&-1 & \cdots & -1 & 1 \\
               -1 & d-1 &\cdots & -1 & 1 \\
                \vdots  &         & \ddots &  \vdots  & \vdots       \\
              -1 & \ldots & -1 & d-1 & 1 \\
               -1 & \ldots &-1 & -1 & 1 \\
             \end{array}
           \right)
\end{equation}
Then, torsion coefficients are given by entries different than 1 in the diagonal Smith Form $\b$ of $B$, namely given by
\begin{equation}\label{smith}
  \b=A\cdot B\cdot C =\diag(1,\underbrace{d,\ldots,d}_{\text{$n-1$ times}})
\end{equation}
for suitable matrices $A,C\in\GL_n(\Z)$.
That is, $s=n-1$ and $\tau_1=\cdots=\tau_{n-1}=d$\,. In particular, $G_{\aa_0}\cong (\Z/d\Z)^{n-1}$.\\
 A torsion matrix $\Ga$ is a representative matrix of the torsion part of the class morphism from $\Weil(\XX_{\aa_0})$ to $\Cl(\XX_{\aa_0})$ and it is characterized by properties from (i) to (iv) in the proof of item (6)  in \cite[Thm.~3.2]{RT-Erratum}, that is:
\begin{itemize}
                                               \item[(i)] $\Ga=(\g_{kj})$ with $\g_{kj}\in \Z/d\Z$\,,
                                               \item[(ii)] $\Ga\cdot(\,^1U_{\aa_0})^T\equiv\0_{n-1} \mod d$\,, being $U_{\aa_0}\in\GL_{n+1}(\Z)$ a matrix switching the transposed weight vector $\aa_0^T$ in Hermite normal form,
                                               \item[(iii)] $\Ga\cdot\L_{\aa_0}^T \equiv\0_{n-1,n} \mod d$\,,
                                               \item[(iv)] $\Ga\cdot(\, _{n-1}(C^{-1}\cdot\widetilde{\L}_{\aa_0}))^T\equiv I_{n-1} \mod d$, where $C$ is given in (\ref{smith}), since the bottom $n-1$ rows of $C^{-1}\cdot\widetilde{\L}_{\aa_0}$ are sent by $\Ga$ to a set of generators of $$\Tors(\Cl(\XX_{\aa_0}))\cong(\Z/dZ)^{n-1}$$
                                             \end{itemize}
Assume $\Ga$ is given as in (\ref{azione}). Then (i) and (iii) are clear and (ii) follows by choosing
\begin{equation*}
  \,^1U_{\aa_0}=\left(
                  \begin{array}{ccccc}
                    0 & \cdots & 0 & 1 & 0 \\
                  \end{array}
                \right)
\end{equation*}
 Finally, condition (iv) is verified up to a basis change in $\Weil(\P(\aa_0))$. In fact, suppressing the $n$-th column from $\widetilde{\L}_{\aa_0}$, by \cite[Cor.~3.3]{RT-LA&GD} it follows that
  \begin{equation*}
1=\det\left(\L_{\aa_0}^{\{n\}}\right)\ \Longrightarrow\ \L_{\aa_0}^{\{n\}}\in\GL_n(\Z)
  \end{equation*}
  being 1 the $n$-th entry in $\aa_0$. Then (iv) is satisfied by setting
  \begin{equation*}
    C=\L_{\aa_0}^{\{n\}}\cdot\left(
                               \begin{array}{cc}
                                 \0_{n-1} & 1 \\
                                 I_{n-1} & 0 \\
                               \end{array}
                             \right)^{-1}
  \end{equation*}
  \end{proof}

Let $\D_{\bb_0}:=\D_{D'_{\bb_0}}$ be the polytope associated with $D'_{\bb_0}$. Then $\D(\XX_{\aa_0},{\bb_0})=[\D_{\bb_0}]$ is the Newton polytope of the generic section in $H^0(\XX_{\aa_0},\cO_{\XX_{\aa_0}}(D'_{\bb_0}))$, meaning that
\begin{equation*}
  h^0(\XX_{\aa_0},\cO_{\XX_{\aa_0}}(D'_{\bb_0}))=l(\D_{\bb_0})=l(\D(\XX_{\aa_0},{\bb_0}))
\end{equation*}
Moreover, $D'_{\bb_0}$ turns out to be a semi-ample divisor of $\XX_{\aa_0}$, as observed in Remark~\ref{rem:s-ample}.

\begin{theorem}\label{thm:m*=k} The family of hypersurfaces $Y^\vee\subseteq\XX_{\aa_0}$, obtained as zero-locus of sections in $H^0(\XX_{\aa_0},\cO_{\XX_{\aa_0}}(D'_{\bb_0}))$, depends on a unique complex modulus, that is, $m_{Y^\vee}=1$. If $n\geq 4$ then $m_{Y^\vee}$ equals the number $k_d^n$ of \ka moduli of projective hypersurfaces $Y_d\subseteq\P^n$ of degree $d$, that is
  \begin{equation*}
    m_{Y^\vee}=k_d^n=1
  \end{equation*}
  By Definition~\ref{def:A,B-mirror}, this means that  $Y^\vee$ is an $A$-mirror of $Y$.
\end{theorem}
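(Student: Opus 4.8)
The plan is to compute the combinatorial quantity $m_{Y^\vee}$ directly from its definition (\ref{mYY}),
$$m_{Y^\vee}=l(\D_{\bb_0})-1-n-\sum_{\Theta<^1\D_{-K_{\XX_{\aa_0}}}}l^*(\Theta),$$
so that the first assertion splits into two independent pieces: the count $l(\D_{\bb_0})=n+2$, and the vanishing $l^*(\Theta)=0$ for every facet $\Theta$ of the anti-canonical polytope of $\XX_{\aa_0}$. Equivalently, by (\ref{aut}) and (\ref{h0}) one has $m_{Y^\vee}=l(\D_{\bb_0})-1-\dim\Aut(\XX_{\aa_0})$, so the second piece is the statement that $\XX_{\aa_0}$ carries no Demazure roots, i.e.\ $\dim\Aut(\XX_{\aa_0})=n$; I would present whichever of these two equivalent formulations reads more cleanly.

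First I would pin down $l(\D_{\bb_0})=n+2$. Since the $f$-process of $(\P^n,D_{\aa_0})$ is calibrated by Corollary~\ref{cor:ipersuperfici}, the last assertion of Theorem~\ref{thm:Deltatriviale} applies, and the computation in the proof of Theorem~\ref{thm:dualita}, specialized to $\aa_0=(1,\dots,1,d-n)$ (so that $a_1=\dots=a_n=1$ and each $[a_n/a_{n-i+1}]$ equals $1$), gives $\D(\XX_{\aa_0},\bb_0)=[\D_{\bb_0}]=\conv(V)$, with $V$ the fan matrix (\ref{V}) of $\P^n$. Since $[\D_{\bb_0}]\cap N=\D_{\bb_0}\cap N$ by definition of the integer part, one gets $l(\D_{\bb_0})=l(\conv(V))$, and the chain of equalities (\ref{k}) in Remark~\ref{rem:combinatorica} gives $l(\conv(V))=n+2$.

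Next I would analyse the facets of $\D_{-K_{\XX_{\aa_0}}}\subseteq N_\R$. By (\ref{Deltaa}) the fan matrix $\L_{\aa_0}$ of $\XX_{\aa_0}$ has columns $\ll_i=d\,\e_i-\1$ for $1\le i\le n$ and $\ll_{n+1}=-\1$; this fan is simplicial, complete, with exactly $n+1$ rays, so $\XX_{\aa_0}$ is $\Q$-factorial of Picard number $1$ and $\D_{-K_{\XX_{\aa_0}}}=\{\n\in N_\R\mid\langle\n,\ll_j\rangle\ge-1\ \forall j\}$ is an $n$-simplex. Solving $\langle\n,\ll_k\rangle=-1$ over the $n$ rays $k\ne j$ (as in (\ref{mI})) identifies its vertices as $\e_1,\dots,\e_n$ and $-\tfrac{1}{d-n}\1$. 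The facet opposite $-\tfrac{1}{d-n}\1$ is the standard unimodular simplex $\conv(\e_1,\dots,\e_n)$, whose only lattice points are its vertices, so $l^*$ of that facet vanishes. For the facet $\Theta_i$ opposite the vertex $\e_i$, lying on $\langle\n,\ll_i\rangle=-1$, a lattice point $\n=(n_1,\dots,n_n)$ in $\Relint\Theta_i$ would have to satisfy $d\,n_i=S-1$ (with $S:=\sum_k n_k$), $S\le 0$, and $n_k\ge n_i+1$ for every $k\ne i$ with $k\le n$; summing the last inequality over the $n-1$ indices $k\ne i$, adding $n_i$, and substituting $n_i=(S-1)/d$ forces $(d-n)S\ge(n-1)d-n$. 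But $d\ge n+1$ makes $(n-1)d-n\ge(n-1)(n+1)-n=n^2-n-1\ge 1>0$, hence $S>0$, contradicting $S\le 0$; thus $l^*(\Theta_i)=0$ for all $i$.

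The hard part is exactly this last arithmetic step: excluding relative-interior lattice points on the ``tilted'' facets $\Theta_i$ is the only place where the hypothesis $d\ge n+1$ is genuinely used, and in toric language it says precisely that $\XX_{\aa_0}$ has no Demazure roots; everything else is bookkeeping around Theorem~\ref{thm:Deltatriviale}, Theorem~\ref{thm:dualita} and Remark~\ref{rem:combinatorica}. Combining the two computations yields $m_{Y^\vee}=(n+2)-1-n-0=1$. Finally, for $n\ge 4$ the Weak Lefschetz theorem gives $k_d^n=h^{1,1}(Y_d)=b_2(\P^n)=1$, and since a generic degree-$d$ hypersurface $Y_d\subset\P^n$ is smooth one may take $\widehat{Y}=Y_d$, whence $k_{\widehat{Y}}=1=m_{Y^\vee}$; by Definition~\ref{def:A,B-mirror}$(i)$ this is precisely the assertion that $Y^\vee$ is an $A$-mirror of $Y$.
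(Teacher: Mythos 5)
Your proof is correct, but it takes a genuinely different route from the paper's. The paper does not evaluate the combinatorial formula (\ref{mYY}) inside the proof: after establishing $[\D_{\bb_0}]=\conv(V)$ and hence $h^0(\XX_{\aa_0},\cO_{\XX_{\aa_0}}(D'_{\bb_0}))=n+2$, it writes out the generic section $f$ explicitly in $\Cox(\XX_{\aa_0})$ and constructs a diagonal automorphism $\d=\diag(\g_1,\ldots,\g_{n+1})$ of $\P(\aa_0)$, commuting with the $G_{\aa_0}$-action of Lemma~\ref{lem:Ga}, which normalizes all coefficients but one and exhibits $\psi$ in the normal form (\ref{mirror}) as the unique modulus. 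The reconciliation with the combinatorial definition (\ref{mYY}) --- in particular the claim that the facets of $\D_{-K_{\XX_{\aa_0}}}=\conv\bigl(\e_1,\ldots,\e_n,-\tfrac{1}{d-n}\1\bigr)$ contain no lattice points in their relative interiors --- is only asserted, in Remark~\ref{rem:moduli} and again in the proof of Proposition~\ref{prop:m*=k in Pn}. Your argument supplies exactly that verification: the arithmetic on the tilted facets $\Theta_i$ (forcing $(d-n)S\geq(n-1)d-n>0$ against $S\leq 0$) is the one nontrivial new step, it is correct, and it is precisely where $d\geq n+1$ enters. What you lose relative to the paper is the explicit one-parameter normal form (\ref{mirror}), which is not merely a by-product: it is reused downstream (the superpotential $\widetilde{w}_{d,\psi}$ of \S\ref{ssez:HoriVafa} and the identification of $\psi$ with the \ka parameter). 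So your version is the cleaner computation of $m_{Y^\vee}$ as literally defined in (\ref{mYY}), while the paper's version buys the explicit mirror family needed later; recording both would be ideal.
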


\begin{proof}
Since ${\bb_0}=(d-n,\ldots,d-n,1)$, relation (\ref{Delta_b}) gives that
\begin{equation*}
  \D_{\bb_0}=\conv\left(\begin{array}{c}
                            {nd+1-n^2\over d} \\
                            -{d-n-1\over d} \\
                            \vdots \\
                            -{d-n-1\over d} \\
                          \end{array}
                                        \begin{array}{c}
                                          \\
                                          \cdots\\
                                          \cdots\\
                                           \\
                                        \end{array}
                                        \begin{array}{c}
                                          -{d-n-1\over d} \\
                                          \vdots \\
                                          -{d-n-1\over d} \\
                                          {nd+1-n^2\over d}\\
                                        \end{array}
                                        \begin{array}{c}
                                          -1 \\
                                          -1\\
                                          \vdots \\
                                          -1\\
                                        \end{array}\right)
\end{equation*}
One can then directly check that
\begin{equation*}
  \D(\XX_{\aa_0},{\bb_0})=[\D_{\bb_0}]=\conv(V)=:\NN
\end{equation*}
as already shown by relation (\ref{[Deltab]}). Therefore $\0\in\Int([\D_{\bb_0}])$, meaning that $k_1=1$ in the first item of \S\ref{ssez:Deltaprocesso}. In particular, one gets
\begin{equation}\label{h0}
  h^0(\XX_{\aa_0},\cO_{\XX_{\aa_0}}(D'_{\bb_0}))=l(\D_{\bb_0})=l(\NN)=n+2
\end{equation}
and a generic section $f\in H^0(\XX_{\aa_0},\cO_{\XX_{\aa_0}}(D'_{\bb_0}))$ can be written as follows
\begin{equation}\label{f}
  f=\left(\sum_{i=1}^n c_i\, x_i^d\cdot \prod_{j=1}^n x_j^{d-n-1}\right)+c_{n+1}\,x_{n+1}^{n+1}+c_{n+2}\,\left(\prod_{j=1}^n x_j^{d-n}\right)\cdot x_{n+1}
\end{equation}
in the Cox ring $\C[x_1,\ldots,x_{n+1}]$ of $\XX_{\aa_0}$\,. Recall now that
$\XX_{\aa_0}\cong\P({\aa_0})/G_{\aa_0}$, with ${\aa_0}=(1,\ldots,1,d-n)$, and consider the automorphism of $\P({\aa_0})$
represented by the diagonal matrix $\d=\diag(\g_1,\ldots,\g_n,\g_{n+1})$, where $\g_1,\ldots,\g_{n+1}$ are solutions  of the following equations
\begin{eqnarray}\label{automorfismo}
\nonumber
  \g_k\hskip1.8truecm=&1&\text{if $c_k=0$, for $1\leq k\leq n+1$} \\
  \g_i^d\left(\prod_{j=1}^n\g_j\right)^{d-n-1} =& c_i &\text{for $1\leq i\leq n$ with $c_i\neq 0$}\\
  \nonumber
  \g_{n+1}^{n+1}\hskip1.6truecm =& c_{n+1} &\text{if $c_{n+1}\neq 0$}
\end{eqnarray}
By the previous Lemma~\ref{lem:Ga}, the action of $G_{\aa_0}$ can be assumed diagonal, meaning that $\d$ commutes with such an action, giving rise to an automorphism of $\XX_{\aa_0}$ making $f$ equivalent to the section
\begin{equation}\label{mirror}
  f'= \left(\sum_{i=1}^n \epsilon_i\, x_i^d\cdot \prod_{j=1}^n x_j^{d-n-1}\right)+\epsilon_{n+1}\,x_{n+1}^{n+1}+\psi\,\left(\prod_{j=1}^n x_j^{d-n}\right)\cdot x_{n+1}
\end{equation}
where $\epsilon_k=\left\{\begin{array}{cc}
                           0 & \text{if $c_k=0$} \\
                           1 & \text{otherwise}
                         \end{array}
\right.$\,.

\noindent Then $\psi\in\C$ turns out to be the unique complex modulus of the family of hypersurfaces $Y^\vee\subseteq\XX_{\aa_0}$ of degree $[D'_{\bb_0}]\in\Cl(\XX_{\aa_0})$\,.
\end{proof}

\begin{remark}\label{rem:moduli} Notice that the general hypersurface $Y^\vee\in|D'_{\bb_0}|$ is not quasi-smooth. Then results by Batyrev and Cox \cite{BC} and Bunnet \cite{Bunnet} cannot be applied to gua\-ran\-tee a good definition of a moduli space $\mathcal{M}_{Y^\vee}$. By the way, observe that the computation performed in Theorem~\ref{thm:m*=k} is consistent with the definition of $m_{Y^\vee}$ given in $(\ref{mYY})$. In fact
\begin{eqnarray*}
    \dim\P\left(H^0(\XX_{\aa_0},\cO_{\XX_{\aa_0}}(D'_{\bb_0}))\right)- \dim\Aut(\XX_{\aa_0})&=&l(\D_{\bb_0})-1-n\\
    &=& l(\NN)-1-n\,=\,1
  \end{eqnarray*}
  The last equality is obtained by recalling (\ref{h0}).  The former follows by (\ref{aut}), just observing that the anti-canonical polytope $\D_{-K_{\XX_{\aa_0}}}$ is given by the following sub-polytope of $\D_{\bb_0}$
  \begin{equation*}
    \D_{-K_{\XX_{\aa_0}}}=\conv\left(
  \begin{array}{cccc}
    \e_1 & \cdots & \e_n & -1/(d-n)\1 \\
  \end{array}
\right)
  \end{equation*}
  whose facets do not contain any lattice point in their relative interior.
\end{remark}

\subsection{According with the Hori-Vafa LG mirror model}\label{ssez:HoriVafa}
In their pivotal, and unpublished, paper \cite{Hori-Vafa}, Hori and Vafa proposed, from a physical point of view, Landau-Ginzburg (LG) mirror models of hypersurfaces and complete intersections in a complete toric variety. Their construction is consistent with the interpretation of Mirror Symmetry as T-duality. In particular, for the projective hypersurface of degree $d=n+1$ in $\P^{n+1}$, a suitable quotient of their LG mirror model still proposes the mirror construction previously given by Greene and Plesser \cite{GP}, for $n=3$, and Batyrev \cite{Batyrev94}, in the general case.

Namely, for the projective, degree $d$, hypersurface $Y=Y_d\subset\P^n$, the Hori-Vafa recipe proposes the LG mirror model $(\L_{d,\psi},w)$, where (see \cite[5.4]{Hori-Vafa} and notation introduced in \cite{KKOY}):
 \begin{itemize}
   \item $\L_{d,\psi}\cong(\C^*)^{n+1}$ is the choice of an irreducible component of the reducible torus hypersurface
   $$\L_{d}:=\left\{\prod_{i=1}^{n+1}x_i^d= \tau y^d\right\}\subset(\C^*)^{n+1}\times\C^*=(\C^*)^{n+2}$$
       being $\psi^{-d}=\tau=e^{-t}\in\C^*$, with $t$ the \ka volume of $Y_d$,
   \item $w_{d,\psi}:\L_{d,\psi}\longrightarrow\C$ is the holomorphic function defined by setting
       $$w_{d,\psi}=w_{d|\L_{d,\psi}}$$
       being $w_d:\C^{n+2}\longrightarrow\C$ defined by
       \begin{equation*}
         w_d(x_1,\ldots,x_{n+1},y)=\sum_{i=1}^{n+1}x_i^{d}+y\,\Longrightarrow\,
         w_{d,\psi}(\x)=\sum_{i=1}^{n+1}x_i^{d}+\psi \prod_{i=1}^{n+1}x_i
       \end{equation*}
       and called the \emph{superpotential} of the LG model.
 \end{itemize}
When $d=n+1$, the superpotential $w_{n+1,\psi}$ turns out to be equivariant \wrt the $\C^*$-action defining $\P^n$ and invariant \wrt the action of $G_\1\cong(\Z/(n+1)\Z)^{n-1}$ described in Lemma~\ref{lem:Ga} and defining $\XX_\1=\P^n/G_\1$, so getting the following picture
 \begin{equation}\label{LGquoziente1}
   \xymatrix{ \{\0\}\ar@{^(->}[r]&\C&\L_{n+1,\psi}\cong(\C^*)^{n+1}\ar@{^(->}[r]\ar[l]_-{w_{n+1,\psi}}
   \ar@{->>}[d]_-{/(\C^*\times G_\1)}&\C^{n+1}\setminus\{\0\}\ar@{->>}[d]_-{/(\C^*\times G_{\1})}\\
   w_{n+1,\psi}^{-1}(0)/(\C^*\times G_\1)\ar[u]\ar@{^(->}[rr]&&\T\ar@{^(->}[r]&\XX_\1}
 \end{equation}
 where $\T$ is the acting torus on $\XX_\1$. Then the Batyrev's mirror $Y^\vee$ of $Y_{n+1}$ is precisely the closure
 \begin{equation*}
   Y^\vee=\overline{w_{n+1,\psi}^{-1}(0)/(\C^*\times G_\1)}\subset\overline{\T}=\XX_\1
 \end{equation*}
 induced by the open embedding $\T\hookrightarrow \XX_\1$.

 \begin{remark}\label{rem:LG-d} Unfortunately, for $d\geq n+2$ the Hori-Vafa LG mirror model does no more admit a similar compactification process, as the superpotential $w_{d,\psi}$ is no more quasi-homogeneous, although we know that \emph{a compact mirror model $Y^\vee_d$ of $Y_d$ should exist}, as defined in Definition~\ref{def:mirror}.
 \end{remark}

 \subsubsection{LG mirror model of the projective hypersurface of degree $d$}\label{sssez:HVinvariante} To bypass troubles observed in Remark~\ref{rem:LG-d}, replace the Hori-Vafa LG mirror model with the LG model $(\widetilde{\L}_{d,\psi},\widetilde{w}_{d,\psi})$ where
 \begin{itemize}
   \item $\widetilde{\L}_{d,\psi}\cong(\C^*)^{n+1}$ is the choice of an irreducible component of the reducible torus hypersurface
   $$\widetilde{\L}_{d}:=\left\{x_{n+1}^{n+1}\cdot\prod_{i=1}^{n}x_i^{(n+1)(d-n)}= \tau y^{n+1}\right\}\subset(\C^*)^{n+1}\times\C^*=(\C^*)^{n+2}$$
       being $\psi^{-(n+1)}=\tau=e^{-t}\in\C^*$, with $t$ the \ka volume of $Y_d$,
   \item  $\widetilde{w}_{d,\psi}:\widetilde{\L}_{d,\psi}\longrightarrow\C$ is the holomorphic function defined by setting
 \begin{equation*}
 \widetilde{w}_{d,\psi}=\widetilde{w}_{d|\L_{\psi}}
 \end{equation*}
       being $\widetilde{w}_d:\C^{n+2}\longrightarrow\C$ defined by
       \begin{eqnarray*}
         \widetilde{w}_d(x_1,\ldots,x_{n+1},y)&=&\left(\sum_{i=1}^n x_i^d\cdot \prod_{j=1}^n x_j^{d-n-1}\right)+x_{n+1}^{n+1}+y\\
         \Longrightarrow\quad
         \widetilde{w}_{d,\psi}(\x)&=&\left(\sum_{i=1}^n x_i^d\cdot \prod_{j=1}^n x_j^{d-n-1}\right)+x_{n+1}^{n+1}+\psi \,x_{n+1}\,\prod_{j=1}^n x_j^{d-n}
       \end{eqnarray*}
 \end{itemize}

In addition to what observed in Remark~\ref{rem:LG-d}, there is a further fact leading to consider the previous modification of the original Hori-Vafa proposal, that is, its consistency with the re-parameterization of the Givental LG mirror model presented in the next Theorem~\ref{thm:d<n+1}, for projective hypersurfaces of degree $d\le n$. This will be treated more in detail in the next Remark~\ref{rem:HVvsGivental}, to which the reader is referred. Here we want just underline that it seems comparable with an analogous modification proposed by Clarke, making the modified Hori-Vafa LG mirror model consistent with the Givental's one  \cite[\S7.2, Rem.~7.4]{Clarke}.

 \begin{conjecture}\label{conj:LGmirror}
   For $d\geq n+1$, a LG mirror model of the projective hypersurface $Y_d\subset\P^n$, of \ka modulus $t=-(n+1)\ln(\psi)$, is given by $((\C^*)^{n+1},\widetilde{w}_{d,\psi})$\,.
 \end{conjecture}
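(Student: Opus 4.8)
The plan is to split Conjecture~\ref{conj:LGmirror} into a combinatorial part, which the modification of \S\ref{sssez:HVinvariante} is tailored to make routine, and an analytic/categorical part, which is the genuinely conjectural content. First I would record that, unlike $w_{d,\psi}$, the modified superpotential $\widetilde{w}_{d,\psi}$ \emph{is} quasi-homogeneous: all three groups of monomials occurring in it have the same degree $(n+1)(d-n)$ for the weights $\aa_0=(1,\ldots,1,d-n)$ of $\P(\aa_0)$. Hence the torus hypersurface $\widetilde{\L}_d$ splits into $n+1$ components, each the graph $y=\zeta\,\psi\,x_{n+1}\prod_j x_j^{d-n}$ (with $\zeta^{n+1}=1$) over $(\C^*)^{n+1}$, so that $\widetilde{\L}_{d,\psi}\cong(\C^*)^{n+1}$ and, on it, $\widetilde{w}_d$ restricts exactly to the Laurent expression $\widetilde{w}_{d,\psi}(\x)$, which is the polynomial $f'$ of (\ref{mirror}) with all $\epsilon_i=1$. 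Exactly as in the case $d=n+1$ (diagram (\ref{LGquoziente1})), $\widetilde{w}_{d,\psi}$ is then equivariant for the $\C^*$-action defining $\P(\aa_0)$ and invariant under the diagonal $G_{\aa_0}\cong(\Z/d\Z)^{n-1}$-action of Lemma~\ref{lem:Ga}, and passing to the quotient by $\C^*\times G_{\aa_0}$ identifies $\widetilde{w}_{d,\psi}^{-1}(0)/(\C^*\times G_{\aa_0})$ with the torus part $Z^\vee=\T_{\aa_0}\cap Y^\vee$ of the $f$-mirror hypersurface $Y^\vee\subseteq\XX_{\aa_0}$ produced by Corollary~\ref{cor:ipersuperfici} and Theorem~\ref{thm:dualita}. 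Thus $((\C^*)^{n+1},\widetilde{w}_{d,\psi})$ is the Hori--Vafa-type ``unreduced'' avatar of the LG/Hypersurface pair $(\T_{\aa_0},f^\vee_{\bb_0})$ attached to $Y^\vee$ in \S\ref{sssez:LG/Hyp}, and of the properification $\overline{f}^\vee_{\bb_0}=[\,f^\vee:\x^{\bb_0}\,]$ of \S\ref{ssez:KKP-compct}.

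Granting this identification, the remaining task --- which is exactly what makes the statement a conjecture --- is to prove that $Y^\vee$, equivalently $((\C^*)^{n+1},\widetilde{w}_{d,\psi})$, is an \emph{actual} mirror of $Y_d$, not merely the candidate produced by $f$-duality. I would attack this in increasing order of strength. The first milestone is the topological mirror test of Definition~\ref{def:A,B-mirror}: the $A$-side equality $m_{Y^\vee}=k_d^n=1$ is already Theorem~\ref{thm:m*=k}, so one complements it with the $B$-side computation $k_{\widehat{Y}^\vee}=m_{Y_d}=m_d^n={n+d\choose d}-(n+1)^2$ for a suitable partial resolution $\widehat{Y}^\vee\to Y^\vee$, the computation being deferred to \cite{R-fpCI}; together these would exhibit $Y^\vee$ as a topological, and then Hodge, mirror of $Y_d$. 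The decisive check is the quantum/period one: one shows that the oscillatory integrals $\int_\Gamma e^{\widetilde{w}_{d,\psi}/z}\,d\log x_1\wedge\cdots\wedge d\log x_{n+1}$ over Lefschetz thimbles of $\widetilde{w}_{d,\psi}$ solve the same Picard--Fuchs system as the small quantum $D$-module (Givental $I$-function) of $Y_d$, the mirror map sending the \ka volume $t=-(n+1)\ln\psi$ to the flat coordinate on the quantum side; at the state-space level this amounts to matching a Jacobian-type ring of $\widetilde{w}_{d,\psi}$ with $H^*(Y_d)$, along the lines of Chiodo--Ruan \cite{Chiodo-Ruan} and its non-\cy relaxation \cite{CKV}. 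The definitive form would be HMS --- an equivalence between a Fukaya-type category of $Y_d$ and the category of matrix factorizations (Fukaya--Seidel category) of $\widetilde{w}_{d,\psi}$ --- which I would defer, as the introduction does.

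The hard part is that for $d\geq n+2$ the hypersurface $Y_d$ is of general type, so the Fano/Calabi--Yau mirror machinery does not apply verbatim and, on the LG side, the lack of a \cy structure forces one to keep track of the framing. The natural setting in which to run the period/cohomology matching is the ``anchored at infinity'' picture of \S\ref{ssez:KKP-compct}: regard $((\C^*)^{n+1},\widetilde{w}_{d,\psi})$ together with its properification $\overline{\widetilde{w}}_{d,\psi}$ and fibre over $\infty$ (the framing $D_{\aa_0}$, resp. $D'_{\bb_0}$ on $\XX_{\aa_0}$) as a Katzarkov--Kontsevich--Pantev compactified LG model, and identify the resulting relative/log de Rham cohomology --- equivalently the Gross--Siebert log Calabi--Yau invariants of the pair $(\XX_{\aa_0},D'_{\bb_0})$ --- with $H^*(Y_d)$ and its quantum deformation. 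Establishing precisely that ``anchoring by the framing'' on the LG side corresponds to the hypersurface $Y_d$ on the geometric side is the substantive obstacle. The consistency of $\widetilde{w}_{d,\psi}$ with the Givental model for $d\leq n$ (Theorem~\ref{thm:d<n+1} and Remark~\ref{rem:HVvsGivental}), with Clarke's modification \cite{Clarke}, and the classical coincidence with Greene--Plesser--Batyrev at $d=n+1$ via (\ref{LGquoziente1}), are supporting evidence rather than a proof.
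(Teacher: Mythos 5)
Your proposal matches the paper's own treatment of this statement: since it is a conjecture, the paper offers only evidence rather than a proof, namely the coincidence with the Hori--Vafa model at $d=n+1$, the equivariance of $\widetilde{w}_{d,\psi}$ under the weighted $\C^*$-action (\ref{azione4}) and the $G_{\aa_0}$-action of Lemma~\ref{lem:Ga} leading to diagram (\ref{LGquoziente}) and the identification of the $f$-mirror $Y^\vee$ with the closure of $\widetilde{w}_{d,\psi}^{-1}(0)/(\C^*\times G_{\aa_0})$, and the link to the LG/Hypersurface correspondence of \S\ref{sssez:LG/Hyp} and its KKP compactification --- all of which you reproduce, with the added (and correct) explicit check that the three monomial groups share the common weighted degree $(n+1)(d-n)$. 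Your further roadmap (topological mirror test via Theorems~\ref{thm:m*=k} and \ref{thm:B-mirror}, period/$I$-function matching, HMS) correctly isolates the genuinely conjectural content, which the paper likewise defers.
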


Following Kontsevich \cite{Kontsevich}, proving this Conjecture means showing that the derived categories of coherent sheaves, from the complex point of view, and the Fukaya category of lagrangian structures, from the symplectic point of view, are each other equi\-va\-lent on the two mirror partners involved (Homological Mirror Symmetry, HMS). This is a quite difficult topic. Here, just some evidences will be provided.

First of all, notice that, when $d=n+1$, the LG model $((\C^*)^{n+1},\widetilde{w}_{n+1,\psi})$ is precisely the Hori-Vafa LG model $((\C^*)^{n+1},w_{n+1,\psi})$.

As a second evidence, consider the fact that, under the weighted $\C^*$-action on $(\C^*)^{n+1}$, given by
\begin{equation}\label{azione4}
  \xymatrix{(\l,\x)\ar@{|->}[r]&(\l x_1,\ldots,\l x_n,\l^{d-n}x_{n+1})}
\end{equation}
the superpotential $\widetilde{w}_{d,\psi}$ is equivariant. In Hori-Vafa notation, this means that there is a \emph{gauged linear sigma model} associated with the LG model $((\C^*)^{n+1},\widetilde{w}_{d,\psi})$, whose gauge action is the weighted one presented in (\ref{azione4}). Moreover, $\widetilde{w}_{d,\psi}$ is also equivariant with respect to the action of $G_{\aa_0}\cong (\Z/d\Z)^{n-1}$ described in Lemma~\ref{lem:Ga} and defining $\XX_{\aa_0}=\P(\aa_0)/G_{\aa_0}$, recalling that $\aa_0=(1,\ldots,1,d-n)$. There is then an analogous picture generalizing (\ref{LGquoziente1}) as follows
\begin{equation}\label{LGquoziente}
   \xymatrix{ \{\0\}\ar@{^(->}[r]&\C&\widetilde{\L}_{d,\psi}\cong(\C^*)^{n+1}\ar@{^(->}[r]\ar[l]_-{\widetilde{w}_{d,\psi}}
   \ar@{->>}[d]_-{/(\C^*\times G_{\aa_0})}&\C^{n+1}\setminus\{\0\}\ar@{->>}[d]_-{/(\C^*\times G_{{\aa_0}})}\\
   \widetilde{w}_{d,\psi}^{-1}(0)/(\C^*\times G_{\aa_0})\ar[u]\ar@{^(->}[rr]&&\T\ar@{^(->}[r]&\XX_{\aa_0}}
 \end{equation}
 where $\T$ is the acting torus on $\XX_{\aa_0}$. Then the $f$-mirror $Y^\vee$ of $Y_{d}$, as proposed in Definition~\ref{def:mirror}, is precisely the closure
 \begin{equation*}
   Y^\vee=\overline{\widetilde{w}_{d,\psi}^{-1}(0)/(\C^*\times G_{\aa_0})}\subset\overline{\T}=\XX_{\aa_0}
 \end{equation*}
 induced by the open embedding $\T\hookrightarrow \XX_{\aa_0}$.

 As a final evidence, notice that the picture described by diagram (\ref{LGquoziente}) is strongly related with general LG/Hypersurface correspondence described in \S\ref{sssez:LG/Hyp} and its compactification given in \S\ref{ssez:KKP-compct}. In a sense, the latter turns out to be the quotient of $((\C^*)^{n+1},\widetilde{w}_{d,\psi})$ by the action of $\C^*\times G_{\aa_0}$ defining $\XX_{\aa_0}$ as a Cox quotient.

 \begin{remark}\label{rem:LGmirrors}
   Taking into account what just observed, relating the LG model here presented with those described in \S\ref{sssez:LG/Hyp} and \S\ref{ssez:KKP-compct}, one could argue that the LG model $(\T_{\aa_0},f^\vee_{\bb_0})$ would be a more appropriated LG mirror model for $Y_d\subset\P^n$ than the one proposed in Conjecture~\ref{conj:LGmirror}. On the other hand, one may expect that these two LG mirror models turn out to be equivalent by the HMS point of view. These are all completely open tasks, at least as far as the author's knowledge allows!
 \end{remark}

\begin{remark}\label{rem:HVvsGivental} The modification of the Hori-Vafa LG mirror model here proposed, and in particular the associated LG model $(\T_{\aa_0},f^\vee_{\bb_0})$, is consistent with LG mirror models proposed by Givental in \cite[Thm.~5]{Givental-ICM} for complete intersections in toric varieties. In fact, one can consider the following re-parameterization
\begin{eqnarray}\label{riparametrizzazione1}
   u_i&:=&\begin{cases}\begin{array}{cc}
\displaystyle{\sum_{i=1}^n x_i^d\cdot \prod_{j=1}^n x_j^{d-n-1}\over \psi\,\x^{\bb_0}}& \text{for}\ 1\leq i\leq n\\
\\
                                     \displaystyle{x_{n+1}^{n+1}\over \psi\,\x^{\bb_0}}& \text{for}\ i=n+1
                                   \end{array}
    \end{cases}\\
\nonumber
    q&:=&\psi^{-n-1}
\end{eqnarray}
so that the LG model $(\T_{\aa_0},f^\vee_{\bb_0})$ can be rewritten, up to a rescaling and a translation by 1, as the Givental LG model defined by the  superpotential
\begin{equation*}
    F: \C^{n+1}\longrightarrow \C\ ,\quad F(\uu):=\sum_{i=1}^{n+1} u_i = (1/\psi) f^\vee_{\bb_0}-1
\end{equation*}
restricted to the torus fibration
\begin{equation*}
  \pi:\C^{n+1}\longrightarrow \C\ ,\quad \pi(\uu):=\prod_{i=1}^{n+1} u_i=q
\end{equation*}
In Theorem~\ref{thm:d<n+1} the same approach will be extended to the case of Kodaira negative projective hypersurfaces. Then $f$-mirror symmetry and the LG/Hypersurface correspondence defined in \S\ref{sssez:LG/Hyp} turn out to give a unified procedure for constructing mirror partners of toric hypersurfaces. The same approach seems to be completely extendable to toric complete intersections: for the details, the interested reader is referred to the forthcoming paper \cite{R-fpCI}.
\end{remark}

\subsection{A-side of the topological mirror web}

   The following result characterizes which framing $D_{\aa}$ of $\P^n$, among those satisfying conditions (\ref{convenzione}), (a), (b) in Theorem~\ref{thm:dualita}, give rise to  $f$-mirror partners, of the generic $Y_d\subset\P^n$, sharing an A-side mirror behaviour.

   \begin{proposition}\label{prop:m*=k in Pn}
      Let $D_{\aa}$ be a framing of $\P^n$ satisfying conditions (\ref{convenzione}), (a) and (b) in Theorem~\ref{thm:dualita} and assume $n\geq 4$. Then, the following facts are equivalent:
      \begin{enumerate}
        \item Theorem~\ref{thm:m*=k} holds for the ftv $(\P^n,D_{\aa})$, that is,
        \begin{equation*}
          m_{Y^\vee}=k_d^n=1
        \end{equation*}
        and $Y^\vee$ is an $A$-mirror of $Y$,
        \item the number of lattice points in $\D_{\bb}$ equals the number of lattice points in $\NN=\conv(V)$\,, i.e.
            \begin{equation*}
              l(\D_{\bb})=l(\NN)=n+2
            \end{equation*}
        \item $\D(\XX_{\aa},\bb):=[\D_{\bb}]=\NN=:\conv(V)$\,,
        \item $\left[a_n /a_1\right]=1$\,.
      \end{enumerate}
    \end{proposition}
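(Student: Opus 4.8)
The plan is to separate the purely combinatorial equivalences $(2)\Leftrightarrow(3)\Leftrightarrow(4)$ from the link with $(1)$, and to reduce the latter to the vanishing $\sum_{\Theta}l^*(\Theta)=0$ over the facets $\Theta$ of the anticanonical polytope $\D_{-K_{\XX_{\aa}}}$, which is the technical heart of the statement. Throughout one works under hypothesis (\ref{convenzione}) and uses the explicit descriptions already extracted in the proof of Theorem~\ref{thm:dualita}: $\D_\bb\cap N=\{-\1\}\cup(\NN'\cap N)$ with $\NN'=\conv\bigl(\0,\e_1,\tfrac{a_n}{a_{n-1}}\e_2,\dots,\tfrac{a_n}{a_1}\e_n\bigr)$, and $[\D_\bb]=\conv\bigl(\{\e_1,[\tfrac{a_n}{a_{n-1}}]\e_2,\dots,[\tfrac{a_n}{a_1}]\e_n,-\1\}\bigr)$; the fan matrix $\L_\aa$ of $\XX_\aa$ is the one displayed in (\ref{Lambda_a}).

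First I would prove $(2)\Leftrightarrow(4)$ and $(3)\Leftrightarrow(4)$. Since $a_1\le\dots\le a_{n+1}$, each intercept $a_n/a_{n-i+1}$ is $\ge1$, so $\NN'$ contains the standard simplex $\conv(\0,\e_1,\dots,\e_n)$; hence $l(\D_\bb)=1+|\NN'\cap N|\ge n+2$, whereas $l(\NN)=l(\conv(V))=n+2$ (the $n+1$ vertices plus $\0$, as $\NN$ is a reflexive simplex with unimodular facets). Writing $\NN'$ as the simplex $\{\x\ge\0:\sum_i (a_{n-i+1}/a_n)\,x_i\le1\}$, one sees that $\NN'\cap N=\{\0,\e_1,\dots,\e_n\}$ exactly when every intercept is $<2$, i.e.\ when $a_n<2a_1$, i.e.\ when $[a_n/a_1]=1$; by monotonicity this is equivalent to $[a_n/a_{n-i+1}]=1$ for all $i$. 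This settles $(2)\Leftrightarrow(4)$. The same inequality $a_n<2a_1$ is exactly what makes every vertex $[\tfrac{a_n}{a_{n-i+1}}]\e_i$ of $[\D_\bb]$ coincide with $\e_i$, hence $[\D_\bb]=\conv(\e_1,\dots,\e_n,-\1)=\conv(V)=\NN$, giving $(3)\Leftrightarrow(4)$.

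Next, the substantive step: $\sum_{\Theta<^1\D_{-K_{\XX_{\aa}}}}l^*(\Theta)=0$, equivalently $\dim\Aut(\XX_\aa)=n$ by (\ref{aut}), i.e.\ no facet of $\D_{-K_{\XX_{\aa}}}=\{\n\in N_\R:\L_\aa^T\n\ge-\1\}$ contains a lattice point in its relative interior. I would verify this directly from the explicit form (\ref{Lambda_a}): fixing the facet $\Theta_j$ cut out by $\langle\ll_j,\n\rangle=-1$ and a lattice point $\n$ with $\langle\ll_k,\n\rangle\ge0$ for all $k\ne j$, one adds suitable subsets of the remaining inequalities — in particular the one coming from the column $\ll_{n+1}=d_{n+1}^{-1}(-a_1,\dots,-a_n)$ together with the ``diagonal'' ones — to force two incompatible sign conditions on a single coordinate of $\n$. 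This is exactly the mechanism already carried out for $\aa=\aa_0$ in Remark~\ref{rem:moduli}; the point is to run it uniformly in $\aa$. Alternatively one may note that $\XX_\aa\cong\P(\q)/G_\aa$ with $G_\aa$ acting diagonally through the torsion matrix of Algorithm~\ref{algoritmoG}, and that $|G_\aa|=|\aa|^{n-1}$ is large enough that no root subgroup of $\Aut(\P(\q))$ normalises this action, so $\Aut(\XX_\aa)$ collapses onto its $n$-torus. Making this uniform estimate rigorous is the main obstacle.

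Granting the vanishing, $(1)\Leftrightarrow(2)$ follows immediately. For $n\ge4$ a generic $Y=Y_d\subset\P^n$ is smooth, since $\cO_{\P^n}(d)$ is very ample, and $\dim Y=n-1\ge3$, so the Weak Lefschetz theorem gives $k_{\widehat Y}=h^{1,1}(Y)=h^{1,1}(\P^n)=1=k_d^n$ with $\widehat Y=Y$; by Definition~\ref{def:A,B-mirror} the assertion that $Y^\vee$ is an $A$-mirror of $Y$ is therefore precisely $m_{Y^\vee}=k_d^n=1$, which is statement $(1)$. Substituting $\sum_\Theta l^*(\Theta)=0$ into the definition (\ref{mYY}) of $m_{Y^\vee}$ yields $m_{Y^\vee}=l(\D_\bb)-1-n$, so $m_{Y^\vee}=1$ if and only if $l(\D_\bb)=n+2$, which is $(2)$. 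Together with the equivalences of the previous paragraph all four conditions are equivalent; and when $(3)$ holds the Newton polytope $[\D_\bb]$ of $Y^\vee$ equals $\NN=\conv(V)$, so $m_{Y^\vee}=1$ also follows along the lines of Theorem~\ref{thm:m*=k}, giving an independent check.
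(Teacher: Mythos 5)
Your combinatorial equivalences $(2)\Leftrightarrow(3)\Leftrightarrow(4)$ are correct and essentially coincide with the paper's argument (the paper routes through the inclusion $[\D_{\bb}]\supseteq\NN$ coming from (\ref{[Deltab]}) rather than counting $\NN'\cap N$ directly, but the content is the same), and you correctly reduce $(1)\Leftrightarrow(2)$ to the vanishing $\sum_{\Theta<^1\D_{-K_{\XX_{\aa}}}}l^*(\Theta)=0$. The problem is that you never prove this vanishing: you explicitly flag it as ``the main obstacle'', and neither of your two sketches closes it. The inequality-manipulation sketch is not carried out, and the alternative via $|G_\aa|=|\aa|^{n-1}$ being ``large enough'' is not an argument --- the order of the quotient group does not by itself control $\sum_{\Theta}l^*(\Theta)$, which is a statement about the lattice points of one explicit polytope, not about root subgroups of $\Aut(\P(\q))$.

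The gap closes with one short computation that you skipped: instead of working with the facet inequalities $\langle\ll_j,\n\rangle\ge-1$, compute the \emph{vertices} of $\D_{-K_{\XX_{\aa}}}$. Since $\L_\aa$ is the matrix (\ref{Lambda_a}), the vertex opposite the $i$-th facet is $-\bigl(\bigl(\L_{\aa}^{\{i\}}\bigr)^T\bigr)^{-1}\cdot\1$, and these turn out to be exactly $(1/a_i)\,\e_i$ for $1\le i\le n$ together with $-(1/a_{n+1})\,\1$. Every facet of this $n$-simplex omits one vertex. If it omits $-(1/a_{n+1})\,\1$, then a relative-interior point is $\sum_i(\mu_i/a_i)\e_i$ with all $\mu_i>0$ and $\sum_i\mu_i=1$; a lattice point would need some $\mu_i/a_i\ge1$, forcing $\mu_i=1$ and all other $\mu_k=0$, a contradiction for $n\ge2$. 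If it omits $(1/a_j)\,\e_j$, then every relative-interior point has $j$-th coordinate in the open interval $(-1/a_{n+1},0)$, hence is not a lattice point. Therefore $\sum_{\Theta}l^*(\Theta)=0$, so $m_{Y^\vee}=l(\D_{\bb})-1-n$ by (\ref{mYY}), and the rest of your argument goes through as written.
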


    \begin{proof} Recall that a fan matrix $\L_{\aa}$ of $\XX_{\aa}$ is given by (\ref{Lambda_a}). Then the anti-canonical polytope $\D_{-K_{\XX_{\aa}}}$ is given by
   \begin{eqnarray}\label{-K_a pol}
   \nonumber
     \D_{-K_{\XX_{\aa}}}&=&\conv\left(\left\{-\left(\left(\L_{\aa}^{\{i\}}\right)^T\right)^{-1}\cdot\1\,|\,1\leq i\leq n+1\right\}\right)\\
     &=&\left(
                          \begin{array}{ccccc}
                            1/a_1 & 0 & \ldots & 0&-1/a_{n+1} \\
                            0&1/a_2&&\vdots&\vdots\\
                            \vdots &  & \ddots & 0&\vdots \\
                            0 & \cdots & 0 & 1/a_n&-1/a_{n+1} \\
                          \end{array}
                        \right)
   \end{eqnarray}
   In particular, every facet of $\D_{-K_{\XX_{\aa}}}$ does not contain any interior point. Then, recalling (\ref{mYY}), one find that
   \begin{equation*}
     m_{Y^\vee}= \dim\P\left(H^0(\XX_{\aa},\cO_{\XX_{\aa}}(D'_{\bb}))\right)- \dim\Aut(\XX_{\aa})=l(\D_{\bb})-1-n
   \end{equation*}
   Then clearly $m_{Y^\vee}=1$ if and only if $l(\D_{\bb})=n+2=l(\NN)$\,, so giving the equivalence between items (1) and (2) in the statement.
   Moreover, notice that relation (\ref{[Deltab]}) gives
   \begin{equation*}
     [\D_{\bb}]=\conv\left(\left\{\e_1,\left[{a_n\over a_{n-1}}\right]\e_2,\ldots,\left[{a_n\over a_{1}}\right]\e_n,-\mathbf{1}\right\}\right)\supseteq \NN
   \end{equation*}
  Therefore
   \begin{equation*}
     l(\D_{\bb})= l(\NN)\ \Longleftrightarrow\ [\D_{\bb}]=\NN
   \end{equation*}
   so proving the equivalence between items (2) and (3).
   Finally, notice that, relation (\ref{[Deltab]}), again, recalling convention (\ref{convenzione}), ensures the equivalence between items (3) and (4).

      \end{proof}

   \begin{remark}
     Condition (4) in the statement of Proposition~\ref{prop:m*=k in Pn}, together with convention (\ref{convenzione}), implies that a framing $\aa$ of $\P^n$, satisfying one of the equivalent conditions in Proposition~\ref{prop:m*=k in Pn}, presents necessarily in the following shape
     \begin{equation*}
       \aa=(\underbrace{a,\ldots,a}_{\text{$n$ times}},\d:=d-na)=(a\1_n,\d)\quad\text{with}\quad 1\leq a\leq \d
     \end{equation*}
     Notice that condition (b) in Theorem~\ref{thm:dualita} gives $(a,\d)=1$, then $\aa$ is reduced if and only if $\aa=\aa_0=(\1_n,\d)$ and $a=1$. Assume $a\geq 2$: then the reduced weight vector $\q$ of $\aa$ is just given by $\q=(\1_n,\d)$. Therefore
     $$(\XX_\aa,\bb)=\left(\P(\1_n,\d)/G_\aa\ ,\ (\underbrace{\d,\ldots,\d}_{\text{$n$ times}},1)\right)$$
     with $|G_\aa|=d^{n-1}$. Then, the same argument used in Remark~\ref{rem:s-ample} shows that $dD'_\bb$ is base point free, $D'_\bb$ is semi-ample and $\pi^*(D'_\bb)$ is a very ample divisor generating $\Pic(\P(\1_n,\d))\cong\Z$.
   \end{remark}

   \begin{remark}
     The last condition (4) in Proposition~\ref{prop:m*=k in Pn} implies that a framing $D_{\aa}\neq D_{\aa_0}$ can give rise to an $A$-mirror partner of $Y_d\subset\P^n$ only if $d\geq 2n+3$. In particular, recalling considerations given in \S\ref{ssez:MWeb}, for $n=4$, the minimum value of the degree $d$ realizing an effective $A$-mirror web, that is, giving rise to multiple $A$-mirrors, is $d=11$, with the two framing $\aa_0=(1,1,1,1,7)$ and $\aa_1=(2,2,2,2,3)$\,. Here, the two mirror partners of the generic $Y^4_{11}\subset\P^4$ are given by (a suitable desingularization of) the generic hypersurfaces
     \begin{eqnarray*}
        Y^\vee_{35}&=&\left\{\left(\sum_{i=1}^4 x_i^{11}\cdot \prod_{j=1}^4 x_j^{6}\right)+\,x_{5}^{5}+\psi\,\left(\prod_{j=1}^4 x_j^{7}\right)\cdot x_{5}=0\right\}\subset\P(\1_4,7)/(\Z/11\Z)^3 \\
        Y^\vee_{15}&=&\left\{\left(\sum_{i=1}^4 x_i^{11}\cdot \prod_{j=1}^4 x_j\right)+x_{5}^{5}+\psi\,\left(\prod_{j=1}^4 x_j^{3}\right)\cdot x_{5}=0\right\}\subset\P(\1_4,3)/(\Z/11\Z)^3
     \end{eqnarray*}
     These two mirror models are not isomorphic as they have singular loci of different dimension:
     \begin{eqnarray*}
       \Sing\left(Y^\vee_{35}\right)&=&\bigcup_{i=1}^4\{x_i=x_5=0\}\ \Longrightarrow\ \dim\left(\Sing\left({Y^\vee_{35}}\right)\right)=2 \\
       \Sing\left(Y^\vee_{15}\right)&=&\left(\bigcup_{1\leq i < j\leq 4}\{x_i=x_j=x_5=0\}\right)\\
       &&\cup\left(\bigcup_{i=1}^4\left\{x_i=x_5=\sum_{\substack{1\leq j\leq 4\\j\neq i}}x_j^{11}=0\right\}\right)\\
       &\Longrightarrow&\dim\left(\Sing\left({Y^\vee_{15}}\right)\right)=1
     \end{eqnarray*}
   \end{remark}

\begin{remark}[About Hodge diamond A-symmetry]\label{rem:Hodge-A}
  By means of methods like those employed by Batyrev and Borisov \cite{BB96}, one can check that, calling $Y^\vee$ the $f$-mirror partner assigned to $Y_d\subset\P^n$ by the choice of the framing $\aa_0=(\1_n,\d)$, and assuming $n\geq 4$ and $d=n+\d\geq n+2$, then
\begin{equation}\label{h21dual}
  h^{n-2,1}(\widehat{Y}^\vee)= l^*(2\D_{\bb_0})-n-2+r
\end{equation}
where $\bb_0=(\d\cdot\1_n,1)$ and $\widehat{Y}^\vee$ is a resolution of singularities of $Y^\vee$ with $r:=\rk(\Cl(\widehat{Y}^\vee))$. If $\d\ge 2$ then $l^*(2\D_{\bb_0})\ge n+2$ and $r>1$, so giving
\begin{equation*}
  h^{n-2,1}(\widehat{Y}^\vee)>1= m_{Y^\vee}
\end{equation*}
Consequently, with that framing, there is no hope of getting any Hodge diamond $A$-symmetry, beyond the \cy setup.

Relation (\ref{h21dual}) is a consequence of a more general computation we developed in the broader context of toric complete intersections \cite{R-fpCI}.
\end{remark}

   \subsection{B-side mirroring}
    Assuming $n\geq 4$, the other side of the mirroring process, so called \emph{B-side}, is that of comparing the \ka moduli $k_{\widehat{Y}^\vee}$ with either the complex moduli $m^n_d$, as computed in (\ref{m_d^n}) (see also Remark~\ref{rem:combinatorica})
    or the Hodge number $h^{n-2,1}(Y_d)$, for a generic hypersurface $Y_d\subset\P^n$ and a generic hypersurface $\widehat{Y}^\vee\in|\widehat{D}'_\bb|$ in $\widehat{\XX}_\aa$, where $$(\P^n,D_\aa)\leftrightsquigarrow(\XX_\aa,D'_\bb)$$
    is a calibrated $f$-process, with $Y_d\sim D_\aa$, and $(\widehat{\XX}_\aa,\widehat{D}'_\bb)\longrightarrow(\XX_\aa,D'_\bb)$ is a sufficiently good resolution.
     The Hodge number $h^{n-2,1}(Y_d)$ can be computed, e.g., by means of the Griffiths' theory on Poincar\'{e} residues \cite{Griffiths}. A comparison with (\ref{m_d^n}) immediately shows that, for $d\geq n+2$,
    \begin{equation*}
      m_d^n={n+d\choose d}-(n+1)^2\neq {2d-1\choose n}-(n+1){d\choose n}=h^{n-2,1}(Y_d)
    \end{equation*}
    meaning that, also in this case, we cannot hope in a symmetry into the Hodge diamond, as in the A-side for the framing $\aa_0=(\1,d-n)$ (recall Remark~\ref{rem:Hodge-A}).

To perform the topological $B$-side check, recall that
\begin{equation*}
  m_{Y_d}=\dim \P\left(H^0(\cO_{\P^n}(d)\right) - \dim \P\GL(n+1) ={n+d\choose d}-(n+1)^2
\end{equation*}
Then, one has to exhibit a suitable (partial) resolution $\phi:\widehat{Y}^\vee\longrightarrow Y^\vee$ to compute
 $k_{\widehat{Y}^\vee}$ and make the required comparison with $m_{Y_d}$. Both the construction of $\phi$ and the computation of Hodge numbers, in particular of $h^1(\widehat{\Omega}_{\widehat{Y}^\vee})=k_{\widehat{Y}^\vee}$, are quite tricky. For $d=n+1$ this is a particular case of Batyrev's results on toric \cy hypersurfaces \cite{Batyrev94}. But this is not the case for $d=n+\d$ with $\d\ge 2$. For this reason, we just anticipate here a statement resuming results whose proof will be deferred to the forthcoming paper \cite{R-fpCI}, where this kind of construction and calculation will be performed in the more general context of complete intersections in toric varieties.

\begin{theorem}\label{thm:B-mirror}
Let $Y^\vee$ be the generic hypersurface of $\XX_{\aa_0}$ in the linear system $|D'_{\bb_0}|$ whose defining polynomial is $f^\vee\in\Cox(\XX_{\aa_0})$.
  There exists a possibly partial (depending on $\d\ge 1$) resolution $\phi: \widehat{\XX}_{\aa_0}\longrightarrow\XX_{\aa_0}$ such that the transformed hypersurface
\begin{equation}\label{trasformata}
  \widehat{Y}^\vee:=\phi^{-1}(Y^\vee)
\end{equation}
 defined as the zero-locus of $\phi^*(f^\vee)\in\Cox(\widehat{\XX}_{\aa_0})$, is either quasi-smooth or smooth (depending on $\d\ge 1$) and
\begin{equation*}
  h^{1,1}\left(\widehat{Y}^\vee\right)=k_{\widehat{Y}^\vee}=m_d^n
\end{equation*}
 That is, recalling Definition~\ref{def:A,B-mirror}, the generic $Y^\vee\subset\XX_{\aa_0}$ is a $B$-mirror partner of the generic hypersurface $Y_d\subset \P^n$. Recalling Theorem~\ref{thm:m*=k}, this means that $(Y_d,Y^\vee)$ is a pair of topologically mirror partners.
\end{theorem}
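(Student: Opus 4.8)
The plan is to construct the resolution $\phi:\widehat{\XX}_{\aa_0}\to\XX_{\aa_0}$ explicitly on the level of fans, then compute $h^{1,1}(\widehat Y^\vee)$ by a weak-Lefschetz plus toric-cohomology argument and match it against the formula \eqref{m_d^n} for $m_d^n$. First I would recall that $\XX_{\aa_0}=\P(\1_n,\d)/G_{\aa_0}$ is a $\Q$-factorial, Picard-rank-$1$ toric variety with fan $\Si_{\aa_0}=\Si_{\D_{\bb_0}}$ over the lattice polytope $[\D_{\bb_0}]=\conv(V)=\NN$, whose fan matrix is $\L_{\bb_0}=V$ from \eqref{V}. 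The singularities of $\XX_{\aa_0}$ are supported on the images of the toric strata, and the defining polynomial $f^\vee$ of \eqref{mirror} (after using the automorphism reduction of the proof of Theorem~\ref{thm:m*=k}, $f'$ with all $\epsilon_i=1$) vanishes transversally on the big torus but acquires non-quasismooth behaviour only along the coordinate strata where the monomials $x_i^d\prod_j x_j^{d-n-1}$ degenerate. So the first real step is to choose, by Corollary~\ref{cor:smallres} and Proposition~\ref{prop:risoluzioni}, a fan $\Xi\in\SF(\L_{\bb_0})=\SF(V)$ refining $\Si_{\aa_0}$ which is simultaneously (i) a $\Q$-factorial small resolution of $\XX_{\aa_0}$ when $\d=1$ (here $\widehat{\XX}_{\aa_0}=\P^n$ itself and we are back to Batyrev \cite{Batyrev94}) and (ii) when $\d\ge 2$, a refinement adding exactly the rays needed to make the strict transform $\widehat Y^\vee$ non-degenerate in the sense of Definition~\ref{def:Si-regolare} — i.e. meeting every torus orbit of $\widehat{\XX}_{\aa_0}$ transversally. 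The combinatorics here is dictated by the Newton polytope $\D(\XX_{\aa_0},\bb_0)=\conv(V)$ and its subdivisions; one shows $\widehat Y^\vee$ is quasi-smooth (smooth when the refinement is non-singular, which happens precisely when $\d$ has no obstructing arithmetic), because the only non-transversality of $f'$ with toric strata occurs over cones of $\Si_{\aa_0}$ that the refinement $\Xi$ subdivides.

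The second step is the Hodge-number computation. Once $\widehat Y^\vee\subset\widehat{\XX}_{\aa_0}$ is (quasi-)smooth and ample-by-pull-back (Remark~\ref{rem:s-ample} gives $d D'_{\bb_0}$ ample on $\XX_{\aa_0}$, so $\phi^*(d D'_{\bb_0})$ is semi-ample and big on $\widehat{\XX}_{\aa_0}$), the weak Lefschetz theorem for (quasi-)smooth ample hypersurfaces in toric varieties (see the argument cited in \S\ref{ssez:mirrortest}, following \cite[\S6.2.3]{CoxKatz}) gives $h^{1,1}(\widehat Y^\vee)=h^{1,1}(\widehat{\XX}_{\aa_0})=\rk\Pic(\widehat{\XX}_{\aa_0})$, because the restriction $H^2(\widehat{\XX}_{\aa_0})\to H^2(\widehat Y^\vee)$ is an isomorphism and both are pure of type $(1,1)$ (the ambient being toric). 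Thus $k_{\widehat Y^\vee}=h^{1,1}(\widehat Y^\vee)$ by the discussion preceding Definition~\ref{def:A,B-mirror}, and what remains is the purely combinatorial identity
\begin{equation*}
  \rk\Pic(\widehat{\XX}_{\aa_0})=m_d^n=\binom{n+d}{d}-(n+1)^2.
\end{equation*}
Here $\rk\Pic(\widehat{\XX}_{\aa_0})=|\Xi(1)|-n=|\NN\cap N|-n$ minus corrections from the small/divisorial part of the resolution, which by Proposition~\ref{prop:risoluzioni} is read off entirely from $\I_\Xi$ versus $\I_{\Si_{\aa_0}}$. The matching is then proved by a lattice-point count in the polytopes $\D_{\bb_0}$, $2\D_{\bb_0}$ (cf. Remark~\ref{rem:Hodge-A}) and their relation, via the $f$-process $V=\L_{\bb_0}$, to the polytope $\D_{\aa_0}$ of \eqref{Deltaa}; this is exactly where the interior-lattice-point sums $\sum_\Theta l^*(\Theta)$ appearing in \eqref{mY} get absorbed into the count of exceptional divisors of $\phi$, reproducing $\binom{n+d}{d}$.

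The main obstacle, which is why the proof is deferred to \cite{R-fpCI}, is the second step's combinatorial identity together with a subtle point in the first step: the resolution $\phi$ is in general only \emph{partial}, and controlling which singularities of $\widehat Y^\vee$ survive — equivalently, showing that the surviving singularities do not contribute to $h^{1,1}$ (they must be at worst $\Q$-factorial quotient singularities in codimension $\ge 2$, not affecting the divisor class count of the hypersurface) — requires a careful Danilov–type analysis of the mixed Hodge structure on the non-quasismooth locus. Concretely, one has to run the Batyrev–Borisov style stringy Hodge number machinery \cite{BB96} adapted to a non-Fano, non-reflexive framing, for which the bookkeeping of $l^*$-terms over faces of $\D_{\bb_0}$ and of $2\D_{\bb_0}$ is genuinely involved; the $\d=1$ case is classical, but each increment of $\d$ introduces new strata of singularities whose crepant partial resolution must be tracked. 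Since the same machinery is needed in the broader complete-intersection setting, the details are carried out once and for all in \cite{R-fpCI}, and here we content ourselves with the statement, whose $A$-side counterpart $k_{\widehat Y^\vee}=m_{Y_d}$ then pairs with Theorem~\ref{thm:m*=k} to exhibit $(Y_d,Y^\vee)$ as a topological mirror pair.
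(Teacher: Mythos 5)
The paper itself gives no proof of Theorem~\ref{thm:B-mirror}: both the construction of $\phi$ and the computation of $k_{\widehat{Y}^\vee}$ are explicitly deferred to the companion work on toric complete intersections, so your decision to leave the final combinatorial identity to \cite{R-fpCI} is consistent with the paper's own treatment. However, your outline of the first step contains a concrete error that would derail the argument. You propose to take $\Xi\in\SF(\L_{\bb_0})=\SF(V)$ refining $\Si_{\aa_0}$ and to invoke Corollary~\ref{cor:smallres}. This confuses the two sides of the duality: $\Si_{\aa_0}$ is the fan over $\D_{\aa_0}\subset M_\R$, with fan matrix $\L_{\aa_0}$ as in (\ref{Deltaa}), whereas $\SF(V)$ consists of fans in $N_\R$ on the rays of $\P^n$; a fan in $\SF(V)$ cannot refine $\Si_{\aa_0}$. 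Worse, since $\L_{\aa_0}$ has exactly $n+1$ columns and $\XX_{\aa_0}$ has Picard number one, $\SF(\L_{\aa_0})$ contains only $\Si_{\aa_0}$ itself (as the paper notes in the proof of Theorem~\ref{thm:dualita}): there are \emph{no} nontrivial small resolutions of $\XX_{\aa_0}$, and $\phi$ is necessarily divisorial, obtained by inserting rays through lattice points of the faces of $\D_{\aa_0}$. It is exactly these exceptional divisors that must account for the jump of $h^{1,1}$ from $1$ to $m_d^n$.

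Consequently the combinatorial identity you propose to verify, $\rk\Pic(\widehat{\XX}_{\aa_0})=|\NN\cap N|-n$ up to corrections, is not the right one: the paper computes $l(\NN)=n+2$, so your formula yields $2$, which cannot equal $\binom{n+d}{d}-(n+1)^2$ once $d\ge n+2$. The count has to be carried out on the $\aa_0$-side polytope $\D_{\aa_0}$ (equivalently on suitable multiples of $\D_{\bb_0}$, in the spirit of Remark~\ref{rem:Hodge-A}), keeping track of which added rays lie in the relative interiors of which faces and of which exceptional divisors actually meet $\widehat{Y}^\vee$. This last point also undercuts your weak-Lefschetz step: after a divisorial and possibly only partial resolution, $\phi^*(D'_{\bb_0})$ is merely semi-ample, $\widehat{\XX}_{\aa_0}$ need not be smooth, and the bijectivity of $\Pic(\widehat{\XX}_{\aa_0})\otimes\Q\to\Pic(\widehat{Y}^\vee)\otimes\Q$ is precisely what has to be established stratum by stratum (via non-degeneracy of $\phi^*(f^\vee)$ in the sense of Definition~\ref{def:Si-regolare} together with a Danilov--Khovanskii type count), not a formal consequence of ampleness downstairs.
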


\section{Extending the duality to complete intersections in toric varieties}\label{sez:CI}

The present section is devoted to extending $f$-duality to families of complete intersection varieties in a fixed toric variety $X$, keeping in mind \S\ref{sez:dualita-hyp} and how Borisov  generalized the Batyrev duality \cite{Borisov}, \cite{BB96}.

\begin{definition}
  Let $(X,D_\aa=\sum_{j=1}^m a_jD_j)$ be a ftv and $V=\left(
                                    \begin{array}{ccc}
                                      \v_1 & \cdots & \v_m \\
                                    \end{array}
                                  \right)
  $ be a fan matrix of $X$, where $m=n+r$, recalling notation \S\ref{sssez:notazione}. A \emph{partition} of the framing $D_\aa$ is the datum of a partition
  $$\exists\,l\in \N:\quad I_1\cup\cdots\cup I_l=\{1,\ldots,m\}\ ,\quad\forall\,i\ I_i\ne\emptyset\ ,\quad\forall\,i\neq j\quad I_i\cap I_j =\emptyset$$
   and divisors $D_{\aa_1},\ldots,D_{\aa_l}$ such that
  $$\forall\,k=1,\ldots,l\quad D_{\aa_k}:=\sum_{i\in I_k}a_iD_i$$
  Clearly $D_\aa=\sum_{k=1}^l D_{\aa_k}$, that is, $\aa=\sum_{k=1}^l \aa_k$\,.\\
   The ftv $(X,D_\aa)$ with a framing partition $\aa=\sum_{k=1}^l\aa_k$ is called a \emph{partitioned ftv} and denoted by $(X,\aa=\sum_{k=1}^l\aa_k)$\,.
\end{definition}

\subsection{$f$-process for complete intersections}
Given a partitioned ftv
$$(X,D_\aa=\sum_{k=1}^l D_{\aa_k})$$
consider the following algorithm (proofs of details are deferred to the forthcoming paper~\cite{R-fpCI}).
\subsubsection{The partitioned $f$-process algorithm}\label{algoritmoDnef}
\begin{enumerate}
  \item Let $\D_\aa$ and $\D_{\aa_1},\ldots,\D_{\aa_l}$ be the polytopes associated with divisors $D_\aa$ and $D_{\aa_1},\ldots,D_{\aa_l}$, respectively, that is
  \begin{eqnarray*}
    \D_\aa&=&\{\m\in M_\R\,|\,V^T\cdot\m \geq -\aa\}\\
    \forall\,k=1,\ldots,l\quad\D_{\aa_k}&=&\{\m\in M_\R\,|\,V^T\cdot\m \geq -\aa_k\}
\end{eqnarray*}
In particular, it turns out that
\begin{equation}\label{sommaintersezione}
  \bigcap_{k=1}^l\D_{\aa_k}=\{\0\}\quad \text{and}\quad \D_\aa=\sum_{k=1}^l\D_{\aa_k}
\end{equation}
where the sum denotes the Minkowski sum of polytopes.
  \item Define
\begin{equation*}
  \cv{\D}_\aa:=\conv(\D_{\aa_1},\ldots,\D_{\aa_l})\subset M_\R
\end{equation*}
Clearly $\cv{\D}_\aa\subseteq\D_\aa$ and relations (\ref{sommaintersezione}) suffices to show that $\0\in\Int(\cv{\D}_\aa)$, \cite{R-fpCI}. Recalling Definition~\ref{def:Deltapolytope}, relations (\ref{sommaintersezione}) still hold for multiple polytopes $k_0\D_\aa$ and $k_0\D_{\aa_1},\ldots,k_0\D_{\aa_l}$, so giving that
\begin{equation*}
  \bigcap_{k=1}^l[k_0\D_{\aa_k}]=\{\0\}\quad \text{and}\quad \0\in\Int(\D(X,\aa))
\end{equation*}
since $\D(X,\aa)=[\sum_{k=1}^lk_0\D_{\aa_k}]$.
Then $\0\in\Int(\cv{\D}(X,\aa))$, being $\cv{\D}(X,\aa):=[k_0\cv{\D}_\aa]$\,, \cite{R-fpCI}.
  \item Set
\begin{equation*}
  \cv{\XX}_\aa:=\XX_{\cv{\Si}_\aa}\quad\text{where}\quad\cv{\Si}_\aa:=\Si_{\cv{\D}(X,\aa)}
\end{equation*}
and let $\cv{\L}_\aa\in\mathbf{M}(n\times \cv{m};\Z)$ be a fan matrix of $\cv{\XX}_\aa$\,, where $\cv{m}=|\cv{\Si}(1)|$. Notice that $\cv{\XX}_\aa$ is a complete toric variety, by Proposition~\ref{prop:Fmatricedipolitopo}.
  \item For every $k=1,\ldots,l$, set $m_k:=|I_k|$ and consider the matrix
\begin{equation*}
  \cv{M}_{\aa_k}:= (V_{I_k})^T\cdot\cv{\L}_\aa\in\mathbf{M}(m_k\times \cv{m};\Z)
\end{equation*}
and let $\bb_k=(b_{jk})_{j=1}^{\cv{m}}$ be \emph{the minimum non-negative column vector} such that
\begin{equation*}
  \cv{M}_{\aa_k}^T+B_k\geq \0\quad\text{where}\quad B_k:=\underbrace{\left(\,\bb_k\ \cdots\ \bb_k\,\right)}_{m_k\ \text{times}}\,\in \mathbf{M}(\cv{m}\times m_k,\N)
\end{equation*}
Then, define $\cv{\bb}:=\sum_{k=1}^l\bb_k$\,. Calling $\cv{D}_1,\ldots,\cv{D}_{\cv{m}}$ the torus invariant ge\-ne\-ra\-tors of $\Weil(\cv{\XX}_{\aa})$, there is a unique induced partition
\begin{equation*}
  J_1\cup\cdots\cup J_l=\{1,\ldots,\cv{m}\}
\end{equation*}
such that $\left(\cv{\XX}_\aa,\cv{D}_{\cv{\bb}}=\sum_{k=1}^l\cv{D}_{\bb_k}\right)$, with $\cv{D}_{\bb_k}:=\sum_{j\in J_k}b_{jk}\cv{D}_j$, is a partitioned ftv, \cite{R-fpCI}.
\item Analogously to step (1), let ${\D}_{\cv{\bb}}$ and $\cv{\D}_{\bb_1},\ldots,\cv{\D}_{\bb_l}$ be the polytopes associated with divisors $\cv{D}_{\cv{\bb}}$ and $\cv{D}_{\bb_1},\ldots,\cv{D}_{\bb_l}$, respectively, that is
  \begin{eqnarray*}
    {\D}_{\cv{\bb}}&=&\{\n\in N_\R\,|\,\cv{\L}_\aa^T\cdot\n \geq -\cv{\bb}\}\\
    \forall\,k=1,\ldots,l\quad\cv{\D}_{\bb_k}&=&\{\n\in N_\R\,|\,\cv{\L}_\aa^T\cdot\n \geq -\bb_k\}
\end{eqnarray*}
Then
\begin{equation}\label{sommaintersezione-b}
  \bigcap_{k=1}^l\cv{\D}_{\bb_k}=\{\0\}\quad \text{and}\quad {\D}_{\cv{\bb}}=\sum_{k=1}^l\cv{\D}_{\bb_k}
\end{equation}
\item Analogously to step (2), define
\begin{equation*}
  \cv{\D}_{\cv{\bb}}:=\conv(\cv{\D}_{\bb_1},\ldots,\cv{\D}_{\bb_l})\subset N_\R
\end{equation*}
Clearly $\cv{\D}_{\bb}\subseteq{\D}_{\cv{\bb}}$ and relations (\ref{sommaintersezione-b}) ensure that $\0\in\Int(\cv{\D}_{\cv{\bb}})$. Then, (\ref{sommaintersezione-b}) still holds for multiple polytopes $h_1{\D}_{\cv{\bb}}$ and $h_1\cv{\D}_{\bb_1},\ldots,h_1\cv{\D}_{\bb_l}$, so giving that
\begin{equation*}
  \bigcap_{k=1}^l[h_1\cv{\D}_{\bb_k}]=\{\0\}\quad \text{and}\quad \0\in\Int(\D(\cv{\XX}_{\aa},\cv{\bb}))
\end{equation*}
since $\D(\cv{\XX}_{\aa},\cv{\bb})=[\sum_{k=1}^lh_1\cv{\D}_{\bb_k}]$, where $h_1$ is defined as the minimum positive integer such that $\0\in\Int([h_1\D_{\cv{\bb}}])$.
Then $\0\in\Int(\cv{\D}(\cv{\XX}_{\aa},\cv{\bb}))$,
being $$\cv{\D}(\cv{\XX}_{\aa},\cv{\bb}):=[h_1\cv{\D}_{\cv{\bb}}]$$
    \item  Analogously to step (3), set
\begin{equation*}
  \cv{\XX}_{\cv{\bb}}:=\XX_{\cv{\Si}_{\cv{\bb}}}\quad\text{where}\quad\cv{\Si}_{\cv{\bb}}:=
  \Si_{\cv{\D}(\cv{\XX}_{\aa},\cv{\bb})}
\end{equation*}
and let $\cv{\L}_{\cv{\bb}}\in\mathbf{M}(n\times \widetilde{m};\Z)$ be a fan matrix of $\cv{\XX}_{\cv{\bb}}$\,, for some $\widetilde{m}\in\N$\,. As above, $\cv{\XX}_{\cv{\bb}}$ is a complete toric variety, by Proposition~\ref{prop:Fmatricedipolitopo}.
    \item Analogously to step (4), for every $k=1,\ldots,l$, set $\cv{m}_k:=|J_k|$ and consider the matrix
\begin{equation*}
  \cv{M}_{\aa_k,\cv{\bb}}:=\left((\cv{\L}_\aa)_{J_k}\right)^T\cdot\cv{\L}_{\cv{\bb}}\in\mathbf{M}(\cv{m}_k\times \widetilde{m};\Z)
\end{equation*}
and let $\cc_k=(c_{j,k})_{j=1}^{\widetilde{m}}$ be \emph{the minimum non-negative column vector} such that
\begin{equation*}
  \cv{M}_{\aa_k,\cv{\bb}}^T+C_k\geq \0\quad\text{where}\quad C_k:=\underbrace{\left(\,\cc_k\ \cdots\ \cc_k\,\right)}_{\cv{m}_k\ \text{times}}\,\in \mathbf{M}(\widetilde{m}\times \cv{m}_k,\N)
\end{equation*}
Then, $(\cv{\XX}_{\cv{\bb}}, \cv{\cc}:=\sum_{k=1}^l\cc_k)$ is a partitioned ftv, whose partitioned framing is given by $\widetilde{D}_{\cv{\cc}}= \sum_{j=1}^{\widetilde{m}}c_{jk}\widetilde{D}_j$, calling $\widetilde{D}_1,\ldots,\widetilde{D}_{\widetilde{m}}$ the torus invariant ge\-ne\-ra\-tors of $\Weil(\cv{\XX}_{\cv{\bb}})$.
\end{enumerate}

\begin{definition}[partitioned $f$-process]
Following the previous algorithm~\ref{algoritmoDnef}, the partitioned ftv $(\cv{\XX}_\aa,\cv{\bb}=\sum_{k=1}^l\bb_k)$, is called a \emph{partitioned $f$-dual} of $(X,\aa=\sum_{k=1}^l\aa_k)$.

\noindent A double application of partitioned $f$-duality defines a \emph{partitioned $f$-process}
\begin{equation}\label{nefDprocess}
  \left(X,\aa=\sum_{k=1}^l\aa_k\right)\ {\rightsquigarrow}\ \left(\cv{\XX}_\aa,\cv{\bb}=\sum_{k=1}^l\bb_k\right)\ {\rightsquigarrow}\ \left(\cv{\XX}_{\cv{\bb}},\cv{\cc}=\sum_{k=1}^l\cc_k\right)
\end{equation}
which is called \emph{calibrated} if there exist $\Xi\in\SF(V)$ and $\Xi'\in\SF(\cv{\L}_{\cv{\bb}})$, refining $\Si$ and $\cv{\Si}_{\cv{\bb}}$, respectively, such that
  $$\left(\widehat{X},\vf^*D_\aa\right)\ {\cong}\ \left(\widehat{X}',(\vf')^*\widetilde{D}_{\cv{\cc}}\right)$$
  are isomorphic framed toric varieties, where
  $$\vf:\widehat{X}(\Xi)\longrightarrow X(\Si)\quad\text{and}\quad\vf':\widehat{X}'(\Xi')\longrightarrow \cv{\XX}_{\cv{\bb}}(\cv{\Si}_{\cv{\bb}})$$
  are the small resolutions associated with the choice of $\Xi$ and $\Xi'$, respectively.
\end{definition}

The following characterization of a calibrated partitioned $f$-process is a direct consequence of Theorem~\ref{thm:Deltatriviale}.

\begin{proposition}
  In the above notation, up to identifying lattices $M$ (hence $N$) of $X$ and $\cv{\XX}_{\cv{\bb}}$, the partitioned $f$-process (\ref{nefDprocess}) is calibrated if and only if
  \begin{eqnarray}\label{nefDcalibrato}
\nonumber
  \cv{\L}_{\cv{\bb}} &=& V\quad\text{up to a permutation of columns} \\
  \forall\,k=1,\ldots,l\quad \cc_k&=& \aa_k
\end{eqnarray}
\end{proposition}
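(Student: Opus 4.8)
The plan is to reduce the statement to Theorem~\ref{thm:Deltatriviale}, applied componentwise across the partition. The partitioned $f$-process is, by construction, nothing but an ``assembled'' version of the ordinary $f$-process: the polytope $\cv{\D}(X,\aa)$ plays the role of $\D(X,\aa)$, the fan matrix $\cv{\L}_\aa$ plays the role of $\L_\aa$, and the vectors $\bb_k$ (resp. $\cc_k$) are defined by exactly the same minimality condition as $\bb$ (resp. $\cc$) in \S\ref{ssez:Deltaprocesso}, but restricted to the columns of $V$ indexed by $I_k$ (resp. the columns of $\cv{\L}_\aa$ indexed by $J_k$). So I would first observe that the calibration condition for the partitioned process, namely that $f:(\widehat{X},\vf^*D_\aa)\cong(\widehat{X}',(\vf')^*\widetilde{D}_{\cv{\cc}})$ is an isomorphism of framed toric varieties, is formally identical to the calibration condition in Definition~\ref{def:Deltaproc banale} once one forgets the partition: it asks for a ftv isomorphism between small $\Q$-factorial resolutions of $X$ and $\cv{\XX}_{\cv{\bb}}$ matching the total framings $D_\aa$ and $\widetilde{D}_{\cv{\cc}}$.

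\textbf{Key steps.} First I would establish the ``only if'' direction. Assuming the partitioned $f$-process (\ref{nefDprocess}) is calibrated, the underlying (unpartitioned) $f$-process $(X,\aa)\rightsquigarrow(\cv{\XX}_\aa,\cv{\bb})\rightsquigarrow(\cv{\XX}_{\cv{\bb}},\cv{\cc})$ is calibrated in the sense of Definition~\ref{def:Deltaproc banale}: the same resolutions $\vf,\vf'$ and the same isomorphism $f$ witness it, because matching total framings is what is required there. Hence Theorem~\ref{thm:Deltatriviale} applies and gives $\cv{\L}_{\cv{\bb}}=V$ up to a permutation of columns, which is the first line of (\ref{nefDcalibrato}). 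For the second line, I would argue that the ftv isomorphism $f$ respects the induced partitions: the partition $I_1\cup\cdots\cup I_l$ of $\{1,\ldots,m\}$ is transported by $\vf^*$ to a partition of the rays of $\widehat{X}$, likewise $J_1\cup\cdots\cup J_l$ on the $\cv{\XX}_\aa$ side induces one on $\cv{\XX}_{\cv{\bb}}$, and since under the identification $V=\cv{\L}_{\cv{\bb}}$ the generator $D_i$ is matched with $\widetilde{D}''_i$, the isomorphism $f$ sends the partition block $I_k$ to a block of the partition on $\cv{\XX}_{\cv{\bb}}$; relabelling, it sends $I_k$ to the block of $\widetilde{D}_{\cv{\cc}}$ carrying $\cc_k$, and matching the restricted framings forces $\cc_k=\aa_k$ for each $k$. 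Conversely, assuming (\ref{nefDcalibrato}), from $\cv{\L}_{\cv{\bb}}=V$ one gets $\SF(V)=\SF(\cv{\L}_{\cv{\bb}})$, so any choice $\Xi\in\SF(V)$ refining $\Si$ yields $\Xi'\in\SF(\cv{\L}_{\cv{\bb}})$ refining $\cv{\Si}_{\cv{\bb}}$ with an isomorphism $f:\widehat{X}(\Xi)\cong\widehat{X}'(\Xi')$; identifying the lattices via $f,\vf,\vf'$ identifies $\Weil(X)\cong\Weil(\cv{\XX}_{\cv{\bb}})$ with $D_i\leftrightarrow\widetilde{D}''_i$, and then $\cc_k=\aa_k$ for all $k$ gives $\sum_k\cc_k=\sum_k\aa_k$, i.e. $\vf^*D_\aa=(\vf')^*\widetilde{D}_{\cv{\cc}}$, with the partitions matching blockwise; this is exactly calibration of the partitioned process.

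\textbf{Main obstacle.} The delicate point — and the place where I would invoke the forthcoming paper \cite{R-fCI} rather than grind through it — is verifying that the auxiliary polytope data is consistent, i.e. that $\0\in\Int(\cv{\D}(X,\aa))$ and the ``$k_0=h_1=1$'' type conclusions hold, and more importantly that the induced partition $J_1\cup\cdots\cup J_l$ of $\{1,\ldots,\cv{m}\}$ in step (4) of Algorithm~\ref{algoritmoDnef} is genuinely well defined and compatible with the isomorphism $f$. This compatibility is what makes the blockwise equalities $\cc_k=\aa_k$ meaningful rather than merely the total equality $\cv{\cc}=\aa$; it is the genuinely new ingredient beyond Theorem~\ref{thm:Deltatriviale}, and its justification (that the Minkowski decomposition $\cv{\D}_{\cv{\bb}}=\sum_k\cv{\D}_{\bb_k}$ is carried by $f$ to the decomposition $\D_\aa=\sum_k\D_{\aa_k}$) is precisely the content deferred to \cite{R-fCI}. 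Granting that, the proof is a direct translation of Theorem~\ref{thm:Deltatriviale} into the partitioned setting.
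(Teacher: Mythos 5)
Your proposal matches the paper's treatment: the paper offers no proof of this proposition beyond the one-line assertion that it is ``a direct consequence of Theorem~\ref{thm:Deltatriviale}'', and your reduction to that theorem --- a ftv isomorphism between small resolutions forcing $\cv{\L}_{\cv{\bb}}=V$ up to permutation, and the minimality defining the framings then forcing their equality, with the converse obtained by running the same identifications backwards --- is exactly the intended argument. You also correctly isolate the only genuinely new ingredient, namely the well-definedness of the induced partition $J_1\cup\cdots\cup J_l$ and the passage from the total equality $\cv{\cc}=\aa$ to the blockwise equalities $\cc_k=\aa_k$, which is precisely what the paper itself defers to \cite{R-fCI}.
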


\begin{definition}[$f$-mirror of a complete intersection]\label{def:nefpmirror}
  Given the partitioned ftv $(X,\aa=\sum_{k=1}^{l}\aa_k)$, assume that the associated partitioned $f$-process (\ref{nefDprocess}) is calibrated. Consider the complete intersection subvariety
  $$Y:=\bigcap_{k=1}^lY_k\subset X\quad\text{with}\quad Y_k\in|D_{\aa_k}|$$
  The generic complete intersection subvariety
  $$Y^\vee:=\bigcap_{k=1}^l Y^\vee_k\subset \cv{\XX}_\aa\quad\text{with}\quad Y_k\in|\cv{D}_{\bb_k}|$$
  is called a \emph{$f$-mirror partner of} $Y$.
\end{definition}

\begin{remark}
  If $l=1$, that is, the partition is trivial, the $f$-mirror duality defined by the previous Definition~\ref{def:nefpmirror} reduces to give the $f$-mirror duality between hypersurfaces in toric varieties defined in Definition~\ref{def:mirror}.
\end{remark}

\begin{remark}\label{rem:CIequazioni}
  This is the analogue of what described by Remark~\ref{rem:famiglie} when $l=1$. \\
  One can explicitly describe the defining polynomials of both $Y$ and $Y^\vee$ in the Cox rings of $X$ and $\cv{\XX}_\aa$, respectively. Namely:
  \begin{itemize}
    \item[(a)] for every $k=1,\ldots,l$, the lattice polytope $[\D_{\aa_k}]$ is the Newton polytope of $Y_k\in|D_{\aa_k}|$; call $\overline{\L}_{\aa_k}$ a matrix whose columns are given by all the lattice points in $[\D_{\aa_k}]$: it is well defined up to a permutation of columns; setting $l_k:=|\D_{\aa_k}\cap M|$, then $\overline{\L}_{\aa_k}$ is a $n\times l_k$ integer matrix; define
        \begin{equation*}
          \overline{M}_{\aa_k}:= V^T\cdot \overline{\L}_{\aa_k}\quad\text{and}\quad \overline{A}_k:=\underbrace{\left(\,\aa_k\ \cdots\ \aa_k\,\right)}_{l_k\ \text{times}}\,\in \mathbf{M}(m\times l_k;\N)\,;
        \end{equation*}
        then the polynomial of $Y_k$ is given by
        \begin{equation*}
          f_k=\sum_{j=1}^{l_k} c_j\x^{\m_j} \in \Cox(X)\cong\C[x_1,\ldots,x_m]
        \end{equation*}
        where $\m_j=(m_{i,j})$ is the $j$-th column of $\overline{M}_{\aa_k}+\overline{A}_k$ and $\x^{\m_j}:=\prod_{i=1}^m x_i^{m_{i,j}}$;
    \item[(b)] recalling step (5) in the algorithm~\ref{algoritmoDnef}, the lattice polytope $[\cv{\D}_{\bb_k}]$ is the Newton polytope of $Y_k^\vee\in|\cv{D}_{\bb_k}|$; call $\overline{\L}_{\bb_k}$ a matrix whose columns are given by all the lattice points in $[\cv{\D}_{\bb_k}]$; setting $l'_k:=|\cv{\D}_{\bb_k}\cap N|$, then $\overline{\L}_{\bb_k}$ is a $n\times l'_k$ integer matrix; define
        \begin{equation*}
          \overline{M}_{\aa,\bb_k}:= \cv{\L}_\aa^T\cdot \overline{\L}_{\bb_k}\quad\text{and}\quad \overline{B}_k:=\underbrace{\left(\,\bb_k\ \cdots\ \bb_k\,\right)}_{l'_k\ \text{times}}\,\in \mathbf{M}(\cv{m}\times l'_k;\N)\,;
        \end{equation*}
        then the polynomial of $Y^\vee_k$ is given by
        \begin{equation*}
          f^\vee_k=\sum_{j=1}^l c_j\x^{\n_j} \in \Cox(\cv{\XX}_\aa)\cong\C[x_1,\ldots,x_{\cv{m}}]
        \end{equation*}
        where $\n_j=(n_{i,j})$ is the $j$-th column of $\overline{M}_{\aa,\bb_k}+\overline{B}_k$ and $\x^{\n_j}:=\prod_{i=1}^{\cv{m}} x_i^{n_{i,j}}$.
  \end{itemize}
  Notice that, for every $k$, both $f_k$ and $f_k^\vee$ are homogeneous polynomials, with respect to degrees induced by class groups. In fact, columns of both $\overline{M}_{\aa_k}$ and $\overline{M}_{\aa,\bb_k}$ determine trivial divisors, up to linear equivalence. Then
  $$\deg(f_k)=[D_{\aa_k}]\in\Cl(X)\quad\text{and}\quad\deg(f_k^\vee)=[\cv{D}_{\bb_k}]\in\Cl(\cv{\XX}_\aa)$$
\end{remark}

\subsection{Generalizing Batyrev-Borisov duality} Definition~\ref{def:nefpmirror} is clearly motivated by the case when $X$ is a Fano toric variety and $\aa=\1$, that is $D_\aa=-K_X$. In fact, in this case a framing partition $\aa=\sum_{k=1}^{l} \aa_k$ such that $D_{\aa_k}$ is a nef divisor, for every $k=1,\ldots,l$, is precisely a Borisov nef partition of the anti-canonical divisor \cite[Def.~2.5, Rem.~2.6]{Borisov}, \cite[Def.~4.6]{BB96}. In this case, $f$-duality reduces to give the well known Batyrev-Borisov mirror symmetry between \cy complete intersections in Fano toric varieties.

\begin{example}\label{ex:nefD}
  To fix ideas, an easy example is presented here.
  Consider the partitioned ftv $$\left(X,\aa=\sum_{k=1}^l\aa_k\right)=\left(\P^2,(1,1,2)=(1,0,0)+(0,1,2)\right)$$
  where weights of the partition are referred to primitive generators of the 1-skeleton of the fan defining $\P^2$ and given by columns of the fan matrix
  \begin{equation*}
    V=\left(
        \begin{array}{ccc}
          1 & 0 & -1 \\
          0 & 1 & -1 \\
        \end{array}
      \right)
  \end{equation*}
  Notice that the framing partition $(1,1,2)=(1,0,0)+(0,1,2)$ is actually a \emph{nef} partition, as both of the summands give back nef divisors.
  We are then considering a generic complete intersection $Y\subset\P^2$ of a line and a cubic (hence 3 points) whose equations are given by Newton polytopes (step (1) in algorithm~\ref{algoritmoDnef})
  \begin{eqnarray*}
    \D_{\aa_1} &:=& \conv(\L_{\aa_1})\ ,\quad \L_{\aa_1}:=\left(
                                                           \begin{array}{ccc}
                                                             0 & -1 & -1 \\
                                                             0 & 1 & 0 \\
                                                           \end{array}
                                                         \right)
     \\
    \D_{\aa_2} &:=& \conv(\L_{\aa_2})\ ,\quad \L_{\aa_2}:=\left(
                                                           \begin{array}{ccc}
                                                             3 & 0 & 0 \\
                                                             -1 & 2 & -1 \\
                                                           \end{array}
                                                         \right)
  \end{eqnarray*}
  Notice that $\D_\aa=\D_{\aa_1}+\D_{\aa_2}$ (see Fig.~\ref{Fig3}).
\begin{figure}
\begin{center}
\includegraphics[width=12truecm]{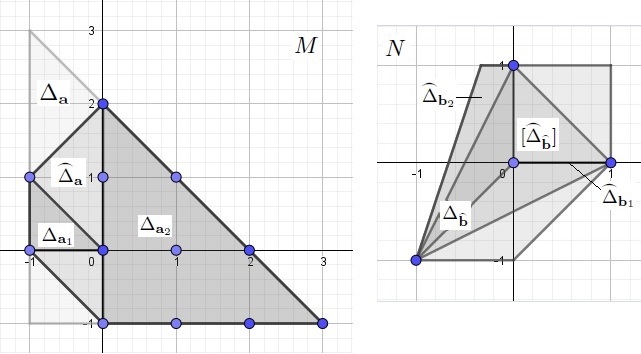}
\caption{\label{Fig3} The calibrated partitioned $f$-process of Example \ref{ex:nefD}.}
\end{center}
\end{figure}

\noindent By part (a) in Remark~\ref{rem:CIequazioni}, polynomials defining $Y$ are then given by
\begin{eqnarray*}
  \overline{M}_{\aa_1}+\overline{A}_1 &=&  V^T\cdot\L_{\aa_1}+\left(
                                                                \begin{array}{ccc}
                                                                  \aa_1 & \aa_1 & \aa_1 \\
                                                                \end{array}
                                                              \right) = \left(
                                                                \begin{array}{ccc}
                                                                  1 & 0 & 0 \\
                                                                  0 & 1 & 0 \\
                                                                  0 & 0 & 1 \\
                                                                \end{array}
                                                              \right)
  \\
  &\Longrightarrow& f_1=a_1x_1+a_2 x_2+a_3 x_3\\
  \overline{M}_{\aa_2}+\overline{A}_2 &=&  V^T\cdot\overline{\L}_{\aa_2}+\underbrace{\left(
                                                                \begin{array}{ccc}
                                                                  \aa_2 & \cdots & \aa_2 \\
                                                                \end{array}
                                                              \right) }_{\text{10 times}}
                                                              \\
                                                              &=& \left(
                                                                \begin {array}{cccccccccc} 0&1&0&2&1&0&3&2&1&0\\ 3&2&2&1&1&1&0&0&0&0\\ 0&0&1&0&1&2&0&1&2&3\end {array}
                                                              \right)
                                                              \\
                                                              &\Longrightarrow& f_2=b_1x_2^3+b_2x_1x_2^2+b_3x_2^2x_3+b_4x_1^2x_2+b_5x_1x_2x_3\\
                                                              &&\quad\quad\ +b_6x_2x_3^2+b_7x_1^3+b_8x_1^2x_3+b_9x_1x_3^2+b_{10}x_3^3
\end{eqnarray*}
Then, generically, $Y$ is given by 3 distinct aligned points.

\noindent A mirror partner $Y^\vee$ of $Y$ is determined by part (b) in Remark~\ref{rem:CIequazioni}. Namely, by step (2) in algorithm~\ref{algoritmoDnef}, one has
\begin{equation*}
  \cv{\D}_\aa:=\conv\left(\D_{\aa_1},\D_{\aa_2}\right)=\conv\left(
                                                                    \begin{array}{ccccc}
                                                                      -1 & -1 & 3 & 0 & 0 \\
                                                                      1 & 0 & -1 & 2 & -1 \\
                                                                    \end{array}
  \right)
\end{equation*}
Then, passing to step (3), one has
\begin{equation*}
  \cv{\L}_\aa=\left(
                                                                    \begin{array}{ccccc}
                                                                      -1 & -1 & 3 & 0 & 0 \\
                                                                      1 & 0 & -1 & 1 & -1 \\
                                                                    \end{array}
  \right)
\end{equation*}
and this is enough to determine the fan $\cv{\Si}_\aa$ of $\cv{\XX}_\aa$. In particular, an easy check gives that $\cv{\XX}_\aa$ is the blow up of $\P(1,2,1)$ in two distinct points.

\noindent Step (4) in algorithm~\ref{algoritmoDnef} allows us to compute the partitioned framing $\cv{\bb}=\bb_1+\bb_2$ over $\cv{\XX}_\aa$. Namely
\begin{eqnarray*}
  \cv{M}_{\aa_1}^T &=& \cv{\L}_\aa^T\cdot\left(
                                           \begin{array}{c}
                                             1  \\
                                             0  \\
                                           \end{array}
                                         \right)= \left(
                                                    \begin{array}{c}
                                                      -1  \\
                                                      -1  \\
                                                      3  \\
                                                      0  \\
                                                      0  \\
                                                    \end{array}
                                                  \right)\ \Longrightarrow\ \bb_1=(1,1,0,0,0)
   \\
  \cv{M}_{\aa_2}^T &=& \cv{\L}_\aa^T\cdot\left(
                                           \begin{array}{cc}
                                             0 & -1  \\
                                             1 & -1  \\
                                           \end{array}
                                         \right)= \left(
                                                    \begin{array}{cc}
                                                      1 & 0  \\
                                                      0 & 1  \\
                                                      -1 & -2  \\
                                                      1 & -1  \\
                                                      -1 & 1  \\
                                                    \end{array}
                                                  \right)\ \Longrightarrow\ \bb_2=(0,0,2,1,1)
\end{eqnarray*}
Then, step (5) gives polytopes associated with divisors $\cv{D}_{\cv{\bb}}, \cv{D}_{\bb_1}, \cv{D}_{\bb_2}$, namely
\begin{eqnarray*}
  \cv{\D}_{\bb_1} &=& \conv\left(
                             \begin{array}{cc}
                               1 & 0 \\
                               0 & 0 \\
                             \end{array}
                           \right)
   \\
  \cv{\D}_{\bb_2} &=& \conv\left(
                             \begin{array}{cccc}
                               0 & 0 & -1 & -1/3 \\
                               0 & 1 & -1 & 1 \\
                             \end{array}
                           \right)
  \\
  \D_{\cv{\bb}} &=& \conv\left(
                             \begin{array}{ccccc}
                               1 & 1 & 0 & -1/3 & -1 \\
                               0 & 1 & -1 & 1 & -1 \\
                             \end{array}
                           \right)
\end{eqnarray*}
Notice that $\D_{\cv{\bb}}=\cv{\D}_{\bb_1}+\cv{\D}_{\bb_2}$. By a direct check (use e.g. \cite[Thm.~3]{RT-Ample}), divisors $\cv{D}_{\cv{\bb}}$, $\cv{D}_{\bb_1}$ and $\cv{D}_{\bb_2}$ turn out to be semi-ample and line bundles $\cO_{\cv{\XX}_{\aa}}(3\cv{D}_{\cv{\bb}})$, $\cO_{\cv{\XX}_{\aa}}(\cv{D}_{\bb_1})$ and $\cO_{\cv{\XX}_{\aa}}(3\cv{D}_{\bb_2})$ be globally generated.

\noindent Passing to step (6) one gets
\begin{equation*}
  \cv{\D}_{\cv{\bb}}:=\conv(\cv{\D}_{\bb_1},\cv{\D}_{\bb_2})=\conv\left(
                             \begin{array}{cccc}
                               1 & 0 & -1 & -1/3 \\
                               0 & 1 & -1 & 1 \\
                             \end{array}
                           \right)
\end{equation*}
Therefore $[\cv{\D}_{\cv{\bb}}]=\conv(V)$, so giving $\cv{\L}_{\cv{\bb}}=V$, up to a permutation on columns, which is the first condition in (\ref{nefDcalibrato}). To check the second one, by step (8) one gets
\begin{eqnarray*}
  \cv{M}_{\aa_1,\cv{\bb}}^T &=& V^T\cdot\L_{\aa_1}=\left(
                                                     \begin{array}{ccc}
                                                       0 & -1 & -1 \\
                                                       0 & 1 & 0 \\
                                                       0 & 0 & 1 \\
                                                     \end{array}
                                                   \right)\ \Longrightarrow\ \cc_1=(1,0,0)=\aa_1
   \\
  \cv{M}_{\aa_2,\cv{\bb}}^T &=& V^T\cdot\L_{\aa_2}=\left(
                                                     \begin{array}{ccc}
                                                       3 & 0 & 0 \\
                                                       -1 & 2 & -1 \\
                                                       -2 & -2 & 1 \\
                                                     \end{array}
                                                   \right)\ \Longrightarrow\ \cc_2=(0,1,2)=\aa_2
\end{eqnarray*}
and the partitioned $f$-process associated with $(\P^3,(1,0,0)+(0,1,2))$ turns out to be calibrated. Then, recalling part (b) of Remark~\ref{rem:CIequazioni}, polynomials defining the mirror partner $Y^\vee$ are given, in the Cox ring $\Cox(\cv{\XX}_\aa)\cong\C[x_1,\ldots,x_5]$, by
\begin{eqnarray*}
  \overline{M}_{\aa,\bb_1}+\overline{B}_1 &=&  \cv{\L}_\aa^T\cdot\left(
                                                                   \begin{array}{cc}
                                                                     1 & 0 \\
                                                                     0 & 0 \\
                                                                   \end{array}
                                                                 \right)
  +\left(
                                                                \begin{array}{cc}
                                                                  \bb_1 & \bb_1 \\
                                                                \end{array}
                                                              \right) = \left(
                                                                \begin{array}{cc}
                                                                  0 & 1 \\
                                                                  0 & 1 \\
                                                                  3 & 0 \\
                                                                  0 & 0 \\
                                                                  0 & 0 \\
                                                                \end{array}
                                                              \right)
  \\
  &\Longrightarrow& f^\vee_1=a_1x_3^3+a_2 x_1x_2\\
  \overline{M}_{\aa,\bb_2}+\overline{B}_2 &=&  \cv{\L}_\aa^T\cdot\left(
                                                                   \begin{array}{ccc}
                                                                    0 & -1 & 0 \\
                                                                    1 & -1 & 0 \\
                                                                   \end{array}
                                                                 \right)
  +\left(
                                                                \begin{array}{ccc}
                                                                  \bb_2 & \bb_2 &\bb_2 \\
                                                                \end{array}
                                                              \right) =
                                                              \\
                                                              &=& \left(
                                                                \begin {array}{ccc} 1&0&0\\ 0&1&0\\
                                                                1&0&2\\
                                                                2&0&1\\
                                                                0&2&1\end {array}
                                                              \right)
                                                              \\
                                                              &\Longrightarrow& f^\vee_2=b_1x_1x_3x_4^2+b_2x_2x_5^2+b_3x_3^2x_4x_5
\end{eqnarray*}
Therefore $Y^\vee=Y^\vee_1\cap Y^\vee_2\subset\cv{\XX}_\aa$, where $Y^\vee_1$ is an hypersurface of degree $(3,3,0)\in \Cl(\cv{\XX}_\aa)$ and $Y^\vee_2$ is an hypersurface of degree $(2,3,2)\in \Cl(\cv{\XX}_\aa)$.
\end{example}

\subsection{Mirroring projective complete intersections} After Theorem~\ref{thm:dualita} and Corollary~\ref{cor:ipersuperfici}, one may expect that analogous statements could still hold for suitable partitioned framed projective spaces and projective complete intersections. This is actually the case, holding the following

\begin{theorem}\label{thm:CI}
  Let $Y_d=\bigcap_kY_{d_k}\subseteq\P^n$ be a complete intersection of $l$ generic projective hypersurface, of degree $(d_1,\ldots,d_l)$ such that $\sum_kd_k\geq n+1$. Then there always exists a partitioned framing ${\aa}=\sum_k\aa_k$ of $\P^n$ such that $Y_{d_k}\sim D_{\aa_k}$, for every $k$, and the associated partitioned $f$-process is calibrated.
\end{theorem}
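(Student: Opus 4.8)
The plan is to imitate the proof of Corollary~\ref{cor:ipersuperfici}, spreading the excess $\bigl(\sum_k d_k\bigr)-(n+1)\ge 0$ of the degrees over the $n+1$ torus invariant divisors of $\P^n$ in a blockwise fashion: one block of the partition per hypersurface, each block carrying the weight pattern $(1,\ldots,1,\d_k)$ which made the single hypersurface case work. Concretely, I would first choose positive integers $m_1,\ldots,m_l$ with $\sum_k m_k=n+1$ and $m_k\le d_k$ for all $k$; this is possible, since one may assume $l\le n+1$ (otherwise $Y_d$ is generically empty) and, starting from $m_k=1$ for every $k$, one still has to allocate $n+1-l\ge 0$ units, block $k$ being able to absorb up to $d_k-1$ of them, the total capacity $\sum_k(d_k-1)=\bigl(\sum_k d_k\bigr)-l\ge(n+1)-l$ being sufficient. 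Then I would fix a partition $\{1,\ldots,n+1\}=I_1\sqcup\cdots\sqcup I_l$ with $|I_k|=m_k$ and, on each $I_k$, put the weights $(1,\ldots,1,\d_k)$ with $\d_k:=d_k-m_k+1\ge 1$, extended by $0$ outside $I_k$; this defines $\aa_k$, with $\sum_{i\in I_k}a_i=d_k$. The resulting $\aa:=\sum_k\aa_k$ is strictly effective, $D_{\aa_k}\sim d_k h$ so that $Y_{d_k}\in|D_{\aa_k}|$, and $(\P^n,\aa=\sum_k\aa_k)$ is a partitioned ftv with associated complete intersection $Y_d$; for $l=1$ this is exactly the framing $\aa_0=(\1_n,d-n)$ of Corollary~\ref{cor:ipersuperfici}.

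Next I would run Algorithm~\ref{algoritmoDnef} with this input. Because each $\aa_k$ has the special shape above, each polytope $\D_{\aa_k}=\{\m\in M_\R\,|\,V^T\cdot\m\geq -\aa_k\}$ is a lattice polytope whose vertices are computed as in (\ref{DeltaaConv}) — a simplex in the coordinates indexed by $I_k$ — so that all the $\D_{\aa_k}$, their Minkowski sum $\D_\aa$ and their convex hull $\cv{\D}_\aa=\conv(\D_{\aa_1},\ldots,\D_{\aa_l})$ are lattice polytopes (hence $k_0=1$), and the lattice points of $\cv{\D}_\aa$ are under explicit control. From this one reads off the fan matrix $\cv{\L}_\aa$ of $\cv{\XX}_\aa$, the dual partitioned framing $\cv{\bb}=\sum_k\bb_k$ with its induced partition $J_1\sqcup\cdots\sqcup J_l$, and then iterates once more to obtain $\cv{\L}_{\cv{\bb}}$ and the vectors $\cc_k$. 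The point is to verify the two conditions (\ref{nefDcalibrato}): $\cv{\L}_{\cv{\bb}}=V$ up to a permutation of columns, and $\cc_k=\aa_k$ for every $k$. The first is the many-block analogue of the equality $[\D_\bb]=\conv(V)$ established in (\ref{[Deltab]}) and in Corollary~\ref{cor:ipersuperfici}; the second is forced, blockwise, by the minimality of $\bb_k,\cc_k$ in Algorithm~\ref{algoritmoDnef} together with the $\gcd$/$\lcm$ bookkeeping already used in the proofs of Theorems~\ref{thm:Deltatriviale} and \ref{thm:dualita} (inside each block the relevant greatest common divisors equal $1$). Granting (\ref{nefDcalibrato}), the characterization of calibrated partitioned $f$-processes stated above yields that the partitioned $f$-process attached to $(\P^n,\aa=\sum_k\aa_k)$ is calibrated, which is the assertion.

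The hard part is the verification just described: unlike in the proof of Theorem~\ref{thm:dualita}, where a single simplex is dualized twice, here one must dualize the convex hull $\cv{\D}_\aa$ of $l$ block-simplices and show that two successive dualizations return the fan matrix to $V$ and the framing to $\aa$, while \emph{simultaneously} controlling the induced partition $J_1\sqcup\cdots\sqcup J_l$ and checking that the minimum non-negative vectors $\bb_k$, computed block by block, reassemble into a framing whose dualizing polytope $\cv{\D}_{\cv{\bb}}$ has integral hull exactly $\conv(V)$. I expect this to amount to running the $\gcd$/$\lcm$ argument of the proof of Theorem~\ref{thm:dualita} inside each block and then gluing along the partition: lengthy bookkeeping rather than a conceptual obstacle. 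The complete details, in the broader setting of complete intersections in arbitrary toric varieties, are carried out in \cite{R-fCI}.
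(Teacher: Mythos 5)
The paper does not actually prove Theorem~\ref{thm:CI}: immediately after the statement it says that ``for the proof and any further detail, the interested reader is referred to \cite{R-fpCI}'', so the only material you can be measured against is the worked Example~\ref{ex:nefD}. Your candidate framing --- blocks $I_1\sqcup\cdots\sqcup I_l$ of sizes $m_k\le d_k$ with $\sum_k m_k=n+1$, carrying weights $(1,\ldots,1,\d_k)$ with $\d_k=d_k-m_k+1$ --- is sensible and does reproduce that example (there $\aa_1=(1,0,0)$, $\aa_2=(0,1,2)$). But your argument stops exactly where the theorem begins: the verification of the two conditions (\ref{nefDcalibrato}) is declared ``lengthy bookkeeping'' and deferred to the same external reference the paper itself cites. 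Since that verification \emph{is} the content of the theorem, what you have written is a plausible strategy, not a proof.

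Moreover, the bookkeeping is not routine and, as stated, it can fail. The danger is primitivity: the fan matrix $\cv{\L}_\aa$ of $\cv{\XX}_\aa$ consists of \emph{primitive} generators of the rays through the vertices of $\cv{\D}_\aa$ (Proposition~\ref{prop:Fmatricedipolitopo}), whereas the condition $\cc_k=\aa_k$ amounts to $\min_{j\in J_k}\langle\ll_j,\v_i\rangle=-a_i$ for $i\in I_k$, and this minimum may be attained only at \emph{non-primitive} vertices of $\D_{\aa_k}$, in which case the passage to $\cv{\L}_\aa$ destroys it. Concretely, take $n=2$ and $(d_1,d_2)=(2,2)$. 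Strict effectivity of $\aa$ forces, up to permutation, the unique admissible choice $\aa_1=(2,0,0)$, $\aa_2=(0,1,1)$, which is also what your recipe produces with $m_1=1$, $m_2=2$. Then $\D_{\aa_1}=\conv\{(0,0),(-2,0),(-2,2)\}$, the two vertices pairing to $-2$ with $\v_1$ are non-primitive, $(\cv{\L}_\aa)_{J_1}$ has columns $(-1,0)$ and $(-1,1)$, and a literal run of Algorithm~\ref{algoritmoDnef} gives $\cc_1=(1,0,0)\neq(2,0,0)=\aa_1$: the partitioned $f$-process is not calibrated, and since there is no other admissible partitioned framing, the statement itself appears to fail for this input unless step (8) is read with the full, non-primitive vertex matrices $\L_{\aa_k}$ --- which is what the paper silently does in Example~\ref{ex:nefD}, in conflict with its own definition. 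Any complete proof must confront this issue, either by constraining the blocks so that the relevant minima are always attained at primitive vertices of $\cv{\D}_\aa$, or by amending the algorithm; your sketch does neither, so the gap is genuine.
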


For the proof and any further detail, the interested reader is referred to \cite{R-fpCI}.

\section{Weakly framed toric varieties and the work of Givental}\label{sez:wftv}

In Definition~\ref{def:ftv} we asked for a \emph{framing} to be a strictly effective divisor. This is motivated by the will to get an involutive duality between pairs of framed toric varieties (recall Remark~\ref{rem:negKod}). On the other hand, Givental's approach \cite{Givental-ICM} produces mirror partners of complete varieties admitting non-negative first Chern class, by means of LG models, so introducing a strong asymmetry in the mirror correspondence, \wrt the \cy case. Actually he proved a Mirror Theorem in the case of toric complete intersections \cite{Givental96}. Consequently, we are led to relax the definition of a framing, just requiring it is nothing more than an \emph{effective divisor}. In this way, one gets a bridge between Givental's LG mirror models and those proposed by Hori-Vafa \cite{Hori-Vafa} for varieties admitting negative first Chern class, represented by the Krawitz duality of Laurent LG models described \S~\ref{ssez:K-dualita}. To get a complete vision of this unifying construction the reader should compare what follows with \S~\ref{ssez:K-dualita} and \S~\ref{ssez:HoriVafa}.

\begin{definition}[Weakly framed toric variety (wftv)]\label{def:wftv} A \emph{weakly framed toric variety} is a couple $(X,D_\aa)$ (also denoted $(X,\aa)$) where:
   \begin{itemize}
     \item $X$ is a complete toric variety, with $\dim(X)=n$ and $\rk(\Pic(X))=r$,
     \item $D=\sum_{\rho\in\Si(1)}a_\rho D_\rho=\sum_{i=1}^m a_i D_i\in\Weil(X)$, with $m=n+r$, is an effective torus invariant Weil divisor, called a \emph{weak framing} of $X$.
   \end{itemize}
   \end{definition}
By Proposition~\ref{prop:gg}, the associated polyhedron $\D_\aa$ is still a polytope, but in general $\0\in M$ is no more a relative interior point of $\D_\aa$, but just a lattice point of $\D_\aa$. Recalling Definition~\ref{def:Deltapolytope}, define the $f$-polytope associated with a wftv $(X,\aa)$ to be the following
\begin{equation*}
  \D(X,\aa):=[\D_\aa]\ \Longrightarrow\ \XX_\aa:=\XX_{\D(X,\aa)}=\XX_{[\D_\aa]}
\end{equation*}
If $\aa$ is a weak framing and not a framing, that is $a_i=0$ for some $i$, then $\0\in\partial[\D_\aa]$ and the toric variety $\XX_\aa$ over the polytope $\D(X,\aa)$ is no more complete. We cannot hope to reconstructing a mirror wftv of $(X,\aa)$. By the way, we can adopt the Givental's asymmetry and thinking of $\XX_\aa$, endowed with a sort of framing we are going to define in a moment, in terms of LG mirror model. More precisely, calling $\L_\aa$ the fan matrix of $\XX_\aa$, obtained by the $f$-polytope $\D(X,\aa)$ as in Proposition~\ref{prop:Fmatricedipolitopo}, and recalling (\ref{b}), let us define the \emph{mirror framing} as the \emph{minimum non-negative column vector} $\bb=(b_j)_{j=1}^{m'}$ (i.e. effective divisor $D'_\bb\in \Weil(\XX_\aa)$) such that
\begin{equation}\label{wb}
  M_\aa^T+B\geq \0\quad\text{where}\quad B:=\underbrace{\left(\,\bb\ \cdots\ \bb\,\right)}_{m\ \text{times}}\,\in \mathbf{M}(m'\times m;\N)
\end{equation}
being $V$ a fan matrix of $X$ and $M_\aa:=V^T\cdot\L_\aa\in\mathbf{M}(m\times m';\Z)$, as usual. One can now go on as in \S\ref{sssez:LG/Hyp}, by setting:
\begin{itemize}
  \item $\T_\aa\cong(\C^*)^n$ be the maximal acting torus on $\XX_\aa$,
  \item $f^\vee_\bb:=f^\vee/\x^\bb\in\C[\x,\x^{-1}]$, where $f^\vee$ is the generic polynomial given in (\ref{fdual}), gene\-ra\-ted by the columns of the matrix $\overline{M}_{\aa,\bb}+\overline{B}$, with
      \begin{equation}\label{Mab(wf)}
        M_{\aa,\bb}:=\L_\aa^T\cdot V
      \end{equation}
      Then $f^\vee_\bb$ is the generic polynomial gene\-ra\-ted by the columns of the matrix $\overline{M}_{\aa,\bb}$.
\end{itemize}

\begin{remark}
  Notice that definition (\ref{Mab(wf)}) of $M_{\aa,\bb}$ differs from that given in display (\ref{c}) in \S\ref{ssez:Deltaprocesso}, namely $M_{\aa,\bb}:=\L^T_\aa\cdot\L_\bb$. In fact, if $\aa$ is not strictly effective then $\XX_\aa$ is not complete and $\D_\bb$ is a polyhedron and not a polytope. Then $\XX_\bb$ and $\L_\bb$ are not well defined. Nevertheless, if $(X,\aa)$ is a ftv  admitting a calibrated $f$-process then $\L_\bb=V$, up to a permutation of columns, and the two definitions coincide.
\end{remark}

\begin{definition}\label{def:LGmirror}
  Given a wftv $(X,\aa)$, let $f$ be the generic polynomial constructed as in (\ref{f-WT}) and generated by the columns of $\overline{M}_\aa+\overline{A}$, and let $Y\in|D_\aa|$ be the hypersurface defined by $f$. Then:
  \begin{enumerate}
    \item the hypersurface $Y^\vee\subset\XX_\aa$ defined by the generic polynomial $f^\vee$ is called an \emph{$f$-mirror partner} of $Y$,
    \item the LG model given by $(\T_\aa,f^\vee_\bb)$ is called a \emph{$f$-mirror LG model} of $Y$.
  \end{enumerate}

\end{definition}

\subsection{Mirror partners of hypersurfaces of degree $d\le n$ in $\P^n$}\label{ssez:NegKod}
 Passing from a framing to a weak framing, that is,  dropping the word ``strictly'', explains why one cannot expect a complete mirror model for toric hypersurfaces (and complete intersections, adapting to a weak framing what described in \S\ref{sez:CI}) associated with a weak framing. The LG mirror model given in Definition~\ref{def:LGmirror} shares interesting similarities to the LG mirror model proposed by Givental. Here we consider, e.g., the case of hypersurfaces $Y_d\subset\P^n$ of degree $d\leq n$, for sake of completeness \wrt what analyzed in \S\ref{sez:ipersuperfici}. The same result can be extended to projective complete intersections: for further details, the interested reader is referred to the forthcoming paper \cite{R-fpCI}.

\begin{theorem}\label{thm:d<n+1}
  For every $d=1,\ldots,n$, set $\aa_d:=(\1_d,\0_{n+1-d})$ and consider the wftv $(\P^n,\aa_d)$. Then
\begin{equation}\label{wf-politopo}
  \D(\P^n,\aa_d)=[\D_{\aa_d}]=\D_{\aa_d}
\end{equation}
is the Newton polytope of the generic degree $d$ homogeneous polynomial in $\C[\x]$, whose zero-locus defines the generic hypersurface $Y_d\subset\P^n$. Moreover, $\XX_{\aa_d}$ is a non-complete toric variety and
\begin{enumerate}
  \item an $f$-mirror partner of $Y_d$ is given by the hypersurface $Y^\vee_d$ of $\XX_{\aa_d}$ defined, up to a variables rescaling, as the zero-locus of
      \begin{eqnarray*}
        f^\vee_1 &=& \prod_{i=1}^nx_i\cdot\left(\psi+\sum_{i=1}^n x_i\right)+1\in\Cox(\XX_{\aa_1})\cong\C[x_1,\dots,x_n]\quad\begin{array}{c}
                                                             \text{if $d=1$\,,} \\
                                                             \text{being $\XX_{\aa_1}\cong\C^n$}
                                                           \end{array}\\
        f^\vee_d &=& \prod_{k=1}^{n+1}x_k\cdot\left(\psi+\sum_{j=d+1}^{n+1}x_j^{d}\right)+\sum_{i=1}^d x_i^d \in\Cox(\XX_{\aa_d})\cong\C[x_1,\dots,x_{n+1}] \quad\text{if $2\le d\le n$}
      \end{eqnarray*}
where $\psi\in\C$ is the unique complex modulus of the mirror family, so giving an $A$-side mirror check, as $h^{1,1}(Y_d)=1$,
  \item an $f$-mirror LG model of $Y_d$ is given by $(\T_{\aa_d},f^\vee_{\bb_d})$, where $\T_{\aa_d}\cong(\C^*)^n$ is the maximal acting torus on $\XX_{\aa_d}$ and
      \begin{equation*}
        f^\vee_{\bb_d}=\left\{\begin{array}{cc}
                               \psi+\sum_{i=1}^n x_i+1/\prod_{i=1}^nx_i & \text{if $d=1$} \\
                               \psi+\sum_{j=d+1}^{n+1}x_j^{d}+\left(\sum_{i=1}^d x_i^d\right)/ \prod_{k=1}^{n+1}x_k & \text{if $2\le d\le n$}
                             \end{array}
        \right.
      \end{equation*}
\end{enumerate}
In particular, the LG mirror model proposed in (2) is a re-parameterization of the Givental's one, so giving a complete set of solutions of the Picard-Fuchs differential system controlling the quantum variation of Hodge structure on $Y_d$\,.
\end{theorem}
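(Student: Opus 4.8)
The plan is to run the $f$-duality machinery of \S\ref{sez:wftv} explicitly on the weak framing $\aa_d=(\1_d,\0_{n+1-d})$ of $\P^n$, and then identify the resulting LG model with Givental's. First I would record the fan matrix $V=(I_n\mid-\1)$ of $\P^n$ as in (\ref{V}) and compute the polyhedron $\D_{\aa_d}=\{\m\in M_\R\mid V^T\m\geq-\aa_d\}$; since the first $d$ inequalities are $m_i\geq-1$ (for $i\le d$, reading $\v_i=\e_i$) or $-\sum m_i\geq -1$ (from $\v_{n+1}=-\1$), while the remaining $n-d$ say $m_i\geq 0$, one checks directly that $\D_{\aa_d}$ is already a lattice polytope with $\0$ on its boundary, so $[\D_{\aa_d}]=\D_{\aa_d}$, giving (\ref{wf-politopo}); comparing with the standard simplex picture of $H^0(\P^n,\cO(d))$ shows it is the Newton polytope of the generic degree-$d$ form. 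Because $\0\in\partial\D_{\aa_d}$, Proposition~\ref{prop:Fmatricedipolitopo} fails and $\XX_{\aa_d}$ is non-complete, as asserted. Then I would extract the fan matrix $\L_{\aa_d}$ of $\XX_{\aa_d}$ from the vertices of $\D_{\aa_d}$ (splitting into the cases $d=1$, where the vertices are $\0,\e_1,\dots,\e_n$ so $\XX_{\aa_1}\cong\C^n$, and $2\le d\le n$, where one gets $n+1$ primitive vertices).

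Next I would compute $M_{\aa_d}=V^T\cdot\L_{\aa_d}$, read off the \emph{mirror framing} $\bb_d$ as the minimal non-negative vector with $M_{\aa_d}^T+B\geq\0$ in (\ref{wb}), and then form $M_{\aa_d,\bb_d}=\L_{\aa_d}^T\cdot V$ as in (\ref{Mab(wf)}); the columns of $\overline{M}_{\aa_d,\bb_d}+\overline{B}$ then give the monomials of the $f$-mirror polynomial $f^\vee$ via Remark~\ref{rem:famiglie}(b)/Definition~\ref{def:LGmirror}, and dividing by $\x^{\bb_d}$ gives the Laurent superpotential $f^\vee_{\bb_d}$ on $\T_{\aa_d}\cong(\C^*)^n$. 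This is the computational heart: one must verify that, after the $\C^*$-action rescaling (the analogue of (\ref{azione4}) and the automorphism argument in the proof of Theorem~\ref{thm:m*=k}), $f^\vee$ collapses to the stated normal form with a single modulus $\psi$ — the monomials $\prod x_i\cdot(\psi+\sum_{j>d}x_j^d)+\sum_{i\le d}x_i^d$ for $2\le d\le n$, and $\prod x_i\cdot(\psi+\sum x_i)+1$ for $d=1$. The single-modulus claim, hence the $A$-side check $h^{1,1}(Y_d)=b_2(\P^n)=1$ by weak Lefschetz, follows exactly as in \S\ref{ssez:HoriVafa}: a diagonal torus automorphism of the covering space absorbs all but one coefficient, the residual one being $\psi$.

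For part (2) I would just restrict $f^\vee$ to the torus by dividing through by $\x^{\bb_d}$, obtaining the displayed $f^\vee_{\bb_d}$. Finally, to match Givental, I would introduce the change of variables in the spirit of (\ref{riparametrizzazione1}): set $u_i:=x_i^d/(\psi\,\x^{\bb_d})$ for $i\le d$ and $u_j:=x_j^d\cdot\prod_k x_k/(\psi\,\x^{\bb_d})$ for the remaining indices, together with $q:=\psi^{-d}$, and check that the constraint $\prod u_i=q$ cuts out the torus fibration and that $\sum u_i=(1/\psi)f^\vee_{\bb_d}-1$ recovers Givental's superpotential $F=\sum u_i$ on $\pi^{-1}(q)$ with $\pi=\prod u_i$ \cite{Givental-ICM},\cite{Givental96}; since Givental proved his LG model computes the Picard--Fuchs system / quantum VHS of $Y_d$, the re-parameterization transports that completeness statement to $(\T_{\aa_d},f^\vee_{\bb_d})$.

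\textbf{Main obstacle.} I expect the genuinely delicate step to be the bookkeeping for $2\le d\le n$: correctly determining the vertices of $\D_{\aa_d}$ (and hence $\L_{\aa_d}$, which now has $n+1$ columns rather than the simplex's $n+1$ vertices in a non-obvious configuration because $\0$ sits on a facet, not in the interior), then showing the minimal $\bb_d$ in (\ref{wb}) produces \emph{exactly} the claimed monomial structure after rescaling, with no stray monomials surviving. Verifying that the $\C^*\times(\text{finite group})$-action leaves precisely one free coefficient, and that the variable change to $u_i,q$ is invertible on the relevant locus and matches Givental's normalization up to the stated rescaling and translation by $1$, is where the proof could hide subtleties; this is presumably why the author defers the complete-intersection generalization to \cite{R-fCI}.
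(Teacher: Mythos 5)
Your proposal follows essentially the same route as the paper's proof: explicit computation of $\L_{\aa_d}$ from the vertices of $\D_{\aa_d}$ (split into $d=1$ and $2\le d\le n$), then $M_{\aa_d}^T=\L_{\aa_d}^T\cdot V$ to read off $\bb_d=\1$, then the columns of $\overline{M}_{\aa_d,\bb_d}+\overline{B}$ to produce $f^\vee_d$ and $f^\vee_{\bb_d}$ up to rescaling, and finally a change of variables into Givental's $(\uu,q)$ with $F(\uu)=\sum_iu_i=(1/\psi)f^\vee_{\bb_d}-1$ on the fibre $\prod_iu_i=q$. That is exactly the paper's argument, and your identification of the $2\le d\le n$ bookkeeping as the delicate point is accurate.

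One concrete slip in your final step: with your substitution (which, since $\x^{\bb_d}=\prod_{k=1}^{n+1}x_k$, coincides with the paper's $u_i=x_i^d/(\psi\,\prod_kx_k)$ for $i\le d$ and $u_j=x_j^d/\psi$ for $j>d$), one computes $\prod_{i=1}^{n+1}u_i=\bigl(\prod_ix_i\bigr)^d\big/\bigl(\psi^{\,n+1}\bigl(\prod_kx_k\bigr)^d\bigr)=\psi^{-(n+1)}$, so the correct normalization is $q=\psi^{-(n+1)}=\psi^{-N}$, not $q=\psi^{-d}$; with your $q$ the constraint $\pi(\uu)=q$ does not cut out the image of the torus. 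Relatedly, your uniform formula does not specialize correctly to $d=1$: there $\XX_{\aa_1}\cong\C^n$ has only $n$ Cox variables, and the paper uses the $n+1$ Givental variables $u_i=x_i/\psi$ for $i\le n$ together with $u_{n+1}=1/(\psi\prod_{j=1}^nx_j)$, which again gives $\prod_iu_i=\psi^{-(n+1)}$. These are normalization corrections rather than gaps in the method.
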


\begin{proof}
  Equalities (\ref{wf-politopo}) are immediately obtained by definitions.

  Assume now $d=1$. Then
\begin{equation*}
  \L_{\aa_1}=\left(
               \begin{array}{ccccc}
                 -1 & -1& \cdots&-1 & -1 \\
                 1 &0  &\cdots& 0&0 \\
                 0&1&0&\cdots&0\\
                 \vdots&\ddots&\ddots&\ddots&\vdots\\
                 0&\cdots&0&1&0
               \end{array}
             \right)=\left(
                       \begin{array}{ccc}
                         &-\1_n& \\
                         \hline
                         I_{n-1}&\vline &\0_{n-1}^T \\
                       \end{array}
                     \right)\in\M(n,n;\Z)
\end{equation*}
Then $\XX_{\aa_1}\cong\C^n$ is affine and $\Cox(\XX_{\aa_1})\cong\C[x_1,\ldots,x_n]$.
Moreover, (\ref{wb}) gives
\begin{equation*}
  M_{\aa_1}^T=\L_{\aa_1}^T\cdot V= \left(
                                     \begin{array}{cc}
                                       -\1_n^T & I_n \\
                                     \end{array}
                                   \right)\ \Longrightarrow\ \bb_1=\1_n\ \Longrightarrow\ \x^{\bb_1}=\prod_{i=1}^nx_i
\end{equation*}
\begin{equation*}
\overline{M}_{\aa_1,\bb_1}+\overline{B}=\L_{\aa_1}^T\cdot \overline{V}+\overline{B}=\left(
                                                                                      \begin{array}{cccccc}
                                                                                        1 & 0 & 2 & 1 & \cdots & 1 \\
                                                                                         \vdots& \vdots & 1 & 2 & \cdots & 1 \\
                                                                                         \vdots& \vdots & \vdots & \ddots & \ddots & \vdots \\
                                                                                         1& 0 & 1 & \cdots & 1 & 2 \\
                                                                                      \end{array}
                                                                                    \right)\in\M(n,n+2;\Z)
\end{equation*}
\begin{eqnarray*}
  &\Longrightarrow&f_1^\vee=\prod_{i=1}^nx_i\cdot\left(\psi+\sum_{i=1}^n x_i\right)+1\quad(\text{up to a variables rescaling})\\
   &\Longrightarrow& f^\vee_{\bb_1}=\psi+\sum_{i=1}^n x_i+{1\over\prod_{i=1}^nx_i}
\end{eqnarray*}
Assume now $2\le d\le n$. Then
\begin{equation*}
  \L_{\aa_d}=\left(
                       \begin{array}{ccccc}
                         &-\1_{d,n+1}&+&\left(dI_d\,\vline\,\0_{d,n+1-d}\right)& \\
                         \hline
                         \0_{n-d,d}&\vline &dI_{n-d}&\vline&\0_{n-d}^T \\
                       \end{array}
                     \right)\in\M(n,n+1;\Z)
\end{equation*}
and $\Cox(\XX_{\aa_d})\cong\C[x_1,\ldots,x_{n+1}]$. Moreover, (\ref{wb}) gives
\begin{equation*}
  M_{\aa_d}^T=\L_{\aa_d}^T\cdot V= \left(
                                     \begin{array}{c|c}
                                       -\1_{d,d}+dI_d & \0_{d,n+1-d} \\ \hline
                                       -\1_{n+1-d,d}& dI_{n+1-d}
                                     \end{array}
                                   \right)\ \Longrightarrow\ \bb_d=\1_{n+1}\ \Longrightarrow\ \x^{\bb_d}=\prod_{k=1}^{n+1}x_k
\end{equation*}
\begin{equation*}
\overline{M}_{\aa_d,\bb_d}+\overline{B}=\L_{\aa_d}^T\cdot \overline{V}+\overline{B}=\left(
                                                                                      \begin{array}{c|c|c}
                                                                                      \1_{d}^T&dI_{d}&\1_{d,n+1-d}\\
                                                                                      \hline                                                                                       \1_{n+1-d}^T & \0_{n+1-d,d} & (d+1)I_{n+1-d}
                                                                                      \end{array}
                                                                                    \right)
\end{equation*}
\begin{eqnarray*}
  &\Longrightarrow&f_d^\vee=\prod_{k=1}^{n+1}x_k\cdot\left(\psi+\sum_{j=d+1}^{n+1}x_j^{d}\right)+\sum_{i=1}^d x_i^d\quad(\text{up to a variables rescaling})\\
  &\Longrightarrow& f^\vee_{\bb_d}=\psi+\sum_{j=d+1}^{n+1}x_j^{d}+{\sum_{i=1}^d x_i^d\over \prod_{k=1}^{n+1}x_k}
\end{eqnarray*}
To prove the last sentence of the statement, recall Givental's notation in \cite[Thm.~5]{Givental-ICM} and consider the following re-parameterization of Givental variables $u_1,\ldots,u_N, q$, by setting $N=n+1$ and
\begin{eqnarray}\label{riparametrizzazione}
    \text{for}\ d=1,\quad u_i&=&\begin{cases}\begin{array}{cc}
                                    x_i/\psi &\text{for}\ 1\leq i\leq N-1\\
                                     1/\left(\psi\prod_{j=1}^n x_j\right)&\text{for}\ i=N
                                   \end{array}
    \end{cases}\\
\nonumber
    \text{for}\ 2\leq d\leq n,\quad u_i&:=&\begin{cases}\begin{array}{cc}
x_i^d/\left( \psi\prod_{j=1}^{n+1} x_j\right)& \text{for}\ 1\leq i\leq d\\
                                     x_i^d/\psi& \text{for}\ d+1\leq i\leq N
                                   \end{array}
    \end{cases}\\
\nonumber
    q&:=&\psi^{-N}
\end{eqnarray}
The Givental LG model is given by the superpotential
\begin{equation*}
    F: \C^N\longrightarrow \C\ ,\quad F(\uu):=\sum_{i=1}^N u_i = (1/\psi)f^\vee_{\bb_d}-1
\end{equation*}
restricted to the torus fibration
\begin{equation*}
  \pi:\C^N\longrightarrow \C\ ,\quad \pi(\uu):=\prod_{i=1}^N u_i=q
\end{equation*}
Setting, in Givental's notation,
\begin{equation*}
  \omega_q:={du_1\wedge\cdots\wedge du_N\over dq}
\end{equation*}
a complete set of solutions of the Picard-Fuchs differential system controlling the quantum VHS on $Y_d$ is described by the \emph{oscillating integrals}
\begin{equation*}
    I(\log q)=\int_{\Ga\subset\pi^{-1}(q)} \omega_q e^{F(\uu)/\hbar}
  \end{equation*}
  $\Ga$ being cycles related to critical points of either $F|_{\pi^{-1}(q)}$. Then, we get the following re-parameterized solutions
\begin{equation*}
  I=e^{-1/\hbar}\int_{\Ga\subset\T_{\aa_d}} \omega_{\psi} e^{f^\vee_{\bb_d}/\hbar\psi}
\end{equation*}
where $\omega_\psi$ is the re-parameterization of $\omega_q$ by means of relations (\ref{riparametrizzazione}), and $\Ga$ are now cycles related with critical points of $f^\vee_{\bb_d}$\,.
\end{proof}

 \begin{remark}
   Notice that for $d\geq 2$, $\XX_{\aa_d}$ is covered by at least two open affine subsets. In particular, $f^\vee_d$ restricted to one such open affine subset becomes of the same shape as $f^\vee_1$, that is, setting e.g. $x_1=1$, one gets
   \begin{equation*}
     f^\vee_d|_{\{x_1=1\}}= \prod_{k=2}^{n+1}x_k\cdot\left(1+\sum_{j=d+1}^{n+1}x_j^{d}\right)+1+\sum_{i=2}^d x_i^d=  \prod_{i=1}^{n}y_i\cdot\left(1+\sum_{i=1}^{n}y_i^d\right)+1+ \sum_{i=1}^{d-1}y_i^d
   \end{equation*}
   by setting $y_i=x_{i+1}$. In particular, imposing $d=1$, the right hand side gives $f_1^\vee(\y)$.

   Moreover, for $d\geq n+1$, the construction above is precisely the one already analyzed in \S\ref{sez:ipersuperfici}.
 \end{remark}

 \begin{remark}\label{rem:LGparagone}
   If $d=1$ then $Y_d\cong\P^{n-1}$ embedded in $\P^n$ by setting $x_1=0$. One can then check the relation between the LG mirror model $((\C^*)^n, f_{\bb_1}^\vee)$ given in Theorem~\ref{thm:d<n+1} and the Givental's LG mirror model as given, e.g., in the Introduction of \cite{GKR} and in Ex.~2.2 of \cite{KKP}. In particular, the LG model here presented turns out to be the section $x_{n+1}=1$ of the LG model presented in \cite{GKR}, after the dimensional correction needed to comparing the two constructions. Moreover, the LG model presented in \cite{KKP} is, up to a translation by $1+\psi$, the section $x_n=1$ of the one here presented.
 \end{remark}

\section{Further examples, remarks and open problems}\label{sez:open}

This final section is devoted to collect a series of suggestions and perspectives coming from the pre\-vious treatment of $f$-duality and the induced mirror web, which will be the main object of forthcoming works. Let us first of all recall, in order of appearance, main problems that arose earlier.
\begin{enumerate}
  \item Understanding the generalized Krawitz duality and LG/Hypersurfaces correspondence as sketched in \S\ref{sssez:LG/Hyp} and in particular Remark~\ref{rem:K-dualita}.
  \item By \S\ref{ssez:KKP-compct}, understanding relations between $f$-duality, log geometry and  Intrinsic Mirror Symmetry in the sense of the Gross-Siebert program.
  \item Conjecture~\ref{conj:LGmirror} and, more in general, the HMS implications of $f$-duality, taking into account the previous items, as observed in Remark~\ref{rem:LGmirrors}. As a starting point in proving equivalence of multiple mirror models here and elsewhere proposed, one could consider the work by Doran, Favero and Kelly \cite{DFK}, \cite{Favero-Kelly}.
  \item Check several MS instances, among those listed in \S\ref{ssez:mirrortest}, for some further examples of hypersurfaces and complete intersections in toric varieties.
\end{enumerate}
In the following, we present some further interesting remark and related problems.

 \subsection{What happens when the $f$-process is not calibrated?}\label{ssez:noncalibrato} Recalling Definition~\ref{def:Deltaproc banale}, assume that the $f$-process
 \begin{equation*}
    (X,\aa)\stackrel{f-dual}{\rightsquigarrow}(\XX_\aa,\bb)\stackrel{f-dual}{\rightsquigarrow}(\XX_\bb,\cc)
  \end{equation*}
  is not calibrated. This fact means that $f$-duality cannot be involutive or, in other words, that it is asymmetric: this is not a new situation, as for instance the case of the Givental's Fano/LG model correspondence and, more in general, as for $f$-duality on a weakly framed toric variety just considered in the previous \S\ref{ssez:NegKod}.

  Le us then assume, by definition as done in Definition~\ref{def:dual-ftv}, that:
   \begin{itemize}
     \item $(\XX_\aa,\bb)$ \emph{is the $f$-dual ftv of $(X,\aa)$ and $(\XX_\bb,\cc)$ is the $f$-dual ftv of $(\XX_\aa,\bb)$}.
   \end{itemize}
   Calling $Y,Y',Y''$ the generic hypersurfaces in $|D_\aa|,|D'_\bb|,|D''_\cc|$, respectively, many questions are naturally arising.
   \begin{enumerate}
    \item Is there a relation between $(X,\aa)$ and $(\XX_\bb,\cc)$\,? For instance, is there a birational map $f:\XX_\bb\dashrightarrow X$ such that $D''_\cc=f^{-1}(D_\aa)$\,? If not, may a similar birational transformation relate $(X,\aa)$ with the final ftv obtained after a finite and even number of $f$-dual passages?
     \item Recalling \S\ref{ssez:mirrortest}, which mirror symmetric aspects relate hypersurfaces in the ordered pairs $(Y,Y')$ and $(Y',Y'')$\,?
     \item Is there a relation linking $Y$ and $Y''$? For instance, is it true that $h^{p,q}(\widehat{Y})=h^{p,q}(\widehat{Y}'')$ for suitable resolutions $\widehat{Y}\longrightarrow Y$ and $\widehat{Y}''\longrightarrow Y''$\,? Are they equivalent from the HMS point of view?
   \end{enumerate}
   The present paper is already too long to start analyzing these and related problems, but they are interesting questions to be settled in future works.

\subsection{Generalized complete intersections, BH-transpolarity and $f$-duality}\label{ssez:Hubsch}
Recently, physicists Anderson \emph{et al.} \cite{AAGGL} described a method to produce examples of new \cy varieties which are not compete intersections. The basic idea is taking an hypersurface (or complete intersection) $Y$ in an ambient variety $P$ and then considering hypersurfaces (or complete intersections) $X$ in $Y$ for which there need not exist sections of two (or $r+s$, resp.) line bundles on $P$ whose common zero locus is $X$. The \cy condition is resumed by a constraint on involved degrees of $Y$ and $X$: hence it is not an essential tool of the geometric construction of these varieties, called \emph{generalized} complete intersections (gCI). This method has been further studied by Berglund and H\"ubsch \cite{BH-CYgCI} and rigorously (and nicely) explained in cohomological terms, in the basic case $r=s=1$, by mathematicians Garbagnati and Van Geemen \cite{G-vG_gCI}, who presented $X$ as the zero locus of a global section $\xi$ of a suitable negatively twisted line bundle on $P$, restricted to $Y$.\\
In their preprint \cite{BH-gCY&MS}, Berglund and H\"ubsch conjecturally describe a method to extending Batyrev-Borisov mirror duality on \cy complete intersections to that kind of generalized \cy complete intersections, by means of a, so called, \emph{trans-polarity between VEX polytopes}, that is, a sort of a finite patching of Batyrev-Borisov dualities on convex pieces composing a not necessarily convex polytope, arising as the Newton polytope associated with the global section $\xi$ (\cite[\S3]{BH-gCY&MS}). Very recently, T.~H\"ubsch pointed me out (private communication) that, dropping \cy condition in the above mentioned transpolarity may correspond to replacing BB-duality on the convex pieces by $f$-duality. This observation opens interesting, although possibly intricate, perspectives to extending $f$-duality to generalized complete intersections in a toric ambient variety $P$.

   \subsection{Toric degeneration: extending $f$-duality via geometric transitions}\label{ssez:Tdegenarazione} Following Batyrev's ideas  given in \cite{Batyrev02} (see also \cite[\S6.3]{Rossi-JGP}), since $f$-mirror partners come in families, one can easily extend the $f$-mirror definition to complete algebraic varieties admitting a toric degeneration.

   \begin{definition}\label{def:degenerazione}
     Let $Y$ be a smooth and complete algebraic variety isomorphic to the generic fiber of a flat family $y:\cY\longrightarrow B$, endowed with a special point $0\in B$ such that $Y_0:=y^{-1}(0)$ is isomorphic to a complete intersection subvariety of a complete toric variety $X(\Si)$, determined by a nef-partitioned framing $D_\aa=\sum_kD_{\aa_k}$ of $X$: $Y_0$ is called a \emph{toric degeneration} of $Y$. Assume that the nef-partitioned process associated to $(X,\aa)$ is calibrated. Then the generic complete intersection $Y_0^\vee$, giving a $f$-mirror partner of $Y_0$, is also an \emph{$f$-mirror partner of} $Y$.
   \end{definition}

   \begin{conjecture}\label{conj:topmirror}
     In the same notation of the previous Definition~\ref{def:degenerazione}, there exists a partitioned ftv $(X,\aa=\sum_k\aa_k)$ and a suitable resolution $\widehat{Y}_0^\vee\longrightarrow Y_0^\vee$ such that the $f$-mirror partner $Y_0^\vee$ of $Y$ is a topological mirror partner of $Y$, that is,
     \begin{equation*}
       k_{\widehat{Y}_0^\vee}=m_Y\quad\text{and}\quad k_Y=m_{Y_0^\vee}
     \end{equation*}
   \end{conjecture}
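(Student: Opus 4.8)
The plan is to route both equalities through the stringy Hodge invariants $\hst^{p,q}$ of the toric degeneration $Y_0$, exploiting two independent invariance properties: invariance of the stringy $E$-function $\Est$ along the flat family $y:\cY\to B$, and a Batyrev--Borisov type exchange symmetry under the calibrated $f$-duality. First I would fix, using Theorem~\ref{thm:CI} in the projective case and the toric-degeneration hypothesis of Definition~\ref{def:degenerazione} in general, a calibrated partitioned ftv $(X,\aa=\sum_k\aa_k)$ so that the $f$-mirror $Y_0^\vee$ and its $\Q$-factorial small resolutions (Corollary~\ref{cor:risoluzioni}) are well defined; the freedom in the ``there exists'' of the statement is then spent both here and in the eventual choice of the resolution $\widehat{Y}_0^\vee$.

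The core argument would split into three steps. \emph{(Degeneration transfer.)} Since $Y$ is the smooth generic fibre and $Y_0=y^{-1}(0)$ a special fibre of a single flat family, Batyrev's invariance of $\Est$ under toric degenerations and geometric transitions \cite{Batyrev02} should yield
\[
  m_Y=\hst^{n-2,1}(Y_0),\qquad k_Y=\hst^{1,1}(Y_0),
\]
identifying the complex- and K\"ahler-moduli counts of the smooth $Y$ with stringy Hodge numbers of the toric complete intersection $Y_0$; the passage to $\hst$ is precisely what absorbs the jump of $h^{1,1}$ that a naive resolution of $Y_0$ would introduce. \emph{(Combinatorial mirror symmetry.)} For the calibrated partitioned pair I would prove the exchange $\hst^{1,1}(Y_0)=\hst^{n-2,1}(Y_0^\vee)$ and $\hst^{n-2,1}(Y_0)=\hst^{1,1}(Y_0^\vee)$, extending the Batyrev--Borisov computation \cite{BB96} from nef partitions of the anti-canonical class to an arbitrary calibrated framing; this is the combinatorial heart, carried out on the nef-partition polytopes and deferred to \cite{R-fpCI}. \emph{(Geometric realisation.)} Finally I would produce a resolution $\widehat{Y}_0^\vee\to Y_0^\vee$ whose ordinary Hodge numbers realise the stringy ones, $h^{p,q}(\widehat{Y}_0^\vee)=\hst^{p,q}(Y_0^\vee)$, so that $k_{\widehat{Y}_0^\vee}=\hst^{1,1}(Y_0^\vee)$ and $m_{Y_0^\vee}=\hst^{n-2,1}(Y_0^\vee)$. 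Chaining the three steps gives
\[
  k_{\widehat{Y}_0^\vee}=\hst^{1,1}(Y_0^\vee)=\hst^{n-2,1}(Y_0)=m_Y,\qquad
  m_{Y_0^\vee}=\hst^{n-2,1}(Y_0^\vee)=\hst^{1,1}(Y_0)=k_Y,
\]
which is exactly the asserted topological mirror pairing.

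The hard part will be the geometric realisation step, together with the control of $\Est$ behind the degeneration transfer, both obstructed by the \emph{wild} singularities that $f$-duality produces: for a general calibrated framing $Y_0^\vee$ is far from Gorenstein, a crepant resolution need not exist, and the stringy numbers $\hst^{p,q}$ need not be non-negative integers, let alone be realised as $h^{p,q}$ of any geometric resolution. Thus the crux is to isolate a class of partitioned framings (and of toric degenerations of $Y$) for which a sufficiently good resolution exists and the stringy and ordinary Hodge numbers agree on the nose; this is precisely the point where the statement must remain conjectural in full generality, and where I would first test it on the projective complete-intersection examples governed by Theorem~\ref{thm:CI}, paralleling the hypersurface computation of Theorem~\ref{thm:B-mirror}.
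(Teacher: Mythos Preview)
First, note that the statement is a \emph{Conjecture}: the paper does not prove it, so there is no paper proof to compare against. What the paper offers is evidence in the special case of projective hypersurfaces (Theorem~\ref{thm:B-mirror}, with proof deferred to \cite{R-fpCI}) and a heuristic remark after Conjecture~\ref{conj:reverset.} invoking the monomial--divisor correspondence. Your proposal is therefore a strategy sketch, and should be judged as such.

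There is a genuine structural gap in your strategy. You route both equalities through stringy Hodge numbers, identifying $m_Y$ with $\hst^{n-2,1}(Y_0)$ and $k_Y$ with $\hst^{1,1}(Y_0)$, and likewise on the dual side. But in this paper $m_Y$ and $m_{Y_0^\vee}$ are \emph{moduli counts}, defined combinatorially in (\ref{mYY}), not Hodge numbers. The paper is explicit that outside the Calabi--Yau setting these do \emph{not} agree with $h^{n-2,1}$: see Remark~\ref{rem:Hodge-A} and the opening of the $B$-side discussion in \S\ref{sez:ipersuperfici}, where for $d\ge n+2$ one has $m_d^n\neq h^{n-2,1}(Y_d)$. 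This is precisely why Definition~\ref{def:A,B-mirror} separates ``topological mirror'' (items $(i)$, $(ii)$, about $m$ and $k$) from ``Hodge mirror'' (items $(iii)$, $(iv)$, about $h^{1,1}$ and $h^{n-2,1}$), and why the remark following it stresses that the two notions coincide only under the \cy condition. Your degeneration-transfer step therefore fails at the very first identification: invariance of $\Est$ along the family gives you information about Hodge numbers, not about $m_Y$.

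The approach the paper actually takes in the tested cases is different in kind: for projective hypersurfaces (Theorem~\ref{thm:B-mirror}) one constructs an explicit partial resolution $\widehat{Y}^\vee$ and computes $h^{1,1}(\widehat{Y}^\vee)=k_{\widehat{Y}^\vee}$ \emph{directly}, comparing it to the combinatorial quantity $m_d^n=l(\D_\aa)-1-n-\sum_\Theta l^*(\Theta)$ without passing through any $\hst$ of $Y$ or $Y_0$. If you want to salvage your outline, the middle ``exchange'' step would need to relate the \emph{combinatorial} moduli counts on the two sides (lattice-point counts in $\D_\aa,\D_\bb$ and their anticanonical facets) rather than Hodge numbers; stringy invariants can only enter once you have independently argued that, for the chosen framing and resolution, they happen to compute $k_{\widehat{Y}_0^\vee}$ --- which is your step~(3), not a bridge to $m_Y$.
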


   Notice that, calling $\widehat{Y}_0\longrightarrow Y_0$ a resolution of singularities, the process
   \begin{equation}\label{g.t.}
           \xymatrix{\widehat{Y}_0\ar[r]&Y_0\ar@{<~>}[r]&Y}
         \end{equation}
         is a \emph{geometric transition} (see \cite[Def.~1.4]{Rossi-JGP} for a definition, here considered in a broader sense, beyond the \cy setup).
    Recalling Morrison's argumentation  given in \cite{Morrison} (see also \cite[\S6.2]{Rossi-JGP}), the extension of Batyrev's mirror duality, given by $f$-mirror duality, allows one to formulate, beyond the \cy setup, the following

   \begin{conjecture}[of reverse transition]\label{conj:reverset.}
     Under notation given in Definition~\ref{def:degenerazione} and Conjecture~\ref{conj:topmirror}, and given the geometric transition (\ref{g.t.}), there should exist a \emph{reverse} geometric transition
     \begin{equation*}
       \xymatrix{\widehat{Y}_0^\vee\ar[r]&Y^\vee_0\ar@{<~>}[r]& Y^\vee}
     \end{equation*}
     such that $Y_0$ is a topological mirror partner of $Y^\vee$, that is,
     \begin{equation*}
       k_{\widehat{Y}_0}=m_{Y^\vee}\quad\text{and}\quad k_{Y^\vee}=m_{Y_0}
     \end{equation*}
     In particular, $Y_0^\vee$ is a toric degeneration of $Y^\vee$, meaning $Y^\vee$ is isomorphic to the generic fiber of a flat family $y^\vee:\mathcal{Y}^\vee\longrightarrow B^\vee$, endowed with a special point $0^\vee\in B^\vee$ such that $(y^{\vee})^{-1}(0^\vee)\cong Y_0^\vee$.
   \end{conjecture}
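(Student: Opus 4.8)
The idea is to run the partitioned construction of \S\ref{sez:CI} \emph{backwards}, using that a calibrated partitioned $f$-process is involutive, and then to carry the topological mirror tests across the resulting pair of geometric transitions. Start from the partitioned ftv $(X,\aa=\sum_k\aa_k)$ of Definition~\ref{def:degenerazione}, its $f$-dual $(\cv{\XX}_\aa,\cv{\bb}=\sum_k\bb_k)$, and the $f$-mirror complete intersection $Y_0^\vee=\bigcap_kY_k^\vee\subset\cv{\XX}_\aa$ of Definition~\ref{def:nefpmirror}. The calibration hypothesis, via (the partitioned analogue of) Theorem~\ref{thm:Deltatriviale}, gives $\cv{\L}_{\cv{\bb}}=V$ and $\cc_k=\aa_k$, i.e.\ relations (\ref{nefDcalibrato}); applying the $f$-process once more to $(\cv{\XX}_\aa,\cv{\bb})$ recovers $(X,\aa)$ up to small resolution, so that $Y_0$ is itself the $f$-mirror of $Y_0^\vee$. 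Thus the ``reverse'' side is \emph{structurally} the $f$-dual of the forward side; what is not a priori symmetric is the resolution/smoothing data, and manufacturing it is the real content of the conjecture.

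First I would build the resolution half of the reverse transition. Picking $\Xi^\vee\in\SF(\cv{\L}_\aa)$ refining $\cv{\Si}_\aa$ yields, by the argument of Corollary~\ref{cor:risoluzioni}, a $\Q$-factorial small resolution $\vf^\vee\colon\widehat{Z}\to\cv{\XX}_\aa$; put $\widehat{Y}_0^\vee:=(\vf^\vee)^{-1}(Y_0^\vee)$, the zero locus of the pulled-back polynomials $(\vf^\vee)^*f_k^\vee$. Exactly as in Theorem~\ref{thm:B-mirror} and the computations deferred to \cite{R-fpCI}, a good choice of $\Xi^\vee$ --- in general only a partial resolution --- makes $\widehat{Y}_0^\vee$ (quasi-)smooth, so that $\widehat{Y}_0^\vee\to Y_0^\vee$ is the contraction part of a geometric transition. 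The remaining, delicate, step is to produce a flat smoothing $y^\vee\colon\mathcal{Y}^\vee\to B^\vee$ with $(y^\vee)^{-1}(0^\vee)\cong Y_0^\vee$ whose generic fibre $Y^\vee$ is smooth and complete. Following Batyrev \cite{Batyrev02}, the natural candidate for $\mathcal{Y}^\vee$ is the family obtained by varying the coefficients of $f_1^\vee,\ldots,f_l^\vee$ inside the linear systems $|\cv{D}_{\bb_k}|$ on a partial resolution of $\cv{\XX}_\aa$; one then has to exhibit a one-parameter subfamily through $Y_0^\vee$ with smooth total space, that is, to check that the relevant deformations of $Y_0^\vee$ inside the toric ambient are unobstructed.

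With both transitions available, the equalities $k_{\widehat{Y}_0}=m_{Y^\vee}$ and $k_{Y^\vee}=m_{Y_0}$ would be verified by lattice-point combinatorics following Morrison \cite{Morrison}. On the toric side, $h^{1,1}(\widehat{Y}_0)$ and the moduli count $m_{Y_0}$ are expressible through lattice-point functions of the polytopes $\cv{\D}(X,\aa)$, $\cv{\D}_{\aa_k}$ and their faces, via the stringy Hodge-number machinery of Batyrev--Borisov \cite{BB96} adapted in \cite{R-fpCI} --- the same input already used in Theorems~\ref{thm:m*=k} and \ref{thm:B-mirror}. The identity $\cc_k=\aa_k$ together with the duality between $\cv{\D}(X,\aa)$ and $\cv{\D}(\cv{\XX}_\aa,\cv{\bb})$ should then match the drop of $h^{1,1}$ across $\widehat{Y}_0\rightsquigarrow Y$ with the corresponding jump of the complex-moduli count across $\widehat{Y}_0^\vee\rightsquigarrow Y^\vee$, and symmetrically. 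Passing to the limit along the two degenerations, by a Clemens--Schmid type analysis of $y\colon\mathcal{Y}\to B$ and $y^\vee\colon\mathcal{Y}^\vee\to B^\vee$, would compute the Hodge numbers of $Y$ and $Y^\vee$ from those of the toric fibres, and in particular exhibit $Y_0^\vee$ as a toric degeneration of $Y^\vee$, which is the last assertion of the conjecture.

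The main obstacle is the existence and unobstructedness of the smoothing of $Y_0^\vee$, hence of the reverse geometric transition itself. Beyond the \cy range there is no Bogomolov--Tian--Todorov theorem, so the local deformation space of $Y_0^\vee$ can be genuinely obstructed, and the combinatorially defined numbers $m_{Y^\vee}$, $m_{Y_0}$ of (\ref{mYY}) need not coincide with actual deformation dimensions; computing $h^{p,q}(Y^\vee)$ then requires controlling the vanishing-cycle contributions in the limiting mixed Hodge structure of $y^\vee$ and proving they are exactly those forced by $f$-mirror symmetry. Consistently with Remark~\ref{rem:Hodge-A}, the conjecture only asks for the \emph{topological} mirror tests $(i)$ and $(ii)$ of Definition~\ref{def:A,B-mirror}, not for Hodge-diamond symmetry, which already fails generically; so the statement is at once more modest and subtler to pin down in the non-\cy setting. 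Since this difficulty is already present in Morrison's \cy picture, a fully general proof seems out of reach with present techniques, and the realistic target is to establish the conjecture for explicit families, extending the projective hypersurface verifications of \S\ref{sez:ipersuperfici} to complete intersections admitting toric degenerations.
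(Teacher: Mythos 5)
This statement is a \emph{conjecture}: the paper offers no proof of it, only the heuristic sketched in the remark that follows it (construct examples via the monomial--divisor correspondence, assume the differentials of the mirror maps $\mu_A$ and $\mu'_B$ exist, and use the inclusion $K_Y\hookrightarrow K_{\widehat{Y}_0}$ to cut out a first-order deformation of $Y^\vee$ that ``should'' integrate to the toric degeneration $Y_0^\vee$). Your proposal is likewise a strategy rather than a proof, and you are candid about that; but it is worth noting that your route is genuinely different from the paper's. You work from the combinatorial end: involutivity of the calibrated partitioned $f$-process (relations (\ref{nefDcalibrato})) to identify $Y_0$ as the $f$-mirror of $Y_0^\vee$, a small resolution $\widehat{Y}_0^\vee\to Y_0^\vee$ built as in Corollary~\ref{cor:risoluzioni}, a smoothing obtained by varying coefficients in $|\cv{D}_{\bb_k}|$, and then lattice-point/Clemens--Schmid bookkeeping to verify the two equalities. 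The paper instead starts from the (assumed) mirror maps and deduces the degeneration as a consequence. Each buys something: your version makes the resolution half of the reverse transition essentially unconditional and reduces the numerical claims to the same combinatorics already used in Theorems~\ref{thm:m*=k} and \ref{thm:B-mirror}; the paper's version explains \emph{why} the degeneration should exist, at the price of presupposing the mirror correspondence it is meant to support.

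The genuine gap --- in both your sketch and the paper's remark --- is the existence half of the reverse transition: a flat family $y^\vee:\mathcal{Y}^\vee\to B^\vee$ with special fibre $Y_0^\vee$ and smooth, complete generic fibre $Y^\vee$. Varying coefficients inside the linear systems $|\cv{D}_{\bb_k}|$ does not produce this: the generic member of those linear systems is still a complete intersection in $\cv{\XX}_\aa$ (typically still singular, as in \S\ref{sez:ipersuperfici}), whereas a smoothing in the sense of a geometric transition must leave the toric ambient altogether. Outside the Calabi--Yau range there is no Bogomolov--Tian--Todorov theorem, so the deformations of $Y_0^\vee$ (or of $\widehat{Y}_0^\vee$) may be obstructed, and the combinatorial quantities $m_{Y^\vee}$, $m_{Y_0}$ of (\ref{mYY}) need not agree with actual tangent-space dimensions of any moduli problem. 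Until that existence and unobstructedness question is settled --- even for a single non-Calabi--Yau family --- neither your Clemens--Schmid computation nor the paper's mirror-map heuristic can be carried out, so the statement must remain a conjecture; your proposal is best read as a correct identification of what a proof would require, not as a proof.
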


   \begin{remark}
     Following the lines given in \cite{Batyrev02}, in the \cy setup, examples satisfying  Conjecture~\ref{conj:reverset.} are constructed by means of the monomial-divisor correspondence \cite{AGM}. More or less, the same argumentation may be extended beyond the \cy setup. In fact, the meaning of the monomial-divisor correspondence is that of the differential of the mirror map. Assume that there exist well defined isomorphisms (actually differentials of mirror maps)
     \begin{equation*}
       \xymatrix{\mu_A:K_Y\ar[r]^\cong &M_{Y_0^\vee}}\ ,\quad \xymatrix{\mu'_B:K_{\widehat{Y}_0}\ar[r]^\cong &M_{Y^\vee}}
     \end{equation*}
     where $K_Y,K_{\widehat{Y}_0}$ are the tangent spaces to the \ka moduli spaces of $Y$ and $\widehat{Y}_0$, respectively, and analogously $M_{Y_0^\vee},M_{Y^\vee}$ are the tangent spaces to the complex moduli speces of $Y_0^\vee$ and $Y^\vee$, respectively: assume all of them are well defined! The isomorphism $\mu_A$ comes from the $A$-side topological mirror symmetry of $(Y,Y_0^\vee)$ and the isomorphism $\mu'_B$ comes from the $B$-side topological mirror symmetry of $(Y^\vee,Y_0)$ (recall Definition~\ref{def:A,B-mirror}). The geometric transition (\ref{g.t.}) induces an inclusion $K_Y\hookrightarrow K_{\widehat{Y}_0}$, via the inclusion of the associated Picard groups. Therefore, the subspace $\mu'_B(K_Y)\subset M_{Y^\vee}$ defines a first-order deformation of $Y^\vee$ which should give rise to the toric degeneration to $Y_0^\vee$.
   \end{remark}

\subsection{General hyperelliptic curve}\label{ssez:iperellittica}
The only examples of toric subvarieties considered throughout the present paper are hypersurfaces and complete intersections in some projective space. The present subsection is devoted to consider a general hyperelliptic curve of genus $g\geq 2$ as presented in \cite[\S4.1]{KKOY}, that is, a divisor $Y$, of bi-degree $(2,g+1)$, in the Hirzebruch surface $\FF_0=\P(\cO_{\P^1}\oplus\cO_{\P^1})$. The latter is a toric variety of Picard number $r=2$, hence a substantially different example from the case of $\P^n$. The reader is warmly invited to compare the $f$-mirror (complete) model here proposed with Landau-Ginzburg mirror models proposed in \cite{KKOY} and, for $g=2$, in \cite{Seidel}, and, moreover, for the case of a general curve of genus $g\geq 2$ in \cite{Efimov}, generalizing Seidel's approach.

A fan matrix of $\FF_0$ is given by
\begin{equation*}
  V=\left(
      \begin{array}{cccc}
        1 & -1 & 0 & 0 \\
        0 & 0 & 1 & -1 \\
      \end{array}
    \right)
\end{equation*}
and a framing $D_\aa$ of bi-degree $(2,g+1)$ is given, e.g., by $\aa=(1,1,1,g)$. Then
\begin{equation*}
  \L_\aa=\left(
           \begin{array}{cccc}
             1 & 1 & -1 & -1 \\
             g & -1 & g & -1 \\
           \end{array}
         \right)\ \Longrightarrow\ \D_\aa=\conv(\L_\aa)
\end{equation*}
In particular $D_\aa$ is an ample divisor of $\FF_0$. Recalling (\ref{b}),
\begin{equation*}
  M_\aa^T=\L_\aa^T\cdot V= \left(
                            \begin{array}{cccc}
                              1&-1&g&-g\\
                              1&-1&-1&1\\
                              -1&1&g&-g\\
                              -1&1&-1&1
                            \end{array}
                          \right)\ \Longrightarrow\ \bb=(g,1,g,1)
\end{equation*}
Then
\begin{equation*}
  \D_{\bb}=\conv\left(
             \begin{array}{cccc}
               2g\over g+1 & 0 & 0 & -{2g\over g+1} \\
               -{g-1\over g+1} & 1 & -1 & g-1\over g+1 \\
             \end{array}
           \right)\ \Longrightarrow\ \left[\D_\bb\right]=\NN=\conv(V)
\end{equation*}
so giving $\L_\bb=V$, up to a permutation of columns. Moreover, (\ref{c}) gives
\begin{equation*}
  M_{\aa,\bb}^T=V^T\cdot\L_\aa=M_\aa\ \Longrightarrow\ \cc=(1,1,1,g)=\aa
\end{equation*}
implying that the $f$-process is calibrated, by Theorem~\ref{thm:Deltatriviale}.

\noindent Part (b) of Remark~\ref{rem:famiglie} gives the polynomial $f^\vee\in\Cox(\XX_\aa)\cong\C[x_1,\ldots,x_4]$, defining the generic element $Y^\vee$ of the mirror family,
\begin{equation*}
  f^\vee=c_1x_1^{2g}x_3^{2g}+c_2x_1^{g+1}x_2^2x_3^{g-1}+c_3x_1^{g}x_2x_3^{g}x_4+c_4x_1^{g-1}x_3^{g+1}
  x_4^2+c_5x_2^2x_4^2
\end{equation*}
Up to a variables rescaling, the generic $f^\vee$ can be reduced to the following shape
\begin{equation}\label{dual f}
  f^\vee=x_1^{2g}x_3^{2g}+x_1^{g+1}x_2^2x_3^{g-1}+x_1^{g}x_2x_3^{g}x_4+\psi\, x_1^{g-1}x_3^{g+1}x_4^2+\vf\, x_2^2x_4^2
\end{equation}
As a Cox quotient, $\XX_\aa\cong \left(\C^4\setminus Z\right)/H_\aa$, where the irrelevant locus $Z$ is the union of two plains meeting in the origin of $\C^4$, namely $Z=\{x_1=x_2=0\}\cup\{x_3=x_4=0\}$, and
\begin{equation*}
  H_\aa\cong\left\{\begin{array}{cc}
                     (\C^*)^2 & \text{if $g=2h$ is even} \\
                     (\C^*)^2\times\boldsymbol{\mu}_2 & \text{if $g=2h+1$ is odd}
                   \end{array}\right.\quad h\in\N\setminus\{0\}
\end{equation*}
In particular the weight matrix defining the action of $H_\aa$ over $\C^4\setminus Z$ is given by
\begin{eqnarray*}
  Q_g&=&Q_{2h}=\left(
               \begin{array}{cccc}
                 1 & g & 1 & g \\
                 0 & g+1 & 2 & g-1 \\
               \end{array}
             \right)=\left(
               \begin{array}{cccc}
                 1 & 2h & 1 & 2h \\
                 0 & 2h+1 & 2 & 2h-1 \\
               \end{array}
             \right)\ \text{if $g=2h$}\\
  Q_g&=&Q_{2h+1}\times \boldsymbol{\tau}=\left(
               \begin{array}{cccc}
                 1 & h & 0 & h+1 \\
                 0 & h+1 & 1 & h \\
               \end{array}
             \right)\times\left(
                            \begin{array}{cccc}
                              1 & 1 & 1 & 1 \\
                            \end{array}
                          \right)\ \text{if $g=2h+1$}
\end{eqnarray*}
meaning that the action is given by
\begin{eqnarray}\label{azione_g}
  &\xymatrix{((\l,\mu),\x)\ar@{|->}[r]&(\l x_1,\l^g\mu^{g+1}x_2,\l\mu^2x_3,\l^g\mu^{g-1}x_4)}& \quad\text{if $g=2h$} \\
  \nonumber
  &\xymatrix{((\l,\mu,\pm 1),\x)\ar@{|->}[r]&(\pm\l x_1,\pm\l^h\mu^{h+1}x_2,\pm\mu x_3,\pm\l^{h+1}\mu^hx_4)} &\quad\text{if $g=2h+1$}
\end{eqnarray}
Notice that $f^\vee$ is equivariant \wrt to both these actions. In particular, as an element of $\Cox(\XX_\aa)$, which is graded on
\begin{equation*}
  \Cl(\XX_\aa)\cong\left\{\begin{array}{cc}
                     \Z^2 & \text{if $g=2h$ is even} \\
                     \Z^2\oplus\Z/2\Z & \text{if $g=2h+1$ is odd}
                   \end{array}\right.\quad h\in\N\setminus\{0\}
\end{equation*}
$f^\vee$ turns out to be homogeneous of degree either $(4g,4g)$ or $(2g,2g,\overline{0})$, respectively. In particular, it turns out that, if $g$ is even then $(g+1)D_\bb$ is ample and, analogously, if $g=2h+1$ is odd then $(h+1)D_\bb$ is ample (apply e.g. \cite[Thm.~3]{RT-Ample}).

\subsubsection{Hori-Vafa type LG mirror models}\label{sssez:HViperellittica} In \cite[\S4.2]{KKOY} a LG mirror model of the general hyperelliptic curve of genus $g$ is proposed, by adopting the Hori-Vafa recipe \cite{Hori-Vafa} for an hypersurface of bi-degree $(2,g+1)$ in $\FF_0$. By a different approach, Seidel proposed a further LG mirror model for the case $g=2$ \cite{Seidel}, obtained as an unramified quotient of the Hori-Vafa LG mirror model of a plane quintic curve. Seidel's methods have been generalized by Efimov \cite{Efimov} for every $g\geq 2$. In particular we get a double proposals of LG mirror models for the generic hyperelliptic curve of genus $g\geq 2$. In all these cases, authors checked one direction of HMS.

Recalling what observed in \S\ref{ssez:HoriVafa}, and in particular in \S\ref{sssez:HVinvariante}, we can obtain a further proposal of LG mirror model for the general hyperelliptic curve of genus $g\geq 2$, by considering the LG model $(\L_{g,\vf,\psi},w_{g,\vf,\psi})$ so defined:
\begin{itemize}
        \item $\L_{g,\vf,\psi}\cong(\C^*)^4$ is an irreducible component of the reducible torus complete intersection
        $$\L_{g,\vf,\psi}:=\left\{\tau_1\,y_1^2=
                             x_1^{2g}x_2^2x_3^{2g}x_4^2=
                             \tau_2\,y_2^2\right\}\subset (\C^*)^4\times (\C^*)^2$$
        where $4\vf^{2}=\tau_1=e^{t_1}$ and $4\psi^{2}=\tau_2=e^{t_2}$, being $t_1,t_2\in\C^*$ \ka parameters related with volumes of the two rulings on $\FF_0$;
        \item $w_{g,\vf,\psi}$ is the restriction to $\L_{g,\vf,\psi}$ of the regular function $\widetilde{w}_{g,\vf,\psi}:\C^6\longrightarrow\C$ defined by
            \begin{equation*}
              \widetilde{w}_{g,\vf,\psi}(\x,\y):=x_1^{2g}x_3^{2g}+x_1^{g+1}x_2^2x_3^{g-1}+\psi (x_1^{g-1}x_3^{g+1}x_4^2+y_2)+\vf (x_2^2x_4^2+y_1)
            \end{equation*}
      \end{itemize}
When restricted to $\L_{g,\vf,\psi}$, the superpotential $\widetilde{w}_{g,\vf,\psi}$ can then be rewritten as $f^\vee$ in (\ref{dual f}), that is,
\begin{equation*}
  w_{g,\vf,\psi}(\x)= x_1^{2g}x_3^{2g}+x_1^{g+1}x_2^2x_3^{g-1}+x_1^{g}x_2x_3^{g}x_4+\psi\, x_1^{g-1}x_3^{g+1}x_4^2+\vf\, x_2^2x_4^2 =f^\vee
\end{equation*}
This gives the following global picture, analogous to (\ref{LGquoziente}),
\begin{equation*}
   \xymatrix{ \{\0\}\ar@{^(->}[r]&\C&\L_{g,\vf,\psi}\cong(\C^*)^{4}\ar@{^(->}[r]\ar[l]_-{w_{g,\vf,\psi}}
   \ar@{->>}[d]_-{/H_\aa}&\C^4\setminus Z\ar@{->>}[d]_-{/H_\aa}\\
   w_{g,\vf,\psi}^{-1}(0)/H_\aa\ar[u]\ar@{^(->}[rr]&&\T_\aa\ar@{^(->}[r]&\XX_{\aa}}
 \end{equation*}
 where $\T_\aa$ is the acting torus on $\XX_{\aa}$. Then the $f$-mirror $Y^\vee$ of $Y$, as proposed in Definition~\ref{def:mirror} and defined by $f^\vee\in\Cox(\XX_\aa)$, is precisely the closure
 \begin{equation*}
   Y^\vee=\overline{w_{g,\vf,\psi}^{-1}(0)/H_\aa}\subset\overline{\T}=\XX_{\aa}
 \end{equation*}
 induced by the open embedding $\T_\aa\hookrightarrow \XX_{\aa}$.
  Evidences seem enough to motivating the following
  \begin{conjecture}
    A LG mirror model of the general hyperelliptic curve of genus $g\geq2$ of \ka parameters $t_1,t_2$, is given by $((\C^*)^4,w_{g,\vf,\psi})$, with $2(\ln2+\ln\vf)=t_1$ and $2(\ln2+\ln\psi)=t_2$.
  \end{conjecture}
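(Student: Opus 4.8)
The final statement is a conjecture, so what follows is a plan of attack rather than a proof.

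The plan is to establish the conjecture by \emph{reducing} it, through explicit changes of variables, to the instances of Homological Mirror Symmetry already available in the literature — Seidel's equivalence for $g=2$ \cite{Seidel}, its generalisation by Efimov for all $g\geq 2$ \cite{Efimov}, and the Hori--Vafa model analysed in \cite{KKOY} — rather than attempting a direct proof of HMS for $((\C^*)^4,w_{g,\vf,\psi})$ from scratch. First I would make precise the relation, only sketched in the global diagram preceding the conjecture, between the affine LG model $((\C^*)^4,w_{g,\vf,\psi})$ and the compact $f$-mirror $Y^\vee\subset\XX_\aa$ cut out by $f^\vee$ in (\ref{dual f}): namely that $w_{g,\vf,\psi}$ is the restriction of the quasi-homogeneous potential $\widetilde w_{g,\vf,\psi}$ to the torus complete intersection $\L_{g,\vf,\psi}$, and that after quotienting by $H_\aa$ its zero fibre compactifies to $Y^\vee$. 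This is the curve analogue of the picture of \S\ref{sssez:HVinvariante}, and the computation is essentially the one carried out there; the point of carrying it out carefully is to exhibit $w_{g,\vf,\psi}$ as a genuine Landau--Ginzburg potential on $(\C^*)^4$ with isolated (or at worst mild) critical locus, and to record the $\C^*$-equivariance coming from (\ref{azione_g}), which is exactly what makes the anchoring-at-infinity of \S\ref{ssez:KKP-compct} available.

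Next I would construct explicit birational identifications, compatible with the Liouville structure near the boundary divisor, between $((\C^*)^4,w_{g,\vf,\psi})$ and, on one hand, the Hori--Vafa mirror of \cite[\S4.2]{KKOY} for the bidegree $(2,g+1)$ divisor in $\FF_0$, and, on the other hand, Efimov's superpotential obtained as an unramified quotient of the Hori--Vafa mirror of the plane curve of degree $2g+1$ \cite{Efimov}, specialising to Seidel's model for $g=2$. The expected shape of these identifications is a monomial change of coordinates on the tori — mirror to the monomial--divisor correspondence discussed at the end of \S\ref{ssez:Tdegenarazione} — composed with the quotient by the finite part of $H_\aa$ recorded in (\ref{azione_g}); the reparameterisation $2(\ln 2+\ln\vf)=t_1$, $2(\ln 2+\ln\psi)=t_2$ is precisely the normalisation matching the \ka volumes of the two rulings of $\FF_0$ under this correspondence, which is why it must appear in the statement. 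Once such an identification is in place one transports along it the Fukaya--Seidel category of $w_{g,\vf,\psi}$, and the known half of the equivalence $D^b\mathrm{Coh}(Y)\simeq \mathrm{FS}((\C^*)^4,w_{g,\vf,\psi})$ proved in \cite{Seidel}, \cite{Efimov} follows; matching of Hochschild (co)homology and of the leading terms of the quantum/LG period integrals would serve as a first consistency check, along the lines of the oscillating-integral computation in the proof of Theorem~\ref{thm:d<n+1}.

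The hard part is twofold. On the symplectic side, the cited HMS statements are established only in one direction, so the reverse equivalence — between the category of matrix factorisations (equivalently, the derived category of singularities) of $w_{g,\vf,\psi}$ and the Fukaya category of $Y$ — would still require new input; and any transport of equivalences along the above changes of variables has to be performed compatibly with the choice of irreducible component $\L_{g,\vf,\psi}$, with the stop/anchoring coming from the framing $D_\bb$, and with the torsion factor $\boldsymbol{\mu}_2$ of $H_\aa$ that appears when $g$ is odd, none of which is automatic. On the algebraic side, $Y^\vee$, hence $w_{g,\vf,\psi}$, is neither of Delsarte type nor quasi-homogeneous in the original variables, so a direct Krawitz/BHK-type attack via Theorem~\ref{thm:Krawitz} and Remark~\ref{rem:K-dualita} would first demand extending the Chiodo--Ruan dictionary well beyond its current scope. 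I would therefore expect a complete argument to proceed case by case ($g=2$ from \cite{Seidel}, then an inductive quotient argument in the spirit of \cite{Efimov}), with the \emph{uniform} statement valid for every $g\geq 2$ remaining the genuine obstacle.
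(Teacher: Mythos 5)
This statement is a conjecture, and the paper supplies no proof of it: immediately after stating it, the text declares that checking whether the model $((\C^*)^4,w_{g,\vf,\psi})$ and the models of \cite{KKOY}, \cite{Seidel}, \cite{Efimov} are HMS-equivalent ``is a completely open task''. So there is no argument in the paper against which to measure yours; what the paper offers is only supporting evidence, namely the construction of $(\L_{g,\vf,\psi},w_{g,\vf,\psi})$, the identity $w_{g,\vf,\psi}=f^\vee$ on the chosen component, the quotient diagram by $H_\aa$ compactifying the zero fibre to $Y^\vee\subset\XX_\aa$, the gauged-sigma-model/KKP-compactification interpretation of the framing $D'_\bb$, and the explicit observation that the reparameterized Givental-type model of (\ref{riparametrizzazione2}) is the section $x_3=1$ of the Hori--Vafa model of \cite[\S4.2]{KKOY}. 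Your roadmap is consistent with, and in places sharper than, this evidence: the monomial change of coordinates you propose is exactly the mechanism behind the $x_3=1$ identification the paper records, and your normalisation of the K\"ahler parameters is the one forced by matching the volumes of the two rulings of $\FF_0$.

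Your honest accounting of the obstacles is the most valuable part of the proposal and matches the paper's own caveats: the cited HMS results are one-directional; the superpotential is neither Delsarte nor quasi-homogeneous in the given coordinates, so Theorem~\ref{thm:Krawitz} and the Chiodo--Ruan dictionary do not apply as stated (cf.\ Remark~\ref{rem:K-dualita}); and the $\boldsymbol{\mu}_2$ torsion factor in $H_\aa$ for odd $g$ must be tracked through any quotient argument. One point you could make more explicit: before any categorical transport, one should verify that the proposed coordinate changes respect the choice of irreducible component of the reducible complete intersection $\L_{g,\vf,\psi}$ and intertwine the $H_\aa$-action of (\ref{azione_g}) with the finite quotients appearing in Seidel's and Efimov's constructions; the paper gives no such verification, and without it even the identification of the underlying Liouville domains (let alone their Fukaya--Seidel categories) remains unestablished. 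As a plan of attack on an open conjecture, your proposal is sound; as a proof, it is not one, and neither the paper nor the literature it cites currently closes the gap.
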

  Accordingly with Hori-Vafa terminology \cite{Hori-Vafa}, a similar LG model admits an associated gauged linear sigma model whose gauge action is given by the $(\C^*)^2$-action described in (\ref{azione_g}). Quotienting by such a gauge action gives back, up to a possibly further quotient by $\Z/2\Z$ depending on the parity of the genus $g$, the LG model $(\T_\aa,f^\vee_\bb)$ described in \S\ref{sssez:LG/Hyp}, and admitting the (generalized) KKP-compactification $(\XX_\aa,\overline{f}^\vee_\bb)$, described in \S\ref{ssez:KKP-compct}. In particular:
  \begin{equation*}
    (\overline{f}^\vee_\bb)^{-1}(0)=Y^\vee\ ,\quad(\overline{f}^\vee_\bb)^{-1}(\infty)=gD'_1+D'_2+gD'_3+D'_4=D'_\bb
  \end{equation*}
  Also in the present case, the ftv $(\XX_\aa,D'_\bb)$ can be thought of a log (no Calabi-Yau) pair, opening the door to an intrinsic mirror symmetric interpretation, in the sense of Gross-Siebert \cite{GS-IMS}.

Moreover, the Laurent superpotential $f_\bb^\vee$ of the LG model $(\T_\aa,f^\vee_\bb)$ admits the following explicit expression
\begin{equation*}
  f^\vee_\bb={f^\vee\over \x^\bb}= 1+{x_1^g x_3^g\over x_2 x_4}+\vf{x_2 x_4 \over x_1^g x_3^g}+{x_1 x_2\over x_3 x_4}+\psi{x_3 x_4\over x_1 x_1}
\end{equation*}
meaning that $(\T_\aa,f^\vee_\bb)$ is a re-parameterization of a Givental type LG model, the latter defined as follows
\begin{eqnarray}\label{riparametrizzazione2}
   u_1 &:=& {x_1^g x_3^g\over x_2 x_4} \\
\nonumber
 u_2 &:=& \vf{x_2 x_4 \over x_1^g x_3^g} \\
\nonumber
  u_3 &:=& {x_1 x_2\over x_3 x_4} \\
\nonumber
  u_4 &:=& \psi{x_3 x_4\over x_1 x_1} \\
\nonumber
  q_1 &:=& \vf \\
\nonumber
  q_2 &:=& \psi
\end{eqnarray}
The associated Givental LG model is then given by the superpotential
\begin{equation*}
  \xymatrix{F:\C^4\ar[r]&\C}\ :\quad F(\mathbf{u}):=\sum_{i=1}^4u_i=f^\vee_\bb-1
\end{equation*}
restricted to the torus fibration
\begin{equation*}
  \xymatrix{\pi:\C^4\ar[r]&\C^2}\ :\quad \pi(\uu):=(u_1u_2,u_3u_4)=\mathbf{q}:=(q_1,q_2)
\end{equation*}
This is analogous to what done in (\ref{riparametrizzazione1}) and (\ref{riparametrizzazione}) for projective hypersurfaces.

  Checking if the LG mirror model here presented and those proposed in \cite{KKOY}, \cite{Seidel} and \cite{Efimov} are actually each other equivalent from the HMS point of view, is a completely open task! Anyway, let us observe that the Givental type LG mirror model defined in (\ref{riparametrizzazione2}) turns out to be the section $x_3=1$ of the LG mirror model proposed in \cite[\S4.2]{KKOY}, analogously to what observed in Remark~\ref{rem:LGparagone} for the LG mirror models of the hyperplane in $\P^n$.

\bibliography{MILEA}
\bibliographystyle{acm}

\end{document}